\begin{document}

\fontsize{12pt}{18pt}\selectfont
\newtheorem{thmint}{Theorem}
\renewcommand{\thethmint}{\Alph{thmint}}
\newtheorem{thmintmine}{Theorem}
\renewcommand{\thethmintmine}{\Roman{thmintmine}}

\newcommand{\scsltitle}[2]{\par\noindent\textsc{#1}\ \textsl{#2}}
\renewcommand{\theenumi}{\alph{enumi}}
\renewcommand{\labelenumi}{\textnormal{(\theenumi)}}

\newtheorem{lemma}{Lemma}[section]
\newtheorem{theorem}[lemma]{Theorem}
\newtheorem*{mainthm}{Main Theorem}
\newtheorem{corollary}[lemma]{Corollary}
\newtheorem{fact}[lemma]{Fact}
\newtheorem{proposition}[lemma]{Proposition}
\newtheorem{observation}[lemma]{Observation}
\theoremstyle{remark}
\newtheorem{claim}[lemma]{Claim}
\newtheorem{remark}[lemma]{Remark}
\newtheorem{remarks}[lemma]{Remarks}
\newtheorem*{rem}{Remark}
\newtheorem{example}[lemma]{Example}
\newtheorem{examples}[lemma]{Examples}
\newtheorem{idea}[lemma]{Idea}
\newtheorem{construction}[lemma]{Construction}
\newtheorem{exercise}[lemma]{Exercise}
\newtheorem{question}[lemma]{Question}
\newtheorem{problem}[lemma]{Problem}
\newtheorem{conjecture}[lemma]{Conjecture}
\newtheorem*{que}{Question}
\newtheorem*{openprob}{Open Problem}
\theoremstyle{definition}
\newtheorem{condition}[lemma]{Condition}
\newtheorem{definition}[lemma]{Definition}
\newenvironment{label1}{
\begin{enumerate}
\renewcommand{\theenumi}{\arabic{section}.\arabic{lemma}\alph{enumi}}
}{\end{enumerate}\renewcommand{\labelenumi}{(\alph{enumi})}}{}
\newcounter{Lcounter}
\renewcommand{\theLcounter}{\theequation\alph{Lcounter}}

\newenvironment{labellist}{
    \refstepcounter{equation}
    \begin{list}{{\rm(\theLcounter)}}{\usecounter{Lcounter}\leftmargin=5pt}
    }{\end{list}
} %
\newcounter{condcounter}
\newenvironment{cond}[1]{
    \renewcommand{\thecondcounter}{#1}
    \begin{list}{#1}{\usecounter{condcounter}\leftmargin=\labelsep\setlength{\rightmargin}{\leftmargin}}
     \item
}
{
    \end{list}
}
\newcounter{NumberEquation}

\newenvironment{labeleqnarray}{%
\setcounter{NumberEquation}{\value{equation}}\addtocounter{NumberEquation}{1}
\setcounter{equation}{0}
\renewcommand{\theequation}{\theNumberEquation\alph{equation}}
\begin{eqnarray}%
}%
{\end{eqnarray}\setcounter{equation}{\value{NumberEquation}}
\renewcommand{\theequation}{\arabic{equation}}}

\newcommand{\I}{{\rm I}}
\let\origwr=\wre
\def\wre{\mathop{\rm wr}\nolimits}
\renewcommand{\wr}{\wre}
\renewcommand{\phi}{\varphi}
\newcommand{\lsdp}{\ltimes}
\newcommand{\isom}{\cong}
\newcommand{\invlim}{{\displaystyle \lim_{\longleftarrow}}}
\newcommand{\<}{\left<}
\renewcommand{\>}{\right>}
\newcommand{\Ind}{{\rm Ind}}
\newcommand{\To}{\longrightarrow}
\newcommand{\freeprod}{\mathop{\displaystyle\prod\hskip-10.5pt*}}

\def\st{\mathop{\mid\;}}
\newcommand{\Tr}{{\rm Tr}}
\newcommand{\Ram}{{\rm Ram}}
\newcommand{\kssigma}{K_s(\mbox{\boldmath $\sigma$})}
\newcommand{\ksrho}{K_s(\mbox{\boldmath $\rho$})}
\newcommand{\gal}{\textnormal{Gal}}
\newcommand{\Gal}{\textnormal{Gal}}
\newcommand{\bfsigma}{\boldsymbol{\sigma}}
\newcommand{\bfrho}{\mbox{\boldmath$\rho$}}
\newcommand{\const}{{\rm const}}
\newcommand{\Sym}{{\rm Sym}}
\newcommand{\Alt}{{\rm Alt}}
\newcommand{\AwrG}{A\textnormal{wr}G}
\newcommand{\wrG}[1]{#1\textnormal{wr}G}
\newcommand{\awrg}[2]{#1\textnormal{wr}#2}
\newcommand{\nek}{,\ldots,}
\newcommand{\semidirect}{\ltimes}
\newcommand{\eps}{\varepsilon}
\newcommand{\vphi}{\varphi}
\newcommand{\measure}[2]{\mu\left(\left\{#1\mid #2\right\}\right)}
\newcommand{\cont}{\subseteq}
\newcommand{\disc}{{\rm disc}}
\newcommand{\hefresh}{\smallsetminus}
\newcommand{\tribra}[1]{\left< #1\right>}
\newcommand{\norbra}[1]{\left( #1\right)}
\newcommand{\limitto}{\mathop{\to}}
\newcommand{\fEP}[5]{\xymatrix{& #1\ar[d]^{#2}\\ #3\ar[r]^{#4} & #5}}
\def\dotunion{
\def\dotunionD{\bigcup\kern-9pt\cdot\kern5pt}
\def\dotunionT{\bigcup\kern-7.5pt\cdot\kern3.5pt}
\mathop{\mathchoice{\dotunionD}{\dotunionT}{}{}}}

\def\iff{\leftrightarrow}
\def\Iff{\Longleftrightarrow}
\def\inter{\bigcap}
\def\nonempty{\ne\emptyset}
\def\normal{\triangleleft}
\def\notdivide{\nmid}
\def\tensor{\otimes}
\def\union{\bigcup}
\def\ab{{\rm ab}}
\def\sol{{\rm sol}}
\def\and{{\rm and}}
\def\cd{{\rm cd}}
\def\alg{{\rm alg}}
\def\Aut{{\rm Aut}}
\def\gcd{{\rm gcd}}
\def\GL{{\rm GL}}
\def\Hom{{\rm Hom}}
\def\Homeo{{\rm Homeo}}
\def\id{{\rm id}}
\def\Fr{{\rm Fr}}
\def\Im{{\rm Im}}
\def\Inn{{\rm Inn}}
\def\irr{{\rm irr}}
\def\Ker{{\rm Ker}}
\def\mod{\;\mathop{\rm mod}\hskip0.2em\relax}
\def\ord{{\rm ord}}
\def\Out{{\rm Out}}
\def\PGL{{\rm PGL}}
\def\PSL{{\rm PSL}}
\def\Br{{\rm Br}}
\def\pr{{\rm pr}}
\def\rank{\mathop{\rm rank}\nolimits}
\def\ring{{\rm ring}}
\def\res{{\rm res}}
\def\Res{{\rm Res}}
\def\SL{{\rm SL}}
\def\Spec{{\rm Spec}}
\def\Subg{{\rm Subg}}
\newcommand{\calA}{{\mathcal A}}
\newcommand{\calB}{{\mathcal B}}
\newcommand{\calC}{{\mathcal C}}
\newcommand{\calD}{{\mathcal D}}
\newcommand{\calE}{{\mathcal E}}
\newcommand{\calF}{{\mathcal F}}
\newcommand{\calG}{{\mathcal G}}
\newcommand{\calH}{{\mathcal H}}
\newcommand{\calI}{{\mathcal I}}
\newcommand{\calJ}{{\mathcal J}}
\newcommand{\calK}{{\mathcal K}}
\newcommand{\calL}{{\mathcal L}}
\newcommand{\calM}{{\mathcal M}}
\newcommand{\calN}{{\mathcal N}}
\newcommand{\calO}{{\mathcal O}}
\newcommand{\calP}{{\mathcal P}}
\newcommand{\calQ}{{\mathcal Q}}
\newcommand{\calR}{{\mathcal R}}
\newcommand{\calS}{{\mathcal S}}
\newcommand{\calT}{{\mathcal T}}
\newcommand{\calU}{{\mathcal U}}
\newcommand{\calV}{{\mathcal V}}
\newcommand{\calW}{{\mathcal W}}
\newcommand{\calX}{{\mathcal X}}
\newcommand{\calY}{{\mathcal Y}}
\newcommand{\calZ}{{\mathcal Z}}
\newcommand{\agal}{{\tilde a}}       \newcommand{\Agal}{{\tilde A}}
\newcommand{\bgal}{{\tilde b}}       \newcommand{\Bgal}{{\tilde B}}
\newcommand{\cgal}{{\tilde c}}       \newcommand{\Cgal}{{\tilde C}}
\newcommand{\dgal}{{\tilde d}}       \newcommand{\Dgal}{{\tilde D}}
\newcommand{\egal}{{\tilde e}}       \newcommand{\Egal}{{\tilde E}}
\newcommand{\fgal}{{\tilde f}}       \newcommand{\Fgal}{{\tilde F}}
\newcommand{\ggal}{{\tilde g}}       \newcommand{\Ggal}{{\tilde G}}
\newcommand{\hgal}{{\tilde h}}       \newcommand{\Hgal}{{\tilde H}}
\newcommand{\igal}{{\tilde i}}       \newcommand{\Igal}{{\tilde I}}
\newcommand{\jgal}{{\tilde j}}       \newcommand{\Jgal}{{\tilde J}}
\newcommand{\kgal}{{\tilde k}}       \newcommand{\Kgal}{{\tilde K}}
\newcommand{\lgal}{{\tilde l}}       \newcommand{\Lgal}{{\tilde L}}
\newcommand{\mgal}{{\tilde m}}       \newcommand{\Mgal}{{\tilde M}}
\newcommand{\ngal}{{\tilde n}}       \newcommand{\Ngal}{{\tilde N}}
\newcommand{\ogal}{{\tilde o}}       \newcommand{\Ogal}{{\tilde O}}
\newcommand{\pgal}{{\tilde p}}       \newcommand{\Pgal}{{\tilde P}}
\newcommand{\qgal}{{\tilde q}}       \newcommand{\Qgal}{{\tilde Q}}
                                     \newcommand{\bbQgal}{{\tilde \bbQ}}
\newcommand{\rgal}{{\tilde r}}       \newcommand{\Rgal}{{\tilde R}}
\newcommand{\sgal}{{\tilde s}}       \newcommand{\Sgal}{{\tilde S}}
\newcommand{\tgal}{{\tilde t}}       \newcommand{\Tgal}{{\tilde T}}
\newcommand{\ugal}{{\tilde u}}       \newcommand{\Ugal}{{\tilde U}}
\newcommand{\vgal}{{\tilde v}}       \newcommand{\Vgal}{{\tilde V}}
\newcommand{\wgal}{{\tilde w}}       \newcommand{\Wgal}{{\tilde W}}
\newcommand{\xgal}{{\tilde x}}       \newcommand{\Xgal}{{\tilde X}}
\newcommand{\ygal}{{\tilde y}}       \newcommand{\Ygal}{{\tilde Y}}
\newcommand{\zgal}{{\tilde z}}       \newcommand{\Zgal}{{\tilde Z}}
                                     \newcommand{\bbZgal}{{\tilde \bbZ}}

\newcommand{\alphagal}{\tilde\alpha}
\newcommand{\betagal}{\tilde\beta}
\newcommand{\Gammagal}{\tilde\Gamma}
\newcommand{\gag}[1]{\bar{#1}}
\def\agag{{\bar a}}     \def\Agag{{\gag{A}}}
\def\bgag{{\bar b}}     \def\Bgag{{\gag{B}}}
\def\cgag{{\bar c}}     \def\Cgag{{\gag{C}}}
\def\dgag{{\bar d}}     \def\Dgag{{\gag{D}}}
\def\egag{{\bar e}}     \def\Egag{{\gag{E}}}
\def\fgag{{\bar f}}     \def\Fgag{{\gag{F}}}
\def\ggag{{\bar g}}     \def\Ggag{{\gag{G}}}
\def\hgag{{\bar h}}     \def\Hgag{{\gag{H}}}
\def\igag{{\bar i}}     \def\Igag{{\gag{I}}}
\def\jgag{{\bar \jmath}}    \def\Jgag{{\bar{J}}}
\def\kgag{{\bar k}}     \def\Kgag{{\gag{K}}}
\def\lgag{{\bar l}}     \def\Lgag{{\gag{L}}}
\def\mgag{{\bar m}}     \def\Mgag{{\gag{M}}}
\def\ngag{{\bar n}}     \def\Ngag{{\gag{N}}}
\def\ogag{{\bar o}}     \def\Ogag{{\gag{O}}}
\def\pgag{{\bar p}}     \def\Pgag{{\gag{P}}}
\def\qgag{{\bar q}}     \def\Qgag{{\gag{Q}}}
\def\rgag{{\bar r}}     \def\Rgag{{\gag{R}}}
\def\sgag{{\bar s}}     \def\Sgag{{\gag{S}}}
\def\tgag{{\bar t}}     \def\Tgag{{\gag{T}}}
\def\ugag{{\bar u}}     \def\Ugag{{\gag{U}}}
\def\vgag{{\bar v}}     \def\Vgag{{\gag{V}}}
\def\wgag{{\bar w}}     \def\Wgag{{\gag{W}}}
\def\xgag{{\bar x}}     \def\Xgag{{\gag{X}}}
\def\ygag{{\bar y}}     \def\Ygag{{\gag{Y}}}
\def\zgag{{\bar z}}     \def\Zgag{{\gag{Z}}}

\def\alphagag{{\bar \alpha}}
\def\betagag{{\bar \beta}}
\def\gammagag{{\bar \gamma}}
\def\phigag{{\bar \varphi}}
\def\psigag{{\bar \psi}}
\def\rhogag{{\bar \rho}}
\def\pigag{{\bar \pi}}
\def\nugag{{\bar \nu}}
\def\thetagag{{\bar \theta}}
\def\etagag{{\bar \eta}}
\def\mugag{{\bar \mu}}
\def\sigmagag{{\bar \sigma}}
\def\calEgag{{\bar \calE}}
\def\calXgag{{\bar \calX}}

\def\ahat{{\hat a}}     \def\Ahat{{\hat A}}
\def\bhat{{\hat b}}     \def\Bhat{{\hat B}}
\def\chat{{\hat c}}     \def\Chat{{\hat C}}
\def\dhat{{\hat d}}     \def\Dhat{{\hat D}}
\def\ehat{{\hat e}}     \def\Ehat{{\hat E}}
\def\fhat{{\hat f}}     \def\Fhat{{\hat F}}
\def\ghat{{\hat g}}     \def\Ghat{{\hat G}}
\def\hhat{{\hat h}}     \def\Hhat{{\hat H}}
\def\ihat{{\hat i}}     \def\Ihat{{\hat I}}
\def\jhat{{\hat j}}     \def\Jhat{{\hat J}}
\def\khat{{\hat k}}     \def\Khat{{\hat K}}
\def\lhat{{\hat l}}     \def\Lhat{{\hat L}}
\def\mhat{{\hat m}}     \def\Mhat{{\hat M}}
\def\nhat{{\hat n}}     \def\Nhat{{\hat N}}
\def\ohat{{\hat o}}     \def\Ohat{{\hat O}}
\def\phat{{\hat p}}     \def\Phat{{\hat P}}
\def\qhat{{\hat q}}     \def\Qhat{{\hat Q}}
\def\rhat{{\hat r}}     \def\Rhat{{\hat R}}
\def\shat{{\hat s}}     \def\Shat{{\hat S}}
\def\that{{\hat t}}     \def\That{{\hat T}}
\def\uhat{{\hat u}}     \def\Uhat{{\hat U}}
\def\vhat{{\hat v}}     \def\Vhat{{\hat V}}
\def\what{{\hat w}}     \def\What{{\hat W}}
\def\xhat{{\hat x}}     \def\Xhat{{\hat X}}
\def\yhat{{\hat y}}     \def\Yhat{{\hat Y}}
\def\zhat{{\hat z}}     \def\Zhat{{\hat Z}} \def\bbZhat{{\hat \bbZ}}

\def\calEhat{{\hat \calE}}
\def\alphahat{{\hat \alpha}}
\def\betahat{{\hat \beta}}
\def\gammahat{{\hat \gamma}}
\def\phihat{{\hat \varphi}}
\def\Phihat{{\hat \Phi}}
\def\Psihat{{\hat \Psi}}
\def\psihat{{\hat \psi}}
\def\nuhat{{\hat \nu}}
\def\muhat{{\hat \mu}}
\def\thetahat{{\hat \theta}}
\def\etahat{{\hat \eta}}
\newcommand{\bbA}{\mathbb{A}}
\newcommand{\bbB}{\mathbb{B}}
\newcommand{\bbC}{\mathbb{C}}
\newcommand{\bbD}{\mathbb{D}}
\newcommand{\bbE}{\mathbb{E}}
\newcommand{\bbF}{\mathbb{F}}
\newcommand{\bbG}{\mathbb{G}}
\newcommand{\bbH}{\mathbb{H}}
\newcommand{\bbI}{\mathbb{I}}
\newcommand{\bbJ}{\mathbb{J}}
\newcommand{\bbK}{\mathbb{K}}
\newcommand{\bbL}{\mathbb{L}}
\newcommand{\bbM}{\mathbb{M}}
\newcommand{\bbN}{\mathbb{N}}
\newcommand{\bbO}{\mathbb{O}}
\newcommand{\bbP}{\mathbb{P}}
\newcommand{\bbQ}{\mathbb{Q}}
\newcommand{\bbR}{\mathbb{R}}
\newcommand{\bbS}{\mathbb{S}}
\newcommand{\bbT}{\mathbb{T}}
\newcommand{\bbU}{\mathbb{U}}
\newcommand{\bbV}{\mathbb{V}}
\newcommand{\bbW}{\mathbb{W}}
\newcommand{\bbX}{\mathbb{X}}
\newcommand{\bbY}{\mathbb{Y}}
\newcommand{\bbZ}{\mathbb{Z}}

\newcommand{\bfa}{{\mathbf a}}
\newcommand{\bfb}{\mathbf b}
\newcommand{\bfc}{\mathbf c}
\newcommand{\bfd}{\mathbf d}
\newcommand{\bfe}{\mathbf e}
\newcommand{\bff}{\mathbf f}
\newcommand{\bfg}{\mathbf g}
\newcommand{\bfh}{\mathbf h}
\newcommand{\bfi}{\mathbf i}
\newcommand{\bfj}{\mathbf j}
\newcommand{\bfk}{\mathbf k}
\newcommand{\bfl}{\mathbf l}
\newcommand{\bfm}{\mathbf m}
\newcommand{\bfn}{\mathbf n}
\newcommand{\bfo}{\mathbf o}
\newcommand{\bfp}{\mathbf p}
\newcommand{\bfq}{\mathbf q}
\newcommand{\bfr}{\mathbf r}
\newcommand{\bfs}{\mathbf s}
\newcommand{\bft}{\mathbf t}
\newcommand{\bfu}{\mathbf u}
\newcommand{\bfv}{\mathbf v}
\newcommand{\bfw}{\mathbf w}
\newcommand{\bfx}{\mathbf x}
\newcommand{\bfy}{\mathbf y}
\newcommand{\bfz}{\mathbf z}

\newcommand{\bfA}{\mathbf A}
\newcommand{\bfB}{\mathbf B}
\newcommand{\bfC}{\mathbf C}
\newcommand{\bfD}{\mathbf D}
\newcommand{\bfE}{\mathbf E}
\newcommand{\bfF}{\mathbf F}
\newcommand{\bfG}{\mathbf G}
\newcommand{\bfH}{\mathbf H}
\newcommand{\bfI}{\mathbf I}
\newcommand{\bfJ}{\mathbf J}
\newcommand{\bfK}{\mathbf K}
\newcommand{\bfL}{\mathbf L}
\newcommand{\bfM}{\mathbf M}
\newcommand{\bfN}{\mathbf N}
\newcommand{\bfO}{\mathbf O}
\newcommand{\bfP}{\mathbf P}
\newcommand{\bfQ}{\mathbf Q}
\newcommand{\bfR}{\mathbf R}
\newcommand{\bfS}{\mathbf S}
\newcommand{\bfT}{\mathbf T}
\newcommand{\bfU}{\mathbf U}
\newcommand{\bfV}{\mathbf V}
\newcommand{\bfW}{\mathbf W}
\newcommand{\bfX}{\mathbf X}
\newcommand{\bfY}{\mathbf Y}
\newcommand{\bfZ}{\mathbf Z}

\newcommand{\bftheta}{\mbox{\boldmath$\theta$}}

\newcommand{\mfrak}{{\mathfrak m}} 
\newcommand{\pfrak}{{\mathfrak p}}       \newcommand{\Pfrak}{{\mathfrak P}}
\newcommand{\frp}{{\mathfrak p}}         \newcommand{\frP}{{\mathfrak P}}
\newcommand{\qfrak}{{\mathfrak q}}       \newcommand{\Qfrak}{{\mathfrak Q}}

\newcommand{\Quot}{{\rm Quot}}
\newcommand{\C}{\calC}

\CompileMatrices

\begin{titlepage}
\center{\fontsize{10pt}{20pt} \selectfont
\includegraphics[width=40pt]{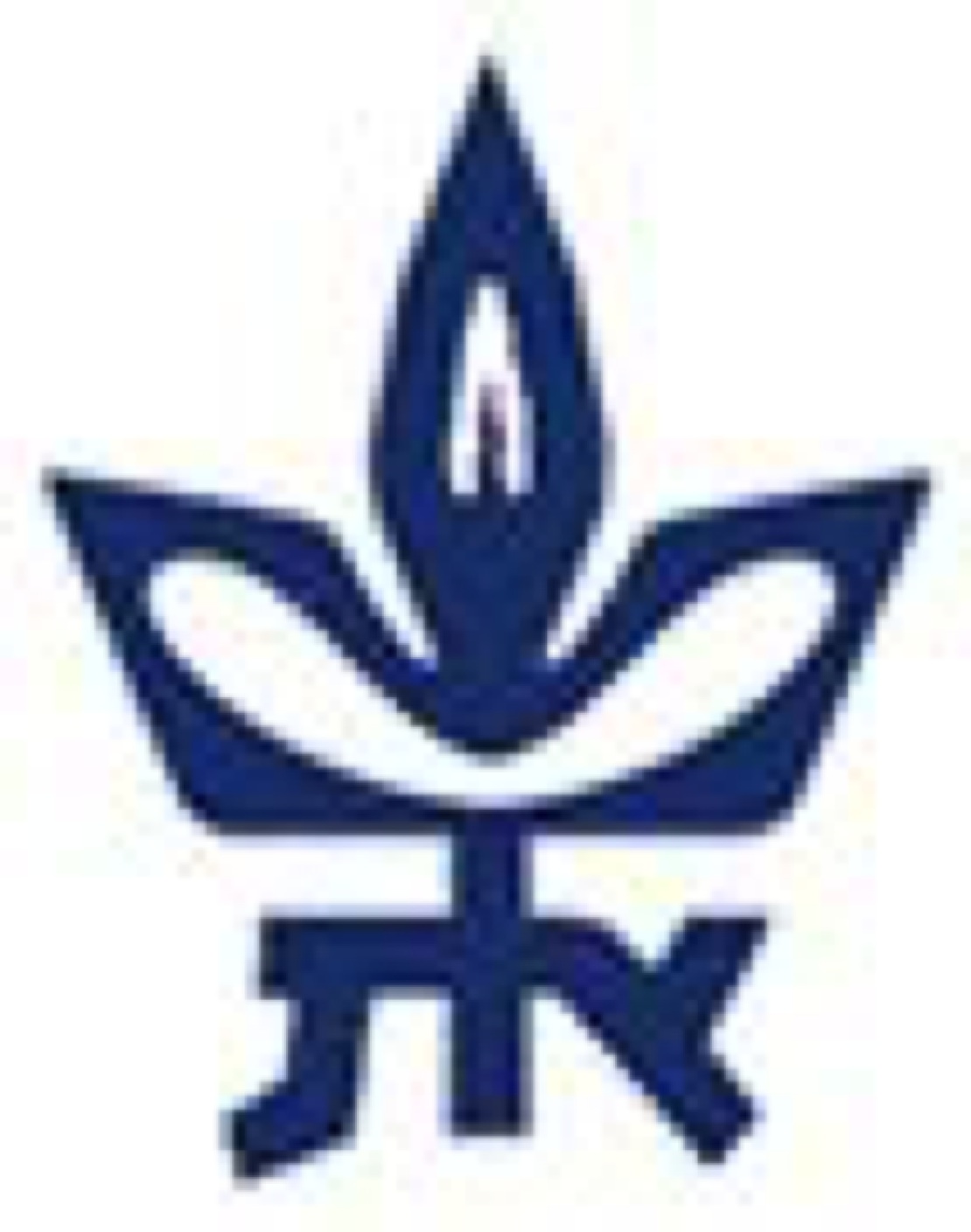}\\
Tel Aviv University\\
The Raymond and Beverly Sackler
Faculty of Exact Sciences\\
School of Mathematical Sciences } \vspace{\stretch{2}}
\center{\fontsize{14pt}{24pt} \selectfont \textsc{Pseudo
Algebraically Closed Extensions}} \vspace{\stretch{1}}
\center{A thesis submitted for the degree ``Doctor of Philosophy''}
\vspace{\stretch{1}} \center{by\\
Lior Bary-Soroker} \vspace{\stretch{3}}
\center{under the supervision of\\ Prof. Dan Haran}
\vspace{\stretch{1}}
\center{Submitted to the Senate of Tel Aviv University\\
September, 2008}
\end{titlepage}

\tableofcontents

\chapter{Introduction}
\section{Background and Motivation}
\subsection{Hilbert's Tenth Problem}
Hilbert's tenth problem asks for a general algorithm to decide
whether a given polynomial Diophantine equation with integer
coefficients has a solution in integers. In 1972 Matijasevich gave a
negative solution to that problem relying on the works of Davis,
Putnam, and J. Robinson since the 1930's.

These developments led Robinson to ask Hilbert's tenth problem
w.r.t.\ the ring of all algebraic integers. In 1987 Rumely indeed
found an algorithm for solving Diophantine problems over the ring of
all algebraic integers using the following local-global principle.
Let $\bbQgal$ be the field of all algebraic numbers and $\bbZgal$
the ring of all algebraic integers. Then
a variety $V$ over $\bbQgal$ has an integral point, i.e.\
$V(\bbZgal)\neq \emptyset$, if and only if $V$ has an integral point
in each completion of $\bbQgal$.

Jarden and Razon obtained Rumely's local-global principle for an
abundance of algebraic integer rings
\cite{JardenRazon1998,JardenRazon_ms} and then derived an
affirmative solution of Hilbert's tenth problem for those rings.
The main innovation made by Jarden-Razon is the following key
definition \cite{JardenRazon1994}.

\begin{definition}\label{def:PAC}
Let $M$ be a field and $K$ a subset. Then $M/K$ is called
\textbf{pseudo algebraically closed (PAC)} (or, alternatively,
we say that $M$ is a PAC extension of $K$) if for every
absolutely irreducible polynomial $f(T,X)\in M[T,X]$ which is
separable in $X$ there are infinitely many $(a,b)\in K\times M$
for which $f(a,b) = 0$.
\end{definition}

Note that this definition generalizes the classical notion of
PAC fields (taking $K=M$), which is central in the subject Field
Arithmetics (some books on this subject are
\cite{FriedJarden2005,MalleMatzat1999,Volklein1996}).

Let us describe Jarden-Razon result on Rumely's local-global
principle more precisely. Let $K$ be a field.
Consider an $e$-tuple of Galois automorphisms
$\bfsigma=(\sigma_1,\ldots,\sigma_e)\in \gal(K)^e$. Then
$K_s(\bfsigma) = \{ x\in K_s \mid \sigma_i(x) = x,\ \forall\ i\}$
denotes the fixed field of $\bfsigma$ in the separable closure $K_s$
and $\left< \bfsigma\right>$ denote the closed subgroup generated by
$\sigma_1,\ldots,\sigma_e$.

\begin{thmint}\label{thm:intAlmostallPAC}
Let $R$ be a countable Hilbertian domain, $K$ its quotient ring, and
$e\geq 1$ an integer. Then $K_s(\bfsigma)/R$ is PAC and
$\left<\bfsigma\right> $ is free of rank $e$ for almost all
$\bfsigma\in \gal(K)^e$ (with respect to the Haar measure).
\end{thmint}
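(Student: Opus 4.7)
The assertion that $\langle \bfsigma \rangle$ is free of rank $e$ for almost all $\bfsigma$ is a classical theorem of Jarden, valid for every countable Hilbertian field (see \emph{Field Arithmetic}). I therefore focus on the PAC-extension conclusion. Since $R$ is countable, so is $K_s$, so there are only countably many absolutely irreducible polynomials $f(T,X) \in K_s[T,X]$ that are separable in $X$. For each such $f$ let $B(f) \subseteq \gal(K)^e$ consist of those $\bfsigma$ whose fixed field $K_s(\bfsigma)$ contains the coefficients of $f$ but for which the solution set $\{(a,b) \in R \times K_s(\bfsigma) : f(a,b)=0\}$ is finite. Since countable unions of null sets are null, it suffices to prove $\mu(B(f)) = 0$ for each such $f$.

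Fix $f$ and let $L \subseteq K_s$ be the finite Galois extension of $K$ generated by the coefficients of $f$; the coefficients of $f$ lie in $K_s(\bfsigma)$ precisely when $\bfsigma \in \gal(K_s/L)^e$. Inside this subgroup of measure $[L:K]^{-e}$ I plan to construct inductively a sequence of pairs $(a_n, b_n)$, where $a_n \in R$ are pairwise distinct and $b_n \in K_s$ is a root of $f(a_n, X)$, such that, writing $\hat L_n$ for the splitting field of $f(a_n, X)$ over $L$ and $L_n = L(b_n)$, the fields $\hat L_n$ are linearly disjoint over $L$; in particular $[L_n : L] = m := \deg_X f$. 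The inductive step is an application of Hilbert's irreducibility theorem in $R$: after $a_1,\ldots,a_{n-1}$ have been chosen, a primitive-element reformulation turns the requirement that $f(a_n, X)$ realise its generic Galois group over $\hat L_1\cdots \hat L_{n-1}$ into irreducibility of a single auxiliary polynomial over $K(T)$, and a Hilbert subset of $R$ supplies infinitely many admissible $a_n$.

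Once the sequence is built, the events
\[
E_n := \{\bfsigma \in \gal(K_s/L)^e : \sigma_i b_n = b_n \text{ for all } i\} = \gal(K_s/L_n)^e
\]
each have normalised Haar measure $m^{-e}$ in $\gal(K_s/L)^e$, and the linear disjointness of the $\hat L_n$ over $L$ makes $E_1, E_2, \ldots$ mutually independent. Since $\sum_n m^{-e} = \infty$, the second Borel--Cantelli lemma gives $\mu(\limsup E_n) = 1$ in $\gal(K_s/L)^e$. Hence for almost every such $\bfsigma$ we have $b_n \in K_s(\bfsigma)$ for infinitely many $n$, yielding infinitely many distinct solutions $(a_n, b_n) \in R \times K_s(\bfsigma)$ of $f(a,b) = 0$, so $\mu(B(f)) = 0$.

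The principal obstacle is the inductive Hilbertian step: one must specialise $T$ to an element of $R$ itself, not of an integer ring inside $\hat L_1 \cdots \hat L_{n-1}$, while forcing the generic Galois group of the specialisation to survive over the growing extension. This rests on the persistence of the Hilbertian property under finite extensions of $K$ and on a Galois-theoretic reformulation that packages the simultaneous constraints into the irreducibility of a single polynomial over $K(T)$ to which Hilbertianness of $R$ can be directly applied.
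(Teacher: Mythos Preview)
The paper does not supply its own proof of this theorem: it is quoted as a result of Jarden--Razon (with the freeness of $\langle\bfsigma\rangle$ attributed to Jarden), and the proof is referenced to \cite{JardenRazon1994} and \cite[Theorems~18.5.6, 18.6.1]{FriedJarden2005}. Your sketch is precisely the Jarden--Razon argument: reduce by countability to a single absolutely irreducible $f$, build via Hilbertianity an infinite sequence of specialisations $a_n\in R$ with controlled splitting behaviour, and apply Borel--Cantelli to the events ``$b_n\in K_s(\bfsigma)$''. So on the level of strategy there is nothing to compare.

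Two small technical points deserve tightening. First, if $L$ denotes the \emph{Galois closure} of the field $L_0$ generated by the coefficients of $f$, then the equivalence ``coefficients of $f$ lie in $K_s(\bfsigma)$ iff $\bfsigma\in\gal(K_s/L)^e$'' is false in general; the correct condition is $\bfsigma\in\gal(K_s/L_0)^e$. This is harmless---either work over $L_0$ throughout, or cover $\gal(K_s/L_0)^e$ by finitely many translates of $\gal(K_s/L)^e$---but the sentence as written is inaccurate. Second, the splitting fields $\hat L_n$ need \emph{not} be linearly disjoint over $L$: they all contain the field of constants $L'=\widehat F\cap K_s$ of the generic splitting field $\widehat F$ of $f(T,X)$ over $L(T)$, and $L'$ may properly contain $L$. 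What Borel--Cantelli actually requires is only that the fields $L_n=L(b_n)$ be linearly disjoint over $L$, equivalently that $f(a_n,X)$ remain irreducible over $L(b_1,\dots,b_{n-1})$; this is exactly what absolute irreducibility plus the Hilbertian descent you describe delivers, so the argument goes through once you weaken the claim from $\hat L_n$ to $L_n$ (or, alternatively, enlarge $L$ to contain $L'$ at the outset and adjust the first point accordingly).
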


Recall that a Hilbertian domain is an integral domain $R$ with the
following feature. For every irreducible $f\in K[X,Y]$ which is
separable in $Y$ there exists $a\in R$ s.t.\ $f(a,Y)$ is irreducible
over $K$. For example, the ring of integers of an arbitrary global
field is a Hilbertian domain.

A classic result of Jarden \cite[Theorems 18.5.6 and
18.6.1]{FriedJarden2005} asserts that $K_s(\bfsigma)$ is PAC and
$\left<\bfsigma\right> $ is free of rank $e$. The rest of the
theorem, i.e.\ the PACness of $K_s(\bfsigma)/R$, is the main result
in \cite{JardenRazon1994}.

Now we are ready to present the extension of Rumely's local-global
principle.

\begin{thmint}\label{thm:intLGprinciple}
Let $K$ be a global field. For almost all $\bfsigma\in \gal(K)^e$
the following holds: Let $V$ be a variety defined over $M =
K_s(\bfsigma)$. Then $V$ has an integral point in $M$ if and only if
$V$ has an integral point in each completion (w.r.t.\ primes of the
ring of integers).
\end{thmint}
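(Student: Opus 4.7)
The plan is to obtain Theorem~\ref{thm:intLGprinciple} as a formal consequence of Theorem~\ref{thm:intAlmostallPAC} combined with the Jarden--Razon extension of Rumely's local-global principle to PAC extensions of rings of integers of global fields.

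The ``only if'' direction is immediate: any integral point of $V$ in $M$ gives, under the canonical embedding $M \hookrightarrow M_v$, an integral point in each completion. The content lies in the converse.

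For the converse, I would let $R$ be the ring of integers of the global field $K$, so that $R$ is a countable Hilbertian domain with field of fractions $K$. By Theorem~\ref{thm:intAlmostallPAC} applied to $R$ and the given integer $e \ge 1$, there is a set $\Sigma \subseteq \gal(K)^e$ of full Haar measure such that for every $\bfsigma \in \Sigma$, the field $M = K_s(\bfsigma)$ is a PAC extension of $R$. I would then invoke the Jarden--Razon local-global principle \cite{JardenRazon1998,JardenRazon_ms}, which asserts precisely that for any PAC extension $M/R$ with $R$ the ring of integers of a global field, a variety over $M$ has an integral point in $M$ if and only if it has an integral point in each completion of $M$. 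Applying this to each $\bfsigma \in \Sigma$ concludes the argument.

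The main obstacle in this chain of reasoning is not the combinatorial step but the Jarden--Razon local-global theorem itself, whose proof requires the PAC property of $M/R$ together with a careful approximation argument adapting Rumely's strategy to the setting of infinite algebraic extensions. Within the context of this thesis, however, that result is cited as prior work, so Theorem~\ref{thm:intLGprinciple} emerges as a direct application of Theorem~\ref{thm:intAlmostallPAC}; the novelty lies in the structural study of PAC extensions and the measure-theoretic input encoded in Theorem~\ref{thm:intAlmostallPAC}.
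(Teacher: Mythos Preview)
Your proposal is correct and matches the paper's treatment: the paper does not prove this theorem but simply states that it is a special case of \cite[Theorem~2.5]{JardenRazon_ms}, and your outline---combining Theorem~\ref{thm:intAlmostallPAC} with the Jarden--Razon local-global principle for PAC extensions---is exactly the intended deduction. Your observation that the novelty of the thesis lies elsewhere, in the structural study of PAC extensions, is also accurate.
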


This theorem is a special case of \cite[Theorem
2.5]{JardenRazon_ms}.

\subsection{Connections to Other Areas}
Above we described the original motivation for the definition of PAC
extensions. Later on Jarden and Razon sharpened
Theorem~\ref{thm:intLGprinciple} in a series of papers (see
\cite{JardenRazon_ms} for the latest version). However some new
applications to PAC extensions having no relation to Hilbert's tenth
problem started to come out.

\subsubsection{Abundance of Hilbertian Fields}
A Galois extension $N/K$ is said to be unbounded if the set
$\{\ord(\sigma) \mid \sigma\in \gal(N/K)\}$ is unbounded. In
\cite{Razon1997} Razon applies Theorem~\ref{thm:intAlmostallPAC} to
find an abundance of Hilbertian fields:

\begin{thmint}
Let $K$ be a countable Hilbertian field and $N/K$ an unbounded
abelian extension. Then for almost all $\bfsigma\in \gal(K)^e$ every
subextension $M$ of $K_s(\bfsigma)N/N$ is Hilbertian.
\end{thmint}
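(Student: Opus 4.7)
The proof combines Theorem~\ref{thm:intAlmostallPAC} with a transfer of Hilbertianity through PAC extensions. Taking $R=K$ in Theorem~\ref{thm:intAlmostallPAC} (legitimate, since a Hilbertian field is a Hilbertian domain with itself as quotient field) yields a full-measure set $\Omega\subseteq\gal(K)^e$ such that for every $\bfsigma\in\Omega$ the extension $K_s(\bfsigma)/K$ is PAC and $\langle\bfsigma\rangle$ is free of rank $e$. I will show that every such $\bfsigma$ satisfies the conclusion, by fixing $\bfsigma\in\Omega$ and a subextension $M$ of $K_s(\bfsigma)N/N$, and proving $M$ is Hilbertian.

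Because $K_s(\bfsigma)N/N$ is Galois with group $\gal(K_s(\bfsigma)N/N)\isom\gal(K_s(\bfsigma)/K_s(\bfsigma)\cap N)$ (via restriction), the Galois correspondence gives $M=L\cdot N$ with $L=M\cap K_s(\bfsigma)$ lying between $K_s(\bfsigma)\cap N$ and $K_s(\bfsigma)$. The extreme case $M=N$ is handled directly by Kuyk's theorem on abelian extensions of Hilbertian fields; the remaining task is Hilbertianity of $LN$ for an arbitrary such $L$.

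The main obstacle is to verify Hilbert's irreducibility criterion on $M=LN$. Given an absolutely irreducible $f(T,X)\in M[T,X]$ separable in $X$, the strategy is to descend via a finitely generated subring to an auxiliary polynomial $F(T,X,Y)\in K[T,X,Y]$ whose $Y$-variables encode the finitely many $M$-coefficients of $f$; apply the Hilbertianity of $K$ to specialize $T\mapsto a\in K$ so that $F(a,X,Y)$ remains absolutely irreducible; and then invoke the PAC extension property of $K_s(\bfsigma)/K$ to realize this specialization by a compatible $K_s(\bfsigma)$-rational point that witnesses the irreducibility of $f(a,X)$ over $L$. The delicate final step --- the crux of the argument --- is to ensure that irreducibility over $L$ persists after compositing with $N$. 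This is precisely where the unboundedness of $N/K$ enters: for $\bfsigma\in\Omega$, the abelian group $\gal(N/K_s(\bfsigma)\cap N)$ admits cyclic quotients of arbitrarily large order, so, for a generic choice of specialization, the abelian extension $LN/L$ is linearly disjoint from the splitting field of $f(a,X)$ over $L$, preserving irreducibility upon base change to $M$. Producing infinitely many such $a$ yields the Hilbertianity of $M$.
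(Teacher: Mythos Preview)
Your proposal has a genuine gap at the step you yourself flag as ``the crux.'' You assert that, because $N/K$ is unbounded abelian, ``for a generic choice of specialization, the abelian extension $LN/L$ is linearly disjoint from the splitting field of $f(a,X)$ over $L$.'' But $LN/L$ is a \emph{fixed} (possibly infinite) abelian extension, and the splitting field of $f(a,X)$ over $L$ is determined by $f$ and $a$; there is no mechanism by which varying $a$ forces this splitting field to avoid $LN$. Concretely, take $f(T,X)=X^2-g(T)$ with $g\in M[T]$: for every $a$ the splitting field is $L(\sqrt{g(a)})$, a quadratic extension that may well lie inside $LN$ regardless of how $a$ is chosen. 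Unboundedness of $\gal(N/K)$ says nothing about which \emph{particular} cyclic quotients occur, so it cannot rule this out. A secondary issue is that you restrict to absolutely irreducible $f$, whereas Hilbertianity requires irreducible specializations for all irreducible $f(T,X)\in M(T)[X]$ separable in $X$; the reduction to the absolutely irreducible case is not automatic.

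The argument in Razon's paper (which the present paper cites for this theorem and whose key lemma it reproves) is quite different and explains the true role of unboundedness. One shows that $K_s(\bfsigma)N$ is a PAC extension of every intermediate $M$ (immediate from $K_s(\bfsigma)/K$ being PAC and base change), and that $\gal(K_s(\bfsigma)N)$, being a closed normal subgroup of $\langle\bfsigma\rangle\cong\Fhat_e$ with infinite abelian quotient (this is where unboundedness enters), is isomorphic to $\Fhat_\omega$. One then applies the result---reproved in this paper as a corollary of the weak Hilbert irreducibility theorem---that if $P/M$ is a PAC extension with $\gal(P)$ $\omega$-free, then $M$ is Hilbertian. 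So unboundedness is used to force $\omega$-freeness of $\gal(K_s(\bfsigma)N)$, not to arrange linear disjointness of individual specializations.
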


The following two results show some nice properties of a field $K$
having a ``non-degenerate'' PAC extension. The main idea lying
behind these results is that such fields satisfy some weak Hilbert's
irreducibility theorem, as described further below in the
introduction. Let us briefly describe the results.

\subsubsection{Dirichlet's Theorem for Polynomial Rings}
The Artin-Kornblum analog of Dirichlet's theorem on primes in
arithmetical progressions asserts the following for a finite field
$\bbF$.

\begin{thmint}
For any coprime $a,b\in \bbF[X]$ and sufficiently large positive
integer $n$ there exists $c\in \bbF[X]$ for which $a+bc$ is
irreducible of degree $n$.
\end{thmint}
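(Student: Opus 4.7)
The plan is to adapt the classical analytic proof of Dirichlet's theorem to the function field $\bbF(X)$, obtaining not merely existence but an asymptotic count of monic irreducibles of degree $n$ in the residue class of $a$ modulo $b$. First I would reduce: replace $a$ by its remainder modulo $b$ so that $\deg a < \deg b =: m$, fix $n \geq m+1$, and note that if $c$ is a polynomial of degree $n-m$ with suitable leading coefficient, then $a+bc$ is a polynomial of degree exactly $n$; multiplying by a constant in $\bbF^\times$ (which only changes $c$) we may freely demand $a+bc$ be monic. Thus it suffices to find a monic irreducible $P \in \bbF[X]$ of degree $n$ with $P \equiv a \pmod{b}$.

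Next I would introduce the Dirichlet $L$-series attached to each character $\chi$ of $(\bbF[X]/b)^\times$:
\[
L(u,\chi) \;=\; \sum_{\substack{f\text{ monic}\\ \gcd(f,b)=1}} \chi(f)\, u^{\deg f} \;=\; \prod_{P} \bigl(1 - \chi(P)\, u^{\deg P}\bigr)^{-1},
\]
the product running over monic irreducibles coprime to $b$. A short formal computation shows that for the trivial character $\chi_0$, $L(u,\chi_0)$ is a rational function with a simple pole at $u = q^{-1}$, while for each non-trivial $\chi$, $L(u,\chi)$ is a polynomial in $u$ of degree at most $m-1$.

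Now I would invoke the Riemann hypothesis for curves over finite fields (Weil), which forces every zero of a non-trivial $L(u,\chi)$ to have absolute value $q^{-1/2}$. Using orthogonality of characters,
\[
\pi_n(a,b) \;=\; \frac{1}{\phi(b)}\sum_{\chi}\bar\chi(a)\, \sum_{\substack{P\text{ monic irred}\\ \deg P = n}} \chi(P),
\]
where $\pi_n(a,b)$ counts the desired irreducibles. Reading off the coefficient of $u^n$ in $-u\,L'(u,\chi)/L(u,\chi)$ and applying Möbius inversion on the degrees of prime powers, the contribution of $\chi_0$ yields the main term $q^n/(n\phi(b)) + O(q^{n/2}/n)$, while each non-trivial $\chi$ contributes $O(q^{n/2})$ by the Weil bound. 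Hence
\[
\pi_n(a,b) \;=\; \frac{q^n}{n\,\phi(b)} + O\!\left(\frac{q^{n/2}}{n}\right),
\]
which is strictly positive for all sufficiently large $n$, producing the required $c$.

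The principal obstacle is the Weil bound for non-trivial $L(u,\chi)$; this is a deep input, though entirely classical. Everything else is formal: the $L$-function identities, the Möbius/character inversion, and the asymptotic combination of main term and error. I would cite Rosen's \emph{Number Theory in Function Fields} for the explicit formalism rather than redo these manipulations, since the argument is standard once the Weil bound is in hand.
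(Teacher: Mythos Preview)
Your argument is correct and is precisely the classical analytic proof via $L$-functions that one finds in Rosen's \emph{Number Theory in Function Fields}; indeed, the paper itself does not prove this statement but cites it as the Artin--Kornblum theorem with a reference to \cite[Theorem~4.8]{Rosen2002}. So there is no ``paper's own proof'' to compare against here---this is background.

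That said, it is worth noting that the paper's broader machinery offers a genuinely different route to Dirichlet-type results, and the contrast is instructive. The paper's Theorem~\ref{thm A} proceeds by (i) explicitly constructing, via elementary manipulations with the Euclidean algorithm, a polynomial $c(X)$ such that $a(X)+b(X)c(X)Y$ has Galois group $S_n$ over $\bar F(Y)$ (Proposition~\ref{thmB}), and then (ii) using the weak Hilbert irreducibility theorem for PAC extensions to specialize $Y$ while keeping irreducibility. This avoids $L$-functions and the Weil bounds entirely, and is essentially constructive up to the PAC specialization step. However, this method requires the base field to be \emph{infinite} (PAC extensions force this), so it does not directly recover the finite-field case you have proved; the paper remarks that its approach handles infinite algebraic extensions of finite fields, where the Artin--Kornblum result also applies via a quantitative form of the analytic argument you sketched. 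Your proof buys a sharp asymptotic count $\pi_n(a,b)\sim q^n/(n\phi(b))$; the paper's method buys generality to non-finite, non-Hilbertian fields at the cost of losing any count and requiring infinitude of $F$.
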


A natural continuation is to generalize Dirichlet's theorem to
infinite fields. Note that, obviously, Dirichlet's theorem does not
hold for $\bbC$ or $\bbR$. A more subtle but still easy observation
which the author learnt from Jarden is that over a Hilbertian field
$K$ Dirichlet's theorem does hold (see
Chapter~\ref{chapter:Dirichlet} for details).
In this work we prove that Dirichlet's theorem holds for a field $K$
provided it has a PAC extension.

\begin{thmintmine}\label{thm:intDirichlet}
Let $K$ be a field, $a,b\in K[X]$ coprime, and $n$ sufficiently
large. Assume that there exists a PAC extension $M/K$ and a
separable extension $N/M$ of degree $n$. Then there exist infinitely
many $c\in K[X]$ such that $a+bc$ is irreducible of degree $n$.
\end{thmintmine}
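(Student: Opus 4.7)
The plan is to encode the Dirichlet question as a one-parameter specialization problem fitting the shape of the PAC definition, and then invoke a weak Hilbertian property for PAC extensions possessing a degree-$n$ extension, which I expect to be the key technical lemma developed elsewhere in the thesis.

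Set $d_a = \deg a$, $d_b = \deg b$; since $n$ is large, $d_a, d_b < n$. I would first pick auxiliary polynomials $c_0, c_1 \in K[X]$ with $\deg c_0 < n - d_b$, $\deg c_1 = n - d_b$, and $\gcd_{K[X]}(a + b c_0,\, c_1) = 1$ --- for example $c_0 = 0$ and $c_1(X) = (X - \lambda)^{n - d_b}$ for any $\lambda \in K$ that is not a root of $a$, with an obvious adjustment when $|K|$ is too small --- and set
\[
h(T, X) := a(X) + b(X)\bigl(c_0(X) + T\, c_1(X)\bigr) \in K[T, X].
\]
Writing $h = A(X) + T B(X)$ with $A = a + b c_0$, $B = b c_1$, the identities $\gcd(A, b) = \gcd(a, b) = 1$ and $\gcd(A, c_1) = 1$ jointly yield $\gcd_{K[X]}(A, B) = 1$, which is equivalent to absolute irreducibility of the degree-$1$-in-$T$ polynomial $h$ in $K[T, X]$. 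The $X$-degree is $n$ (leading term $T b c_1$), and separability in $X$ over $K(T)$ is automatic in characteristic zero and holds in positive characteristic for a generic choice of $c_0, c_1$.

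I would then apply the expected weak Hilbertian lemma --- \emph{if $M/K$ is PAC and $M$ admits a separable degree-$n$ extension, then every absolutely irreducible $f(T, X) \in K[T, X]$ of degree $n$ in $X$ and separable in $X$ admits infinitely many $t \in K$ for which $f(t, X)$ is irreducible over $K$} --- to $h(T, X)$. This yields infinitely many $t \in K$ for which $c(X) := c_0(X) + t\, c_1(X) \in K[X]$ makes $a + b c = h(t, X)$ irreducible of degree $n$ over $K$. Distinct $t$ give distinct $c$ (the leading coefficient of $c$ is $t$ times that of $c_1$), so the required infinitude of $c$'s follows.

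The real content is the weak Hilbertian lemma. I would attack it Galois-theoretically: let $\hat L / K(T)$ be the Galois closure of $K(T, \xi)/K(T)$ for $\xi$ a root of $h(T, X)$, and let $G = \gal(\hat L/K(T))$ act transitively on the $n$-element set $\Omega$ of roots. Irreducibility of $h(t, X)$ over $K$ is equivalent to transitivity of the decomposition group $G_t \subset G$ at a place above $t$ on $\Omega$. The PAC property on $M/K$ supplies abundantly many $t \in K$ for which $h(t, X)$ has a root in $M$, which pins one $G_t$-orbit to the $M$-side; the degree-$n$ extension $N/M$ is then used, via a Galois-theoretic twisting argument, to promote this partial orbit information to full transitivity on $\Omega$. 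Carrying out the twist rigorously is the main technical obstacle; modulo it, the two-step reduction above completes the proof.
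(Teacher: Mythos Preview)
Your reduction to a one-parameter family $h(T,X)=A(X)+T\,B(X)$ with $\gcd(A,B)=1$ is fine and is exactly how the paper begins. The gap is in the ``weak Hilbertian lemma'' you invoke: the statement \emph{every absolutely irreducible $f(T,X)\in K[T,X]$ of degree $n$ in $X$ has infinitely many irreducible specializations provided $M/K$ is PAC with a separable degree-$n$ extension $N/M$} is false as stated, and the paper does not prove it. What the paper proves (Corollary~\ref{cor:wHIT_stablesymmetric}) is this lemma only for \emph{stable symmetric} polynomials, i.e.\ those with $\gal(f,\Kgal(T))\cong S_n$. The reason is structural: the degree-$n$ extension $N/M$ yields a transitive subgroup of $S_n$ as an image of $\gal(M)$, and this embeds as a transitive solution of the induced embedding problem $\calE_f(M)$ \emph{only} because the target group is all of $S_n$. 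If $\gal(f,\Kgal(T))$ were, say, cyclic of order $n$, a transitive solution would require a $\bbZ/n\bbZ$-quotient of $\gal(M)$, which the mere existence of some degree-$n$ extension does not provide.

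Consequently the entire technical weight of the theorem lies in constructing $c(X)$ so that $a(X)+b(X)c(X)Y$ has Galois group $S_n$ over $\Kgal(Y)$, and this is what you have skipped. Your choice $c_0=0$, $c_1=(X-\lambda)^{n-d_b}$ gives no control over the Galois group. The paper's Proposition~\ref{thmB} does this construction by engineering specific ramification: one chooses $c$ so that at two finite places $Y=\alpha_1,\alpha_2$ the specialized polynomial factors as $(X-\gamma_1)^e h_1(X)$ and $(X-\gamma_2)^2 h_2(X)$ with $h_i$ separable and $e$ coprime to $n$ and to the characteristic; the inertia then contributes an $e$-cycle and a transposition, forcing $S_n$. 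Achieving these factorizations simultaneously (Lemma~\ref{lem_final_poly}) is the actual work, and it is what your argument is missing.
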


This result appears in \cite{Bary-SorokerDirichlet}.

\subsubsection{Scaled Trace Forms}
Let $L/K$ be a separable extension of degree $n$. Then $L$ is
equipped with the non-degenerate quadratic trace form. Namely,
$x\mapsto \Tr_{L/K}(x^2)$, for $x\in L$. A generalization of the
trace form is the following scaled trace form. Fix $\lambda \in
L^\times$. Then the scaled (by $\lambda$) trace from is the
non-degenerate quadratic form on $L$ defined by
\[
x\mapsto \Tr_{L/K}(\lambda x^2),
\]
for all $x\in L$.

Scharlau \cite{Scharlau1987} and Waterhouse
\cite{Waterhouse1986} (independently) prove that over a
Hilbertian field $K$ every non-degenerated quadratic form is
isomorphic to a scaled trace form. In a joint work with Kelmer
\cite{Bary-SorokerKelmer}, we extend Scharlau-Waterhouse proof
to fields having a PAC extension.

\begin{thmint}\label{thm:intTrace}
Let $K$ be a field. Assume that there exists a PAC extension $M/K$
and a separable extension $N/M$ of degree $n$. Then every
non-degenerate quadratic form of degree $n$ over $K$ is isomorphic
to a scaled trace form.
\end{thmint}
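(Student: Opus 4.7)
My approach is to follow the Scharlau--Waterhouse argument for Hilbertian fields and replace its single use of Hilbert's irreducibility theorem by the weak PAC-analogue of irreducibility established earlier in this thesis (the same tool that underlies Theorem~\ref{thm:intDirichlet}). Scharlau and Waterhouse produce a generic parametric extension of degree $n$ whose scaled trace form is tautologically the given $q$; specialization of the parameters then yields the required extension of $K$.

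First, I would diagonalize $q \cong \langle a_1, \ldots, a_n \rangle$ and recall the Scharlau--Waterhouse construction: a monic polynomial $f(\mathbf{T}, X) \in K[\mathbf{T}][X]$ of degree $n$ in $X$, with finitely many parameters $\mathbf{T} = (T_1, \ldots, T_r)$, together with an element $\lambda(\mathbf{T})$ in the generic algebra $A = K[\mathbf{T}][X]/(f(\mathbf{T},X))$, such that over $F = K(\mathbf{T})$ the polynomial $f(\mathbf{T},X)$ is irreducible and separable of degree $n$, and the scaled trace form $x \mapsto \Tr_{A_F/F}(\lambda(\mathbf{T}) x^2)$ equals $q \otimes_K F$. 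This step is purely algebraic and carries over verbatim from the Hilbertian setting; it imposes no hypothesis on $K$.

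Second, one must specialize $\mathbf{T}$ to some $\mathbf{t} \in K^r$ at which $f(\mathbf{t}, X)$ remains irreducible of degree $n$ over $K$, at which $\lambda(\mathbf{t})$ is well-defined and nonzero in $L_\mathbf{t} := K[X]/(f(\mathbf{t},X))$, and at which the discriminant of $f(\mathbf{t}, X)$ does not vanish. In the Hilbertian case this is standard HIT. In the present setting I would invoke the weak PAC irreducibility theorem: under the hypothesis that $M/K$ is a PAC extension and $M$ admits a separable extension of degree $n$, every irreducible $f(T,X) \in K[T,X]$ of degree $n$ in $X$ (satisfying the appropriate regularity condition) has infinitely many specializations $t \in K$ for which $f(t,X)$ is irreducible of degree $n$ over $K$. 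A routine induction on the number $r$ of parameters reduces the multivariable specialization to the one-variable case. For each such $\mathbf{t}$, the field $L_\mathbf{t}/K$ is separable of degree $n$ and carries a scaled trace form isomorphic to $q$, completing the proof.

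The main obstacle is verifying that the Scharlau--Waterhouse polynomial $f(\mathbf{T},X)$ satisfies the hypotheses that trigger the PAC irreducibility theorem, and in particular that the corresponding function-field extension $F(\alpha)/F$ is regular over $K$, so that its specializations at $K$-rational points can be realized inside an extension of the PAC field $M$. The existence of the degree $n$ separable extension $N/M$ is crucial here: it provides the ``upstairs'' receptacle into which $M$-specializations embed, and its degree matches the dimension of $q$. The remaining technicalities --- separability in positive characteristic, and the non-vanishing of $\lambda(\mathbf{t})$ and of the discriminant --- cut out proper Zariski-closed subsets and therefore do not deplete the infinite supply of good specializations produced by the weak HIT.
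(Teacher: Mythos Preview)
Your overall strategy --- run the Scharlau--Waterhouse construction, then replace the HIT step by the weak irreducibility theorem for PAC extensions --- is exactly the route taken in \cite{Bary-SorokerKelmer}, and the paper explicitly says that Corollary~\ref{cor:wHIT_stablesymmetric} is the tool feeding into Theorem~\ref{thm:intTrace}.

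There is, however, a genuine gap in the way you formulate the ``main obstacle.'' You single out regularity of $F(\alpha)/K$ as the hypothesis to be checked, but the relevant irreducibility result in this thesis is not triggered by mere absolute irreducibility. Look at Proposition~\ref{prop:PACextirr}(b): even when $f$ is absolutely irreducible (so $(G:G_0)=(H:H_0)=n$), one still needs a \emph{transitive} weak solution of the embedding problem $\calE_f(M)=(\gal(M)\to B,\;H\to B)$, where $H=\gal(FM/M(\bft))$. The degree-$n$ extension $N/M$ hands you an epimorphism from $\gal(M)$ onto some transitive subgroup $C\leq S_n$, but this is useless unless $C$ embeds into $H$ compatibly with the action on the roots. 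Nothing forces that: if $H$ happens to be, say, a proper primitive subgroup of $S_n$, and $C$ is a transitive subgroup not conjugate into $H$, you get no weak solution at all. Regularity over $K$ only buys you $B=1$; it does not buy you $H=S_n$.

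What the paper actually uses is Corollary~\ref{cor:wHIT_stablesymmetric}, whose hypothesis is that $f(\bfT,X)$ is \emph{stable symmetric}: $\gal(f(\bft,X),\Kgal(\bft))\cong S_n$. In that case $H=S_n$, any transitive $C$ sits inside $H$, and the existence of \emph{any} separable $N/M$ of degree $n$ suffices. So the real content of the specialization step --- and the point you should flag as the obstacle --- is arranging or verifying that the Scharlau--Waterhouse generic polynomial has full symmetric Galois group over $\Kgal(\bfT)$, not just that it is absolutely irreducible. (One also does not need the inductive reduction to a single parameter: Corollary~\ref{cor:wHIT_stablesymmetric} already handles $e$-tuples $\bfT$.)
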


\begin{remark}
The latter two theorems make the property that a field $K$ has a PAC
extension $M/K$ and a separable extension $N/M$ of degree $n$
interesting. Below we find some families of fields having this
property for many $n$'s, e.g., pro-solvable extensions of $\bbQ$.
\end{remark}

\section{The Galois Structure of PAC Extensions}
\label{sec:itroGalois} We develop an elementary machinery to study
field extensions. The essential ingredient is a generalization of
embedding problems to field extensions. Then we apply it to PAC
extensions. In particular we get a key property -- the \emph{lifting
property} (Proposition~\ref{prop:ExtensionSoltoGeoSol_DEP}). Let us
describe some of the consequences of this developments.

\subsection{Restrictions on PAC Extensions}
In \cite{JardenRazon1994} (where PAC extensions firstly appear)
Jarden and Razon find some Galois extensions $M$ of $\bbQ$ such
that $M$ is PAC as a field but $M$ is a PAC extension of no
number field. (For this a heavy tool is used, namely Faltings'
theorem.) This led them to ask whether this is a coincidence or
a general phenomenon.

In \cite{Jarden2006} Jarden answers this question by showing that
the only Galois PAC extension of an arbitrary number field is its
algebraic closure. Jarden does not use Faltings' theorem, but
different kind of results. Namely, Frobenius' density theorem,
Neukirch's characterization of $p$-adically closed fields among all
algebraic extensions of $\bbQ$, and the special property of $\bbQ$
that it has no proper subfields (!). For that reason Jarden's method
is restricted to number fields.

The next step is to generalize Jarden's results to a finitely
generated infinite field $K$. Elaborating Jarden-Razon original
method, i.e.\ using Faltings' theorem (and Grauert-Manin theorem in
positive characteristic), Jarden and the author
\cite{Bary-SorokerJarden} generalize Jarden's result to $K$.

\begin{thmint}\label{thm:intGaloisPACextFG}
Let $K$ be a finitely generated infinite field. The only Galois PAC
extension of $K$ is its separable closure.
\end{thmint}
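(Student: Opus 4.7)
The plan is to argue by contradiction: from a supposed proper Galois PAC extension $M/K$ I will construct an absolutely irreducible curve of geometric genus $\ge 2$ defined over a finitely generated field with infinitely many algebraic points of bounded degree, contradicting Faltings' theorem (or, in positive characteristic, its Grauert--Manin/Samuel analog for non-isotrivial curves). So suppose $M \subsetneq K_s$. First I would pick $\alpha \in K_s \setminus M$ and let $g(X) \in M[X]$ be its minimal polynomial over $M$, of degree $n \ge 2$. The coefficients of $g$ generate a finite subextension $M_0/K$ inside $M$; since $K$ is finitely generated over its prime field, so is $M_0$, and $g \in M_0[X]$. Using that $K$ is infinite, I would next choose $h(T) \in K[T]$ of sufficiently high degree with sufficiently generic coefficients so that
\[
f(T, X) := g(X) - h(T) \in M_0[T, X]
\]
is absolutely irreducible, separable in $X$, and the smooth projective model $C$ of $\{f = 0\}$ has geometric genus $\ge 2$; this is forced by Riemann--Hurwitz applied to the degree-$n$ cover $C \to \bbP^1_T$, whose ramification grows with $\deg h$. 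In positive characteristic I would also arrange that $C$ is non-isotrivial over $M_0$, which is possible precisely because the infinitude of $K$ permits transcendental freedom in the coefficients of $h$.

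By the PAC extension hypothesis applied to $f$, there must exist infinitely many zeros $(a, b) \in K \times M$ of $f$. For each such pair, $b$ is a root of $g(X) - h(a) \in M_0[X]$, so $b$ is algebraic over the finitely generated field $M_0$ of degree at most $n$. Thus $C$ carries infinitely many closed points of residue degree $\le n$ over $M_0$. To reach a contradiction I would invoke Faltings' theorem on subvarieties of abelian varieties, applied to the Abel--Jacobi image of the $n$-th symmetric power $C^{(n)}$ in the Jacobian $J_C$: for a curve whose Abel--Jacobi image contains no translate of a positive-dimensional abelian subvariety (a condition securable by the genericity of $h$), the set of algebraic points of degree $\le n$ over $M_0$ is finite. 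In positive characteristic the corresponding finiteness comes from the Grauert--Manin/Samuel theorem, whose non-isotriviality hypothesis is exactly what was engineered in the previous step. Either way the finiteness contradicts what the PAC extension property just delivered.

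The hard part will be the geometric construction: simultaneously arranging absolute irreducibility, geometric genus $\ge 2$, non-speciality of the Abel--Jacobi image (so that the ``big'' Faltings theorem gives finiteness of bounded-degree points), and, in positive characteristic, non-isotriviality of $C$, requires careful analysis of the cover $C \to \bbP^1_T$ combined with a genericity argument on $h$. Note that the two hypotheses on $K$ are used in opposite directions: finite generation is what lets us invoke Faltings/Grauert--Manin over $M_0$, while infinitude is what supplies enough generic polynomials $h \in K[T]$ to realize the geometric conditions---a reflection of the fact that the analogous statement genuinely fails over finite fields.
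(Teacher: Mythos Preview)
Your argument has a genuine gap at the Faltings step, and in fact the argument as written never uses the Galois hypothesis---so if it worked, it would show that \emph{every} proper PAC extension $M/K$ of a finitely generated infinite field equals $K_s$, which is false (for almost all $\bfsigma \in \gal(K)^e$, the field $K_s(\bfsigma)$ is a proper PAC extension of $K$ that is not separably closed).

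The specific failure is this: the curve $C : g(X) = h(T)$ you construct carries, by design, a degree-$n$ morphism to $\bbP^1_T$ (projection to $T$). Hence for \emph{every} $a \in M_0$ the fiber over $a$ is a degree-$n$ effective divisor on $C$ defined over $M_0$, i.e.\ a sum of closed points each of residue degree $\le n$. Since $M_0 \supseteq K$ is infinite, $C$ automatically has infinitely many closed points of degree $\le n$ over $M_0$, with no PAC input whatsoever. Equivalently, the $g^1_n$ on $C$ produces a $\bbP^1 \subset C^{(n)}$ with infinitely many $M_0$-rational points, so $C^{(n)}(M_0)$ is infinite regardless of whether the image $W_n \subset J_C$ contains a translate of an abelian subvariety; the condition you call ``securable by the genericity of $h$'' is in direct conflict with the very map you used to build $C$. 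The PAC property does give you more---namely that the fiber over $a \in K$ has a point whose $X$-coordinate lies in $M$, not merely in a degree-$\le n$ extension of $M_0$---but your argument discards this extra information and retains only the degree bound, which is insufficient.

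The paper's proof is entirely different and elementary: given any finite quotient $A$ of $\gal(M)$ and any nontrivial finite quotient $G$ of $\gal(M/K)$, one embeds $A$ into $S_n \wr G$ in a specific way and uses the lifting property of PAC extensions to extend this to a weak solution of a rational double embedding problem. The Galois hypothesis enters precisely here: normality of $\gal(M)$ in $\gal(K)$ forces the image of $A$ to be invariant under an element that conjugates it into a disjoint copy of itself inside the wreath product, whence $A = 1$ and $M$ is separably closed. This uses nothing about finitely generated fields and yields the stronger conclusion that the Galois closure of any proper separable algebraic PAC extension is $K_s$. A Faltings/Grauert--Manin approach in the spirit of Jarden--Razon and \cite{Bary-SorokerJarden} does exist, but it proceeds through a different construction than the one you outline.
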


Using the lifting property mentioned above and wreath products, we
can elementarily and easily reprove
Theorem~\ref{thm:intGaloisPACextFG}. Moreover, our proof uses no
special features of finitely generated fields, so it generalizes
Theorem~\ref{thm:intGaloisPACextFG} to arbitrary fields.
\begin{thmintmine}\label{thm:intmineGalois}
Let $M/K$ be a proper separable algebraic PAC extension. Then the
Galois closure of $M/K$ is the separable closure of $K$. \\
In particular, if $M/K$ is a Galois PAC extension, the either $M=K$
or $M=K_s$.
\end{thmintmine}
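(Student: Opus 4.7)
The plan is a proof by contradiction using Proposition~\ref{prop:ExtensionSoltoGeoSol_DEP} (the lifting property) together with a wreath product construction, as the sentence preceding the theorem hints. Set $\Gamma = \gal(K_s/K)$, $H = \gal(K_s/M)$, and let $L$ be the Galois closure of $M/K$ inside $K_s$, so that $N := \gal(K_s/L)$ is precisely the normal core of $H$ in $\Gamma$. Assume toward contradiction that $N \ne 1$, i.e.\ $L \ne K_s$, and I will derive a contradiction with the properness of $M/K$.

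The first step is a finite reduction. Since $N \ne 1$, choose a finite Galois extension $F/K$ inside $K_s$ with $F \not\subseteq L$, and set $G = \gal(F/K)$, $\bar M = M \cap F$, $B = \gal(F/\bar M) \le G$ (which is precisely the image of $H$ under the restriction $\Gamma \twoheadrightarrow G$). Because $N$ is normal in $\Gamma$ and contained in $H$, the image $\bar N$ of $N$ in $G$ is a nontrivial normal subgroup of $G$ contained in $B$. Hence the normal core $B_G$ of $B$ in $G$ satisfies $B_G \supseteq \bar N \ne 1$; equivalently, the coset action $G \to \Sym(G/B)$ has nontrivial kernel.

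The second step is to encode this obstruction via a finite split embedding problem and invoke the lifting property. The wreath product $B \wr G$ comes with a canonical surjection onto $G$, and combined with $\Gamma \twoheadrightarrow G$ this gives a finite split embedding problem for $\Gamma$, with a canonical partial solution on $H$ provided by the diagonal embedding $G \hookrightarrow B \wr G$ (this bookkeeping is exactly what encodes the field $\bar M$). Proposition~\ref{prop:ExtensionSoltoGeoSol_DEP} then produces a proper solution $\beta : \Gamma \twoheadrightarrow B \wr G$ extending this partial data. Translating back to fields, $\beta$ realizes a Galois extension of $K$ inside $K_s$ that refines $F/K$ and sits inside the Galois closure of $M/K$, i.e.\ inside $L$.

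Finally, the structure of $B \wr G$ yields the contradiction: the lifted extension, sitting inside $L$, forces the core collapse $B_G = 1$ in $G$ because wreath products are universal for embeddings whose coset permutation representation is faithful. This contradicts the conclusion of the first step. Hence $N = 1$ and $L = K_s$. The main obstacle will be in the third step, namely matching the partial solution on $H$ (which records $\bar M$) with the output of the lifting property so that the wreath-product bookkeeping genuinely forces $B_G = 1$; the wreath product is the natural gadget here because $B_G$ measures exactly the failure of faithfulness of $G \to \Sym(G/B)$. The ``in particular'' clause is immediate: if $M/K$ is Galois, then $L = M$, so either $M = K$ or $M = K_s$.
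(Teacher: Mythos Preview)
Your step~3 contains a fatal error. You claim that the solution field of $\beta$ ``sits inside the Galois closure of $M/K$, i.e.\ inside $L$'', but this is false by your own setup: you arranged that $\bar N$, the image of $N$ in $G$, is nontrivial and contained in $B$, and your ``partial solution on $H$'' is precisely the restriction map $H\to B$ followed by the section $G\hookrightarrow B\wr G$. Hence $\beta(N)=\bar N\neq 1$, so $N\not\subseteq\ker\beta$ and the solution field cannot lie in $L$. Once that claim goes, there is no mechanism left to force $B_G=1$; the appeal to Krasner--Kaloujnine universality does not produce such a conclusion. There are also two smaller gaps you would need to patch: the lifting property (Proposition~\ref{prop:ExtensionSoltoGeoSol_DEP}) applies only to \emph{rational} DEPs, and by Corollary~\ref{cor:wreath_regularlysolvable} rationality of $(\phi,\ B\wr G\to G)$ requires $B$ to be regularly realizable over $K$, which you have not assumed; and the lifting property only yields a \emph{weak} solution, not the surjection $\beta\colon\Gamma\twoheadrightarrow B\wr G$ you assert.

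The paper's argument proceeds differently and explains why your direct attack stalls. It first treats the Galois case separately (Lemma~\ref{lem:noGaloisExt}): when $M/K$ is Galois, $\gal(M)$ is \emph{normal} in $\gal(K)$, so after lifting $\theta\colon\gal(M)\to A$ into $S_n\wr G$ (with $S_n$ used precisely because it is regularly realizable) the image $i(A)$ must be normalized by $\theta_0(\gal(K))$; but distinct $G$-translates of the first-coordinate copy of $A$ in $S_n\wr G$ have trivial intersection, forcing $A=1$. Normality is exactly what makes the disjoint-support trick bite, and it is unavailable in your general setup. The paper then reduces the general case to the Galois one via Razon's descent (Corollary~\ref{cor:Razon}): choose $L_0/K$ linearly disjoint from $M$ with $L_0 M$ equal to the Galois closure $L$; then $L/L_0$ is a proper Galois PAC extension, and Lemma~\ref{lem:noGaloisExt} gives $L=K_s$.
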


The Galois structure of a PAC extension $M/K$ has more restrictions.
For example in Corollary~\ref{cor: finite PAC extension} we classify
all finite PAC extensions.
\begin{thmintmine}\label{thm:intminefinite}
Let $M/K$ be a finite extension. Then either $M/K$ is PAC if and
only if one of the following holds.
\begin{enumerate}
\item $K_0$ is real closed and $K$ is the algebraic closure of
$K_0$.
\item $K_0$ is PAC and $K/K_0$ is purely inseparable.
\end{enumerate}
\end{thmintmine}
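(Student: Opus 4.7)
The plan is to analyze a finite PAC extension $M/K$ via its separable--inseparable decomposition and then invoke Theorem~II. Let $L$ denote the separable closure of $K$ in $M$, so $L/K$ is separable algebraic and $M/L$ is purely inseparable.

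For the forward direction, I first verify that $L/K$ is itself a PAC extension. Given $f\in L[T,X]$ absolutely irreducible and separable in $X$, the PAC property of $M/K$ produces infinitely many $(a,b)\in K\times M$ with $f(a,b)=0$. Since $b$ is a root in $M$ of the separable polynomial $f(a,X)\in L[X]$, it lies in $L_s\cap M=K_s\cap M=L$, the last equality being the defining property of $L$. Theorem~II, applied to the proper separable algebraic PAC extension $L/K$, then yields two alternatives. If $L\neq K$, the Galois closure of $L/K$ equals $K_s$, forcing $K_s/K$ finite; Artin--Schreier then gives characteristic zero with $K$ real closed, $L=K_s=K(\sqrt{-1})$, and since $M/L$ is purely inseparable in characteristic zero we conclude $M=L$ is the algebraic closure of $K$, which is case (a). If instead $L=K$, so $M/K$ is purely inseparable, the same argument applied to any absolutely irreducible $g\in K[T,X]$ separable in $X$ shows that the $M$-roots furnished by the PAC hypothesis actually lie in $K_s\cap M=K$; hence $K$ is PAC, which is case (b).

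For sufficiency, case (a) is immediate: $K$ is infinite and $M$ algebraically closed, so for all but finitely many $a\in K$ the polynomial $f(a,X)\in M[X]$ has positive degree (its leading coefficient in $X$ being a non-zero element of $M[T]$) and thus a root in $M$. For case (b), with $K$ PAC and $M/K$ purely inseparable, I carry out a function-field descent. Given $f\in M[T,X]$ absolutely irreducible and separable in $X$, let $F=M(T)[X]/(f)$ be the function field of the curve $C\colon f=0$ and $F_0$ the separable closure of $K(T)$ in $F$. Since $M(T)/K(T)$ is purely inseparable and $F/M(T)$ is separable, one has $[F_0:K(T)]=\deg_X f$, and $K$ is algebraically closed in $F_0$ (because the field of constants of $F$ over $M$ is $M$ by absolute irreducibility of $f$, while $F_0\cap M\subseteq F_0\cap M(T)=K(T)$). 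Hence $F_0$ is the function field of an absolutely irreducible curve $C_0/K$ given by some $\tilde f\in K[T,Y]$ separable in $Y$. PACness of $K$ produces infinitely many $(a,c)\in K^2$ with $\tilde f(a,c)=0$, and the $M$-birational equivalence $C_0\times_K M\sim C$ over $M(T)$, which preserves the $T$-coordinate, lifts all but finitely many such pairs to $(a,b)\in K\times M$ with $f(a,b)=0$.

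The main obstacle is the descent in case (b): justifying that $K$ is algebraically closed in $F_0$, so that $\tilde f$ is absolutely irreducible over $K$ and the PAC hypothesis on $K$ can be invoked, and organizing the birational lifting so that $K$-rational $T$-coordinates yield $M$-rational points on $C$. The other ingredients -- Theorem~II, Artin--Schreier, and the observation that $M\cap K_s=K$ when $M/K$ is purely inseparable -- are routine.
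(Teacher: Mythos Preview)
Your argument is correct and follows essentially the same route as the paper: decompose $M/K$ into its separable part $L/K$ and purely inseparable part $M/L$, show $L/K$ is PAC, invoke Theorem~II together with Artin--Schreier for the case $L\neq K$, and observe that $K$ itself is PAC when $L=K$. The only difference is that where the paper cites \cite[Corollary~2.3]{JardenRazon1994}---both for the implication ``$M/K$ PAC $\Rightarrow$ $L/K$ PAC'' in the forward direction and for the sufficiency of case~(b)---you supply direct arguments: the short observation that separable zeros in $M$ of polynomials over $L$ already lie in $K_s\cap M=L$, and a function-field descent via the separable closure $F_0$ of $K(T)$ in $F$. Both substitutes are valid (one minor point: your assertion that $f(a,X)$ is separable should be qualified by ``for all but finitely many $a$'', which is harmless since you have infinitely many pairs), and the descent in case~(b) is exactly the content of the cited result.
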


Interestingly, this theorem implies a positive answer to a
seemingly unrelated problem, namely Problem~18.7.8 of
\cite{FriedJarden2005} in case of finitely generated fields: For
a given Hilbertian field $K$, this problem asks whether the
following `bottom theorem' holds. For almost all $\bfsigma\in
\gal(K)^e$ the field $M=K_s(\bfsigma)$ has no subfield $L$ such
that $1<[M:L]<\infty$.

In \cite{Haran1985} Haran proves an earlier version of the
bottom theorem, namely with the additional condition that
$K\subseteq L$. Note that if, e.g., $K = \bbF_p(x)$, then
$[M:M^p]=p$ for any algebraic separable extension $M/K$ (see
discussion before Theorem~\ref{thm:bottom} for more details).
Therefore the bottom theorem fails for this $K$ for trivial
reasons. Hence the condition `$M/L$ is separable' should be
added to the formulation of the bottom theorem.

Now the bottom theorem for $K$ a finitely generated infinite field
is valid. The main ingredient in the proof is, as mentioned above,
Theorem~\ref{thm:intminefinite}.
\begin{thmintmine}
The bottom theorem holds for all finitely generated infinite fields.
\end{thmintmine}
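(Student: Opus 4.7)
Since $K$ is finitely generated and infinite it is Hilbertian, so Theorem~\ref{thm:intAlmostallPAC} gives that for almost every $\bfsigma \in \gal(K)^e$ the field $M := K_s(\bfsigma)$ is a PAC extension of $K$ and $\langle\bfsigma\rangle$ is free of rank $e$. On this measure-one set I claim $M$ admits no subfield $L \subsetneq M$ with $[M:L]$ finite and $M/L$ separable.

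Suppose such an $L$ exists, and set $L' := LK \subseteq M$, so $K \subseteq L' \subseteq M$, $[M:L']$ is finite, and $M/L'$ is separable. If $L' \subsetneq M$, then PACness of $M/K$ passes upward (since $K \subseteq L'$) to make $M/L'$ a proper finite separable PAC extension. Theorem~\ref{thm:intmineGalois} then says that the Galois closure of $M/L'$ in $K_s$ equals the separable closure of $L'$; because $L' \supseteq K$ the latter coincides with $K_s$ itself, and together with finiteness of $M/L'$ this forces $[K_s:L'] < \infty$. Hence $\gal(K_s/L')$ is finite, yet it contains $\gal(K_s/M) = \langle\bfsigma\rangle$, which is an infinite free profinite group of rank $e \ge 1$, a contradiction. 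This step is essentially Haran's earlier version of the bottom theorem.

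The genuinely new case is $L' = M$, i.e., $LK = M$ with $K \not\subseteq L$; here Haran's argument is unavailable. The plan is to bring Theorem~\ref{thm:intminefinite} (the classification of finite PAC extensions) to bear on $M/L$. First, one uses the lifting property (Proposition~\ref{prop:ExtensionSoltoGeoSol_DEP}) together with the compositum relation $LK = M$ and the finitely generated structure of $K$ to transport PACness from $M/K$ to $M/L$, thereby turning $M/L$ into a finite separable PAC extension. Then Theorem~\ref{thm:intminefinite} forces $L$ into option~(a) --- giving $L$ algebraically closed and hence $M = L$, contradiction --- or into option~(b), where $L$ is purely inseparable over a PAC subfield, whereupon the argument of the previous paragraph (via Theorem~\ref{thm:intmineGalois}) again produces a contradiction with $\langle\bfsigma\rangle$ being free of positive rank. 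The main obstacle is precisely the PAC-transport step for $M/L$ in the absence of the inclusion $K \subseteq L$; this is the essential new ingredient going beyond Haran's bottom theorem.
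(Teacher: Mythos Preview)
Your Case~1 argument is correct, though slightly roundabout; once $M/L'$ is a proper finite separable PAC extension with $K\subseteq L'$, Theorem~\ref{thm:intminefinite} already finishes the job directly.

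The genuine gap is in Case~2, and you have correctly located it yourself: you need $M/L$ to be PAC when $K\not\subseteq L$, and you propose to ``transport'' PACness from $M/K$ to $M/L$ via the lifting property (Proposition~\ref{prop:ExtensionSoltoGeoSol_DEP}). But the lifting property is a consequence of PACness, not a tool for establishing it; it presupposes that the extension in question is already PAC. There is no mechanism in your outline that would let you conclude $M/L$ is PAC from $M/K$ PAC when $L$ and $K$ are incomparable, and indeed the single measure-one event ``$K_s(\bfsigma)/K$ is PAC'' that Theorem~\ref{thm:intAlmostallPAC} provides is not strong enough on its own.

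The paper's route is fundamentally different and sidesteps the case split entirely. It first proves a far stronger almost-sure statement (Corollary~\ref{cor:PACoverf.g.}): for almost all $\bfsigma$, the field $K_s(\bfsigma)$ is PAC over \emph{every} subfield not algebraic over a finite field. The mechanism is to show that a finitely generated infinite field is s-Hilbertian over a very small base ($\bbZ$ in characteristic~$0$, or $\bbF_p(t)$ for each transcendental $t$ in positive characteristic), and then to invoke the Jarden--Razon PAC-over-$S$ result uniformly. Once this is in hand, any proper subfield $N\subsetneq M$ with $M/N$ finite separable automatically contains some finitely generated infinite field $F$; then $M/F$ is PAC, hence $M/N$ is PAC, and Theorem~\ref{thm:intminefinite} (equivalently Corollary~\ref{cor: finite PAC extension}) gives the contradiction in one stroke. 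The new ingredient you were missing is precisely this strengthening of the almost-sure PAC property to cover all subfields at once.
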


\subsection{Descent of Galois Groups}
In \cite{Razon2000} Razon proves for a PAC extension $M/K$ that
every separable extension $N/M$ descends to a separable extension
$L/K$.

\begin{thmint}\label{thm:intSplitting}
Let $M/K$ be a PAC extension and $N/M$ a separable algebraic
extension. Then there exists a separable algebraic extension $L/K$
that is linearly disjoint from $M$ over $K$ such that $N=ML$.
\end{thmint}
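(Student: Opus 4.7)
The plan is to reduce to the case where $N/M$ is Galois via the Galois correspondence, use the lifting property of PAC extensions (Proposition~\ref{prop:ExtensionSoltoGeoSol_DEP}) to descend the Galois closure to an extension of $K$, and then recover $L$ as a fixed field of that descended extension. Concretely, let $\tilde N/M$ be the Galois closure of $N/M$, with Galois group $G=\Gal(\tilde N/M)$ and $N=\tilde N^H$ for $H=\Gal(\tilde N/N)$. The core construction is to produce a (possibly infinite) Galois extension $\tilde L/K$ with $\tilde L\cdot M=\tilde N$ and $\tilde L\cap M=K$; when $\tilde N/M$ is finite Galois, this is exactly what the lifting property delivers.

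Given such a $\tilde L$, the restriction map $\tau\colon G\to\Gal(\tilde L/K)$, $\sigma\mapsto\sigma|_{\tilde L}$, is an isomorphism: injectivity follows from $\tilde L\cdot M=\tilde N$ (any $\sigma\in G$ restricting to the identity on $\tilde L$ fixes $\tilde L\cdot M=\tilde N$), and surjectivity follows from the degree equality $[\tilde L:K]=[\tilde L\cdot M:M]=[\tilde N:M]=|G|$ that comes from the linear disjointness $\tilde L\cap M=K$. Set $H':=\tau(H)$ and $L:=\tilde L^{H'}$. For any $\sigma\in H$, its restriction $\tau(\sigma)\in H'$ fixes $L$ pointwise; viewing $L\subseteq\tilde N$, this means $\sigma$ itself fixes $L$, so $L\subseteq\tilde N^H=N$ and in particular $L\cdot M\subseteq N$. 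The subfield $L\subseteq\tilde L$ inherits linear disjointness from $M$ over $K$, so
\[
[L\cdot M:M]=[L:K]=[G:H]=[N:M],
\]
forcing $L\cdot M=N$. Thus the theorem reduces to the construction of $\tilde L$.

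For the general separable algebraic case, I would write $\tilde N=\bigcup_i\tilde N_i$ as a directed union of finite Galois subextensions $\tilde N_i/M$ and try to build $\tilde L=\bigcup_i\tilde L_i$ with $\tilde L_i\subseteq\tilde L_j$ whenever $\tilde N_i\subseteq\tilde N_j$. The main technical obstacle is the coherent choice of the $\tilde L_i$: having fixed $\tilde L_i$ descending $\tilde N_i$, one must extend it to $\tilde L_{i+1}\supseteq\tilde L_i$ descending $\tilde N_{i+1}$. This is a relative version of the lifting problem (an embedding problem for $\Gal(K)$ over the quotient by $\Gal(\tilde L_i/K)$), which I expect to follow either from a profinite version of Proposition~\ref{prop:ExtensionSoltoGeoSol_DEP} applied to the induced embedding problem over $K$, or from a Zorn-type argument on the poset of compatible partial descents. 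Modulo this compatibility, the entire argument is formal Galois-theoretic bookkeeping; all of the PAC content of the theorem is concentrated in the lifting property.
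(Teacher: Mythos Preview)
Your reduction via the Galois closure is clean, but the key step fails: the lifting property does \emph{not} in general deliver a Galois extension $\tilde L/K$ with $\tilde L\cap M=K$ and $\tilde L\cdot M=\tilde N$. To apply Proposition~\ref{prop:ExtensionSoltoGeoSol_DEP} the double embedding problem must be rational, and for the upper embedding problem $(\gal(K)\to 1,\ G\to 1)$ with $G=\Gal(\tilde N/M)$ this means $G$ itself must be regularly realizable over $K$. For an arbitrary base field $K$ and an arbitrary finite group $G$ this is the regular inverse Galois problem and is wide open. If instead you embed $G$ into some regularly realizable $G_0$ (say $S_{|G|}$), the lifting property only yields $\theta_0\colon\gal(K)\to G_0$ extending $\theta$; the resulting $\tilde L$ is Galois over $K$ with group $\theta_0(\gal(K))$, which may strictly contain $G$. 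Then $[\tilde L:K]>[\tilde N:M]$, so $\tilde L\cap M\neq K$, and your restriction map $\tau$ is no longer an isomorphism --- the whole bookkeeping collapses.

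The paper's proof (Corollary~\ref{cor:Razon}) sidesteps this by never descending the Galois closure. It embeds $G$ into $S_n$ with $n=[N:M]$ via the \emph{coset action} on $G/G'$ (where $G'=\Gal(\tilde N/N)$), so that the point-stabilizer in $G$ is exactly $G'$. Since $S_n$ is regularly realizable over any field, the lifting property (packaged as Theorem~\ref{thm realizing group}) produces $\theta_0\colon\gal(K)\to S_n$ extending $i\theta$; its image contains $G$ and is therefore transitive, so the preimage of a point-stabilizer cuts out a field $L$ with $[L:K]=n=[N:M]$ and $LM=N$. Note that this $L/K$ need not be Galois even when $N/M$ is (see the Remark following Corollary~\ref{cor:Razon}), which confirms that the stronger Galois descent of $\tilde N$ you are assuming is genuinely unavailable in general.
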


We generalize Razon's result and get a stronger descent result:
\begin{thmintmine}\label{thm:intmineDescent}
Let $M/K$ be a PAC extension and $N/M$ a finite Galois
extension. Assume $\gal(N/M) \leq H$, where $H$ is regularly
realizable over $K$. Then there exists a Galois extension $L/K$
such that $\gal(L/K)\leq H$ and $N=LM$.
\end{thmintmine}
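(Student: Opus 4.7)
The plan is to combine a regular Galois realization of $H$ over $K$ with the PAC property of $M/K$ (in the form of the lifting property, Proposition~\ref{prop:ExtensionSoltoGeoSol_DEP}) to produce a $K$-rational specialization of the realization that reproduces $N/M$ upon base change. The regular realization is precisely what buys us control over $\gal(L/K)$; this is the ingredient missing from Razon's original descent Theorem~\ref{thm:intSplitting}, which provides some $L/K$ with $LM=N$ but no handle on its Galois group.

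First I would fix a regular Galois realization $F/K(t)$ of $H$, so that $\gal(F/K(t))\cong H$ and $F\cap K_s = K$. Choose a primitive element $y\in F$ over $K(t)$ whose minimal polynomial $f(t,Y)\in K[t,Y]$ is absolutely irreducible and separable in $Y$. Because regularity is preserved under base change to any field containing $K$, the extension $FM/M(t)$ is Galois of group $H$ and $f$ remains absolutely irreducible over $M$. The standard fact about specializations of Galois covers then gives: for every unramified $a\in K$, the splitting field $L_a\subseteq K_s$ of $f(a,Y)$ over $K$ is Galois with $\gal(L_a/K)\leq H$. Thus producing $L$ reduces to producing an unramified $a\in K$ with $L_a M = N$.

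Next, I would encode the matching $L_a M = N$ as an algebraic condition to which the PAC hypothesis applies. Let $H' = \gal(N/M)\leq H$ and let $\beta$ be a primitive element of $N/M$ with minimal polynomial $p(Y)\in M[Y]$ of degree $|H'|$. Using $FM/M(t)$ together with the data of $\beta$ and $p$, I would construct an auxiliary absolutely irreducible polynomial $g(T,X)\in M[T,X]$, separable in $X$, whose zeros $(a,b)\in K\times M$ correspond to specializations $t\mapsto a$ such that, under a chosen $M$-embedding of $N$ into the algebraic closure, one of the roots of $f(a,Y)$ in $K_s$ plays the role of $\beta$; this forces $L_a M\supseteq N$ and, by a degree count using $[L_a M:M]\leq |H|$ and the structure of the decomposition group, equality $L_a M = N$. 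Applying the PAC property of $M/K$ to $g$ yields infinitely many such $(a,b)$, and any one of them gives the desired $L=L_a$.

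The main obstacle is the construction of this auxiliary polynomial $g$ and the verification of its absolute irreducibility over $M$. The regularity of $F/K$ is essential at this step, since it is what guarantees that irreducibility and Galois structure descend from $F/K(t)$ to $FM/M(t)$, allowing a single PAC application to capture both the Galois constraint and the matching constraint simultaneously. Once absolute irreducibility of $g$ is in hand, the rest of the argument is bookkeeping: the PAC property of $M/K$ produces the pair $(a,b)$, the specialization at the unramified $a\in K$ yields $L/K$ Galois with $\gal(L/K)\leq H$, and by construction $LM=N$.
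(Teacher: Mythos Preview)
Your plan is sound and would eventually succeed, but it is essentially a hands-on re-derivation of machinery the paper has already packaged. You announce that you will use the lifting property (Proposition~\ref{prop:ExtensionSoltoGeoSol_DEP}), but your execution does not invoke it; instead you set out to build an auxiliary absolutely irreducible polynomial $g(T,X)$ over $M$ whose $K\times M$-points pick out specializations with the correct residue field. That construction, and the verification of absolute irreducibility you flag as the ``main obstacle,'' is precisely the content of Proposition~\ref{prop: characterization of solutions} (the field $\hat E$ there is the variety cut out by your $g$), which in turn is what powers the lifting property.

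The paper's proof (Theorem~\ref{thm realizing group}) is a one-liner by comparison. Write $G=\gal(N/M)\le H$ and let $\theta\colon \gal(M)\twoheadrightarrow G$ be the restriction. Form the double embedding problem with higher part $(\gal(K)\to 1,\ H\to 1)$ and lower part $(\gal(M)\to 1,\ G\to 1)$; since $H$ is regularly realizable over $K$, the higher problem is rational (here $L_0=K$), so the DEP is rational. Now $\theta$ is a solution of the lower problem, and the lifting property yields a weak solution $(\theta,\theta_0)$ of the DEP. Let $L$ be the fixed field of $\ker\theta_0$: then $\gal(L/K)=\theta_0(\gal(K))\le H$, and $\theta_0|_{\gal(M)}=\theta$ gives $\ker\theta_0\cap\gal(M)=\ker\theta=\gal(N)$, i.e.\ $LM=N$.

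What each approach buys: the paper's route isolates all the geometry (your polynomial $g$, the absolute irreducibility, the place-theoretic bookkeeping) once and for all in Propositions~\ref{prop: characterization of solutions} and~\ref{prop:ExtensionSoltoGeoSol_DEP}, so that every descent statement afterwards becomes a short diagram chase in the right DEP. Your route is more self-contained and makes the auxiliary variety explicit, but you would end up reproving those propositions in this special case. If you want to keep your write-up, the cleanest fix is to replace your middle two paragraphs by a direct citation of the lifting property applied to the DEP above; the polynomial $g$ never needs to be named.
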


Indeed, applying Theorem~\ref{thm:intmineDescent} to the symmetric
group $H=S_n$ yields a proof of Theorem~\ref{thm:intSplitting} (see
proof of Corollary~\ref{cor:Razon}). It is interesting to note that
the original proof of Theorem~\ref{thm:intSplitting} is similar but
very specific: One only considers the regular realization of $H=S_n$
generated by the generic polynomial $f(T_1,\ldots,T_n,X) = X^n + T_1
X^{n-1} + \cdots + T_n$.

Let me explain the name `descent' attached to
Theorem~\ref{thm:intmineDescent}. If a finite Galois group
$G=\gal(N/M)$ over $M$ regularly occurs over $K$, then, by taking
$H=G$ in Theorem~\ref{thm:intmineDescent} we get that $G$ occurs
over $K$ (since $G=\gal(L/K)$ in that case). Thus $G$
\emph{descends} to a Galois group over $K$.

As a consequence of this and of the fact that abelian groups are
regularly realizable over any field, we get, for example, that
\[M_{\ab} = MK_{\ab}.\]

\section{Projective Pairs}
There is a strong connection between PAC fields and projective
profinite groups.

\begin{thmint}
The absolute Galois group of an arbitrary PAC field is projective,
and vice-versa, every projective profinite group can be realized as
the absolute Galois group of a PAC field.
\end{thmint}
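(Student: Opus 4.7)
The theorem combines two implications; I would treat each separately.

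\textbf{First direction (PAC $\Rightarrow$ projective).} My plan is to prove the equivalent statement that $\cd(\Gal(K)) \leq 1$, which characterizes projectivity of profinite groups. The outline: (i)~verify that every finite separable extension $L/K$ of a PAC field is again PAC, by forming a Weil restriction of an absolutely irreducible $f \in L[T,X]$ separable in $X$ to obtain an absolutely irreducible $g \in K[T,X]$, applying PAC of $K$, and descending the solution; (ii)~observe that $\Br(L) = 0$ for every PAC field $L$, because a nontrivial Brauer class is represented by a Severi--Brauer variety, which is absolutely irreducible, smooth, of positive dimension, and therefore has an $L$-rational point by PAC, splitting the class. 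Combining (i) and (ii) with Serre's criterion (together with a parallel Artin--Schreier argument for the $p$-primary component in characteristic $p$) yields $\cd(\Gal(K)) \leq 1$, hence projectivity.

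\textbf{Second direction (every projective $G$ is $\Gal(K)$ for some PAC $K$).} The plan follows Lubotzky and van den Dries. Fix a cardinal $\kappa \geq \rank(G)$. Using the measure-theoretic machinery underlying Theorem~A (extended to uncountable Hilbertian fields when necessary), produce a PAC field $F$ with $\Gal(F) \cong \hat F_\kappa$, the free profinite group of rank $\kappa$. Choose a continuous surjection $\pi \colon \hat F_\kappa \twoheadrightarrow G$; projectivity of $G$ then yields a continuous section $s \colon G \to \hat F_\kappa$ of $\pi$. Identify $G$ with $s(G) \leq \Gal(F)$ and set $K := F_s^{s(G)}$; infinite Galois theory gives $\Gal(K) \cong G$.

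\textbf{The main obstacle.} The core difficulty in the second direction is ensuring that the resulting $K$ is itself PAC. The naive approach---take an absolutely irreducible $V/K$, extend scalars to $F$, invoke PAC of $F$ to produce a point---fails because the $F$-rational point need not be fixed by $s(G)$ and therefore need not descend to $K$. The Lubotzky--van den Dries remedy replaces the static fixed-field step by a transfinite construction inside a large algebraically closed universal domain: one enumerates all pairs consisting of an absolutely irreducible variety and a model over a finitely generated subfield, and at each stage enlarges the candidate field by a rational point of the current variety while simultaneously tracking the Galois quotient through a filtered inverse system with limit $G$. Projectivity of $G$ is precisely the ingredient that reconciles the point-adjoining step with the Galois-control step at every stage, and verifying that both properties survive in the limit is the heart of the argument.
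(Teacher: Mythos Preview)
The paper does not give its own proof of this statement; it is quoted as a known background result, with the first direction attributed to Ax \cite[Theorem~11.6.2]{FriedJarden2005} and the second to Lubotzky and van den Dries \cite[Corollary~23.1.2]{FriedJarden2005}. So there is no in-paper argument to compare against in the strict sense.

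That said, your outline is correct and matches the cited literature, with one notable difference of approach in the first direction. You go through $\cd(\gal(K))\le 1$ via Brauer groups of all finite separable extensions and Serre's criterion. Ax's argument --- and the one implicit in the paper's own machinery --- is more direct: every finite embedding problem for $K$ is equivalent to a geometric one (the paper's Lemma in Section~\ref{sec:EP}), and then Proposition~\ref{prop: characterization of solutions} (or its predecessors) produces a finitely generated regular extension $\Ehat/K$ such that a $K$-rational point on $\Ehat$ yields a weak solution; PAC supplies that point. This avoids Severi--Brauer varieties, Weil restriction, and the separate $p$-primary bookkeeping. Your route is perfectly valid, but it imports more machinery than the problem needs and is further from the embedding-problem language the paper is built on.

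For the second direction your description of the Lubotzky--van den Dries construction, including the identification of the central difficulty (that a point over $F$ need not descend to the fixed field, forcing a transfinite build where projectivity of $G$ reconciles point-adjunction with Galois control), is accurate.
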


Ax proved the first assertion \cite[Theorem~11.6.2]{FriedJarden2005}
and Lubotzky and v.d.~Dries proved the second one
\cite[Corollary~23.1.2]{FriedJarden2005}.

We introduce a similar characterization for PAC extensions of
PAC fields and \emph{projective pairs}. A projective pair
$(\Gamma,\Lambda)$ is composed of a profinite group $\Lambda$
and a closed subgroup $\Gamma$ for which any \emph{double finite
embedding problem} is weakly solvable (see
Definition~\ref{def:pp}).

\begin{thmintmine}
\begin{enumerate}
\item
Let $M$ be a PAC extension of a PAC field $K$. Then $(\gal(M),
\gal(K))$ is a projective pair.
\item
Let $M$ be an algebraic extension of a PAC field $K$. Then $M/K$
is PAC if and only if $(\gal(M), \gal(K))$ is projective.
\item
Let $(\Gamma,\Lambda)$ be a projective pair. Then there exists a
PAC extension $M$ of a PAC field $K$ such that $\Gamma\cong
\gal(M)$, $\Lambda\cong \gal(K)$.
\end{enumerate}
\end{thmintmine}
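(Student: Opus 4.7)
The plan is to bootstrap the classical correspondence between PAC fields and projective absolute Galois groups to the relative setting, using the lifting property from Section~\ref{sec:itroGalois} (Proposition~\ref{prop:ExtensionSoltoGeoSol_DEP}) as the bridge between field-theoretic and geometric embedding problems. All three parts will be organized around weak solvability of double finite embedding problems, with the lifting property doing the essential work.

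For part (a), I fix a double finite embedding problem for $(\gal(M),\gal(K))$: an epimorphism $\alpha\colon\gal(K)\to A$, its restriction $\alpha_0\colon\gal(M)\to A_0$ with $A_0=\alpha(\gal(M))$, a finite epimorphism $\phi\colon B\to A$, and the pullback $\phi_0\colon\phi^{-1}(A_0)\to A_0$. Translate $\alpha$ and $\alpha_0$ into Galois extensions $N/K$ and $N_0/M$. Invoke the lifting property of the PAC extension $M/K$ to encode this embedding datum geometrically, as a cover of an absolutely irreducible $K$-variety whose specialization at $K$-points recovers $(\alpha,\alpha_0)$. Since $K$ is PAC, $\gal(K)$ is projective by Ax, which yields a weak solution $\beta\colon\gal(K)\to B$ of the outer embedding problem; the lifting property then furnishes $K$-points at which $\beta$ specializes compatibly with the restricted data, so that $\beta|_{\gal(M)}$ is a weak solution of the inner embedding problem. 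This proves $(\gal(M),\gal(K))$ is a projective pair.

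For part (b), the forward direction is part (a). For the converse, assume $K$ is PAC and $(\gal(M),\gal(K))$ is projective, and let $f(T,X)\in M[T,X]$ be absolutely irreducible and separable in $X$. Let $\tilde E$ be the Galois closure over $M(T)$ of the residue field of $f$, and consider its descent through the action of $\gal(K)$ over $K(T)$. Organize the resulting tower into a double finite embedding problem for $(\gal(M),\gal(K))$; by projectivity of the pair there is a weak solution, which, combined with $K$ being PAC, translates to the existence of a $K$-specialization $T\mapsto a$ at which $f(a,X)$ acquires a root in $M$. The abundance of such specializations built into the PAC property of $K$ yields infinitely many of them, which is exactly the PAC-extension condition.

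For part (c), given a projective pair $(\Gamma,\Lambda)$, first realize $\Lambda$ as $\gal(K)$ for some PAC field $K$ by Lubotzky-v.d.~Dries, fix an isomorphism $\Lambda\isom\gal(K)$, and let $M$ be the fixed field of the image of $\Gamma$ in $\gal(K)$; then $\gal(M)\isom\Gamma$ and $\gal(K)\isom\Lambda$ by construction. Now invoke part (b): $K$ is PAC, $M/K$ is algebraic, and the pair $(\gal(M),\gal(K))$ is exactly $(\Gamma,\Lambda)$ and hence projective by hypothesis, so $M/K$ is PAC. The hard part throughout is the construction in part (a)---namely, the step in which the lifting property is bundled into a simultaneous weak solution of the outer and the inner embedding problems. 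Once this is secured, the reverse direction of (b) and the construction in (c) follow formally from combining part (a), the lifting property, and the classical realization result of Lubotzky and v.d.~Dries.
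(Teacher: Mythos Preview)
Your argument for part (a) has two genuine gaps.

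First, the double embedding problem you write down is too special: you take the inner group to be the full preimage $\phi^{-1}(A_0)$. But with that choice there is nothing to prove — any weak solution $\beta\colon\gal(K)\to B$ of the outer problem automatically satisfies $\phi(\beta(\gal(M)))=\alpha(\gal(M))=A_0$, so $\beta|_{\gal(M)}$ already lands in $\phi^{-1}(A_0)$. Projectivity of $\gal(K)$ alone solves this case; the PAC-extension hypothesis is never used. The definition of projective pair (Definition~\ref{def:pp}) requires you to solve every DEP, meaning the inner group $G$ is an \emph{arbitrary} subgroup of $H$ mapping onto $A_0$, possibly much smaller than $\phi^{-1}(A_0)$. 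That is where the real content lies, and your outline does not address it.

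Second, you invoke the lifting property in the wrong direction. Proposition~\ref{prop:ExtensionSoltoGeoSol_DEP} starts from a weak solution of the \emph{lower} embedding problem (the one for $\gal(M)$) and lifts it to a geometric weak solution of the full DEP; it does not take a solution for $\gal(K)$ and force its restriction to land in a prescribed subgroup. Moreover, the lifting property only applies once the DEP is known to be \emph{rational}, and you give no mechanism for that. The paper's route is: use projectivity of $\gal(K)$ to reduce (via Corollary~\ref{cor:ppreductiontodoublesplit}) to \emph{doubly split} DEPs; then invoke the theorem of Pop and Haran--Jarden that over a PAC field every finite split embedding problem is rational; finally apply the characterization of PAC extensions (Proposition~\ref{prop:ExistGeoSol}) to produce the geometric weak solution. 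The Pop/Haran--Jarden input is the step you are missing entirely.

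Your parts (b) and (c) are essentially the paper's argument. For (b) the key point you gesture at but do not state cleanly is Corollary~\ref{cor:PACfield_solutionisgeometric}: over a PAC base every weak solution is already geometric, so the abstract weak solution furnished by projectivity of the pair is automatically the geometric one that Proposition~\ref{prop:ExistGeoSol} requires. Part (c) matches the paper exactly.
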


Thus any result about projective pairs immediately yields an analog
for PAC extensions of PAC fields. This connection motivates the
study of projective pairs.

Interestingly, although we cannot directly apply the results about
projective pairs to analogous results on PAC extensions (of non PAC
field) we can apply the ideas and technics. And indeed the
elementary machinery mentioned in Section~\ref{sec:itroGalois} comes
from this analogy.

The study of projective pairs in this work go along two
perspectives. First we prove the analogs of results about PAC
extensions. For example, the analog of
Theorem~\ref{thm:intmineGalois} asserts that the only normal
projective pairs (i.e.\ $\Gamma\normal \Lambda)$ are when $\Gamma=1$
or $\Gamma=\Lambda$. Sometimes in the group theoretic setting the
results become stronger, e.g., the analog of
Theorem~\ref{thm:intmineDescent} implies the following
\begin{thmintmine}
Let $(\Gamma,\Lambda)$ be a projective pair. Then $\Lambda =
N\rtimes \Gamma$, for some $N\normal \Lambda$.
\end{thmintmine}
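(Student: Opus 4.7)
The plan is to reduce the statement to producing a continuous retraction $r\colon\Lambda\to\Gamma$, i.e., a continuous homomorphism with $r|_\Gamma=\id_\Gamma$. Given such an $r$, set $N:=\ker r$; this is a closed normal subgroup of $\Lambda$ with $N\cap\Gamma=\ker(\id_\Gamma)=1$, and for every $\lambda\in\Lambda$ the identity $\lambda=\bigl(\lambda\,r(\lambda)^{-1}\bigr)\cdot r(\lambda)$ exhibits $\lambda\in N\Gamma$; hence $\Lambda=N\rtimes\Gamma$, as required.

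To produce $r$, I would apply the projective pair hypothesis (in its profinite version) to the double embedding problem with initial data $\Lambda\to 1$ and $\id_\Gamma\colon\Gamma\to\Gamma$, coupled by the trivial map $\Gamma\to 1$, and with target data $\Lambda\to\Gamma$ and $\id_\Gamma\colon\Gamma\to\Gamma$, coupled by $\id_\Gamma$. The constraints on a weak solution $(\mu^*\colon\Lambda\to\Gamma,\ \nu^*\colon\Gamma\to\Gamma)$ force $\nu^*=\id_\Gamma$ (because $\nu^*$ must lift the identity of $\Gamma$ along the identity of $\Gamma$), whence $\mu^*|_\Gamma=\id_\Gamma\circ\nu^*=\id_\Gamma$; thus $r:=\mu^*$ is the desired retraction.

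Since Definition~\ref{def:pp} only postulates weak solvability of \emph{finite} double embedding problems, the profinite DEP above must be assembled from its finite approximations. Writing $\Gamma=\varprojlim_U\Gamma/U$ over open normals $U\normal\Gamma$, projectivity supplies for each $U$ a weak solution $r_U\colon\Lambda\to\Gamma/U$ extending $\pi_U\colon\Gamma\to\Gamma/U$. A second application of projectivity — now with initial data $r_U$ on the $\Lambda$-side and $\pi_V$ on the $\Gamma$-side for $V\subseteq U$, coupled via the quotient $\Gamma/V\to\Gamma/U$ — shows that the transition map between the corresponding solution sets $\mathcal{S}_V\to\mathcal{S}_U$ is surjective. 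Standard Zorn/inverse-limit manipulations on this surjective system yield a coherent family $(r_U)_U$, which glues to the continuous retraction $r\colon\Lambda\to\Gamma$.

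The main obstacle is the passage from finite to profinite embedding problems and the coherent assembly of the $r_U$'s; this parallels the well-known passage for projective profinite groups and should go through verbatim for pairs, but requires care since the sets $\mathcal{S}_U$ need not be finite. A minor technical point is matching the embedding problems above to the exact format of Definition~\ref{def:pp}: should that definition require surjective bottom data or a non-degenerate coupling, one can inflate the trivial map $\Lambda\to 1$ to the quotient $\Lambda\to\Lambda/U_\Lambda$ for an open normal subgroup $U_\Lambda\normal\Lambda$ with $U_\Lambda\cap\Gamma\subseteq U$ (available because $\Gamma$ is closed in the profinite group $\Lambda$). Such cosmetic adjustments leave the argument intact.
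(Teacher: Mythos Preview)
Your overall strategy—produce a continuous retraction $r\colon\Lambda\to\Gamma$ and take $N=\ker r$—is exactly the paper's approach (Corollary~\ref{cor:splittingpp} with $N=1$, which invokes the lifting property and Proposition~\ref{prop infinite EPs}).

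There is, however, a real wrinkle in your second paragraph. In the paper's format a DEP requires $A\leq B$ with $i\colon A\hookrightarrow B$ the \emph{inclusion}. Your DEP has lower base $A=\Gamma$ (so that $\mu=\id_\Gamma$ forces the lower solution) but upper base $B=1$, with ``coupling'' $i\colon\Gamma\to 1$; this violates $A\leq B$ unless $\Gamma=1$. The obstruction is not about surjectivity or non-degeneracy as you suggest, but injectivity of $i$. Moreover, your proposed repair—inflate the upper base to $B=\Lambda/U_\Lambda$—is insufficient by itself: you would also need $\beta\colon H\to B$ surjective, and with $H=\Gamma/U$ there is in general no surjection $\Gamma/U\twoheadrightarrow\Lambda/U_\Lambda$. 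The adjustment is therefore not cosmetic: one must simultaneously enlarge $H$ to a fiber product (in the profinite limit $H=\Gamma\times\Lambda$, $G$ the diagonal copy of $\Gamma$, $A=\Gamma\leq B=\Lambda$, $\mu=\id_\Gamma$, $\nu=\id_\Lambda$), so that $\beta$ is the second projection and the lower solution is still forced. This is precisely the construction in the paper's proof of the lifting property; once you make it, your argument and the paper's coincide. Your inverse-limit assembly in the third paragraph is fine (indeed compactness of the nonempty closed sets $\calS_U\subseteq\Hom(\Lambda,\Gamma/U)$ already suffices, and this is what underlies Proposition~\ref{prop infinite EPs}).
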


In the same time we find families of examples of projective
pairs. By the transitivity of PAC extensions
(Proposition~\ref{prop:transitivityofPACextension}) we get new
families of PAC extensions. For example, the following is a
special case of Corollary~\ref{cor:ppAbelianExtension}.

\begin{thmintmine}
Every projective group $P$ can be realized as the absolute Galois
group of a PAC extension of some Hilbertian field $K$. Moreover if
the rank of $P$ is at most countable, then we can take
$K=\bbQ_{\ab}$.
\end{thmintmine}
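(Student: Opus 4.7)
The plan is to combine three ingredients developed earlier in this work: part (c) of the projective-pair theorem just stated, the transitivity of PAC extensions (Proposition~\ref{prop:transitivityofPACextension}), and Theorem~\ref{thm:intAlmostallPAC} of Jarden--Razon; together with the classical fact (Roquette, Fried--V\"olklein) that a PAC field whose absolute Galois group is free of infinite rank is Hilbertian. For the first assertion, set $\kappa=\max(\rank(P),\aleph_0)$ and let $\hat F_\kappa$ denote the free profinite group of rank $\kappa$. Every projective profinite group of rank at most $\kappa$ embeds as a closed subgroup of $\hat F_\kappa$; fix such an embedding of $P$. I would first verify that $(P,\hat F_\kappa)$ is a projective pair in the sense of Definition~\ref{def:pp}: since $\hat F_\kappa$ is free, every finite embedding problem over it is solvable, and this, combined with the projectivity of $P$, provides a weak solution to any double finite embedding problem attached to the pair. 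Part (c) of the projective-pair theorem then yields a PAC extension $M/K$ with $\gal(K)\cong\hat F_\kappa$ and $\gal(M)\cong P$, and since $K$ is a PAC field with free absolute Galois group of infinite rank, Roquette--Fried--V\"olklein forces $K$ to be Hilbertian.

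For the moreover clause, recall first that $\bbQ_{\ab}$ is countable and Hilbertian, as any abelian extension of a Hilbertian field is again Hilbertian (Kuyk's theorem). I would then apply Theorem~\ref{thm:intAlmostallPAC} (or its standard extension to $\aleph_0$-tuples, obtained by a countable-intersection argument on the Haar measure) to produce $\bfsigma\in\gal(\bbQ_{\ab})^{\aleph_0}$ for which $M_0:=(\bbQ_{\ab})_s(\bfsigma)$ is a PAC extension of $\bbQ_{\ab}$ and $\langle\bfsigma\rangle$ is free of rank $\aleph_0$; by Jarden's classical theorem $M_0$ is itself a PAC field, and so $\gal(M_0)\cong\hat F_{\aleph_0}$. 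Since $\rank(P)\le\aleph_0$, embed $P$ as a closed subgroup of $\gal(M_0)$ and let $M$ be the corresponding fixed field, so that $\gal(M)\cong P$. The same projective-pair verification as in the first paragraph shows that $(\gal(M),\gal(M_0))$ is a projective pair, whence part (b) of the projective-pair theorem gives that $M/M_0$ is PAC. Finally, the transitivity of PAC extensions applied to the tower $\bbQ_{\ab}\subseteq M_0\subseteq M$ shows that $M/\bbQ_{\ab}$ is PAC.

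The main obstacle is the verification, used in both parts, that a pair of the form $(P,\hat F_\kappa)$ with $P$ a projective closed subgroup of the free profinite group $\hat F_\kappa$ is always a projective pair. Once Definition~\ref{def:pp} is unwound into the weak solvability of every double finite embedding problem, this ought to reduce to a diagram chase combining the (surjective) solvability of finite embedding problems over the free group $\hat F_\kappa$ with the lifting property afforded by projectivity of $P$; the delicate point is matching up these two pieces of data compatibly along the square encoding the double embedding problem, and this is where the most careful bookkeeping would be needed in writing up the full argument.
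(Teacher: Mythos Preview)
Your overall architecture is right---use the projective-pair/PAC-extension correspondence plus transitivity, grounded in a PAC extension with free absolute Galois group---but the step you flag as the ``main obstacle'' is in fact a genuine gap, and the diagram chase you are hoping for does not exist.

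The claim that for \emph{any} closed embedding of a projective $P$ into $\hat F_\kappa$ the pair $(P,\hat F_\kappa)$ is projective is false. If it were true, then every closed subgroup $\Gamma\leq\hat F_\kappa$ would give a projective pair (closed subgroups of projective groups are projective). But Corollary~\ref{cor:openpp} says a projective pair $(\Gamma,\Lambda)$ with $\Gamma$ open forces $\Gamma=\Lambda$, and Proposition~\ref{prop:normalprojectivepairs} says a projective pair with $\Gamma\neq\Lambda$ has trivial normal core; so any proper open subgroup, or any nontrivial proper normal subgroup, of $\hat F_\kappa$ gives a counterexample. Concretely: freeness of $\hat F_\kappa$ lets you solve the higher embedding problem $(\nu,\beta)$, and projectivity of $P$ lets you solve the lower one $(\mu,\alpha)$, but a weak solution of the double embedding problem requires a single $\theta\colon\hat F_\kappa\to H$ with $\theta(P)\leq G$; separate solutions to the two layers need not be compatible in this sense, and there is no mechanism to force compatibility for an arbitrary embedding.

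The paper fixes this by choosing the embedding carefully: by the Haran--Lubotzky decomposition $\hat F_\kappa\cong P\ast\hat F_\kappa$ (Lemma~\ref{lem:ppHL}), one embeds $P$ as a \emph{free factor}, and Proposition~\ref{prop_freefactor} shows that a free factor of a projective group always yields a projective pair. This is Corollary~\ref{cor:ppHL}, and it is the missing ingredient in both halves of your argument. Once you invoke it, the rest of your plan (parts (b) and (c) of the projective-pair theorem, transitivity, and the existence of a PAC extension of $\bbQ_{\ab}$ with absolute Galois group $\hat F_\omega$) goes through exactly as you outline.
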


Unfortunately, this does not lead us to discover new PAC extensions
of $\bbQ$.

\section{Weak Hilbert's Irreducibility Theorem}
There is a deep connection between embedding problems, irreducible
specializations of polynomials, and rational points on varieties.
The following theorem is a good evidence for this connection.

\begin{thmint}
Let $K$ be a PAC field. Then $K$ is Hilbertian if and only if $K$ is
$\omega$-free.
\end{thmint}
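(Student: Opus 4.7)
The plan is to reduce both directions to the solvability of finite embedding problems over $\gal(K)$, using the PAC hypothesis to convert between the geometric language of rational points and the arithmetic language of embedding problems. Recall that since $K$ is PAC its absolute Galois group is projective, so every finite embedding problem for $\gal(K)$ already admits a weak (not necessarily surjective) solution; $\omega$-freeness is the stronger requirement that every finite embedding problem admits a proper (surjective) solution, and the whole game is to convert weak solutions into proper ones and vice versa.

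For the direction $\omega$-free $\Rightarrow$ Hilbertian, take an irreducible $f(T,Y)\in K[T,Y]$ separable in $Y$. Let $L/K(T)$ be the Galois closure of $K(T)[Y]/(f)$, put $G=\gal(L/K(T))$, $L_0=L\cap K_s$, and $G_0=\gal(L_0/K)$. The restriction $G\twoheadrightarrow G_0$ together with $\gal(K)\twoheadrightarrow G_0$ is a finite embedding problem; by $\omega$-freeness it has a proper solution $\varphi\colon \gal(K)\twoheadrightarrow G$. The pair $(L/K(T),\varphi)$ defines a twisted normal cover of $\bbP^1_K$ that is absolutely irreducible (because $\varphi$ is surjective and matches $G_0$ correctly). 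Since $K$ is PAC the cover carries infinitely many $K$-rational points; each one furnishes a specialization $T=a\in K$ at which the decomposition group is all of $G$, so $f(a,Y)$ remains irreducible of degree $\deg_Y f$. Infinitely many such $a$ witness Hilbertianity.

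For the direction Hilbertian $\Rightarrow$ $\omega$-free, take a finite embedding problem $(\alpha\colon \gal(K)\twoheadrightarrow A,\ \beta\colon B\twoheadrightarrow A)$. The goal is a proper solution $\varphi\colon \gal(K)\twoheadrightarrow B$ with $\beta\varphi=\alpha$. Write $\ker\beta$ as a quotient of a finite group that is regularly realizable over $K(T)$ (using that PAC fields are ample, so finite split embedding problems over $K(T)$ admit regular solutions), and assemble a finite Galois cover of $\bbP^1_K$ with group $B$ whose constant field is the $A$-extension cut out by $\alpha$. Hilbertianity of $K$ then produces a Hilbert subset of rational specializations $T=a\in K$ that preserve the Galois group $B$; any such specialization yields a Galois extension of $K$ with group $B$ whose projection to $A$ realizes $\alpha$, i.e.\ a proper solution of the embedding problem. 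Running this over all finite embedding problems gives $\omega$-freeness.

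The main obstacle is the second direction: the construction of a regular realization of the whole of $B$ over $K(T)$ matching the given quotient $\alpha$. The projectivity given by PAC supplies a weak solution cheaply, but upgrading it to a regular realization at the level of covers requires the ampleness of PAC fields (Pop's theorem) and a careful patching argument, after which Hilbertianity takes over to extract a group-preserving specialization. The opposite direction is essentially a direct application of $\omega$-freeness plus PAC-ness of $K$ to the Galois closure of $f$.
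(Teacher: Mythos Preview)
The paper does not give its own proof of this theorem; it attributes the two directions to the literature (Roquette for $\omega$-free $\Rightarrow$ Hilbertian; Fried--V\"olklein and Pop, later reproved by Haran--Jarden via algebraic patching, for Hilbertian $\Rightarrow$ $\omega$-free). Your sketch is correct in outline and follows exactly this attributed approach: the first direction is the field-crossing/specialization argument (what the paper packages as Proposition~\ref{prop: characterization of solutions} and Corollary~\ref{cor:PACfield_solutionisgeometric}), and for the second direction you correctly isolate the deep input---that every finite split embedding problem over a PAC field is regularly solvable over $K(T)$---and then specialize via Hilbertianity.

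One small imprecision: in the first direction you write that each $K$-rational point of the twisted cover gives a specialization ``at which the decomposition group is all of $G$.'' What is literally true is that the decomposition group equals the image of the geometric solution $\phi^*$, and by construction $\phi^*=\varphi$ is surjective; so the conclusion holds, but the reason is the identification $\phi^*=\varphi$ coming from the twist, not a general fact about rational points. Similarly, in the second direction the phrase ``write $\ker\beta$ as a quotient of a finite group that is regularly realizable over $K(T)$'' understates what is needed: one must regularly solve the full split embedding problem $(\gal(K)\to A,\ B\to A)$ over $K(T)$, not merely realize $\ker\beta$ regularly. You do arrive at the correct formulation a line later, so this is cosmetic.
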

(Recall that the PACness means that all varieties have rational
points, Hilbertianity implies that all polynomials have the
irreducible specialization property, and $\omega$-freeness
asserts that all finite embedding problems are solvable.) In
their work on Frobenius fields \cite{FriedHaranJarden1984},
Haran, Fried, and Jarden refined the above connection.

We extract from the previous result the exact statement that
connects embedding problems, irreducible specializations of
polynomials, and rational points on varieties (see
Lemma~\ref{lem:characterizationofirrspecialization}). Next we apply
that general criterion to a field $K$ which has PAC extensions. This
gives a weak Hilbert's irreducibility theorem for $K$
(Proposition~\ref{prop:PACextirr}). One nice corollary is in the
case where the polynomial is the ``most irreducible."

\begin{thmintmine}
Let $K$ be a field and $f(T,X)\in K(T)[X]$ an irreducible polynomial
of degree $n$ in $X$ whose Galois group over $\Kgal(T)$ is $S_n$.
Further assume that there exists a PAC extension $M/K$ and a
separable extension $N/M$ of degree $n$. Then there exist infinitely
many $a\in K$ for which $f(a,X)$ is irreducible over $M$, and hence
over $K$.
\end{thmintmine}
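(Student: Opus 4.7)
The plan is to derive the statement as an immediate corollary of the weak Hilbert's irreducibility theorem for fields admitting PAC extensions (Proposition~\ref{prop:PACextirr}); the only content is to translate the hypothesis on $f$ into the group-theoretic input required there.

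First I would translate the hypothesis into Galois-theoretic language. Let $\tilde F$ denote the splitting field of $f(T,X)$ over $K(T)$ and set $G=\Gal(\tilde F/K(T))$, viewed as a transitive permutation group on the $n$ roots of $f$. The assumption that $\Gal(\tilde F\Kgal/\Kgal(T))=S_n$ forces $G=S_n$, since the geometric Galois group sits as a normal subgroup of $G$ inside $S_n$. Irreducibility of $f$ in $X$ is equivalent to transitivity of $G$ on the roots, and the point-stabilizer $H$ of a single root satisfies $[G:H]=n$, so in this situation $H=S_{n-1}$. The root field $E=K(T)[X]/(f)$ is the fixed field of $H$.

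Next I would invoke Proposition~\ref{prop:PACextirr}: the hypothesis required there is that $M$ possess a separable algebraic extension playing the role of a descended specialization of $E/K(T)$, which in the present setting amounts to a separable extension of $M$ of degree $[G:H]=n$. The given $N/M$ serves as exactly such a witness. The proposition then yields infinitely many $a\in K$ for which $f(a,X)$ is irreducible over $M$; since $K\subseteq M$, such a polynomial is a fortiori irreducible over $K$, giving the second assertion.

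The main obstacle has been absorbed into Proposition~\ref{prop:PACextirr} and, ultimately, into the lifting property of Section~\ref{sec:itroGalois}, so essentially nothing remains at the corollary level. The only point worth highlighting in the write-up is the step from the geometric to the arithmetic Galois group: the normal inclusion $\Gal(\tilde F\Kgal/\Kgal(T))\trianglelefteq G\leq S_n$ together with the assumption that the outer flanks coincide forces $G=S_n$, which is precisely what allows us to identify $H$ with $S_{n-1}$ and match $[G:H]$ with $[N:M]$.
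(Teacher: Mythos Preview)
Your proposal is correct and follows essentially the same route as the paper's proof of Corollary~\ref{cor:wHIT_stablesymmetric}: identify $G=S_n$ (so the splitting field is regular over $K$ and the induced embedding problem is $(\gal(K)\to 1,\,S_n\to 1)$), then feed a transitive weak solution of $\calE_f(M)=(\gal(M)\to 1,\,S_n\to 1)$ into Proposition~\ref{prop:PACextirr}\eqref{case:PACextirrtransitive}, invoking Remark~\ref{rem:overM} for irreducibility over $M$. The only step you leave implicit, and which the paper writes out, is \emph{how} $N/M$ produces that transitive solution: one takes the Galois closure $\Nhat/M$ and lets $\gal(\Nhat/M)$ act on the $n$ cosets of $\gal(\Nhat/N)$, giving an embedding into $S_n$ with transitive image; this is precisely the map $\eta$ needed. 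You should also note that ``infinitely many $a$'' follows because Proposition~\ref{prop:PACextirr} allows one to avoid any prescribed nonzero $r(T)$, so one may exclude finitely many previously found values at each stage.
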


This result is essential in the proofs of
Theorems~\ref{thm:intDirichlet} and \ref{thm:intTrace}.

\section{Fields Having PAC Extensions} In light of the above
results, it is a significant feature for a field $K$ to have
non-trivial PAC extensions.

We focus on two directions. First we generalize
Theorem~\ref{thm:intAlmostallPAC} in case $K$ is a finitely
generated field. Recall that a finitely generated infinite field is
always Hilbertian.

\begin{thmintmine}
Let $K$ be a finitely generated infinite field and $e\geq 1$ a
positive integer. Then for almost all $\bfsigma\in \gal(K)^e$
and for every field $F\subseteq K_s(\bfsigma)$ which is not
algebraic over a finite field the extension $K_s(\bfsigma)/F$ is
PAC.
\end{thmintmine}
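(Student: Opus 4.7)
The plan is to combine two ideas: that PAC-ness of $M/F$ is preserved when $F$ is enlarged within $M$, and that the countability of $K_s$ lets Theorem~A be applied relative to countably many finitely generated subfields. Directly from Definition~\ref{def:PAC}, if $E \subseteq F \subseteq K_s(\bfsigma)$ and $K_s(\bfsigma)/E$ is PAC, then $K_s(\bfsigma)/F$ is PAC, since any pair $(a,b) \in E \times K_s(\bfsigma)$ lies a fortiori in $F \times K_s(\bfsigma)$. Any $F \subseteq K_s(\bfsigma)$ not algebraic over the prime field $k_0$ contains a finitely generated infinite subfield, for instance $k_0(t)$ for any $t \in F$ transcendental over $k_0$. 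So it suffices to show that for almost all $\bfsigma \in \gal(K)^e$, the extension $K_s(\bfsigma)/E$ is PAC for every finitely generated infinite subfield $E \subseteq K_s(\bfsigma)$.

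Since $K$ is finitely generated and infinite it is countable, hence $K_s$ is countable and contains only countably many finitely generated infinite subfields; enumerate them $E_1, E_2, \ldots$. Each $E_n$ is Hilbertian, by the classical fact that every finitely generated infinite field is Hilbertian. For each $n$, Theorem~A applied to $E_n$ yields a Haar-null subset of $\gal(E_n)^e$ outside which the corresponding extension is PAC. The condition $E_n \subseteq K_s(\bfsigma)$ is equivalent to $\bfsigma$ lying in the open subgroup $H_n := \gal(E_n K) \leq \gal(K)$ of finite index $[E_n K : K]$; this same $H_n$ is also an open subgroup of $\gal(E_n)$, of finite index $[E_n K : E_n]$. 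Both Haar measures --- on $\gal(K)^e$ and on $\gal(E_n)^e$ --- restrict on $H_n^e$ to proportional multiples of the Haar measure on $H_n^e$, so they share the same null subsets of $H_n^e$. Therefore the ``bad'' set $A_n := \{\bfsigma \in H_n^e : K_s(\bfsigma)/E_n \text{ is not PAC}\}$ has measure zero in $\gal(K)^e$, and the complement $\Omega := \gal(K)^e \setminus \bigcup_n A_n$ is the required set of full measure.

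The main obstacle will be a subtlety in positive characteristic. Theorem~A applied to $E_n$ actually produces the PAC-ness of $(E_n)_s(\bfsigma)/E_n$, and one needs the identification $(E_n)_s = K_s$ to translate this into the desired statement about $K_s(\bfsigma)/E_n$. This identification holds precisely when $K_s/E_n$ is separable --- automatic in characteristic zero, and whenever $K \subseteq E_n$ --- but can fail otherwise: for example, $E_n = \bbF_p(x^p) \subsetneq K = \bbF_p(x)$ makes $x \in K_s$ purely inseparable over $E_n$. To handle this I would restrict the enumeration to those $E_n$ for which $K_s/E_n$ is separable, and verify that the reduction in the first paragraph still goes through, using the existence of a separating transcendence basis for any finitely generated subextension of the (perfect) prime field. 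Inseparable residual cases such as $F = K^p$ must be dealt with separately, for instance by deducing PAC-ness of $K_s(\bfsigma)/F$ directly from Theorem~A at $K$ via a Frobenius substitution $T \mapsto S^p$ in the defining polynomial.
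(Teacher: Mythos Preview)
Your reduction to finitely generated infinite subfields $E_n$ and the measure-comparison idea are on the right track, but the second paragraph has a real gap. The claim that $H_n := \gal(E_n K)$ sits inside $\gal(E_n)$ as an open subgroup of index $[E_n K : E_n]$ presupposes that $E_n K / E_n$ is a finite algebraic extension, and this fails whenever $E_n$ has smaller transcendence degree over the prime field than $K$. For instance, with $K = \bbF_p(x,y)$ and $E_n = \bbF_p(x)$ (or $K = \bbQ(x)$ and $E_n = \bbQ$ in characteristic zero), the compositum $E_n K = K$ is transcendental over $E_n$, so $\gal(K)$ does not embed into $\gal(E_n)$ with finite index at all, and $(E_n)_s$ is a proper subfield of $K_s$. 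This is not the inseparability issue you isolate in the last paragraph but a more basic transcendence-degree mismatch; it occurs in every characteristic as soon as $K$ is not a global field, and it cannot be repaired by a Frobenius substitution.

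The paper gets around this by replacing Theorem~A with a relative form: if a countable field $E$ is \emph{Hilbertian over a subset $S$} (every separable Hilbert set of $E$ meets $S^r$), then $E_s(\bfsigma)$ is PAC over $S$ for almost all $\bfsigma \in \gal(E)^e$. In characteristic $p$ one takes, for each transcendental $t \in K_s$, the field $E = K(t)$; since $E/K$ is finite separable one has $E_s = K_s$ and the identification problem disappears. The substantive new step is to show that such an $E$, a finitely generated and possibly \emph{transcendental} extension of $\bbF_p(t)$, is still Hilbertian over $S = \bbF_p(t)$; this is precisely what bridges the transcendence gap your argument runs into. In characteristic zero a single application with $S = \bbZ$ suffices.
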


We do not know whether this result is true in the more general case
where $K$ is an arbitrary countable Hilbertian field (e.g.\
$K=\bbQ_{\ab}$). A positive answer would imply the bottom theorem
for countable Hilbertian fields.

Next we go in the opposite direction, that is, we consider a fixed
field $F$ and try to find PAC extensions of it. More precisely we
ask the following
\begin{question}
For what positive integers $n$ there exist a PAC extension $M/F$ and
a separable extension $N/M$ of degree $n$?
\end{question}

In several cases we can give a satisfactory answer as presented in
the following result.

\begin{thmintmine}
Let $K$ be a countable Hilbertian field and $F/K$ a separable
algebraic extension. Then there exist a PAC extension $M/F$ and a
separable extension $N/M$ of degree $n$ in the following cases.
\begin{enumerate}
\item
$F/K$ is pro-solvable and $n\geq 5$.
\item
There exists a prime $p$ such that $p\nmid [\Fhat:K]$ (as
supernatural numbers), where $\Fhat$ is the Galois closure of $F/K$.
\end{enumerate}
\end{thmintmine}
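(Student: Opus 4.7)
The plan is to construct $M = K_s(\bfsigma)$ for a carefully chosen $\bfsigma \in \gal(\Fhat)^e$. Choosing $\bfsigma$ in $\gal(\Fhat)^e$ forces $\Fhat \subseteq M$, and in particular $F \subseteq M$; then, assuming $M/K$ is PAC, transitivity of PAC extensions (Proposition~\ref{prop:transitivityofPACextension}) applied to the tower $K\subseteq F\subseteq M$ yields $M/F$ PAC. The separable extension $N/M$ of degree~$n$ will be obtained either from the freeness of $\langle\bfsigma\rangle$ or from an auxiliary finite extension $L/K$ of degree~$n$ arranged to be linearly disjoint from $M$ over $K$, so that $N := LM$.

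For case~(a), I would use the Hilbertianity of $K$ together with Hilbertian specialization of the generic polynomial $X^n+T_1X^{n-1}+\cdots+T_n$ to realize $S_n$ as $\gal(\Lhat/K)$ for some Galois $\Lhat/K$, with a degree-$n$ subfield $L \subseteq \Lhat$. Since $\gal(\Fhat/K)$ is pro-solvable and $S_n$ is non-solvable for $n\geq 5$, the only possible non-trivial common quotient of $S_n$ and $\gal(\Fhat/K)$ is $\bbZ/2\bbZ$; varying the specialization lets me choose $\Lhat$ whose quadratic resolvent $K(\sqrt{\disc})$ lies outside the countable collection of quadratic subfields of $\Fhat$. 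This forces $\Lhat\cap\Fhat=K$, the restriction $\gal(\Fhat)\twoheadrightarrow\gal(\Lhat/K)$ is surjective, and a positive-Haar-measure subset of $\bfsigma\in\gal(\Fhat)^e$ has $\bfsigma|_{\Lhat}$ generating $S_n$; such $\bfsigma$ act transitively on the $n$ roots of the minimal polynomial of $L/K$, which forces $L\cap M = K$ and $[LM : M] = n$.

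For case~(b), I would use the hypothesis $p\nmid[\Fhat:K]$ to note that every pro-$p$ Sylow subgroup of $\gal(K)$ is already contained in $\gal(\Fhat)$. Exploiting this rich pro-$p$ structure, I would choose $\bfsigma\in\gal(\Fhat)^e$ so that $\langle\bfsigma\rangle$ is free profinite of rank~$e$ — a positive-Haar-measure condition via the Jarden freeness argument underlying Theorem~\ref{thm:intAlmostallPAC}. A free profinite group of rank $\geq 1$ admits open subgroups of every finite index, so $M=K_s(\bfsigma)$ automatically has separable extensions of every degree~$n$, without needing an auxiliary~$L/K$.

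The main obstacle, shared by both cases, is arranging $M/K$ to be PAC. Theorem~\ref{thm:intAlmostallPAC} gives this for almost all $\bfsigma\in\gal(K)^e$ under the ambient Haar measure on $\gal(K)^e$, but $\gal(\Fhat)^e$ is a null subset of $\gal(K)^e$ whenever $\Fhat/K$ is infinite, so one cannot appeal to the theorem directly. To get around this, I would rerun the Jarden--Razon proof inside the closed subgroup $\gal(\Fhat)\leq\gal(K)$: in case~(a), the pro-solvability of $\Fhat/K$ together with Weissauer/Haran diamond-type Hilbertianity theorems should supply enough Hilbertian behavior of $\Fhat$ for the argument to go through almost surely in $\gal(\Fhat)^e$; in case~(b), the pro-$p$ Sylow structure lets one factor the sampling of $\bfsigma$ through a Hilbertian intermediate quotient. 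Intersecting the resulting measure-one subset of $\gal(\Fhat)^e$ with the positive-measure subset from the case-specific paragraph yields the desired $\bfsigma$.
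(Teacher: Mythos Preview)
Your identification of the main obstacle is correct, but your proposed workaround does not close the gap. ``Rerunning Jarden--Razon inside $\gal(\Fhat)$'' means applying the argument with base field $\Fhat$; even granting that $\Fhat$ is Hilbertian (which is \emph{not} known for a general Galois pro-solvable $\Fhat/K$ --- think of $\Fhat=K^{\sol}$, where no diamond/Weissauer theorem applies), this yields $K_s(\bfsigma)/\Fhat$ PAC for almost all $\bfsigma\in\gal(\Fhat)^e$, not $K_s(\bfsigma)/K$ PAC. Since $F\subseteq\Fhat$, PAC over $\Fhat$ does not give PAC over $F$: you need rational points with image in $F^e$, not merely in $\Fhat^e$. (Incidentally, the implication you actually want, $M/K$ PAC $\Rightarrow$ $M/F$ PAC for $K\subseteq F\subseteq M$, is trivial from the definition and is not the transitivity of Proposition~\ref{prop:transitivityofPACextension}.) Your sketch contains no mechanism producing $M/K$ or $M/F$ PAC with $\bfsigma$ constrained to the null set $\gal(\Fhat)^e$; the case~(b) phrase about ``factoring through a Hilbertian intermediate quotient'' is too vague to carry weight.

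The paper avoids this by \emph{not} constraining $\bfsigma$. It picks $\bfsigma\in\gal(K)^e$ generic (so $K_s(\bfsigma)/K$ is PAC and $G:=\langle\bfsigma\rangle\cong\Fhat_e$) and sets $M=FK_s(\bfsigma)$; then $M/F$ is PAC by the base-change property \cite[Lemma~2.1]{JardenRazon1994}. The degree-$n$ extension is produced purely group-theoretically: with $N_0=\gal(M)$ and $N=\gal(\Fhat K_s(\bfsigma))$ one has $N\leq N_0\leq G$, $N\lhd G$, and $G/N$ embeds in $\gal(\Fhat/K)$. The key lemma (Lemma~\ref{lem:grp}) says that any finite quotient $H$ of $G$ sharing no nontrivial quotient with $G/N$ is already a quotient of $N_0$. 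In case~(a) take $H=A_n$ (simple non-abelian for $n\geq5$, hence no common quotient with the pro-solvable $G/N$); then $(A_n:A_{n-1})=n$ gives an index-$n$ subgroup of $N_0$. In case~(b) take a quasi-$p$ group $H=\bbZ/m\bbZ\rtimes\bbZ/p^k\bbZ$ (only trivial prime-to-$p$ quotient), which has subgroups of every index dividing $|H|$.

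A secondary gap in your case~(a): you propose to choose $\Lhat$ so that its quadratic resolvent avoids the quadratic subfields of $\Fhat$, but $\Fhat$ may contain \emph{every} quadratic extension of $K$ (e.g.\ $F=K^{\ab}$). Then no $S_n$-extension $\Lhat/K$ satisfies $\Lhat\cap\Fhat=K$, and for $\bfsigma\in\gal(\Fhat)$ the restriction $\bfsigma|_{\Lhat}$ always lands in $A_n$, so it can never generate $S_n$ as you claim.
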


This result has several applications. For example one can extend
Theorem~\ref{thm:intTrace} to the following

\begin{thmint}
Let $K$ be a Hilbertian field (of any cardinality) and let $F/K$ be
a pro-solvable extension (resp.\ an extension whose Galois closure
is prime to $p$). Then for every $n\geq 5$ (resp.\ $n\geq 1$) any
non-degenerate quadratic form of dimension $n$ is isomorphic to a
scaled trace form over $F$.
\end{thmint}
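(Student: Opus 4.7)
Proof plan. Theorem~\ref{thm:intTrace} applied to $F$ reduces the claim to producing a PAC extension $M/F$ together with a separable extension $N/M$ of degree $n$. Under hypotheses (a) and (b), the preceding theorem of the chapter furnishes exactly such $M$ and $N$, but only when $K$ is countable. The entire content of the present theorem is therefore the removal of the countability assumption.

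I would effect this by a descent to a countable sub-setting, carried out form by form. Given a non-degenerate quadratic form $q$ of dimension $n$ over $F$, its matrix entries lie in a finite subextension $L$ of $F/K$. Pick a primitive element $\beta$ of the Galois closure $\Lhat$ of $L$ in $\Fhat$, let $g(X)\in K[X]$ be its minimal polynomial over $K$, and write the conjugates of $\beta$ and the matrix entries of $q$ as polynomials in $\beta$ with coefficients in $K$. By the standard Fried--Jarden construction of countable Hilbertian subfields (see, e.g., Lemma~13.2.1 of \cite{FriedJarden2005}), one may pick a countable Hilbertian subfield $K_0\leq K$ containing the coefficients of $g$ and of all these polynomial expressions. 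Then $\Lhat_0:=K_0(\beta)$ is Galois over $K_0$, restriction induces an isomorphism $\gal(\Lhat_0/K_0)\cong \gal(\Lhat/K)$, and hence $\Lhat_0/K_0$ is solvable in case (a) or of order prime to $p$ in case (b). Setting $F_0 := \Lhat_0\cap F$, the extension $F_0/K_0$ is separable algebraic of the requisite type and contains all matrix entries of $q$.

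Applying the preceding theorem to the countable pair $(K_0, F_0)$ produces a PAC extension $M/F_0$ and a separable extension $N/M$ of degree $n$. Theorem~\ref{thm:intSplitting} descends $N$ to a separable extension $E_0/F_0$ of degree $n$ linearly disjoint from $M$ over $F_0$. Theorem~\ref{thm:intTrace}, applied to the field $F_0$, then represents $q$ over $F_0$ as $\Tr_{E_0/F_0}(\lambda x^2)$ for some $\lambda\in E_0^\times$.

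The main obstacle is the last step: upgrading this representation from $F_0$ to $F$. For $E:=E_0\cdot F$ to be a separable field extension of $F$ of degree $n$ — which is precisely the condition that makes the trace form base-change cleanly to a scaled trace form over $F$ — one needs $E_0$ and $F$ to be linearly disjoint over $F_0$. I would arrange this by ensuring from the outset that $F\leq M$, a condition compatible with the PAC property via the transitivity of PAC extensions (Proposition~\ref{prop:transitivityofPACextension}) together with a careful choice of $M$ in the construction of the preceding theorem. With $F\leq M$, linear disjointness of $E_0$ from $M$ over $F_0$ forces linear disjointness of $E_0$ from $F$ over $F_0$, so $E/F$ is separable of degree $n$ and the trace form over $F_0$ base-changes to yield $q\cong \Tr_{E/F}(\lambda x^2)$ over $F$, as required.
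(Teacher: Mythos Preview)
The thesis does not prove this statement—it is quoted from \cite{Bary-SorokerKelmer}—so there is no in-text argument to compare against; for countable $K$ the claim is immediate from Theorem~\ref{prop:sol} (and its prime-to-$p$ companion) together with Theorem~\ref{thm:intTrace}. Your descent-to-countable strategy for the general case is the right idea, but the execution has a genuine gap. First, Theorem~\ref{thm:intTrace} only guarantees that $q$ is a scaled trace form via \emph{some} degree-$n$ extension of $F_0$; it does not hand you the particular $E_0$ obtained by Razon-descending $N$, so the linear disjointness of that $E_0$ from $M$ says nothing about the extension actually carrying the trace form. Second, and decisively, the fix ``arrange $F\le M$'' is impossible on cardinality grounds: $M$ is separable algebraic over the countable field $F_0$, hence itself countable, while $F$ may be uncountable—no choice of $M$ in the construction of Theorem~\ref{prop:sol} and no appeal to transitivity can place an uncountable field inside a countable one.

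A workable repair opens the proof of Theorem~\ref{thm:intTrace} rather than citing its statement. The degree-$n$ extension produced there is the root field of a specialized stable-symmetric polynomial $f(\bfa,X)\in F_0[X]$, and Remark~\ref{rem:overM} shows $f(\bfa,X)$ remains irreducible over $M$. Since a separable polynomial in $F_0[X]$ is irreducible over $F$ if and only if it is irreducible over $F\cap(F_0)_s$, what is needed is not $F\subseteq M$ but only $F\cap(F_0)_s\subseteq M$. This forces one to take $F_0:=F\cap K_{0,s}$ rather than a finite subextension, after which one must verify—by a careful choice of $K_0$—that this possibly infinite $F_0$ is still pro-solvable (resp.\ has prime-to-$p$ Galois closure) over $K_0$. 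That verification is the piece your outline does not supply.
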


This result appear in \cite{Bary-SorokerKelmer}.
\chapter{Preliminaries in Galois Theory}
This chapter sets up the necessary background in Galois theory and
finite group theory needed for this work. It mainly fixes notation
and gives some of the basic properties which will be used later.
Also some of the technical details of the upcoming chapters are
hidden in this chapter. The expert reader may wish to skip this
expository part, and return to it when needed.

\section{Permutational Galois Groups}
Let $F/K$ be a Galois extension. The Galois group $G = \gal(F/K)$ is
equipped with several natural actions coming from polynomials and
subgroups. We describe two of these actions below.

Firstly let $f(X)\in K[X]$ be a separable monic polynomial which
splits over $F$. Then $f(X) = \prod_{i=1}^{n}(X-\alpha_i)$, where
all $\alpha_i\in F$ and are distinct. Since $G$ fixes the
coefficients of $f(X)$, it permutes the roots of $f$. Properties of
$f$ are encoded in this action (this idea goes back to Galois
himself), e.g., the following

\begin{lemma}\label{lem:polynomial_action}
Let $R$ be the set of roots of $f$.
\begin{enumerate}
\item
$R$ generates $F$ over $K$ if and only if the action of $G$ on
$R$ is faithful.
\item
There is a correspondence between the factorization
$f(X)=\prod_{i=1}^m f_i(X)$ of $f(X)$ into irreducible polynomials
over $K$ and the decomposition $R = \dotunion_{i=1}^m R_i$ of $R$
into $G$-orbits $R_i$, given by $f_i(X) = \prod_{\alpha\in
R_i}(X-\alpha)$ (under a suitable rearrangement of the factors
$f_i$).
\item
$f(X)$ is irreducible over $K$ if and only if $(G,R)$ is transitive.
\end{enumerate}
\end{lemma}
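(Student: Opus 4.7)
The three parts of this lemma are all foundational facts about the Galois action on roots of polynomials; the overall strategy is to invoke the Galois correspondence for (a), then build (b) out of invariant-theoretic manipulations, and finally derive (c) as a one-line corollary of (b).

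For part (a), my plan is to identify the kernel of the action of $G$ on $R$ with the subgroup of $G$ fixing the field $K(R)$ pointwise. An element $\sigma \in G$ fixes $R$ pointwise if and only if it fixes every $K$-algebra combination of elements of $R$, i.e.\ if and only if $\sigma\in \Gal(F/K(R))$. Under the Galois correspondence $K(R) = F$ exactly when $\Gal(F/K(R)) = 1$, and that is exactly faithfulness of the action on $R$.

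For part (b), I would start from the orbit decomposition $R = \dotunion_{i=1}^m R_i$ and set $f_i(X) := \prod_{\alpha\in R_i}(X-\alpha)$. The coefficients of $f_i$ are the elementary symmetric polynomials in the elements of $R_i$; since $G$ permutes $R_i$ among itself, these coefficients are $G$-invariant elements of $F$, hence lie in $F^G = K$. So $f_i \in K[X]$, and clearly $f = \prod_{i=1}^m f_i$. To see each $f_i$ is irreducible over $K$, note that any nontrivial factorization of $f_i$ over $K$ would split $R_i$ into a disjoint union of two nonempty $G$-stable subsets (by applying the same invariant-theoretic argument in reverse), contradicting the fact that $R_i$ is a single orbit. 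For the converse direction I would use the standard fact that $G$ acts transitively on the roots of any polynomial irreducible over $K$: given a factorization $f = \prod g_j$ into $K$-irreducibles, each set of roots of $g_j$ is a single $G$-orbit, and these orbits partition $R$, so after reindexing they match the $R_i$.

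Part (c) is then immediate: $f$ is irreducible over $K$ if and only if the factorization in (b) has $m = 1$, if and only if $R$ consists of a single $G$-orbit, i.e.\ $(G,R)$ is transitive.

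There is no genuine obstacle here; the only point that requires a moment of care is verifying that the symmetric-function argument in (b) really forces the polynomials $f_i$ into $K[X]$ (which uses $F/K$ Galois so that $F^G = K$) and that the two directions of the correspondence indeed match up under a suitable reordering of the factors.
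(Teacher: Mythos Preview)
Your proof is correct and follows essentially the same route as the paper: part (a) via the Galois correspondence applied to $K(R)$, part (b) by showing the orbit-products have $G$-invariant coefficients and that a nontrivial factor would yield a proper $G$-stable subset of an orbit, and part (c) as the $m=1$ case of (b). Your write-up is in fact slightly more detailed than the paper's (you spell out the converse direction in (b) explicitly), but there is no substantive difference in approach.
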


\begin{proof}
Let $E\subseteq F$ be the field generated by $R$. By the Galois
correspondence $E=F$ if and only if no $\sigma\in G$ fixes $E$. This
implies (a).

For (b) notice that the coefficients of each $f_i(X) =
\prod_{\alpha\in R_i}(X-\alpha)$ are $G$-invariant, hence
$f_i\in K[X]$. Furthermore, if $f_i$ were reducible, by Galois
correspondence, there would been a proper nonempty subset of
$R_i$ which is $G$-invariant, a contradiction. (c) is a special
case of (b).
\end{proof}

Let us describe the second type of actions. Let $H\leq G$ be a
subgroup. Then $G$ acts naturally on the cosets $G/H$ of $H$ in $G$
by left multiplication. This action is (non-canonically)
isomorphic\footnote{If $G$ is an infinite group, then we assume that
it is profinite and that $H$ is open in $G$.} to the previous
action. Namely, let $E = F^H$ be the fixed field of $H$ in $F$, let
$\alpha_1,\ldots,\alpha_n\in E$ be a generating set of $E/K$, i.e.\
$E=K(\alpha_1,\ldots,\alpha_n)$, and let $f(X)\in K[X]$ be the
minimal monic polynomial with $\alpha_1,\ldots,\alpha_n$ as roots.
Then $G$ permutes the roots of $f(X)$, and, by the Galois
correspondence, $\gal(F/E)=H$. This implies that the stabilizer of
$\alpha_1,\ldots,\alpha_n$ is $H$.

\section{Embedding Problems}\label{sec:EP}
A profinite group is defined as an inverse limit of finite groups,
or equivalently as a totally disconnected compact Hausdorff group
\cite[Lemma 1.1.7]{FriedJarden2005}. The former description makes it
plausible to characterize a profinite group $\Gamma$ via finite
objects. If $\Gamma$ is finitely generated\footnote{in the
topological sense.}, then it is determined (up-to-isomorphism) by
the set
\[
\{\Gamma/N \mid N \mbox{ is open and normal in }
\Gamma\}/\textnormal{isom.}
\]
of all finite quotients \cite[Proposition 16.10.7]{FriedJarden2005}.

However in general this set of finite quotient does not
determine $\Gamma$. For example, the direct product $\Gamma_1 =
\prod_{G} G$ of all finite groups $G$ and the free profinite
group of countable rank $\Gamma_2 = \Fhat_\omega$, both have all
finite groups as quotients, but $\Gamma_1\not\cong \Gamma_2$
(since, e.g., $\Gamma_2$ has no torsion). Therefore a more
refined finite object -- \emph{finite embedding problems} -- is
needed.

Let us present two classical results before going into details.
\begin{itemize}
\item
A profinite group $\Gamma$ is projective if and only if every finite
embedding problem is weakly solvable
\cite[Lemma~7.6.1]{RibesZalesskii}.
\item
Let $\Gamma$ be a profinite group of infinite rank $\kappa$. Then
$\Gamma$ is free if and only if every non-trivial finite embedding
problem has $\kappa$ solutions
\cite[Theorem~25.1.7]{FriedJarden2005}.
\end{itemize}

\subsection{Embedding Problems for Profinite Groups}
Let $\Gamma$ be a profinite group. An \textbf{embedding problem} for
$\Gamma$ is a diagram
\[
\xymatrix{%
    &\Gamma\ar[d]^{\mu}\ar@{.>}[dl]_{\exists\theta?}\\
G\ar[r]^\alpha
    &A,
}%
\]
where $G$ and $A$ are profinite groups and $\mu$ and $\alpha$ are
(continuous) epimorphisms. In short we write $(\mu,\alpha)$.

A \textbf{solution} of $(\mu,\alpha)$ is an epimorphism
$\theta\colon \Gamma\to G$ such that $\alpha\theta = \mu$. If
$\theta$ satisfies $\alpha\theta = \mu$ but is not necessarily
surjective, we say that $\theta$ is \textbf{weak solution}. In
particular, a profinite group $G$ is a quotient of $\Gamma$ if and
only if the embedding problem $(\Gamma\to 1, G\to 1)$ is solvable.

If $G$ is finite (resp.\ $\alpha$ splits), we say that the embedding
problem is \textbf{finite} (resp.\ \textbf{split}).

Two embedding problems $(\mu\colon \Gamma\to A, \alpha\colon G\to
A)$ and $(\nu \colon \Gamma \to B, \beta \colon H\to B)$ are said to
be \textbf{equivalent} if there exist isomorphisms $i\colon G\to H$
and $j\colon A\to B$ for which the following diagram commutes.
\[
\xymatrix{%
G\ar[r]^{\alpha} \ar[d]^i
    &A\ar[d]^j
        &\Gamma\ar@{=}[d]\ar[l]_{\mu}\\
H\ar[r]^{\beta}
    &B
        &\Gamma\ar[l]_{\nu}
}%
\]
It is evident that any (weak) solution of $(\mu,\alpha)$ corresponds
to a (weak) solution of $(\nu,\beta)$ and vice versa.

The following lemma gives an obvious, but useful, criterion for a
weak solution to be a solution (i.e.\ surjective).

\begin{lemma}\label{lem:criterionforproperness}
A weak solution $\theta\colon \Gamma \to G$ of an embedding problem
$(\mu\colon \Gamma\to A, \alpha \colon G\to A)$ is surjective if and
only if $\ker(\alpha) \leq \theta(\Gamma)$.
\end{lemma}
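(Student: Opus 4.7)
The plan is to prove the two implications separately, both of which are essentially diagram-chasing with the defining relation $\alpha\theta=\mu$ plus the surjectivity of $\mu$ and $\alpha$.

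For the forward direction, if $\theta$ is surjective then $\theta(\Gamma)=G$, which trivially contains $\ker(\alpha)$. No content here; I would dispatch it in a single sentence.

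For the backward direction, the key idea is that the image $\theta(\Gamma)$ is automatically a subgroup of $G$ that maps onto $A$ under $\alpha$ (because $\alpha\theta=\mu$ and $\mu$ is surjective). First I would pick an arbitrary $g\in G$ and set $a=\alpha(g)\in A$. Using surjectivity of $\mu$, choose $\gamma\in\Gamma$ with $\mu(\gamma)=a$; then $\alpha(\theta(\gamma))=\mu(\gamma)=\alpha(g)$, so the element $g\theta(\gamma)^{-1}$ lies in $\ker(\alpha)$. By hypothesis $\ker(\alpha)\subseteq\theta(\Gamma)$, so $g\theta(\gamma)^{-1}\in\theta(\Gamma)$, and multiplying on the right by $\theta(\gamma)\in\theta(\Gamma)$ gives $g\in\theta(\Gamma)$. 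Since $g$ was arbitrary, $\theta(\Gamma)=G$.

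There is no real obstacle; the only thing worth being slightly careful about, in the profinite setting, is that $\theta(\Gamma)$ is a closed subgroup of $G$ (as the continuous image of a compact group in a Hausdorff group), which ensures the set-theoretic argument above works with no extra topological closure step. I would mention this once and then conclude.
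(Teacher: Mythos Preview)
Your proof is correct and essentially identical to the paper's: both pick $g\in G$, lift $\alpha(g)$ through $\mu$ to some $\gamma\in\Gamma$, observe that $g$ and $\theta(\gamma)$ differ by an element of $\ker(\alpha)\leq\theta(\Gamma)$, and conclude. Your closing remark about closedness is harmless but unnecessary, since the argument already shows every $g\in G$ lies in $\theta(\Gamma)$ set-theoretically.
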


\begin{proof} Suppose $\ker(\alpha)\leq \theta(\Gamma)$.
Let $g\in G$, put $a=\alpha(g)$, and let $f\in \mu^{-1} (a)$. Then
$\theta(f)^{-1} g \in \ker(\alpha)\leq \theta(\Gamma)$, and hence
$g\in \theta(\Gamma)$. The converse is obvious.
\end{proof}

\subsection{Embedding Problems for Fields}
Let $K$ be a field with a separable closure $K_s$. The absolute
Galois group $\gal(K) = \gal(K_s/K)$ is profinite. This defines the
notion of an embedding problem for $K$.

More precisely, an embedding problem for a field $K$ is an embedding
problem \[(\mu\colon \gal(K) \to A, \alpha\colon G\to A)\] for
$\gal(K)$. If we denote by $L$ the fixed field of $\ker(\mu)$, then
$\mu$ factors as $\mu = \mugag\mu_0$, where $\mu_0\colon \gal(K)\to
\gal(L/K)$ is the restriction map and $\mugag\colon \gal(L/K)\to A$
is an isomorphism. Then the embedding problems $(\mu,\alpha)$ and
$(\mu_0,\mugag^{-1}\alpha)$ are equivalent.
So, from now on, we shall assume that $A = \gal(L/K)$ and $\mu$ is
the restriction map (unless we explicitly specify differently).
\begin{equation}\label{EP:basic}
\xymatrix{%
    &\gal(K)\ar[d]^{\mu}\ar@{.>}[dl]_{\exists\theta?}\\
G\ar[r]^(0.35){\alpha}
    &\gal(L/K)
}%
\end{equation}
Another piece of notation is that of solution fields: Let
$\theta\colon \gal(K) \to G$ be a weak solution of $(\mu,\alpha)$.
The fixed field of $\ker(\theta)$ is called the \textbf{solution
field}.

In terms of fields, a weak solution of an embedding problem
corresponds to a $K$-embedding of the field $L$ inside the
solution field $F$ in such a way that the restriction map
$\gal(F/K)\to \gal(L/K)$ coincides with $\alpha$. In particular,
the embedding problems $(\mu,\alpha)$ and the embedding problem
defined by the restriction map, i.e.\ $(\mu, \res\colon
\gal(F/K) \to \gal(L/K))$, are equivalent.

\subsection{Rational and Geometric Embedding Problems}
We define embedding problems coming from geometric objects.

\begin{definition}
Let $E$ be a finitely generated regular extension of $K$, let $F/E$
be a Galois extension, and let $L = F\cap K_s$, where $K_s$ is a
separable closure of $K$). Then the restriction map $\alpha \colon
\gal(F/E)\to \gal(L/K)$ is surjective, since $E\cap K_s = K$.
Therefore
\begin{equation}\label{EP:fields}
\xymatrix{%
    &\gal(K)\ar[d]^{\mu}\\
\gal(F/E)\ar[r]^{\alpha}
    &\gal(L/K)
}%
\end{equation}
is an embedding problem for $K$. We call such an embedding problem
\textbf{geometric embedding problem}.

If $E=K(t_1,\ldots,t_e)$ is a field of rational functions over $K$
then we call \eqref{EP:fields} which gets the form
\begin{equation}\label{EP:regularlysolvable}
\xymatrix{%
    &\gal(K)\ar[d]^{\mu}\\
\gal(F/K(\bft))\ar[r]^{\alpha}
    &\gal(L/K)
}%
\end{equation}
\textbf{rational embedding problem}.
\end{definition}

It seems that the rationality of an embedding problem depends on
$e\geq 1$. However Lemma~\ref{lem:regularsolvablewithonet} shows
that if the condition is satisfied for some $e\geq 1$, then it also
holds for $e=1$.

\begin{lemma}
Every finite embedding problem is equivalent to a geometric
embedding problem.
\end{lemma}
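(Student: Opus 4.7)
The plan is to construct, starting from a given finite embedding problem $(\mu\colon \gal(K)\to \gal(L/K),\ \alpha\colon G\to\gal(L/K))$, an explicit finitely generated regular extension $E/K$ and a Galois extension $F/E$ whose associated geometric embedding problem coincides with $(\mu,\alpha)$ up to the evident identifications.

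The construction I would use is the standard ``twisted regular representation'' trick. Take one indeterminate $t_g$ for each $g\in G$ and set $F=L(t_g\mid g\in G)$. Let $G$ act on $F$ by letting $h\in G$ act on $L$ via $\alpha(h)\in\gal(L/K)$ and on the indeterminates by $h\cdot t_g = t_{hg}$. Define $E=F^G$. Then Artin's theorem gives that $F/E$ is Galois with $\gal(F/E)=G$, and by construction the restriction map $\gal(F/E)\to\gal(L/K)$ equals $\alpha$. Since $G$ is finite, $F/L$ is finitely generated and hence $E/K$ is finitely generated as well.

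What remains are two verifications. First, that $F\cap K_s=L$: any element of $F$ algebraic over $L$ lies in $L$ because $F=L(\mathbf t)$ is purely transcendental over $L$, and every element of $K_s$ is algebraic over $K$ hence over $L$, so $F\cap K_s\subseteq L$; the reverse inclusion is obvious. Second, that $E/K$ is regular: if $x\in E$ is algebraic over $K$ then $x\in F\cap K_s=L$, so $x\in L\cap E=L^G=L^{\alpha(G)}=L^{\gal(L/K)}=K$, using that $\alpha$ is surjective. These two facts make $(\alpha\colon\gal(F/E)\to\gal(L/K))$ a geometric embedding problem in the sense of the paper's definition.

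Finally, the equivalence with the original $(\mu,\alpha)$ is immediate: we may take both comparison isomorphisms $i\colon G\to\gal(F/E)$ and $j\colon\gal(L/K)\to\gal(L/K)$ to be the identity, since by construction the restriction map $\gal(F/E)\to\gal(L/K)$ is literally $\alpha$. The only step that requires care is the regularity argument for $E/K$; everything else is formal. I expect the main obstacle (really just a bookkeeping point) is making sure the twisted $G$-action on $F$ is well defined and that the fixed field $L^G$ computed through $\alpha$ is exactly $K$, which forces using the surjectivity of $\alpha$ at a key moment.
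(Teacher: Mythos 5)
Your construction is correct and is essentially the standard one that the paper invokes by citing Fried--Jarden, Lemma~11.6.1: let $G$ act on $F=L(t_g\mid g\in G)$ through $\alpha$ on $L$ and by left translation on the indeterminates, and take $E=F^G$. One small point to add: regularity of $E/K$ requires separability of $E/K$ as well as $K$ being algebraically closed in $E$ (and ``algebraic over $K$'' a priori lands you in $\Kgal$, not $K_s$); both issues evaporate because $F=L(\bft)$ has $\bft$ as a separating transcendence basis over $K$ and $L$ is the full algebraic closure of $K$ in $F$, so every subextension of $F/K$ is separable and the relative algebraic closure argument goes through as you wrote it.
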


See \cite[Lemma~11.6.1]{FriedJarden2005} for the proof of this
result. Now we can restrict and discuss only geometric embedding
problems.

We say that a finite group is \textbf{regularly realizable} if there
exists a Galois extension $F/K(\bft)$ such that $F$ is regular over
$K$ and $G\cong \gal(F/K(\bft))$. In other words, in
\eqref{EP:regularlysolvable} we have $L=K$. The following classical
result asserts that certain families of finite groups are regularly
realizable over any field:

\begin{theorem}\label{thm:regulargroups}
All abelian groups, all symmetric groups, and all alternating groups
are regularly realizable over any field.
\end{theorem}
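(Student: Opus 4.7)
My plan is to handle the three families separately via explicit constructions of the Galois extension $F/K(\bft)$.

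For the symmetric group $S_n$, I would take algebraically independent indeterminates $X_1,\ldots,X_n$ over $K$ and set $F=K(X_1,\ldots,X_n)$. The natural permutation action of $S_n$ on the $X_i$ is faithful, so $F/F^{S_n}$ is Galois with group $S_n$. Since the elementary symmetric polynomials $e_1,\ldots,e_n$ are algebraically independent over $K$ and generate $F^{S_n}$, the base field is a rational function field $K(\bft)$; the top field $F$ is purely transcendental over $K$, hence in particular regular over $K$. For the alternating group $A_n$, I would work inside the same $F$ and consider $F^{A_n}$. The task reduces to producing $n$ algebraically independent generators of $F^{A_n}$ over $K$. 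In characteristic different from $2$, I would take $e_1,\ldots,e_{n-1}$ together with the ``square root of the discriminant'' $\delta=\prod_{i<j}(X_i-X_j)$, and verify using $[F^{A_n}:K(e_1,\ldots,e_n)]=2$ and a direct algebraic-independence count that these are a transcendence basis; in characteristic $2$ I would appeal to a standard substitute $A_n$-invariant in place of $\delta$.

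For finite abelian groups, I would first reduce to cyclic groups: if $A=\prod_{i=1}^r C_{m_i}$ and each $C_{m_i}$ admits a regular realization over some $K(\bft^{(i)})$ with pairwise disjoint parameter tuples, then the compositum realizes $A$ regularly over $K(\bft^{(1)},\ldots,\bft^{(r)})$. To realize a cyclic group $C_m$ with $m=p^a n$ and $(n,p)=1$ (where $p=\mathrm{char}\,K$, possibly $0$), I would decompose $C_m\isom C_{p^a}\times C_n$ and build each factor separately: for the prime-to-$p$ part, start with the Kummer extension $L(t,t^{1/n})/L(t)$ over $L=K(\zeta_n)$ and descend to $K(t)$ by the standard twisting argument using the action of $\gal(L/K)$ on $\mu_n$; for the $p$-power part, use Artin--Schreier--Witt theory, realizing $C_{p^a}$ as the Galois group of a generic Witt-vector equation over $K(t)$.

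The step I expect to be the main obstacle is the $A_n$ case in characteristic $2$, where $\delta$ becomes a square and one must exhibit a different rational invariant spanning the fixed field. The descent step in the abelian case also requires some care to ensure that the descended extension remains regular over $K$, but this is a routine Galois-cohomological manipulation.
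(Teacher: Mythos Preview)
The paper does not supply its own proof; it simply refers to \cite{FriedJarden2005}. Your arguments for $S_n$ and for finite abelian groups follow standard lines and are essentially fine. The real problem is your treatment of $A_n$.

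You claim that $e_1,\ldots,e_{n-1},\delta$ generate $F^{A_n}$ over $K$. These $n$ elements do form a transcendence basis, but for $n\ge 3$ they do \emph{not} generate. Take $n=3$: the nontrivial coset of $A_3$ in $S_3$ induces an automorphism of $F^{A_3}$ fixing $e_1,e_2,e_3$ and sending $\delta\mapsto-\delta$. Hence if $e_3$ lay in $K(e_1,e_2,\delta)$ it would already lie in the even part $K(e_1,e_2,\delta^2)=K(e_1,e_2,D)$, where $D$ is the discriminant. But $D$ is quadratic in $e_3$ and one checks directly that $e_3\notin K(e_1,e_2,D)$; thus $[F^{A_3}:K(e_1,e_2,\delta)]=2$, not $1$.

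Worse, what you are implicitly attempting --- showing that $K(X_1,\ldots,X_n)^{A_n}$ is a rational function field over $K$ --- is an instance of Noether's problem that is, to my knowledge, open for general $n$ even over $\bbQ$. The route taken in the cited reference bypasses this entirely: one writes down an explicit one-parameter family of polynomials over $K(t)$ and proves, via ramification computations at suitably chosen places, that its Galois group is $A_n$ and that the splitting field is regular over $K$, with separate ad hoc arguments in small characteristic. So the obstacle you single out (the characteristic-$2$ replacement for $\delta$) is not the main one; your proposed generating set already fails over $\bbQ$ for $n=3$.
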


A proof can be found e.g.\ in \cite{FriedJarden2005}. (This result
is by no mean the state of art.)

\section{Fiber Products}
Let $\alpha_1\colon G_1\to A$ and $\alpha_2 \colon G_2\to A$ be
epimorphisms of (profinite) groups. Then the fiber product is
defined by
\[
G_1\times_A G_2 = \{ (g_1,g_2)\in G_1\times G_2 \mid \alpha_1(g_1) =
\alpha_2(g_2)\}\leq G_1\times G_2.
\]
It is equipped with two natural projection maps $\pi_i\colon
G_1\times_A G_2 \to G_i$ defined by $\pi_i(g_1,g_2) = g_i$, $i=1,2$.

Fiber products rise up `naturally' in many situations. We shall use
them mainly to find a convenient embedding problem that dominates a
given embedding problem.
\begin{definition}
An embedding problem $(\mu'\colon \Gamma\to A',\alpha'\colon G'\to
A')$ for a profinite group $\Gamma$ \textbf{dominates} an embedding
problem $(\mu\colon \Gamma\to A, \alpha\colon G\to A)$ if every
(weak) solution $\theta'$ of $(\mu',\alpha')$ induces a (weak)
solution $\theta$ of $(\mu,\alpha)$ via a commutative diagram
\begin{equation}\label{eq:dom}
\xymatrix{%
    &\Gamma\ar[d]_{\mu'}\ar@/^/[dd]^{\mu}\ar@{.>}[dl]_{\theta'}\\
G'\ar[d]^\pi\ar[r]^{\alpha'}
    &A'\ar[d]_{\nu}\\
G\ar[r]^\alpha
    &A.
}%
\end{equation}
Here $\pi\colon G'\to G$ is surjective.
\end{definition}

The following lemma gives a dominating embedding problem via fiber
products.

\begin{lemma}\label{lem:dominatin_fiber_product}
Let $(\mu \colon \Gamma\to A, \alpha\colon G\to A)$ be an embedding
problem for a profinite group $\Gamma$. Let $\mu'\colon \Gamma\to
A'$ be an epimorphism such that $\ker(\mu')\leq \ker(\mu)$. Let $G'
= G\times_A A'$ and let $\alpha'\colon G' \to A'$ be the
corresponding projection map. Then the embedding problem $(\mu',
\alpha')$ dominates $(\mu,\alpha)$. Moreover
\begin{enumerate}
\item
Assume that $\theta\colon \Gamma\to G$ is a weak solution and
$\ker(\mu')\leq \ker(\theta)$. Then $(\mu',\alpha')$ splits. (In
particular, if $(\mu,\alpha)$ splits, then so does
$(\mu',\alpha')$.)
\item
A weak solution $\theta$ of $(\mu,\alpha)$ induces a weak solution
$\theta'$ of $(\mu',\alpha')$ defined by $\theta'(x) = (\theta(x),
\mu'(x))$.
\end{enumerate}
\end{lemma}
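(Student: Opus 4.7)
The proof plan is a straightforward diagram chase, so I will focus on organizing the unwinding clearly.

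\medskip\noindent
\textbf{Setting up the diagram.} The hypothesis $\ker(\mu')\leq \ker(\mu)$ lets me factor $\mu$ through $\mu'$: since $\mu'$ is a continuous epimorphism and $\ker(\mu')\leq \ker(\mu)$, there is a unique continuous epimorphism $\nu\colon A'\to A$ with $\mu = \nu\mu'$. This $\nu$ is the map used to form the fiber product $G' = G\times_A A'$, and it is the diagonal map $\nu\colon A'\to A$ in \eqref{eq:dom}. By construction, if $\pi\colon G'\to G$ denotes the other projection, then $\alpha\pi = \nu\alpha'$. I will also need that $\alpha'$ is surjective (so that $(\mu',\alpha')$ really is an embedding problem): for any $a'\in A'$ pick $g\in\alpha^{-1}(\nu(a'))$ and note $(g,a')\in G'$ projects to $a'$; by the analogous argument $\pi$ is surjective.

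\medskip\noindent
\textbf{Domination.} Given a weak solution $\theta'\colon \Gamma\to G'$ of $(\mu',\alpha')$, put $\theta = \pi\theta'$. Then
\[
\alpha\theta \;=\; \alpha\pi\theta' \;=\; \nu\alpha'\theta' \;=\; \nu\mu' \;=\; \mu,
\]
so $\theta$ is a weak solution of $(\mu,\alpha)$. If $\theta'$ is surjective, then $\theta = \pi\theta'$ is surjective since $\pi$ is; hence solutions map to solutions. This gives the commutative diagram \eqref{eq:dom}.

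\medskip\noindent
\textbf{Item (a).} Suppose $\theta\colon\Gamma\to G$ is a weak solution of $(\mu,\alpha)$ with $\ker(\mu')\leq \ker(\theta)$. Then $\theta$ factors as $\theta = \tau\mu'$ for a unique continuous homomorphism $\tau\colon A'\to G$. Applying $\alpha$ on the left and using $\alpha\theta = \mu = \nu\mu'$ and surjectivity of $\mu'$ gives $\alpha\tau = \nu$. Thus the formula $s(a') = (\tau(a'), a')$ defines a continuous homomorphism $s\colon A'\to G'$ landing in the fiber product, and $\alpha' s = \id_{A'}$, so $s$ splits $\alpha'$. For the ``in particular'' part, if $s_0\colon A\to G$ splits $\alpha$, then $\theta := s_0\mu$ is a weak solution of $(\mu,\alpha)$ whose kernel equals $\ker(\mu)$ (since $s_0$ is injective), and hence contains $\ker(\mu')$; the first half now gives a splitting.

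\medskip\noindent
\textbf{Item (b).} Define $\theta'\colon\Gamma\to G\times A'$ by $\theta'(x) = (\theta(x), \mu'(x))$. The equality $\alpha\theta = \mu = \nu\mu'$ shows $\theta'(x)\in G\times_A A' = G'$ for all $x$, so $\theta'$ is a continuous homomorphism $\Gamma\to G'$, and $\alpha'\theta'(x) = \mu'(x)$, so it is a weak solution of $(\mu',\alpha')$.

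\medskip\noindent
\textbf{Expected obstacle.} There is no real obstacle; the statement is a formal consequence of the universal property of the fiber product. The only point that deserves care is the bookkeeping for the ``in particular'' clause of (a), where one must observe that composing $\mu$ with an injective splitting does not enlarge the kernel, so that the kernel hypothesis of the first half of (a) is satisfied automatically.
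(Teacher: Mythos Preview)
Your proof is correct and follows essentially the same approach as the paper: both set up the induced map $\nu\colon A'\to A$, obtain domination from the commuting square $\alpha\pi=\nu\alpha'$, and prove (a) by factoring the map $x\mapsto(\theta(x),\mu'(x))$ through $A'\cong\Gamma/\ker(\mu')$ to produce the section. Your write-up is simply more explicit---you verify surjectivity of $\alpha'$ and $\pi$, and you spell out the ``in particular'' clause---where the paper leaves these as tacit.
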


\begin{proof}
The commutative diagram \eqref{eq:dom}, where $\pi$ is the
projection map and $\nu$ is induced by $\mu'$ and $\mu$ implies that
$(\mu',\alpha')$ dominates $(\mu,\alpha)$, as needed.

(b) is trivial.

Finally we prove (a). The map $\theta\times \mu' \colon \Gamma\to
G'$ sends $x\in \Gamma$ to the pair $(\theta(x),\mu'(x))$. Let
$K=\ker(\theta\times \mu')$. Clearly $K = \ker(\theta)\cap
\ker(\mu')$ and hence by assumption $K=\ker(\mu')$. Therefore
$\theta\times \mu'$ induces a map $\beta'\colon A'\to G'$. Then
$\alpha'\beta'\mu' = \alpha' (\theta\times \mu') = \mu'$; hence
$\alpha'\beta'=\id$.
\end{proof}

A fiber product also describes the Galois group of the compositum of
two Galois extensions. The restriction maps are then realized as the
projection maps:

\begin{lemma}\label{lem:GaloisandFiber}
Let $M_1$ and $M_2$ be Galois extensions of a field $K$ and let
$L=M_1\cap M_2$. Then $\gal(M_1M_2/K)$ is canonically isomorphic to
$\gal(M_1/K) \times_{\gal(L/K)} \gal(M_2/K)$. Under this isomorphism
the restriction maps coincide with the projection maps.
\end{lemma}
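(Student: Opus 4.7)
The plan is to define the map
\[
\Phi\colon \gal(M_1M_2/K)\to \gal(M_1/K)\times \gal(M_2/K),\qquad \sigma\mapsto (\sigma|_{M_1},\sigma|_{M_2}),
\]
and show that $\Phi$ is an injective homomorphism whose image is exactly the fiber product over $\gal(L/K)$. Because $M_1$ and $M_2$ are Galois over $K$, they are stable under $\gal(M_1M_2/K)$, so the restrictions are well-defined elements of $\gal(M_i/K)$. Since $\sigma|_{M_1}$ and $\sigma|_{M_2}$ both agree with $\sigma|_L$, the image lies in $\gal(M_1/K)\times_{\gal(L/K)}\gal(M_2/K)$, and by construction $\Phi$ commutes with the restriction-vs-projection structure.

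Injectivity is immediate: if $\sigma$ fixes both $M_1$ and $M_2$ pointwise, then it fixes the compositum $M_1M_2$, so $\sigma=\id$.

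The main step is surjectivity. Given a pair $(\sigma_1,\sigma_2)$ in the fiber product, I first extend $\sigma_1$ to some automorphism $\sigma\in\gal(M_1M_2/K)$ (possible since $M_1M_2/M_1$ is Galois, so any $K$-automorphism of $M_1$ lifts to $M_1M_2$; alternatively, extend $\sigma_1$ to $K_s$ and restrict). Set $\sigma_2':=\sigma|_{M_2}$. Since $\sigma_2'|_L=\sigma_1|_L=\sigma_2|_L$, the element $\sigma_2(\sigma_2')^{-1}$ lies in $\gal(M_2/L)$. The key ingredient is that because $M_1/L$ and $M_2/L$ are Galois with $M_1\cap M_2=L$, they are linearly disjoint over $L$, so the restriction map
\[
\rho\colon \gal(M_1M_2/M_1)\To \gal(M_2/L)
\]
is an isomorphism. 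Let $\tau\in\gal(M_1M_2/M_1)$ be the preimage of $\sigma_2(\sigma_2')^{-1}$ under $\rho$. Then $\tau\sigma$ restricts to $\sigma_1$ on $M_1$ and to $\sigma_2(\sigma_2')^{-1}\sigma_2'=\sigma_2$ on $M_2$, giving the required preimage of $(\sigma_1,\sigma_2)$.

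The only real obstacle is verifying that $\rho$ is an isomorphism, which follows from the classical fact that Galois extensions with trivial intersection are linearly disjoint; injectivity of $\rho$ is clear (its kernel fixes both $M_1$ and $M_2$) and surjectivity is precisely $[M_1M_2:M_1]=[M_2:L]$. The final claim, that $\Phi$ intertwines the restriction maps $\gal(M_1M_2/K)\to\gal(M_i/K)$ with the projections $\pi_i$ of the fiber product, is then immediate from the definition of $\Phi$.
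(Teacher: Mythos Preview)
Your proof is correct and is exactly the standard argument; the paper itself does not give a proof but simply writes down the map $\sigma\mapsto(\sigma|_{M_1},\sigma|_{M_2})$ and cites \cite[VI \S1 Theorem~1.14]{Lang2002}. So you have supplied the details that the paper outsources to Lang. One small remark: your phrasing ``surjectivity is precisely $[M_1M_2:M_1]=[M_2:L]$'' tacitly assumes finite degree, whereas the lemma as stated allows infinite Galois extensions; the linear disjointness argument you give (kernel of $\rho$ is trivial, and $M_1,M_2$ linearly disjoint over $L$ since one of them is Galois with $M_1\cap M_2=L$) works without change in the infinite case, so this is only a cosmetic point.
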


\begin{proof}
The canonical isomorphism
\[
\gal(M_1M_2/K) \to \gal(M_1/K) \times_{\gal(L/K)} \gal(M_2/K)
\]
is given by $\sigma \mapsto (\sigma|_{M_1}, \sigma|_{M_2})$. (See
\cite[VI \S1 Theorem~1.14]{Lang2002}.)
\end{proof}

There is a connection between fiber products and independent
solutions of embedding problems. Let $\Psi = \{\theta_i\mid i\in I
\}$ be a family of solutions of a finite embedding problem
$(\mu\colon \Gamma \to A, \alpha\colon G\to A)$ for a profinite
group $\Gamma$. We call $\Psi$ \textbf{independent} if $\{\ker
(\theta_i)\mid i\in I\}$ is an independent family w.r.t.\ the Haar
measure of $\ker (\mu)$. That is to say, $\Psi$ is independent if
and only if
\[
(\ker(\mu):\bigcap_{i\in I_0}\ker(\theta_i) ) = \prod_{i\in I_0}
(\ker(\mu): \ker(\theta_{i}))
\]
for every finite subset $I_0\subseteq I$.

We denote
\[
G^I_A = \{(g_i)\in G^I \mid \alpha(g_i) = \alpha(g_j),\ \forall
i,j\in I\}
\]
for the fiber product of $I$ copies of $G$. For each $i\in I$ let
$\pi_i\colon G^I_A \to G$ be the $i$th projection map. We have an
epimorphism $\alpha_I \colon G^I_A \to A$ defined by $\alpha_I =
\alpha \pi_i$, for some $i\in I$. Clearly this definition is
independent of the choice of $i\in I$.

\begin{lemma}
\label{lem:independent_fiber} If $\theta \colon \Gamma \to G^I_A$ is
a solution of $(\mu , \alpha_I)$, then $\{\theta_i=\pi_i\theta\mid
i\in I\}$ is an independent family of solutions of $(\mu,\alpha)$.
\end{lemma}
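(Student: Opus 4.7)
The plan is to verify the two defining conditions: that each $\theta_i = \pi_i\theta$ is a genuine solution of $(\mu,\alpha)$, and that the indices $(\ker\mu : \bigcap_{i\in I_0}\ker\theta_i)$ factor as a product, for every finite $I_0 \subseteq I$. Both parts reduce to straightforward kernel bookkeeping once the right identification of $\ker(\alpha_I)$ is made.

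First I would check that $\theta_i$ is a solution. The relation $\alpha\theta_i = \alpha\pi_i\theta = \alpha_I\theta = \mu$ is immediate. For surjectivity, I note that $\pi_i\colon G^I_A\to G$ itself is surjective: given $g\in G$, set $a=\alpha(g)$ and, for each $j\ne i$, pick any $g_j\in\alpha^{-1}(a)$; the resulting tuple lies in $G^I_A$ and projects to $g$. Composing with the surjection $\theta$ gives $\theta_i$ surjective.

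The heart of the argument is the identification $\ker(\alpha_I) = (\ker\alpha)^I$: a tuple $(g_j)$ in $G^I_A$ lies in $\ker(\alpha_I)$ iff $\alpha(g_j)=1$ for all $j$, and any such tuple automatically satisfies the fiber-product compatibility. Since $\ker(\pi_i)\subseteq\ker(\alpha_I)$, it consists of tuples in $(\ker\alpha)^I$ whose $i$-th coordinate is $1$, and therefore
\[
\bigcap_{i\in I_0}\ker(\pi_i) = \{(g_j)\in(\ker\alpha)^I : g_i=1 \text{ for all } i\in I_0\},
\]
which has index $|\ker\alpha|^{|I_0|}$ in $\ker(\alpha_I)$, computed coordinate-wise.

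To finish, I would pull these indices back through $\theta$. Because $\theta$ is surjective and $\alpha_I\theta=\mu$, the restriction $\theta|_{\ker\mu}\colon\ker\mu\twoheadrightarrow\ker\alpha_I$ is surjective, and taking preimages preserves indices. Hence
\[
(\ker\mu:\bigcap_{i\in I_0}\ker\theta_i) = (\ker\alpha_I:\bigcap_{i\in I_0}\ker\pi_i) = |\ker\alpha|^{|I_0|},
\]
and specializing to $I_0=\{i\}$ gives $(\ker\mu:\ker\theta_i)=|\ker\alpha|$. Thus $\prod_{i\in I_0}(\ker\mu:\ker\theta_i) = |\ker\alpha|^{|I_0|}$ matches the combined index, which is exactly the independence condition. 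The only step that demands any care is the identification of $\ker(\alpha_I)$ with $(\ker\alpha)^I$ inside the fiber product; everything else is formal.
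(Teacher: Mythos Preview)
Your argument is correct and follows essentially the same approach as the paper: both recognize that $\bigcap_{i\in I_0}\ker\theta_i$ is the preimage under $\theta$ of the tuples vanishing on $I_0$, and both compute the resulting index as $|\ker\alpha|^{|I_0|}=(|G|/|A|)^{|I_0|}$. The paper packages the intersection via the single map $\theta_{I_0}=\pi_{I_0}\theta\colon\Gamma\to G^{I_0}_A$ and reads off the index from $|G^{I_0}_A|/|A|$, whereas you work coordinate-wise inside $\ker\alpha_I=(\ker\alpha)^I$ and pull back through $\theta$; the content is the same.
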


\begin{proof}
For finite $I_0\subseteq I$ set $\pi_{I_0}\colon G^{I}_A \to
G^{I_0}_A$ and denote $\theta_{I_0} = \pi_{I_0}\theta$.
\[
\xymatrix{
    &\Gamma\ar[d]^{\mu}\ar[dl]_{\theta}\ar@/^7pt/[ddl]^{\theta_{I_0}}\\
G^I_A\ar[r]^{\alpha_I}\ar[d]_{\pi_{I_0}}
    &A\ar@{=}[d]\\
G^{I_0}_A\ar[r]^{\alpha_{I_0}}
    &A
}
\]
The above diagram with $I_0 =\{i\}$ implies that $\theta_i$ is a
solution, for any $i\in I$.

Since
\[
1 = \theta_{I_0}(x) = \pi_{I_0} (\theta(x)) \Longleftrightarrow
\forall i\in I_0,\ 1 = \pi_{i} (\theta(x)) =\theta_{i}(x),
\]
we get that $\bigcap_{i\in I_0}\ker(\theta_i) = \ker(\theta_{I_0})$.
Thus
\begin{eqnarray*}
(\ker(\mu):\bigcap_{i\in I_0}\ker(\theta_i) ) &=& (\ker(\mu):
\ker(\theta_{I_0})) = \frac{|G^{I_0}_A|}{|A|}  =
\left(\frac{|G|}{|A|}\right)^{|I_0|}\\ &=& \prod_{i\in I_0}
(\ker(\mu): \ker(\theta_{i})).
\end{eqnarray*}
This shows that $\{\theta_i\mid i\in I\}$ is independent family of
solutions, as needed.
\end{proof}

\section{Places}
In this section we recall some of the definitions and basic
properties of places, for reference see \cite{FriedJarden2005}.

A \textbf{place} $\phi$ of a field $F$ is a map of $\phi\colon F\to
M\cup \{\infty\}$ such that $M$ is a field and the following
properties hold.
\begin{enumerate}
\item $x+\infty = \infty + x = \infty$, $\forall x\in M$.
\item $x\cdot\infty = \infty \cdot x = \infty$, $\forall x\in M^\times$ ($:= M\smallsetminus \{0\}$).
\item $\phi(x+y) = \phi(x) + \phi(y)$ (whenever the right hand side is defined).
\item $\phi(xy) = \phi(x) \phi(y)$ (whenever the right hand side is defined).
\item $\exists x,y\in F$ such that $\phi(x) = \infty$ and
$\phi(y)\neq 0,\infty$.
\end{enumerate}
If $F$ and $M$ are extensions of a common field $K$, and $\phi(x) =
x$ for every $x\in K$, we say that $\phi$ is a \textbf{$K$-place}.

Every place $\phi$ of $F$ comes with a local ring $\calO_\phi =
\{x\in F \mid \phi(x)\neq \infty\}$ whose maximal ideal is
$\mfrak_\phi = \{x\in \calO_\phi \mid \phi(x) = 0\}$. The
quotient field $\calO_\phi/\mfrak_\phi$ is canonically
isomorphic to the \textbf{residue field} $\Fgag = \{\phi(x)\mid
x\in \calO_\phi\}$. Places are said to be \textbf{equivalent} if
they have the same local ring. In particular, equivalent places
have isomorphic residue fields.

Let $E/K$ be a regular extension and let $F/E$ be a finite Galois
extension. Assume that $\phi$ is a $K$-place of $E$. Then, by
Chevalley's lemma, the place $\phi$ extends to a place $\Phi\colon
F\to M\cup\{\infty\}$, where $M$ is the algebraic closure of $\Egag$
\cite[Proposition 2.3.1]{FriedJarden2005}. Assume from now on that
$\Fgag/\Egag$ is separable. Then $\Fgag/\Egag$ is, in fact, Galois
and the Galois group $\gal(\Fgag/\Egag)$ is then called the
\textbf{residue group}.

Let $L = F\cap K_s$, then $L\subseteq \Fgag$. (Indeed, since
$K\subseteq \calO_\Phi$, $L$ is integral over $K$, and $\calO_\Phi$
is integrally closed, it follows that $L\subseteq \calO_\Phi$.) By
composing $\Phi$ with an appropriate Galois automorphism of
$\Fgag/K$, we may assume that $\Phi$ is an $L$-place.

We call the subgroup
\[
D_{\Phi/\phi} = \{ \sigma\in \gal(F/E) \mid \sigma \calO_\Phi =
\calO_\Phi\} = \{ \sigma\in \gal(F/E) \mid \sigma \mfrak_\Phi =
\mfrak_\Phi\}
\]
of $\gal(F/E)$ the \textbf{decomposition group} of $\Phi/\phi$. The
fixed field of the decomposition group in $F$ is (as expected) the
\textbf{decomposition field} of $\Phi/\phi$. The decomposition field
is the maximal subextension of $F/E$ having the same residue field
as of $E$.

There is a natural epimorphism from the decomposition group to the
residue group, namely $\sigma \in D_{\Phi/\phi}\mapsto \sigmagag \in
\gal(\Fgag/\Egag)$, where $\sigmagag$ is defined by $\sigmagag(x +
\mfrak_\Phi) = \sigma x + \mfrak_\Phi$. The kernel of this
epimorphism is called the \textbf{inertia group}, and is denoted by
$I_{\Phi/\phi}$, viz.
\[
I_{\Phi/\phi} = \{ \sigma \in \gal(F/E) \mid x - \sigma (x) \in
\mfrak_\Phi, \forall x\in \calO_\Phi\}.
\]
If the inertia group is trivial, the place $\Phi$ is said to be
\textbf{unramified} over $\phi$ (or $\phi$ is unramified in $F$). In
other words, $\Phi$ is unramified over $\phi$ if and only if the
decomposition group is isomorphic to the residue group.

From now on we assume the $F,E$ are function fields. That is, we
assume that $F$ and $E$ are finite separable extensions of
$K(\bft)$, where $\bft=(t_1,\ldots,t_e)$ is a finite $e$-tuple of
variables for some $e\geq 1$.

\begin{lemma} \label{lem:RES}
Let $x\in F$ be such that $F = E(x)$. Then there exists a nonzero
$g(\bft)\in K[\bft]$ such that for every $K$-place $\phi$ of $F$ if
$\bfa = \phi(\bft)$ is finite and $g(\bfa)\neq 0$, then $\xgag =
\phi(x)$ is finite, $\phi$ is unramified over $E$, and $\Fgag =
\Egag(\xgag)$.
\end{lemma}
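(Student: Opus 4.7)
The plan is to encode all the necessary genericity conditions in a single polynomial $g \in K[\bft]$ via the minimal polynomial of $x$ over $E$ and its discriminant. Let $p(Y) \in E[Y]$ be the monic minimal polynomial of $x$ over $E$, of degree $n = [F:E]$, and let $d \in E^\times$ be its discriminant, which is nonzero since $F/E$ is separable. Pick a primitive element $u$ for $E/K(\bft)$ with minimal polynomial $q(T) \in K(\bft)[T]$, write each coefficient $c_i$ of $p$ as well as $d^{-1}$ as a $K(\bft)$-polynomial in $u$, and let $g \in K[\bft]$ be the product of the common denominators of these representations together with the denominators of the coefficients of $q$ and the discriminant of $q$. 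Then for any $K$-place $\phi$ of $F$ with $\bfa = \phi(\bft)$ finite and $g(\bfa) \neq 0$, each $\phi(c_i)$ is finite and $\phi(d) \neq 0, \infty$.

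Fixing such a $\phi$: the equation $p(x) = 0$ is monic with coefficients in $\calO_\phi$, so $x$ is integral over $\calO_\phi$, and since $\calO_\phi$ is a valuation ring (hence integrally closed in its fraction field $F$), we get $x \in \calO_\phi$, and so $\xgag = \phi(x)$ is finite. The specialized polynomial $\pgag(Y) \in \Egag[Y]$ then satisfies $\pgag(\xgag) = 0$ and has discriminant $\phi(d) \neq 0$, so it is separable of degree $n$.

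For the residue-field equality and unramifiedness I would analyze $\calO[x]$, where $\calO = \calO_\phi \cap E$ has maximal ideal $\mfrak$. Since $p$ is the minimal polynomial of $x$ over $E$ and $\deg p = n = [F:E]$, we have $\calO[x] \cong \calO[Y]/(p(Y))$, and reducing mod $\mfrak$ gives $\calO[x]/\mfrak\calO[x] \cong \Egag[Y]/(\pgag(Y)) \cong \prod_j \Egag(\xgag_j)$, a finite product of fields indexed by the distinct roots $\xgag_j$ of $\pgag$. By the standard correspondence between the extensions of $\phi|_E$ to $F$ and the maximal ideals of this reduction, each extension has residue field equal to the corresponding factor $\Egag(\xgag_j)$ and ramification index equal to the multiplicity of the matching factor of $\pgag$, which is $1$ throughout. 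Hence $\Fgag = \Egag(\xgag)$ and $\phi$ is unramified over $\phi|_E$. The main obstacle I expect is making this last correspondence rigorous, because $\calO$ is only a valuation ring (possibly non-Noetherian and non-discrete): one has to verify that $\calO[x]$ sits inside the integral closure of $\calO$ in $F$ and that its maximal ideals above $\mfrak$ correspond bijectively to the extensions of $\phi|_E$, which is standard in the function-field setting but needs care outside the Dedekind case.
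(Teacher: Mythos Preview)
Your outline is sound, and the discriminant idea is the right one, but your route diverges from the paper's in a way worth noting. The paper does \emph{not} work with the minimal polynomial of $x$ over $E$ and the valuation ring $\calO_\phi\cap E$; instead it takes $f(\bft,X)\in K[\bft,X]$ to be the minimal polynomial of $x$ over $K(\bft)$, sets $g(\bft)$ equal to the product of the leading coefficient and discriminant of $f$, and then passes to the Noetherian ring $R=K[\bft,g(\bft)^{-1}]$. The key observation is that the integral closure $S$ of $R$ in $F$ equals $R[x]$ (by the standard discriminant argument, cf.\ \cite[Definition~6.1.3]{FriedJarden2005}), and in particular $S=U[x]$ where $U$ is the integral closure of $R$ in $E$. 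This makes $S/U$ a ring cover in the sense of \cite{FriedJarden2005}, and the conclusion is read off from \cite[Lemma~6.1.4]{FriedJarden2005}.

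The payoff of the paper's choice is exactly the obstacle you flagged: by working over the Noetherian ring $R$ rather than over the (possibly non-discrete, non-Noetherian) valuation ring $\calO_\phi\cap E$, the standard Dedekind-style correspondence between primes of $S$ over a prime of $U$ and factors of the reduced minimal polynomial is available without any extra care. Your approach can be pushed through---once $\disc(p)$ is a unit in $\calO=\calO_\phi\cap E$ the algebra $\calO[x]$ is finite free and \'etale over $\calO$, hence equals the integral closure, and then a rank-count via $\dim_{\Egag}(\calO[x]/\mfrak\calO[x])=[F:E]$ together with the fundamental inequality forces all $e_\Phi=1$ and $\Fgag=\Egag(\xgag)$---but this is precisely the ``care outside the Dedekind case'' you anticipated. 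The paper simply sidesteps it.
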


\begin{proof}
Let $f(\bft,X) \in K[\bft,X]$ be the irreducible polynomial of $x$
over $K(\bft)$. Let $g(\bft)\in K[\bft]$ be the product of the
leading coefficient of $f$ (as a polynomial in $X$) and its
discriminant. Since $f$ is separable, $g(\bft)\neq 0$.

Let $R = K[\bft, g(\bft)^{-1}]$ and let $S$, $U$ be the integral
closures of $R$ in $F$, $E$, respectively. Then $S = R[x]$
(\cite[Definition 6.1.3]{FriedJarden2005}), in particular $S =
U[x]$. Conclude that $S/U$ is a ring cover. Now \cite[Lemma
6.1.4]{FriedJarden2005} finishes the proof.
\end{proof}

Let $x\in F$ be a primitive element of $F/E$, i.e.\ $F=E(x)$ and let
$g(\bft)\in K[\bft]$ from the above lemma. Assume that $\bfa =
\phi(\bft)$ is finite and $g(\bfa)\neq 0$. Let $f$ be the
irreducible polynomial of $x$ over $E$.

Then $\fgag=\phi(f)$ is a separable polynomial (since the $\Phi$ is
unramified over $E$). Therefore $\Phi$ induces a bijective map
between the set $\calX$ of all roots of $f$ and the set $\calXgag$
of all roots of $\fgag$. We thus have an isomorphism $\rho\colon
S_\calX\to S_\calXgag$. Recall that $D_{\Phi/\phi}$ embeds into
$S_\calX$ (since $\gal(F/E)$ does) and that $\gal(\Fgag/\Egag)$
embeds into $S_\calXgag$. Then the following result holds (cf.\
\cite[Lemma 6.1.4]{FriedJarden2005}).

\begin{lemma}\label{lem:act_geo_sol}
The restriction of $\rho\colon S_\calX\to S_\calXgag$ to the
decomposition group $D_{\Phi/\phi}$ coincides with the natural map
$D_{\Phi/\phi}\to \gal(\Fgag/\Egag)$.
\end{lemma}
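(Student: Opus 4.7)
The plan is to unpack the two maps and verify they agree on an arbitrary root of $f$. Both maps send $\sigma \in D_{\Phi/\phi}$ to a permutation of $\calXgag$: one sends $\sigma$ to $\rho(\sigma)$, viewing $\sigma$ first as a permutation of $\calX$ and then transporting along the bijection $\Phi\colon \calX\to\calXgag$; the other sends $\sigma$ to $\sigmagag$, restricted to $\calXgag$. To prove equality it suffices to show, for every $\alpha \in \calX$, that
\[
\Phi(\sigma(\alpha)) = \sigmagag(\Phi(\alpha)).
\]

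First I would check the prerequisite that each $\alpha \in \calX$ lies in $\calO_\Phi$, so that $\Phi(\alpha) \in \calXgag$ is defined. This is where I would invoke the construction in the proof of Lemma~\ref{lem:RES}: with $g(\bfa) \neq 0$, the element $x$, and hence each of its $\gal(F/E)$-conjugates, is integral over $U$, the integral closure of $R = K[\bft, g(\bft)^{-1}]$ in $E$; thus $\calX$ consists of elements integral over $\calO_\phi \subseteq \calO_\Phi$, and since $\calO_\Phi$ is integrally closed we get $\calX \subseteq \calO_\Phi$. The same lemma also guarantees that $\Phi$ is unramified over $\phi$ and that $\fgag$ is separable of the same degree as $f$, which is exactly what is needed for $\alpha \mapsto \Phi(\alpha)$ to be a bijection $\calX \to \calXgag$.

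The key computation is then immediate. Since $\sigma \in D_{\Phi/\phi}$ preserves both $\calO_\Phi$ and $\mfrak_\Phi$, we have $\sigma(\alpha) \in \calO_\Phi$ for every $\alpha \in \calX$, and
\[
\Phi(\sigma(\alpha)) \;=\; \sigma(\alpha) + \mfrak_\Phi \;=\; \sigmagag(\alpha + \mfrak_\Phi) \;=\; \sigmagag(\Phi(\alpha)),
\]
where the middle equality is nothing but the definition of the natural map $\sigma \mapsto \sigmagag$. This shows that $\rho(\sigma)$ and $\sigmagag$ induce the same permutation of $\calXgag$, which is exactly the assertion of the lemma.

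There is no real obstacle in this argument: once the definitions of $D_{\Phi/\phi}$, of $\sigmagag$, and of $\rho$ are spelled out, the conclusion is a one-line computation. The only preliminary point that warrants a moment of care is the integrality statement $\calX \subseteq \calO_\Phi$, and even that is essentially contained in the proof of Lemma~\ref{lem:RES}.
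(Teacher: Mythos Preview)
Your proof is correct: once the roots lie in $\calO_\Phi$, the identity $\Phi(\sigma\alpha)=\sigmagag(\Phi(\alpha))$ is an immediate consequence of the definition $\sigmagag(x+\mfrak_\Phi)=\sigma x+\mfrak_\Phi$, and this is exactly what one needs. The paper does not actually supply a proof of this lemma---it simply records the statement with a reference to \cite[Lemma~6.1.4]{FriedJarden2005}---so your direct verification is the natural way to fill in the details, and it matches what that reference does.
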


The above natural map relates to embedding problems in the following
way. An unramified place $\Phi/\phi$ for which $\Egag = K$ induces a
map $\Phi^* \colon \gal(K) \to \gal(F/E)$ which factors through the
residue group and whose image $\Phi^* (\gal(K)) = D_{\Phi/\phi}$ is
the decomposition group. We have
\[
\Phi(\Phi^*(\sigma) x) = \sigma \Phi(x),
\]
for all $\sigma\in \gal(K)$ and $x\in \calO_\Phi$. Thus if $\Phi$ is
an $L$-place, then $\res_{F,L}\circ \Phi^* = \res_{K_s,L}$. In other
words, $\Phi^*$ is a weak solution of the embedding problem
$$(\res\colon \gal(K)\to \gal(L/K), \res\colon \gal(F/E)\to
\gal(L/K)).$$ Such weak solutions are called \textbf{geometric} and
will be discussed below.

Let $\Psi$ be another place of $F$ lying over $\phi$, then $\Phi =
\Psi\sigma$ for some Galois automorphism $\sigma\in \gal(F/E)$, the
decomposition groups are conjugate (by the same $\sigma$) and
$\Psi^*$ and $\Phi^*$ differ by this conjugation. Therefore, to make
notation simpler, and when there is no risk of confusing, we omit
$\Phi$ from the notation. For example, $D_\phi$ stands for
$D_{\Phi/\phi}$ for some extension $\Phi$ of $\phi$ and $\phi^*$ is
actually $\Phi^*$ for some $\Phi$ lying above $\phi$; we shall say
that $\phi$ is unramified at $E$ if $\Phi/\phi$ is unramified, etc.

Let $\bft = (t_1\nek t_e)$ be an $e$-tuple of variables over $K$.
Any $a_1\nek a_e\in K_s$ defines a specialization $K[\bft] \to
K[\bfa]$. This specialization can be extended to a $K$-place $\phi$
of $K(\bft)$ into $K_s\cup\{\infty\}$. If $e=1$, the place $\phi$ is
uniquely determined by $a=a_1\in K_s$, and, in particular, the
residue field is $K(a)$.

However, if $e>1$, then $\phi$ is not uniquely determined by $\bfa$
and even the residue field need not be $K(\bfa)$. (The reason for
this is that an element such as $\phi(\frac{t_i-a_i}{t_j-a_j})$,
$i\neq j$, can be defined to be $\infty$ or any other element of
$K_s$.) Nevertheless, we can extend the specialization $\bft \to
\bfa$ to a $K$-place $\phi$ of $K(\bft)$ such that its residue field
is $K(\bfa)$, and moreover, for every finite extension $L/K$ the
residue field of $L(\bft)$ is $L(\bfa)$ (under any extension of
$\phi$ to an $L$-place of $L(\bft)$), see \cite[Lemma
2.2.7]{FriedJarden2005}.

\subsection{Points on Varieties and Places} \label{pntsplace}
Let $V$ be an algebraic variety defined over $K$. In this work
varieties always stay irreducible over the algebraic closure (that
is $V\tensor_K \Kgal$ is irreducible). Let $E$ denote the function
field of $V$. Let $\bfa\in V(K)$ and let $U=\Spec R$ be an affine
neighborhood of $\bfa$. Then $\bfa$ defines a $K$-homomorphism
$s_{\bfa}\colon R\to K$.

The homomorphism $s_{\bfa}$ can be extended to a $K$-place of $E$.
Similarly to the case of extending specializations to places, there
are several ways to extends $s_{\bfa}$ to $E$ when $V$ is not a
curve and it can be extended to a $K$-place of $E$ with residue
field $\Egag = K(\bfa)$.

Let $\nu\colon V \to \bbA^{\dim(V)}$ be a finite separable morphism
over $K$. Then the corresponding field extension $E/K(\bft)$ is
finite and separable. Let $\bfa\in K_s^{\dim(V)}
(=\bbA^{\dim(V)}(K_s))$ and take $\bfb \in \nu^{-1}(\bfa)\subseteq
V(K_s)$. By Lemma~\ref{lem:RES}, there exists $0\neq g(\bft)\in
K[\bft]$ such that if $g(\bfa)\neq 0$, then the specialization
$\bft\mapsto\bfa$ can be extended to a $K$-place of $E$ such that
$\overline{K(\bft)} = K(\bfa)$ and $\Egag = K(\bfb)$.

\subsection{Geometric Solutions of Embedding Problems}
\label{sec:geometricsolution}%
Consider a geometric embedding problem
$$
(\mu\colon \gal(K)\to \gal(L/K),\alpha\colon \gal(F/E) \to
\gal(L/K))
$$
for a field $K$. As we saw before, a place $\phi$ of $E$ that is
unramified in $F$ and with residue field $\Egag=K$ induces a weak
solution $\phi^*$ whose image is the decomposition group.
\begin{definition}
A weak solution $\theta\colon \gal(K) \to \gal(F/E)$ of a geometric
embedding problem $(\mu,\alpha)$ is called \textbf{geometric} if
there exists a $K$-place $\phi$ of $E$ unramified in $F$ such that
$\theta = \phi^*$.
\end{definition}

Geometric solutions are compatible with scalar extensions.

\begin{lemma}\label{lem:geometricdomination}
Let $(\mu,\alpha)$ be a geometric embedding problem. Let $M/K$ be a
Galois extension with $L\subseteq M$. Then the geometric embedding
problem
\[
(\mu'\colon \gal(K) \to \gal(M/K), \alpha'\colon \gal(FM/E)\to
\gal(M/K)),
\]
where $\alpha'$ and $\mu'$ are the corresponding restriction maps
dominates the embedding problem $(\mu,\alpha)$ with respect to the
restriction maps. Furthermore, if $\psi^*$ is a geometric (weak)
solution of $(\mu',\alpha')$, then $(\psi|_{F})^*$ is a geometric
(weak) solution of $(\mu,\alpha)$.
\end{lemma}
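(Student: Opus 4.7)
The plan is to verify two things: that $(\mu',\alpha')$ is genuinely a geometric embedding problem whose natural restriction dominates $(\mu,\alpha)$, and that geometric solutions descend along this restriction. The central tool is the commutative square
\[
\xymatrix{
\gal(FM/E)\ar[r]^{\pi}\ar[d]_{\alpha'} & \gal(F/E)\ar[d]^{\alpha}\\
\gal(M/K)\ar[r]^{\nu} & \gal(L/K)
}
\]
in which all four arrows are the corresponding restriction maps.

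First I would check that $(\mu',\alpha')$ fits the framework of geometric embedding problems. Since $E/K$ is regular and $M\subseteq K_s$, one has $E\cap M=K$, so $EM/E$ is Galois with group canonically $\gal(M/K)$, and $FM$ is the compositum over $E$ of two Galois extensions, hence Galois over $E$. To identify $FM\cap K_s=M$, I would observe that $F/L$ is a regular extension ($F/K$ is separable, since so are $E/K$ and $F/E$, and $F\cap L_s=F\cap K_s=L$), and regularity is preserved under base change, so $FM/M$ is regular, yielding $FM\cap K_s\subseteq FM\cap M_s=M$; the reverse inclusion is trivial. Hence $\alpha'\colon\gal(FM/E)\to\gal(M/K)$ is a well-defined surjection and $(\mu',\alpha')$ is a geometric embedding problem.

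For domination, the square commutes tautologically since each composite restricts an element of $\gal(FM/E)$ to $L$, and similarly $\mu=\nu\mu'$. Hence, given a weak solution $\theta'$ of $(\mu',\alpha')$, the composite $\theta=\pi\theta'$ is a weak solution of $(\mu,\alpha)$; surjectivity of $\pi$ (from the tower $FM/F/E$ of Galois extensions) promotes solutions to solutions. For the geometric part, let $\phi$ denote the $K$-place of $E$ underlying $\psi$; since $\phi$ is unramified in $FM$ it is a fortiori unramified in $F$, so $(\psi|_F)^*$ is defined. The defining relation $\psi(\psi^*(\sigma)x)=\sigma\psi(x)$ on $\calO_\psi$, restricted to $x\in\calO_{\psi|_F}$ and using that $\psi^*(\sigma)$ preserves the Galois subextension $F/E$, shows $(\psi|_F)^*=\pi\psi^*$; combined with the commutative square this exhibits $(\psi|_F)^*$ as a geometric (weak) solution of $(\mu,\alpha)$.

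The only substantive point is the regularity argument ensuring $FM\cap K_s=M$; the rest is a functoriality statement for restriction of Galois groups and restriction of places, reducing to the diagram chase above.
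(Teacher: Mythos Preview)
Your argument is correct and follows essentially the same route as the paper. The only presentational difference is that the paper identifies $\gal(FM/E)\cong\gal(F/E)\times_{\gal(L/K)}\gal(M/K)$ via Lemma~\ref{lem:GaloisandFiber} and then invokes the abstract domination Lemma~\ref{lem:dominatin_fiber_product}, whereas you work directly with the commutative square of restriction maps; you also spell out the regularity argument for $FM\cap K_s=M$, which the paper leaves implicit.
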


\begin{proof}
As $E/K$ is regular and by Lemma~\ref{lem:GaloisandFiber}, we have
\[\gal(FN/E) =
\gal(F/E)\times_{\gal(LE/E)}\gal(ME/E) \cong
\gal(F/E)\times_{\gal(L/K)} \gal(M/K)
\]
and the projection maps coincide with the restriction maps. Thus
$(\mu',\alpha')$ dominates $(\mu,\alpha)$
(Lemma~\ref{lem:dominatin_fiber_product}). Now let $\psi^*$ be a
geometric (weak) solution of $(\mu',\alpha')$. For $\phi =
\psi|_{F}$, we have that $\phi$ is unramified in $F$ and
$\res_{FM,F}\circ \psi^* = \phi^*$, as needed.
\end{proof}

Combination of Matsusaka-Zariski Theorem and Bertini-Noether Lemma
reduces the transcendence degree in \eqref{EP:regularlysolvable} to
$1$.

\begin{lemma}\label{lem:regularsolvablewithonet}
Let $K$ be an infinite field, $L/K$ a finite Galois extension, and
$(u,\bft)$ an $e+1$-tuple of algebraically independent elements over
$K$. Consider a rational embedding problem
\eqref{EP:regularlysolvable}. Then there exists a solution of
\[
(\mu_u\colon \gal(K(u)) \to \gal(L/K), \alpha\colon
\gal(F/K(\bft))\to \gal(L/K))
\]
whose solution field is regular over
$L$.
\end{lemma}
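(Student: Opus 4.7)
The plan is to cut the transcendence degree from $e$ down to $1$ by pulling back the Galois cover corresponding to $F/K(\bft)$ along a generic affine line, which is where the hypothesis that $K$ is infinite and the Bertini-Noether lemma (together with Matsusaka-Zariski) come into play. Concretely, I would parametrize a line in $\bft$-space by $\bft = \bfa + u\bfb$ with $\bfa\in K^e$, $\bfb\in K^e\smallsetminus\{0\}$ to be chosen, and show that for suitable $(\bfa,\bfb)$ the pulled-back extension of $K(u)$ is a regular solution field.

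First I would pick a primitive element $\gamma$ for $F/K(\bft)$ with irreducible polynomial $p(\bft,X)\in K[\bft,X]$. Since $F/K(\bft)$ is Galois of group $G$ and $L=F\cap K_s$, Matsusaka-Zariski (or directly the definition of $L$) gives that $F/L(\bft)$ is a regular Galois extension with group $\ker(\alpha)$, and in particular $p(\bft,X)$ is absolutely irreducible over $L(\bft)$. Applying Bertini-Noether to $p$ over $L$ yields a nonzero polynomial $d(\bfA,\bfB)\in L[\bfA,\bfB]$ (in $2e$ new variables) such that whenever $d(\bfa,\bfb)\neq 0$, the specialized polynomial $p(\bfa+u\bfb,X)\in K[u,X]$ is absolutely irreducible over $L(u)$. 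Because $K$ is infinite, such $(\bfa,\bfb)$ with $\bfb\neq 0$ can be found inside $K^{2e}$.

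Let $F'$ denote the splitting field of $p(\bfa+u\bfb,X)$ over $K(u)$. Specialization at $\bft\mapsto \bfa+u\bfb$ extends to a place of $F$ unramified over a sufficiently small neighborhood of the line (using Lemma~\ref{lem:RES} applied to the appropriate $g(\bft)$), so by Lemma~\ref{lem:act_geo_sol} the corresponding decomposition group embeds faithfully into $G$; since $p(\bfa+u\bfb,X)$ has the same degree as $p(\bft,X)$ and remains absolutely irreducible over $L(u)$, this embedding is in fact an isomorphism $\gal(F'/K(u))\cong G$ compatible with the restriction to $\gal(L/K)$. The induced canonical surjection $\theta\colon \gal(K(u))\to \gal(F'/K(u))\cong G$ is therefore a solution of $(\mu_u,\alpha)$. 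Absolute irreducibility of $p(\bfa+u\bfb,X)$ over $L(u)$ means precisely that $F'\cdot L/L(u)$ is a regular extension, i.e.\ the solution field $F'$ is regular over $L$.

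The main obstacle is making sure that a single application of Bertini-Noether delivers all three properties simultaneously: preservation of absolute irreducibility (which gives the Galois group isomorphism), surjectivity of the specialized decomposition map onto $G$, and regularity of $F'$ over $L$. All three are encoded in the single statement that $p(\bfa+u\bfb,X)$ is absolutely irreducible over $L(u)$ (with a suitably chosen discriminant-type factor in $d$ ruling out ramification), so the work is essentially to verify that the standard form of Bertini-Noether for absolutely irreducible polynomials, combined with Matsusaka-Zariski governing the constant field $L$, is enough; no additional deep input is needed.
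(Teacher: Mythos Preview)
Your approach is essentially the paper's: reduce the transcendence degree to $1$ via Matsusaka--Zariski and Bertini--Noether while preserving absolute irreducibility, then verify surjectivity by a degree count. The paper does the reduction one variable at a time by induction, whereas you parametrize a full line in one step; this is a cosmetic difference.

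There is, however, a real gap in the polynomial you feed into Bertini--Noether. The minimal polynomial $p=\irr(\gamma,K(\bft))$ is \emph{not} absolutely irreducible when $L\neq K$: since $F/K(\bft)$ is Galois and $L(\bft)$ is an intermediate field, $p$ factors over $L(\bft)$ into $[L:K]$ Galois-conjugate irreducible pieces, each of degree $[F:L(\bft)]=|\ker\alpha|$. What \emph{is} absolutely irreducible is $q=\irr(\gamma,L(\bft))\in L[\bft,X]$, precisely because $F/L$ is regular. You must apply Bertini--Noether to $q$ (with coefficients in $L$), choose $(\bfa,\bfb)\in K^{2e}$ so that $q(\bfa+u\bfb,X)$ stays absolutely irreducible, and take the place to be an $L$-place so that $L\subseteq F'$. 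The degree count then reads
\[
[F':K(u)]=[F':L(u)]\cdot[L:K]=\deg_X q\cdot[L:K]=[F:L(\bft)]\cdot[L:K]=[F:K(\bft)],
\]
which is exactly the computation the paper performs at the end of its proof. By contrast, if $p$ really were absolutely irreducible of degree $|G|$ as you claim, a root of its specialization would already give $[F':L(u)]\geq |G|$, whence $[F':K(u)]\geq |G|\cdot[L:K]>|G|$, contradicting the embedding of the decomposition group into $G$. So the fix is small but necessary: work with the minimal polynomial over $L(\bft)$, not over $K(\bft)$.
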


\begin{proof}
Let $x\in F$ be integral over $L[\bft]$ such that $F=K(\bft,x)$. Let
$f(\bfT,X)\in K[\bfT,X]$ be the absolutely irreducible polynomial
that is separable and monic in $X$ for which $f(\bft,x)=0$.

Take two variables $U,V$. Matsusaka-Zariski Theorem
\cite[Proposition 10.5.4]{FriedJarden2005} implies that
$f(T_1,\ldots, T_{e-1}, U + V T_1,X)$ is irreducible in the ring
$\Lgal [T_1,\ldots,T_{e-1} , X]$, where $\Lgal$ is an algebraic
closure of $L(U,V)$. Let $h(T_1,\ldots,T_r)\in K[\bfT]$ be the
non-zero polynomial given in Lemma~\ref{lem:RES} with respect to
the extension $F/K(\bft)$.

By Bertini-Noether Lemma \cite[Proposition 10.4.2]{FriedJarden2005}
there exists a nonzero $c(U,V)\in L[U,V]$ such that for any
$\alpha_e,\beta_e\in K$ satisfying $c(\alpha_e,\beta_e)\neq 0$ the
monic polynomial $f(T_1,\ldots,T_{e-1}, \alpha_e + \beta_e T_1 , X)$
remains absolutely irreducible over $K$. Since $K$ is infinite,
there exist $\alpha_e,\beta_e\in K$ with $\beta_e\neq 0$ such that
$c(\alpha_e,\beta_e)\neq 0$ and $h(T_1,\ldots,\alpha_e+\beta_e
T_1)\neq 0$. Induction on $e$ yields $\alpha_i,\beta_i\in K$,
$\beta_i\neq 0$, $i=2,\ldots, e$ such that $g(T,X) = f(T, \alpha_2 +
\beta_2 T,\ldots, \alpha_e + \beta_e T,X)$ is an absolutely
irreducible polynomial and $h(T, \alpha_2 + \beta_2 T,\ldots,
\alpha_e + \beta_e T,X)\neq 0$.

Extend the specialization $\bft\mapsto(u,\alpha_2 + \beta_2u
,\ldots, \alpha_e + \beta_e u)$ to an $L$-place $\phi$ of
$F/K(\bft)$ and denote by $F_0/K(u)$ the residue field extension
\cite[Lemma 2.2.7]{FriedJarden2005}. By Lemma~\ref{lem:RES}, $F_0 =
L(u,x_0)$, where $x_0=\phi(x)$ is a root of $g(u,X)$. Hence $F_0$ is
regular over $L$. The place $\phi$ also induces a geometric weak
solution $\phi^*\colon \gal(K(u)) \to \gal(F/K(\bft))$ of
$(\mu_u,\alpha)$ whose residue field is $F_0$. But
\begin{eqnarray*}
[F_0:K(t)] &=& [F_0:L(t)][L:K] = \deg_X g(t,X) [L:K]=\deg_X
f(\bft,X)[L:K]\\
&=&[F:L(\bft)][L:K] = [F:K(\bft)],
\end{eqnarray*}
so $\phi^*$ is surjective, and thus the assertion follows.
\end{proof}

\section{Wreath Products}\label{sec:wreathproduct}
In this section we introduce wreath products, and the more general
notion of twisted wreath products.

\subsection{Definition}
Let $A$, $G_0 \leq G$ be finite groups. Assume that $G_0$ acts on
$A$ (from the right). Let
\[
\Ind_{G_0}^G(A) = \{ f\colon G\to A\mid f(\sigma\rho) =
f(\sigma)^\rho,\ \forall \sigma\in G, \rho\in G_0\} \cong
A^{(G:G_0)}.
\]
Here multiplication is component-wise, i.e.\ $(fg)(\sigma) =
f(\sigma) g(\sigma)$. Then $G$ acts on $\Ind_{G_0}^G(A)$ by
$f^\sigma(\tau) = f(\sigma \tau)$. We define the \textbf{twisted
wreath product} to be the semidirect product
\[
A\wr_{G_0} G = \Ind_{G_0}^G(A) \rtimes G,
\]
i.e., an element in $A\wr_{G_0} G$ can be written uniquely as
$f\sigma$, where $f\in \Ind_{G_0}^G(A)$ and $\sigma\in G$. The
multiplication is then given by $(f\sigma) (g\tau) = f
g^{\sigma^{-1}} \sigma\tau$. The twisted wreath product is equipped
with the quotient map $\alpha\colon A\wr_{G_0} G\to G$ defined by
$\alpha(f\sigma) = \sigma$.

If $G_0=1$, the twisted wreath product is simply called
\textbf{wreath product}. We abbreviate $A\wr_{1} G$ and write $A\wr
G$ for the wreath product.

The next result states that $A\rtimes G_0$ embeds in $A\wr_{G_0} G$.
\begin{lemma}\label{lem:embeddinginwreath}
For each $a\in A$ let $f_a \in \Ind_{G_0}^G(A)$ be defined by
\[
f_a(\sigma)=\begin{cases}a^\sigma & \sigma\in G_0\\1 &
\mbox{otherwise.}\end{cases}
\]
Then the map $\rho\colon A\rtimes G_0 \to A\wr_{G_0} G$ defined by
$\rho(a\sigma) = f_a \sigma$ is a monomorphism.
\end{lemma}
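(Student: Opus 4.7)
The plan is to verify three things in sequence: that $f_a$ really lies in $\Ind_{G_0}^G(A)$, that $\rho$ is a homomorphism, and that $\rho$ is injective. Each amounts to a short pointwise computation; the main bookkeeping issue is keeping the semidirect product conventions straight between $A \rtimes G_0$ and $A \wr_{G_0} G$.

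First I would check that $f_a \in \Ind_{G_0}^G(A)$, i.e.\ that $f_a(\sigma\rho) = f_a(\sigma)^\rho$ for every $\sigma \in G$ and every $\rho \in G_0$. Because $G_0$ is a subgroup, $\sigma\rho \in G_0$ if and only if $\sigma \in G_0$. In the case $\sigma \in G_0$, both sides equal $a^{\sigma\rho}$; in the case $\sigma \notin G_0$, both sides equal $1$.

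Next I would check that $\rho$ is a homomorphism. Recall the multiplication in $A \wr_{G_0} G$ is $(f\sigma)(g\tau) = fg^{\sigma^{-1}}\sigma\tau$, while the multiplication in $A \rtimes G_0$ is $(a\sigma)(b\tau) = a\, b^{\sigma^{-1}}\sigma\tau$. Comparing $\rho((a\sigma)(b\tau)) = f_{a b^{\sigma^{-1}}} \sigma\tau$ with $\rho(a\sigma)\rho(b\tau) = f_a f_b^{\sigma^{-1}} \sigma\tau$, it suffices to prove the identity $f_{a b^{\sigma^{-1}}} = f_a \cdot f_b^{\sigma^{-1}}$ in $\Ind_{G_0}^G(A)$. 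I would evaluate both sides at an arbitrary $\tau \in G$: if $\tau \in G_0$ then $\sigma^{-1}\tau \in G_0$ as well (since $\sigma \in G_0$), and both sides give $a^\tau (b^{\sigma^{-1}})^\tau = (ab^{\sigma^{-1}})^\tau$; if $\tau \notin G_0$ then $\sigma^{-1}\tau \notin G_0$, and both sides are $1$.

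Finally, for injectivity, suppose $\rho(a\sigma) = f_a\sigma = 1_{A\wr_{G_0}G}$. The image in $G$ under $\alpha$ forces $\sigma = 1$, and then $f_a$ is the trivial element of $\Ind_{G_0}^G(A)$; evaluating at $1 \in G_0$ gives $a = f_a(1) = 1$. Hence $\ker\rho = 1$. No step presents real difficulty; the only place to be careful is matching the convention $f^\sigma(\tau) = f(\sigma\tau)$ used in the definition of the twisted wreath product with the semidirect product multiplications appearing on both sides of the homomorphism check.
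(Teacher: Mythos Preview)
Your proof is correct and follows essentially the same approach as the paper's. The paper compresses the homomorphism check into the single identity $(f_a)^\tau = f_{a^\tau}$ for $\tau\in G_0$ (together with the evident multiplicativity $f_{ab}=f_af_b$), while you unpack this into the pointwise verification of $f_{ab^{\sigma^{-1}}}=f_a f_b^{\sigma^{-1}}$; but the underlying computation is identical, and your added checks that $f_a\in\Ind_{G_0}^G(A)$ and that $\ker\rho=1$ are straightforward details the paper leaves implicit.
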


\begin{proof}
Clearly $\rho$ is injective. Since $(f_a)^\tau = f_{a^\tau}$ for any
$a\in A$ and $\tau\in G_0$, the map $\rho$ is a homomorphism.
\end{proof}

\subsection{Twisted Wreath Products and Embedding Problems}
The following result strengthens
Lemma~\ref{lem:criterionforproperness} in the case of embedding
problems with twisted wreath products.

\begin{lemma}\label{lem:criterionforpropernesswreath}
Let $(\phi\colon \Gamma\to G, \alpha\colon A\wr_{G_0} G \to G)$ be a
finite embedding problem for a profinite group $\Gamma$ and let
$\theta\colon \Gamma\to A\wr_{G_0} G$ be a weak solution. Assume
that $A = \{ f_a\mid a\in A \}\leq \theta(\Gamma)$. Then $\theta$ is
surjective.
\end{lemma}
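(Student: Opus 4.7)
The plan is to invoke Lemma~\ref{lem:criterionforproperness}: since $\phi = \alpha\theta$ is surjective (as $\phi$ is an epimorphism by definition of embedding problem), it suffices to prove $\ker(\alpha) \leq \theta(\Gamma)$. The kernel is exactly the normal subgroup $\Ind_{G_0}^G(A)$, and the hypothesis gives us the subgroup $\{f_a\mid a\in A\}\leq \theta(\Gamma)$ inside it. The strategy is to enlarge this by conjugating the $f_a$'s by preimages of arbitrary $\sigma \in G$, thus filling up $\Ind_{G_0}^G(A)$ coset-by-coset of $G_0$ in $G$.

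For each $\sigma \in G$, I would use surjectivity of $\phi$ to pick $\gamma_\sigma \in \Gamma$ with $\phi(\gamma_\sigma)=\sigma$, and write $\theta(\gamma_\sigma) = h_\sigma\sigma$ with $h_\sigma\in \Ind_{G_0}^G(A)$. For any $a\in A$, the element $\psi := \theta(\gamma_\sigma)\, f_a\, \theta(\gamma_\sigma)^{-1}$ lies in $\theta(\Gamma)\cap \ker(\alpha)$. A direct computation in the twisted wreath product (first $\sigma f_a\sigma^{-1} = (f_a)^{\sigma^{-1}}$, where $(f_a)^{\sigma^{-1}}(\rho) = f_a(\sigma^{-1}\rho)$, then pointwise conjugation by $h_\sigma$ inside $\Ind_{G_0}^G(A)$) shows that $\psi$ is supported on the coset $\sigma G_0$ with
\[
\psi(\sigma) \;=\; h_\sigma(\sigma)\, a\, h_\sigma(\sigma)^{-1}.
\]
The values on the rest of $\sigma G_0$ are forced by the equivariance condition $\psi(\sigma\tau)=\psi(\sigma)^\tau$, so $\psi$ is completely determined by $\psi(\sigma)$.

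As $a$ ranges over $A$, the element $h_\sigma(\sigma)\, a\, h_\sigma(\sigma)^{-1}$ ranges over all of $A$. Hence $\theta(\Gamma)$ contains every element of $\Ind_{G_0}^G(A)$ that is supported on the single coset $\sigma G_0$. Since an arbitrary $f \in \Ind_{G_0}^G(A)$ decomposes as a product $f = \prod_{\sigma G_0 \in G/G_0} f_{[\sigma]}$ of functions supported on distinct cosets, and such factors commute because of disjoint supports, we obtain $f\in \theta(\Gamma)$. Thus $\ker(\alpha) \leq \theta(\Gamma)$, and Lemma~\ref{lem:criterionforproperness} concludes the proof.

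The only delicate point is bookkeeping the twisted-wreath multiplication correctly — in particular checking that the inner automorphism by $\sigma$ sends $\Ind_{G_0}^G(A)$ to itself by the shift $f\mapsto f^{\sigma^{-1}}$, so that conjugation by $h_\sigma\sigma$ acts on $\Ind_{G_0}^G(A)$ as shift-then-pointwise-conjugation. Once that formula is in hand the rest is bookkeeping over cosets.
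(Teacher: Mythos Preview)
Your proof is correct and follows essentially the same approach as the paper's: both show $\ker(\alpha)=\Ind_{G_0}^G(A)\leq\theta(\Gamma)$ by conjugating the copy $A=\{f_a\}$ by lifts of each $\sigma\in G$ to obtain all functions supported on a single coset $\sigma G_0$, then taking products, and finish via Lemma~\ref{lem:criterionforproperness}. The paper compresses this into the line ``the image of $\theta$ is $G$-invariant, hence $A^\sigma\leq\theta(\Gamma)$ and $\Ind_{G_0}^G(A)=\prod_\sigma A^\sigma\leq\theta(\Gamma)$'', while you spell out explicitly that conjugation by $h_\sigma\sigma$ (rather than by $\sigma$ alone) still yields exactly the set of functions supported on $\sigma G_0$---a detail the paper elides.
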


\begin{proof}
The image of $\theta$ is $G$-invariant, hence $A^\sigma \leq
\theta(\Gamma)$ for every $\sigma \in G$. Therefore
\[\Ind_{G_0}^G(A) = \prod_{\sigma\in G} A^\sigma \leq \theta(\Gamma). \]
But $\Ind_{G_0}^G(A)=\ker(\alpha)$; hence
Lemma~\ref{lem:criterionforproperness} implies that $\theta$ is
surjective.
\end{proof}

There is a close relation between solutions of embedding problem
and twisted wreath products. An application of this connection
is Haran's diamond theorem \cite{Haran1999JGroupTheory}. We
bring below a simple result in that spirit, but first we need a
simple preparation.

We claim that the map $\pi\colon \Ind_{G_0}^G(A)\rtimes G_0\to
A\rtimes G_0$ defined by $\pi(f\sigma) = f(1)\sigma$ is an
epimorphism: It is obvious that $\pi$ is surjective. For any $f\in
\Ind_{G_0}^G(A)$ and $\sigma\in G_0$ we have
\[
\pi(f^\sigma)=f(\sigma)=f(1)^\sigma = \pi(f)^\sigma,
\]
and hence $\pi$ is also a homomorphism. We call $\pi$ the
\textbf{Shapiro map}. Note that the map $\rho$ defined in
Lemma~\ref{lem:embeddinginwreath} is a section of $\pi$, i.e.\
$\pi \rho$ is the identity map on $A\rtimes G_0$.

\begin{lemma}\label{lem:wreath_solution}
Let $\Gamma\leq \Lambda$ be profinite groups. Let $\nu\colon \Lambda
\to G$ be an epimorphism onto a finite group, let $G_0=\nu(\Gamma)$,
and set $\mu=\nu|_{\Gamma}$. Assume $G_0$ acts on a finite group
$A$, and let $\theta$ be a weak solution of $(\nu\colon \Lambda \to
G, \alpha\colon A\wr_{G_0} G\to G)$. Then $\eta = \pi
\theta|_{\Gamma}$ is defined and is a weak solution of $(\mu\colon
\Gamma\to G_0, \beta\colon A\rtimes G_0\to G_0)$.
\end{lemma}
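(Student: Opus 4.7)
The plan is to verify the two things the statement requires: first that $\eta$ is well-defined (i.e.\ $\theta(\Gamma)$ lands in the domain of $\pi$), and then that $\beta\eta = \mu$.

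For well-definedness, I would unravel the condition that $\theta$ is a weak solution of $(\nu,\alpha)$. This means $\alpha\theta = \nu$, so for every $\gamma\in\Gamma$ the element $\alpha(\theta(\gamma)) = \nu(\gamma) = \mu(\gamma)$ lies in $G_0 = \nu(\Gamma)$. Writing $\theta(\gamma) = f\sigma$ with $f\in\Ind_{G_0}^G(A)$ and $\sigma\in G$, the condition $\alpha(f\sigma)=\sigma\in G_0$ forces $\theta(\gamma)\in \Ind_{G_0}^G(A)\rtimes G_0$, which is precisely the domain on which the Shapiro map $\pi$ is defined. Hence $\eta = \pi\circ\theta|_{\Gamma}\colon \Gamma\to A\rtimes G_0$ makes sense and is a composition of continuous homomorphisms, hence a homomorphism.

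It remains to check that $\eta$ is a weak solution, i.e.\ $\beta\eta = \mu$. For $\gamma\in\Gamma$ with $\theta(\gamma)=f\sigma$ as above,
\[
\beta(\eta(\gamma)) = \beta(\pi(f\sigma)) = \beta(f(1)\sigma) = \sigma = \alpha(f\sigma) = \alpha(\theta(\gamma)) = \nu(\gamma) = \mu(\gamma),
\]
so $\beta\eta = \mu$, as needed.

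There is essentially no obstacle here: the lemma is really a bookkeeping observation about how the Shapiro map $\pi$ relates the two embedding problems. The only point that could cause confusion is that $\pi$ is not defined on all of $A\wr_{G_0} G$ but only on the subgroup $\Ind_{G_0}^G(A)\rtimes G_0$, which is exactly why the hypothesis $G_0 = \nu(\Gamma)$ (rather than merely $G_0\leq G$) is used: it guarantees that restricting $\theta$ to $\Gamma$ produces values where $\pi$ is applicable.
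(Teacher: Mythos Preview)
Your proof is correct and follows exactly the same approach as the paper's proof: first observe that $\alpha\theta(\Gamma)=\nu(\Gamma)=G_0$ forces $\theta(\Gamma)\leq \Ind_{G_0}^G(A)\rtimes G_0$, so $\eta$ is well defined, and then check $\beta\eta=\mu$. The paper in fact dismisses the second step as ``evident'', so your explicit computation of $\beta(\eta(\gamma))=\mu(\gamma)$ is more detailed than the original.
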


\begin{proof}
Note that since $\alpha\theta(\Gamma)=\nu(\Gamma)=G_0$, we have
$\theta(\Gamma)\leq \Ind_{G_0}^G(A)\rtimes G_0$. So $\eta = \pi
\theta|_{\Gamma}$ is well defined. It is evident that $\eta$ is a
weak solution of $(\mu,\beta)$.
\end{proof}

We shall need the following technical result.

\begin{lemma}\label{lem:embedding in wreath}
Let $A\wr_{G_0} G$ be a twisted wreath product of finite groups and
denote by $\pi\colon \Ind_{G_0}^G(A) \rtimes G_0 \to A\rtimes G_0$
the corresponding Shapiro map. Let $i \colon G_0 \to A\rtimes G_0$
be some splitting of the quotient map $\alpha_0\colon A\rtimes
G_0\to G_0$. Then there exists a splitting $j\colon G\to A\wr_{G_0}
G$ of the quotient map $\alpha\colon A\wr_{G_0} G\to G$ such that
$j(\sigma) \in \Ind_{G_0}^G(A)\rtimes G_0$ and  $\pi (j(\sigma)) =
i(\sigma)$ for every $\sigma\in G_0$.
\end{lemma}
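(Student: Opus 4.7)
The plan is to parametrize splittings of $\alpha$ by $1$-cocycles $c\colon G\to \Ind_{G_0}^G(A)$ and to reformulate the extra hypothesis on $j$ as the Shapiro-type restriction $c(\sigma)(1) = a(\sigma)$ on $G_0$, where $a$ is the $1$-cocycle attached to $i$; the cocycle $c$ is then built by hand using a transversal of $G_0$ in $G$.

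Since $\alpha$ admits the tautological section $\sigma\mapsto 1\cdot \sigma$, every section of $\alpha$ has the unique form $j(\sigma) = c(\sigma)\sigma$ for some map $c\colon G\to \Ind_{G_0}^G(A)$. The multiplication rule $(f\sigma)(g\tau) = fg^{\sigma^{-1}}\sigma\tau$ turns the requirement that $j$ be a homomorphism into the $1$-cocycle condition $c(\sigma\tau) = c(\sigma)\cdot c(\tau)^{\sigma^{-1}}$. Writing $i(\sigma) = a(\sigma)\sigma$ with $a(\sigma)\in A$ makes $\sigma\mapsto a(\sigma)$ a $1$-cocycle $G_0\to A$ satisfying $a(\sigma\tau) = a(\sigma)\cdot a(\tau)^{\sigma^{-1}}$, by the same calculation applied to $i$. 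For $\sigma \in G_0$ one has $j(\sigma)\in \Ind_{G_0}^G(A)\rtimes G_0$ automatically, and $\pi(j(\sigma)) = c(\sigma)(1)\cdot\sigma$, so the condition that $\pi\circ j$ agree with $i$ on $G_0$ boils down to $c(\sigma)(1) = a(\sigma)$ for $\sigma\in G_0$.

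To build $c$, I fix a left transversal $T$ of $G_0$ in $G$ with $1\in T$, and for each $x\in G$ write $x = t(x)\rho(x)$ uniquely with $t(x)\in T$ and $\rho(x)\in G_0$. Given $\sigma\in G$ and $t\in T$, I decompose $\sigma^{-1}t = t'\rho'$ with $t'\in T$, $\rho'\in G_0$, and set
\[
c(\sigma)(t) \,:=\, a\bigl((\rho')^{-1}\bigr), \qquad c(\sigma)(t\rho) \,:=\, c(\sigma)(t)^{\rho}\ \text{for}\ \rho\in G_0,
\]
so that $c(\sigma)\in \Ind_{G_0}^G(A)$ by construction. For $\sigma\in G_0$ the identity $\sigma^{-1}\cdot 1 = 1\cdot\sigma^{-1}$ forces $t' = 1$ and $\rho' = \sigma^{-1}$, whence $c(\sigma)(1) = a(\sigma)$ as required, and $j(\sigma) = c(\sigma)\sigma$ then lies in $\Ind_{G_0}^G(A)\rtimes G_0$.

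The only nontrivial check remaining is the cocycle relation for $c$, and this is the main bookkeeping obstacle. By the $G_0$-equivariance of the $c(\sigma)$ and of the relation itself, it suffices to verify $c(\sigma\tau)(t) = c(\sigma)(t)\cdot c(\tau)(\sigma^{-1}t)$ for $t\in T$: decomposing $\sigma^{-1}t = t'\rho'$ and $\tau^{-1}t' = t''\rho''$ gives $(\sigma\tau)^{-1}t = t''(\rho''\rho')$, so the left side becomes $a\bigl((\rho')^{-1}(\rho'')^{-1}\bigr)$, while the right side rewrites (using $c(\tau)(t'\rho') = c(\tau)(t')^{\rho'}$) as $a\bigl((\rho')^{-1}\bigr)\cdot a\bigl((\rho'')^{-1}\bigr)^{\rho'}$, and the two agree by the cocycle identity $a(xy) = a(x)\cdot a(y)^{x^{-1}}$ applied to $x = (\rho')^{-1}$, $y = (\rho'')^{-1}$. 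This is nothing but the explicit realization of the Shapiro isomorphism $Z^1(G,\Ind_{G_0}^G(A))\cong Z^1(G_0,A)$ at the level of cocycles, and it finishes the proof.
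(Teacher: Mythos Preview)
Your proof is correct and follows essentially the same route as the paper: both write the sought splitting as $j(\sigma)=c(\sigma)\sigma$ for a $1$-cocycle $c\colon G\to \Ind_{G_0}^G(A)$, build $c$ explicitly from a transversal of $G_0$ in $G$, and then verify the cocycle identity by direct computation. The only cosmetic differences are that the paper uses a right transversal and first extends $a$ to a function on all of $G$ before writing down $f_\tau(\sigma)=a_{\sigma^{-1}}^{-1}a_{\sigma^{-1}\tau}$, whereas you use a left transversal and give the value $c(\sigma)(t)$ directly; your remark that this is the explicit Shapiro isomorphism $Z^1(G,\Ind_{G_0}^G A)\cong Z^1(G_0,A)$ is a helpful conceptual gloss the paper leaves implicit.
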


\begin{proof}
For each $\sigma\in G_0$ let $a_\sigma = i(\sigma) \sigma^{-1}$,
i.e.\ $i(\sigma) = a_\sigma \sigma$. Since $i$ is a homomorphism,
for every $\sigma,\tau\in G_0$ we have
\begin{eqnarray}
\label{eq:wre2}&&a_{\sigma\tau} = a_\sigma a_\tau^{\sigma^{-1}}.
\end{eqnarray}
(Note that substituting $\sigma=\tau=1$ we get that $a_1 = 1$.)
Choose a set of representatives $R$ of left cosets of $G_0$ in $G$,
i.e.\ any $\sigma\in G$ uniquely factorizes as $\sigma = \sigma'
\rho$, $\sigma'\in G_0$ and $\rho\in R$. Assume that $1\in R$. Then
define $a_\sigma = a_{\sigma'}$.

We note that \eqref{eq:wre2} holds, under the extended definition,
for every $\sigma\in G_0$ and $\tau\in G$. Indeed, assume $\tau =
\tau' \rho$, $\tau'\in G_0$ and $\rho\in R$. Then $a_{\tau} =
a_{\tau'}$ and $a_{\sigma\tau} = a_{\sigma \tau'}$. Now using
\eqref{eq:wre2} with $\sigma,\tau'\in G_0$ we get
\[
a_{\sigma\tau} = a_{\sigma\tau'} = a_\sigma a_{\tau'}^{\sigma^{-1}}
= a_\sigma a_{\tau}^{\sigma^{-1}}.
\]

It suffices to construct $f_\sigma\in \Ind_{G_0}^G(A)$ for each
$\sigma \in G$ such that
\begin{eqnarray}
&&f_\sigma(1) = a_\sigma,\label{eq:wre3}\\
&&f_{\sigma\tau} (\rho)= f_\sigma (\rho)
f_\tau(\sigma^{-1}\rho)\label{eq:wre4}
\end{eqnarray}
for all $\sigma,\tau\in G$. Indeed, assume we done that. Then we set
$j(\sigma) = f_\sigma \sigma$ and we have $j(\sigma) \in
\Ind_{G_0}^G(A)\rtimes G_0$ for $\sigma\in G_0$. Now \eqref{eq:wre4}
implies that $j$ is a homomorphism and \eqref{eq:wre3} implies that
$\pi(j(\sigma)) = i(\sigma)$ for all $\sigma\in G_0$. (Note that
substituting $\tau = \rho = 1$ in \eqref{eq:wre4} one gets
$f_1(\sigma^{-1}) = f_{\sigma}(1)^{-1} f_{\sigma}(1) = 1$, i.e.\
$f_1 = 1$.)

Let $f_\tau(\sigma) = a_{\sigma^{-1}}^{-1} a_{\sigma^{-1} \tau}$
(this definition comes from \eqref{eq:wre4} with $\rho = 1$ and
\eqref{eq:wre3}). Then, clearly, \eqref{eq:wre3} holds. For
$\sigma,\tau,\rho\in G$ we have
\[
f_\sigma (\rho) f_\tau(\sigma^{-1}\rho)=a_{\rho^{-1}}^{-1}
a_{\rho^{-1}\sigma} a_{\rho^{-1}\sigma}^{-1} a_{\rho^{-1}\sigma\tau}
= a_{\rho^{-1}}^{-1} a_{\rho^{-1}\sigma\tau} = f_{\sigma\tau}(\rho),
\]
hence \eqref{eq:wre4} holds. Next let $\sigma,\tau \in G$ and
$\rho\in G_0$. By \eqref{eq:wre2}, we get that
\[
f_{\sigma}(\tau \rho) = a_{\rho^{-1} \tau^{-1}}^{-1} a_{\rho^{-1}
\tau^{-1} \sigma} = (a_{\rho^{-1}} a_{\tau^{-1}}^{\rho})^{-1}
a_{\rho^{-1}} a_{\tau^{-1} \sigma}^{\rho} = (a_{\tau^{-1}}^{-1}
a_{\tau^{-1} \sigma})^{\rho} = (f_{\sigma}{\tau})^\rho,
\]
that is to say, $f_\sigma \in \Ind_{G_0}^G(A)$, as needed.
\end{proof}

\begin{proposition} \label{prop:embeddingtwotowreath}
Let $A$ and $H$ be finite groups, let $H_0,G$ be subgroups of $H$,
and let $G_0 = G\cap H_0$. Assume that $H_0$, and hence also $G_0$,
acts on $A$ and that there exists a splitting $i\colon G_0 \to
A\rtimes G_0\leq A\rtimes H_0$ of the projection map $A\rtimes
G_0\to G_0$.
Then there exists an embedding $j\colon G \to A\wr_{H_0} H$ such
that the diagram
\[
\xymatrix{%
G_0 \ar[d]_{j|_{G_0}} \ar[dr]^{i}\\
\Ind_{H_0}^H(A)\rtimes H_0\ar[r]^(0.6){\pi}
    &A\rtimes H_0
}%
\]
commutes. (Here $\pi$ is the Shapiro map.)
\end{proposition}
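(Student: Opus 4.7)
The plan is to bootstrap from Lemma~\ref{lem:embedding in wreath} (which gives the desired splitting in the special case $G=H$, $G_0=H_0$) to the general case, by constructing an explicit injection of the smaller twisted wreath product into the larger one.

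First, apply Lemma~\ref{lem:embedding in wreath} directly to the data $A$, $G_0\leq G$, and the splitting $i\colon G_0\to A\rtimes G_0$. This produces a splitting $j'\colon G\to A\wr_{G_0}G$ of the quotient map $A\wr_{G_0}G\to G$ such that $j'(\sigma)\in \Ind_{G_0}^G(A)\rtimes G_0$ and $\pi_G(j'(\sigma))=i(\sigma)$ for every $\sigma\in G_0$, where $\pi_G$ denotes the Shapiro map of $A\wr_{G_0}G$.

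Next, construct an embedding $\phi\colon A\wr_{G_0}G\hookrightarrow A\wr_{H_0}H$ that extends the inclusion $G\hookrightarrow H$ on the semidirect factor. On the base, for $f\in \Ind_{G_0}^G(A)$ define $\phi(f)=F\in \Ind_{H_0}^H(A)$ by $F(\sigma\eta)=f(\sigma)^\eta$ whenever $\sigma\in G$ and $\eta\in H_0$, and $F(\tau)=1$ for $\tau\notin GH_0$. The crucial hypothesis $G_0=G\cap H_0$ makes this well-defined: if $\sigma_1\eta_1=\sigma_2\eta_2$ with $\sigma_i\in G$ and $\eta_i\in H_0$, then $\sigma_2^{-1}\sigma_1\in G\cap H_0=G_0$, and the $G_0$-equivariance of $f$ forces $f(\sigma_1)^{\eta_1}=f(\sigma_2)^{\eta_2}$; the same assumption yields the required $H_0$-equivariance of $F$. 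Routine verifications then show $\phi$ is a homomorphism $\Ind_{G_0}^G(A)\to \Ind_{H_0}^H(A)$ and that it intertwines the conjugation action of $G$ with the restriction to $G\leq H$ of the conjugation action of $H$; hence $\phi$ extends to a group homomorphism $A\wr_{G_0}G\to A\wr_{H_0}H$. Since $f$ is recovered from $F|_G$, the map $\phi$ is injective.

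Finally, set $j=\phi\circ j'\colon G\to A\wr_{H_0}H$. This is an embedding as a composition of injections. For $\sigma\in G_0\subseteq H_0$, writing $j'(\sigma)=f'\sigma$ with $f'\in \Ind_{G_0}^G(A)$, one has $j(\sigma)=\phi(f')\sigma\in \Ind_{H_0}^H(A)\rtimes H_0$, and the Shapiro map $\pi$ of $A\wr_{H_0}H$ evaluates as $\pi(j(\sigma))=\phi(f')(1)\cdot\sigma=f'(1)\cdot\sigma=\pi_G(j'(\sigma))=i(\sigma)$, which is exactly the commutativity asserted in the diagram. The main obstacle is verifying that $\phi$ is a well-defined, equivariant homomorphism; this is where the hypothesis $G_0=G\cap H_0$ is used essentially. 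Once $\phi$ is in hand, the rest is a formal two-line composition.
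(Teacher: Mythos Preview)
Your proof is correct and follows essentially the same approach as the paper: both first invoke Lemma~\ref{lem:embedding in wreath} to get a splitting $j'\colon G\to A\wr_{G_0}G$ compatible with $i$ via the Shapiro map, then construct the same explicit embedding $\phi\colon A\wr_{G_0}G\hookrightarrow A\wr_{H_0}H$ (extending $f$ by $F(\sigma\eta)=f(\sigma)^\eta$ on $GH_0$ and $F=1$ elsewhere), and finally set $j=\phi\circ j'$. The only difference is the order of presentation; your emphasis on why $G_0=G\cap H_0$ is needed for the well-definedness of $\phi$ is a nice touch that the paper leaves more implicit.
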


\begin{proof}
Let $f\in \Ind_{G_0}^G(A)$. Note that $G/G_0$ naturally embeds
into $H/H_0$ by mapping $\sigma G_0$ to $\sigma H_0$ and its
image is the set $(G\cdot H_0)/H_0$. Extend $f$ to $\fgal\colon
H \to A$ by setting $\fgal(\sigma h) = f(\sigma)^h$ if
$\sigma\in G$, $h \in H_0$ and $f(\sigma)=1$ if $\sigma\in H
\smallsetminus (G\cdot H_0)$. Then $\fgal \in \Ind_{H_0}^H(A)$.
Moreover the map $\phi\colon A\wr_{G_0} G \to A\wr_{H_0} H$
defined by $\phi(f \sigma) = \fgal \sigma$, $f\in
\Ind_{G_0}^G(A)$, $\sigma\in G$ is an embedding.

\[
\xymatrix{ G\ar[r]_{j'}\ar@/^10pt/[rr]^j   & A\wr_{G_0} G
\ar[r]_\phi
      &A\wr_{H_0} H \\
G_0\ar[u]\ar[r]_(0.3){j'}   & {\Ind_{G_0}^G (A)} \rtimes G_0 \ar[r]
\ar[u] \ar[d]^{\pi_0}
      & {\Ind_{H_0}^H(A)} \rtimes H_0 \ar[u] \ar[d]^\pi\\
G_0 \ar[r]^i \ar@{=}[u]
   & A\rtimes G_0\ar[r]
      &A\rtimes H_0
}
\]

Now, by the previous lemma, for $i\colon A\rtimes G_0$ there exists
$j'\colon G\to A\wr_{G_0} G$ such that $\pi_0(j'(\sigma))=i(\sigma)$
for all $\sigma\in G_0$. (Here $\pi_0$ is the Shapiro map w.r.t.\
$A\wr_{G_0} G$.) Let $j = \phi j'$ we show below that
$\pi(j(\sigma)) = i(\sigma)$ for all $\sigma \in G_0$.

Indeed, let $\sigma\in G_0$. Denote $j'(\sigma) = f\sigma$, $f\in
\Ind_{G_0}^G(A)$; then $j(\sigma) = \phi(f\sigma) = \fgal \sigma$.
Thus
\[
\pi(j(\sigma)) = \fgal(1)\sigma = f(1)\sigma = \pi_0(f \sigma) =
\pi_0(j'(\sigma))=i(\sigma),
\]
as needed.
\end{proof}

\subsection{Twisted Wreath Product in Fields}

\begin{definition}
Let $\Fhat/ K$ be a Galois extension whose Galois group is
$A\wr_{G_0} G$. Set $I = \Ind_{G_0}^G(A)$. To the chain of subgroups
\[
1\leq \{ f\in I \mid f(1)=1\} \leq I \leq I\rtimes G_0 \leq
A\wr_{G_0} G
\]
there corresponds a tower of fields (in the inverse order)
\begin{equation}\label{eq:towerwreath}
K\subseteq L_0 \subseteq L \subseteq F \subseteq \Fhat.
\end{equation}
In particular, $G_0 = \gal(L/L_0)$ and $G = \gal(L/K)$. Then we say
that \eqref{eq:towerwreath} \textbf{realizes} $A\wr_{G_0} G$.
\end{definition}

\begin{remark}
Consider an embedding problem $(\mu \colon \gal(K) \to G,
\alpha\colon A\wr_{G_0} G \to G)$, where $G=\gal(L/K)$ and $\mu$ is
the restriction map. Then \eqref{eq:towerwreath} realizes
$A\wr_{G_0} G$ implies that $\theta \colon \gal(K)\to \gal(\Fhat/K)$
is a solution.
\end{remark}

The following lemma, due to Haran \cite{Haran1999InventMath},
enables us to descend split embedding problems in terms of twisted
wreath products.

But first let us recall several facts. Let $M$ be a field, $N/M$ a
Galois extension, and $F/M(t)$ a Galois extension such that,
$N\subseteq F$, $F/N$ is regular, and $t$ is a transcendental
element over $M$. Now if the restriction map $\beta\colon
\gal(F/M(t)) \to \gal(N/M)$ splits, then $F = EN$, where $E$ is the
fixed field in $F$ of the image of $\gal(N/M)$ under some splitting
of $\beta$. Then $E/M$ is regular and $M(t)\subseteq E$.

Let $x\in E$ be an element for which $E = M(t,x)$ and let $f(t,X)\in
M[t,X]$ be its irreducible polynomial over $M(t)$. Then $f$ is
absolutely irreducible (since $E/M$ is regular) and $f$ is Galois
over $N(t)$ (since $F=EN=N(t,x)$).

\begin{lemma} \label{lem:wreath_regularlysolvable}
Let $M/K$ be a separable algebraic extension, $t$ a transcendental
element over $M$, and consider a rational finite split embedding
problem
\[
(\mu\colon \gal(M) \to \gal(N_1/M), \beta\colon \gal(F/M(u)) \to
\gal(N_1/M))
\]
for $M$. In particular, $F/N_1$ is regular. Let $f(u,X)\in M[u,X]$
be as above, i.e., $f$ is absolutely irreducible, Galois over
$N_1(u)$, and a root of which generates $F/N_1(u)$. Assume that
there exists a finite Galois extension $L/K$ satisfying
\begin{enumerate}
\item
$f(u,X)\in L_0[u,X]$, where $L_0=L\cap M$,
\item
$f(u,X)$ is Galois over $L(u)$, and
\item
$N_1 \subseteq N$, where $N = ML$.
\end{enumerate}
Set $G=\gal(L/K)$, $G_0 = \gal(L/L_0) \cong \gal(N/M)$, let
$\phi\colon \gal(K)\to G$ be the restriction map, and let
$A=\ker(\beta)=\gal(F/N_1(u))\cong \gal(FL/N(u))$. Then
\begin{equation}\label{eq:epinside a lemma 2} (\phi\colon \gal(K)
\to G, \alpha\colon A\wr_{G_0} G \to G)
\end{equation}
is rational. (Here $\gal(N_1/M)$ acts on $A$ via a splitting of
$\beta$ and $G_0$ acts on $A$ via the restriction map $G_0\to
\gal(N_1/M)$.)
\end{lemma}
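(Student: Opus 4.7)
The plan is to realize $A\wr_{G_0}G$ explicitly as the Galois group of a regular extension $\Fhat$ of $K(u)$, built as the compositum of $(G:G_0)$ Galois conjugates of the cover defined by $f$; this is the field-theoretic incarnation of the twisted wreath product. The polynomial $f\in L_0[u,X]$, absolutely irreducible by (a) and Galois over $L(u)$ with group $A$ by (b) (together with (c) and the splitting of $\beta$), plays the role of the ``local'' geometric datum to be induced up from $L_0$ to $K$ by a set of coset representatives for $G/G_0$.

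Concretely, let $x$ be a root of $f$ and put $E_0:=L_0(u)(x)\subseteq FL$. Fix coset representatives $R\subset G$ for $G/G_0$; for each $\sigma\in R$, choose an extension $\tilde\sigma$ of $\sigma$ to $\gal(\overline{K(u)}/K(u))$ with $\tilde\sigma(u)=u$, and let
\[
F_\sigma:=L(u)(\tilde\sigma x),
\]
the splitting field over $L(u)$ of the conjugate polynomial $f^\sigma\in L[u,X]$. Since $G_0$ fixes $f\in L_0[u,X]$, the field $F_\sigma$ depends only on the coset $\sigma G_0$; since $f^\sigma$ is absolutely irreducible over $L$, each $F_\sigma/L$ is regular, hence $F_\sigma\cap K_s=L$. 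Set
\[
\Fhat:=\prod_{\sigma\in R}F_\sigma.
\]
Then $\Fhat$ is the splitting field over $K(u)$ of the $G$-invariant polynomial $\prod_{\sigma\in R}f^\sigma\in K[u,X]$, so $\Fhat/K(u)$ is Galois; the compositum $\Fhat/L$ remains regular as a compositum of regular extensions, giving $\Fhat\cap K_s=L$.

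The principal step, and the main obstacle, is to identify $\gal(\Fhat/K(u))$ with $A\wr_{G_0}G$. The twisted wreath product action of $A\wr_{G_0}G$ on the roots of $\prod_\sigma f^\sigma$ (each copy of $A$ acting on the roots of a single $f^\sigma$ and $G$ permuting blocks through its action on $G/G_0$) furnishes a natural injection $\gal(\Fhat/K(u))\hookrightarrow A\wr_{G_0}G$ whose composition with $\alpha$ is the surjective restriction $\gal(\Fhat/K(u))\to G=\gal(L(u)/K(u))$. What remains is to show that $\gal(\Fhat/L(u))$ surjects onto $\Ind_{G_0}^G(A)=\prod_\sigma A$, equivalently, that the $F_\sigma$ are mutually linearly disjoint over $L(u)$. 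By $G$-equivariance and Goursat's lemma, this reduces to pairwise disjointness $F_\sigma\cap F_\tau=L(u)$ for $\sigma\ne\tau$ in $G/G_0$: any common Galois subextension of $F_\sigma$ and $F_\tau$ over $L(u)$ would be $G$-stable in $\Fhat$ and hence descend to a Galois subextension of $L(u)/K(u)$; but a nontrivial such descent is excluded because $F_\sigma/L(u)$ is not Galois over $K(u)$ when $\sigma G_0\ne G_0$ (its defining polynomial $f^\sigma$ being a proper $G$-conjugate of $f$). Granted this linear disjointness, the semidirect decomposition $A\wr_{G_0}G=\Ind_{G_0}^G(A)\rtimes G$ is realized by the tower $K(u)\subset L(u)\subset\Fhat$ together with the splitting of $\gal(FL/L_0(u))\to G_0$ coming from the split structure of $\beta$ and the $G$-action on cosets, yielding the desired rational realization of $(\phi,\alpha)$.
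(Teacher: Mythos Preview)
Your overall shape is right in spirit---realize the twisted wreath product as the Galois group of the compositum of the $G/G_0$-conjugates of the cover defined by $f$---but the execution has a genuine gap at the linear disjointness step, and this is precisely the point where the paper's argument diverges from yours.

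Your argument that $F_\sigma\cap F_\tau=L(u)$ is not valid. The intersection $F_\sigma\cap F_\tau$ is Galois over $L(u)$, but it is \emph{not} $G$-stable: the $G$-action permutes the family $\{F_\sigma\}$ and hence permutes the pairwise intersections among themselves, it does not fix any particular one. So the ``descent to $K(u)$'' step does not follow. Worse, the disjointness can simply fail over the single variable $u$. Nothing in the hypotheses forces the coefficients of $f$ to generate $L_0$ over $K$; if for instance $f\in K[u,X]$ (which is perfectly consistent with (a)), then $f^\sigma=f$ for every $\sigma\in G$, all the $F_\sigma$ coincide, and your $\Fhat$ has degree $|A|$ over $L(u)$ rather than $|A|^{(G:G_0)}$. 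In that case $\gal(\Fhat/K(u))$ is far too small to be $A\wr_{G_0}G$.

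The paper's proof fixes exactly this: it does \emph{not} work over the single transcendental $u$. Instead it chooses a basis $c_1,\ldots,c_n$ of $L_0/K$, introduces algebraically independent variables $t_1,\ldots,t_n$, and substitutes $u\mapsto \sum_i c_i t_i$. The key point (this is Haran's Lemma~3.1, which the paper invokes) is that the $G/G_0$-conjugates $\sum_i \sigma(c_i)t_i$ are themselves algebraically independent over $L$, since the matrix $(\sigma(c_i))_{\sigma,i}$ is invertible; this forces the conjugate covers $F_\sigma$ (now living over $L(\bft)$) to be genuinely linearly disjoint, and the wreath product falls out. So the missing idea in your proposal is this passage to several variables via a basis of $L_0/K$; once you insert it, your compositum-of-conjugates picture becomes exactly the construction the paper cites.
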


\[
\xymatrix{
           &&F\\
    &M (u)\ar@{-}[r]^{G_0}
           &N(u)\ar@{-}[u]_A\\
    &M\ar@{-}[u]\ar@{-}[u]\ar@{-}[r]|{N_1}
           &N\ar@{-}[u]\\
K \ar@{-}[r]\ar@{.}@/_10pt/[rr]_{G}
    &L_0\ar@{-}[u]\ar@{-}[r]^{G_0}
           &L\ar@{-}[u]
}
\]

\begin{proof}
Let $c_1,\ldots, c_n$ be a basis of $L_0/K$ and let
$\bft=(t_1,\ldots,t_n)$ be an $n$-tuple of algebraically independent
elements over $L_0$. Then by \cite[Lemma~3.1]{Haran1999InventMath},
there exist fields $F_0,\Fhat_0$ such that
\begin{enumerate}
\item
$K(\bft)\subseteq L_0(\bft)\subseteq L(\bft)\subseteq F_0\subseteq
\Fhat_0$ realizes $A\wr_{G_0} G$,
\item
$\Fhat_0/L$ is regular, and
\item
$F_0=L(\bft)(z)$, where $\irr(z,L(\bft))= f(\sum_1^n c_it_i,Z)\in
L_0[\bft,X]$.
\end{enumerate}
In particular, $\theta\colon \gal(K(\bft))\to \gal(\Fhat_0/K(\bft))$
is a solution of \eqref{eq:epinside a lemma 2} whose solution field
is regular over $L$. This means that the embedding problem is
rational.
\end{proof}

\begin{remark}
The connection between solutions of the two embedding problems in
the above lemma is much deeper. In \cite{Haran1999InventMath}, Haran
establishes this connection and applies it to prove his diamond
theorem. This theorem states a general sufficient condition for a
separable extension of a Hilbertian field to be Hilbertian.
\end{remark}

\begin{corollary} \label{cor:wreath_regularlysolvable}
Let $K\subseteq L \subseteq M$ be a tower of separable algebraic
extensions such that $L/K$ is a finite Galois extension with a
Galois group $G = \gal(L/K)$. Let $A$ be a finite group which is
regularly realizable over $K$. Then the embedding problem
\[
(\res\colon \gal(K) \to G, \alpha \colon A\wr G \to G)
\]
is rational.
\end{corollary}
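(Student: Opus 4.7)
The plan is to apply Lemma~\ref{lem:wreath_regularlysolvable} with the degenerate choice $G_0 = 1$, $N_1 = M$, so that the twisted wreath product collapses to the ordinary wreath product $A\wr G$ and the input embedding problem for $M$ becomes the trivial one.

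First I would use the hypothesis that $A$ is regularly realizable over $K$ to produce a transcendental element $u$ and a Galois extension $F_0/K(u)$ with $\gal(F_0/K(u)) \cong A$ and $F_0/K$ regular. Choosing a primitive element $x$ with $F_0 = K(u,x)$, the irreducible polynomial $f(u,X)\in K[u,X]$ of $x$ over $K(u)$ is absolutely irreducible (by regularity of $F_0/K$) and Galois over $K(u)$.

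Next I would set $F = F_0\cdot M$. Because $F_0/K$ is regular and $M/K$ is separable algebraic, $F_0$ and $M$ are linearly disjoint over $K$, so $F/M(u)$ is Galois with group $A$ and $F/M$ is regular; the polynomial $f$ remains Galois over $M(u)$, and likewise over $L(u)$ (using linear disjointness of $F_0$ and $L$ over $K$). Consider then the (trivially split) rational embedding problem
\[
(\gal(M)\to 1,\ \beta\colon \gal(F/M(u))\to 1)
\]
for $M$; here $N_1 = M$ and $\ker(\beta) = A$. Since $L\subseteq M$, we have $L_0 := L\cap M = L$ and $N := ML = M$; thus the three hypotheses of Lemma~\ref{lem:wreath_regularlysolvable} hold: (a) $f\in K[u,X]\subseteq L_0[u,X]$; (b) $f$ is Galois over $L(u)$ as just noted; (c) $N_1 = M = N$.

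Invoking Lemma~\ref{lem:wreath_regularlysolvable} then yields that $(\res\colon\gal(K)\to G,\ A\wr_{G_0}G\to G)$ is rational, where $G_0 = \gal(L/L_0) = \gal(L/L) = 1$ and hence $A\wr_{G_0}G = A\wr G$. There is no real obstacle: the only thing to check carefully is the bookkeeping in verifying (a)--(c), and this is immediate once one observes that $L\subseteq M$ forces $L_0 = L$ (so $G_0$ is trivial) and $N_1 = N$.
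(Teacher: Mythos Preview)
Your proposal is correct and follows exactly the same approach as the paper's proof: both take the regular realization $F_0/K(u)$ of $A$, set $F=F_0M$ and $N_1=M$, and apply Lemma~\ref{lem:wreath_regularlysolvable} with the given $L$, so that $L_0=L\cap M=L$ forces $G_0=1$ and the twisted wreath product becomes the ordinary one. Your write-up simply makes explicit the verification of hypotheses (a)--(c) that the paper leaves to the reader.
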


\begin{proof}
By assumption there exists a regular extension $F_0$ of $K$ and
$t\in F_0$ such that  $F_0/K(t)$ is a Galois extension with a Galois
group $A = \gal(F_0/K(t))$. Let $x\in F_0$ be a primitive element
and $f(t,X) = \irr (x,K(t))$. We use
Lemma~\ref{lem:wreath_regularlysolvable} with $F_1 = F_0 M$, $N_1 =
M$, and $L$ to get the assertion.
\end{proof}

\subsection{Permutational Wreath Product}\label{sec:permutationalwreath}
Often wreath products occur
in nature as permutation groups. We do not present here the most
general setting, for a more general definition see e.g.\
\cite{Meldrum1995}.

Let $A,G$ be finite groups. Assume that $A$ acts on some set $X$.
Then $A\wr G$ acts on $X\times G$ by the following rule.
\[
(f\sigma) (x,\tau) = (f(\sigma\tau) x , \sigma \tau), \quad \forall
f\sigma\in A\wr G,\ x\in X,\ \and \ \tau\in G.
\]
This action is well defined since
\begin{eqnarray*}
f\sigma f'\sigma' (x,\tau) &=& f\sigma (f'(\sigma'\tau)x,
\sigma'\tau) =
(f(\sigma\sigma'\tau)
f'(\sigma'\tau)x,\sigma\sigma'\tau)\\
&=& (ff'^{\sigma^{-1}}(\sigma\sigma'\tau) x, \sigma\sigma'\tau) =
ff'^{\sigma^{-1}}\sigma\sigma' (x,\tau).
\end{eqnarray*}

\begin{proposition}
If the action of $A$ on $X$ is faithful (resp.\ transitive), then so
is the action of $A\wr G$ on $X\times G$.
\end{proposition}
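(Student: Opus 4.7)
The plan is to verify both statements by direct computation, using the explicit description of the action $(f\sigma)(x,\tau) = (f(\sigma\tau)x,\sigma\tau)$ together with the fact that, in the (untwisted) wreath product $A\wr G = \Ind_1^G(A)\rtimes G$, the function $f\colon G\to A$ is arbitrary with no equivariance constraint.

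For faithfulness, I would suppose $f\sigma\in A\wr G$ fixes every pair $(x,\tau)\in X\times G$. Reading off the second coordinate gives $\sigma\tau=\tau$ for all $\tau\in G$, hence $\sigma=1$. The first coordinate then reads $f(\tau)x=x$ for all $x\in X$ and all $\tau\in G$; since $A$ acts faithfully on $X$, this forces $f(\tau)=1$ for every $\tau$, so $f$ is the constant function $1$ and therefore $f\sigma=1$ in $A\wr G$.

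For transitivity, given two points $(x_1,\tau_1)$ and $(x_2,\tau_2)$ in $X\times G$, I would prescribe $\sigma=\tau_2\tau_1^{-1}\in G$, so that the second coordinate moves correctly: $\sigma\tau_1=\tau_2$. For the first coordinate I need $f(\tau_2)x_1=x_2$; by transitivity of $A$ on $X$ there is $a\in A$ with $ax_1=x_2$, and I define $f\in\Ind_1^G(A)=A^G$ by $f(\tau_2)=a$ and $f(\tau)=1$ for $\tau\neq\tau_2$ (no equivariance condition has to be checked since $G_0=1$). Then $(f\sigma)(x_1,\tau_1)=(f(\tau_2)x_1,\tau_2)=(x_2,\tau_2)$, as desired.

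There is essentially no obstacle here; both arguments are one-line unpackings of the definitions, and the only mild subtlety is noticing that for the untwisted wreath product the ``induced'' functions $f\colon G\to A$ are genuinely unconstrained, so the point-mass function used in the transitivity argument lies in $\Ind_1^G(A)$. (For the twisted version $A\wr_{G_0}G$ the same statements would need more care, but in the setting of this proposition they are free.)
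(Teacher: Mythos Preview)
Your proof is correct and is precisely the direct verification the paper has in mind; the paper's own proof consists of the single word ``Clear.'' Your write-up simply spells out the obvious computation that the author left to the reader.
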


\begin{proof}
Clear.
\end{proof}

A group $A$ is of \textbf{degree} $n$ if $A$ acts faithfully on a
set $X$ with $n$ elements. The symmetry group $S_n$ is maximal with
respect to the property that any group of degree $n$ can be embedded
inside $S_n$ as a permutation group. In what follows we show that
$S_n\wr G$ is the analogous `maximal' permutation group if we
consider permutation groups with an epimorphism onto $G$ whose
kernel has degree $n$.

Let us be more precise. Let $\alpha\colon H\to G$ be an epimorphism
of finite groups. Let $X$ be a set of cardinality $n$. An action of
$H$ on $X\times G$ is said to be \textbf{fine} if it is transitive,
faithful, and any $h\in H$ maps (bijectively) $X\times\{\tau\}$ onto
$X\times \{\alpha(h) \tau\}$ for all $\tau\in G$. Therefore, any
$h\in H$ and $\tau\in G$ define a permutation $h_\tau\in S_X$ by the
formula
\[
h(x,\tau) = (h_\tau(x), \alpha(h) \tau).
\]
For $h_1,h_2\in H$, $g_1=\alpha(h_1),g_2=\alpha(h_2)\in G$, and
$(x,\tau)\in X\times G$ we have
\begin{eqnarray*}
((h_1h_2)_{\tau}(x),g_1g_2\tau)
    &=
        & (h_1h_2) (x,\tau) = h_1 ((h_2)_\tau(x),g_2\tau)\\
    &=
        & ((h_1)_{h_2\tau}(h_2)_{\tau}(x),g_1g_2\tau),
\end{eqnarray*}
and thus
\begin{equation}\label{eq:fineaction}
(h_1h_2)_{\tau} = (h_1)_{g_2\tau} (h_2)_\tau.
\end{equation}

Let $A$ be a transitive group of degree $n$ acting on $X$. Then the
permutational wreath product $A\wr G$ acts finely on $X\times G$.
The following lemma asserts that the wreath product is maximal
w.r.t.\ groups that act finely.

\begin{lemma}
Let $X=\{1,\ldots,n\}$, let $\alpha\colon S_n\wr G\to G$, and let
$\beta\colon H\to G$ be an epimorphism of finite groups. Assume that
$H$ acts finely on $X\times G$. Then there exists an embedding
$\nu\colon H\to S_n\wr G$ that respects the actions on $X\times G$
such that $\alpha \nu = \beta$.
\end{lemma}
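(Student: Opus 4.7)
The plan is to construct $\nu$ directly from the fine action. For each $h\in H$ write $\sigma=\beta(h)$; the fine action hypothesis yields permutations $h_\tau\in S_X=S_n$ satisfying $h(x,\tau)=(h_\tau(x),\sigma\tau)$ together with the cocycle-type identity \eqref{eq:fineaction}. Since every element of $S_n\wr G$ has the form $f\cdot s$ with $f\in\Ind_1^G(S_n)=S_n^G$ and $s\in G$, and the action prescribed in Section~\ref{sec:permutationalwreath} is $(f\cdot s)(x,\tau)=(f(s\tau)x,s\tau)$, matching $\nu(h)(x,\tau)$ with $h(x,\tau)$ forces
\[
\nu(h):=f_h\,\sigma,\qquad f_h(\rho):=h_{\sigma^{-1}\rho}\in S_n,\ \rho\in G.
\]

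I first verify the easy properties. By construction $\alpha(\nu(h))=\sigma=\beta(h)$, so $\alpha\nu=\beta$. A direct unpacking gives $\nu(h)(x,\tau)=(f_h(\sigma\tau)x,\sigma\tau)=(h_\tau(x),\sigma\tau)=h(x,\tau)$, so $\nu$ respects the actions on $X\times G$. Because the action of $H$ on $X\times G$ is faithful, this immediately implies that $\nu$ is injective, provided $\nu$ is known to be a group homomorphism.

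The heart of the argument is therefore the homomorphism check. Using the multiplication rule $(f\sigma_1)(g\sigma_2)=f\,g^{\sigma_1^{-1}}\sigma_1\sigma_2$ with $g^{\sigma_1^{-1}}(\rho)=g(\sigma_1^{-1}\rho)$, the identity $\nu(h_1)\nu(h_2)=\nu(h_1h_2)$ reduces, after the $G$-components $\sigma_1\sigma_2$ cancel, to the cocycle condition
\[
f_{h_1h_2}(\rho)=f_{h_1}(\rho)\,f_{h_2}(\sigma_1^{-1}\rho),\qquad\rho\in G.
\]
Substituting the definition of $f_{h_i}$ this becomes $(h_1h_2)_{\sigma_2^{-1}\sigma_1^{-1}\rho}=(h_1)_{\sigma_1^{-1}\rho}\cdot(h_2)_{\sigma_2^{-1}\sigma_1^{-1}\rho}$, which is precisely \eqref{eq:fineaction} evaluated at $\tau=\sigma_2^{-1}\sigma_1^{-1}\rho$. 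This single identification is the one place where the hypothesis of a fine action does real work; everything else is bookkeeping.

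The expected main obstacle is simply keeping the wreath-product conventions of the paper straight—the left-translation action $f^\sigma(\tau)=f(\sigma\tau)$ and the resulting twisted multiplication—so that the guessed formula $f_h(\rho)=h_{\beta(h)^{-1}\rho}$ is the correct one and the cocycle condition unfolds cleanly into \eqref{eq:fineaction} rather than into some spurious inverse or conjugate.
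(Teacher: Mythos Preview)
Your proof is correct and follows essentially the same approach as the paper: you define $\nu(h)=f_h\,\beta(h)$ with $f_h(\rho)=h_{\beta(h)^{-1}\rho}$, verify $\alpha\nu=\beta$ and compatibility with the actions, deduce injectivity from faithfulness, and reduce the homomorphism check to the cocycle identity \eqref{eq:fineaction}. The only cosmetic difference is that injectivity already follows set-theoretically from the action-preserving property and faithfulness, so the proviso ``provided $\nu$ is known to be a group homomorphism'' is unnecessary.
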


\begin{proof}
Define $\nu\colon H \to S_n\wr G$ by setting $\nu(h) = f_h g$, where
$g=\beta(h)$ and $f_h(\sigma)(x) = h_{g^{-1}\sigma}(x)$, for $x\in
X$. It is obvious that $\alpha \nu =\beta$. Also $\nu$ respects the
action on $X\times G$. Indeed,
\[
\nu(h) (x,\tau) = (f_h(\beta(h)\tau) x, \beta(h)\tau) = (h_{\tau}
(x), \beta(h)\tau) = h(x,\tau).
\]
As the actions of $H$ and $S_n\wr G$ on $X\times G$ are faithful, we
get that $\nu$ is injective. It remains to show that $\nu$ is a
homomorphism.

Let $h_1,h_2\in H$ and $g_1= \alpha(h_1)$, $g_2 = \alpha(h_2)$.
Since $\nu(h_1h_2) = f_{h_1h_2}g_1g_2$ and $\nu(h_1)\nu(h_2) =
f_{h_1} f_{h_2}^{g_1^{-1}} g_1 g_2$, it suffices to verify that
$f_{h_1h_2} = f_{h_1} f_{h_2}^{g_1^{-1}}$. By \eqref{eq:fineaction}
we have
\begin{eqnarray*}
f_{h_1h_2}(\sigma)(x)
    &=
        & (h_1h_2)_{g_2^{-1}g_1^{-1} \sigma}(x) =
        (h_1)_{g_1^{-1} \sigma} (h_2)_{g_2^{-1}g_1^{-1} \sigma}(x)=\\
    &=
        &
        f_{h_1}(\sigma)f_{h_2}(g_1^{-1}\sigma)(x)=(f_{h_1}f_{h_2}^{g^{-1}})(\sigma)(x),
\end{eqnarray*}
as needed.
\end{proof}

\subsection{The Embedding Theorem}
The wreath product has the following interesting property that any
extension of $A$ and $G$ can be embedded in $A\wr G$. We will not
use this result here.

\begin{theorem}\label{thm:Embedding Theorem}
Let $\xymatrix@1{1\ar[r] & A\ar[r]&H\ar[r]^{\pi}& G \ar[r]&1}$ be an
exact sequence of groups. Then there exists an embedding $i\colon
H\to A\wr G$ such that $\alpha i = \pi$, where $\alpha\colon A\wr G
\to G$ is the projection map.
\end{theorem}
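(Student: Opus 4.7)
My plan is the classical Kaloujnine--Krasner argument, written in the paper's conventions. First I would pick a set-theoretic section $s\colon G\to H$ of $\pi$ satisfying $s(1)=1$ (so $\pi\circ s = \id_G$, but $s$ need not be a homomorphism). Given $h\in H$, set $g=\pi(h)$ and define a function $f_h\colon G\to A$ by
\[
f_h(\tau) \;=\; s(\tau)^{-1}\, h\, s(g^{-1}\tau), \qquad \tau\in G.
\]
Applying $\pi$ to the right-hand side yields $\tau^{-1} g g^{-1}\tau = 1$, so $f_h(\tau)\in \ker\pi = A$, and thus $f_h\in\Ind_1^G(A)=A^G$. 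Then I would set
\[
i(h) \;=\; f_h\cdot \pi(h) \;\in\; A\wr G.
\]
By construction $\alpha\circ i = \pi$.

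The main verification is that $i$ is a homomorphism. Let $h_1,h_2\in H$ with $g_j=\pi(h_j)$. Inserting $s(g_1^{-1}\tau)s(g_1^{-1}\tau)^{-1}$ gives
\[
f_{h_1h_2}(\tau) \;=\; s(\tau)^{-1} h_1 s(g_1^{-1}\tau)\cdot s(g_1^{-1}\tau)^{-1} h_2 s(g_2^{-1}g_1^{-1}\tau)
\;=\; f_{h_1}(\tau)\, f_{h_2}(g_1^{-1}\tau).
\]
With the paper's action $f^{\sigma}(\tau)=f(\sigma\tau)$ on $A^G$, this says exactly $f_{h_1h_2}=f_{h_1}\, f_{h_2}^{\,g_1^{-1}}$. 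Hence, using the multiplication rule $(f\sigma)(g\tau)=f g^{\sigma^{-1}}\sigma\tau$ in $A\wr G$,
\[
i(h_1)\,i(h_2) \;=\; (f_{h_1}g_1)(f_{h_2}g_2) \;=\; f_{h_1} f_{h_2}^{\,g_1^{-1}} g_1 g_2 \;=\; f_{h_1h_2}\,(g_1g_2) \;=\; i(h_1h_2).
\]

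Finally, injectivity: if $i(h)=1$ then $\pi(h)=1$, so $h\in A$ and $s(\pi(h)^{-1}\tau)=s(\tau)$; evaluating at $\tau=1$ gives $1=f_h(1)=s(1)^{-1} h s(1)=h$. The only subtle point is choosing the section so that $f_h$ lies in $A$ (which is automatic once $s$ is a section) and checking that the twisting in $f_h$ is consistent with the paper's left-action convention $f^\sigma(\tau)=f(\sigma\tau)$; the computation above shows this works out on the nose. No obstacle of substance remains.
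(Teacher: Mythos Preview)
Your argument is correct: this is precisely the classical Kaloujnine--Krasner embedding, and your computations with the section $s$ and the cocycle-like function $f_h$ are accurate in the paper's conventions. Note that the paper does not actually prove this theorem but simply refers to \cite[Corollary~2.10]{Meldrum1995}; your write-up is the standard proof one finds there, so there is nothing to compare.
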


For a proof see \cite[Corollary 2.10]{Meldrum1995}.


\chapter{Double Embedding Problems and PAC Extensions}
This chapter constitutes the technical backbone of the thesis. The
study of the Galois structure of a field $K$ can be carried out by
finite embedding problems. We introduce the notion of \emph{double
embedding problems} for a field extension $K/K_0$ which consists on
two compatible embedding problems -- one for $K$ and one for $K_0$.

It is known (although in different terminology) that over a PAC
field every solution of a geometric finite embedding problem is
geometric (see Subsection~\ref{sec:geometricsolution} for
definitions). We go in a parallel way, and characterize the PACness
of an extension in terms of geometric solutions of certain double
embedding problems.

Surprisingly, a stronger property is valid -- every solution of an
embedding problem for $K$ (under some regularity condition) can be
lifted to a \emph{geometric} solution of the whole double embedding
problem. This key property is called the \textbf{lifting property}
and it is the main result of the chapter. We also give a stronger,
but a bit more technical, lifting property for PAC extensions of
finitely generated fields.

This group theoretic approach proves to be extremely efficient. In
the following chapters we apply it to the study of the Galois
structure PAC extensions and other applications.

\section{Basic Properties of PAC Extensions}
Recall the definition of a PAC field.

\begin{definition}
A field $K$ is called \textbf{PAC} if every nonempty absolutely
irreducible variety defined over $K$ has a $K$-rational point.
\end{definition}

In \cite{JardenRazon1994}, Jarden and Razon introduce the more
general notion of PAC extensions:

\begin{definition}
A field extension $K/K_0$ is said to be \textbf{PAC} if for every
absolutely irreducible variety $V$ of dimension $e\geq 1$ defined
over $K$ and for every separable dominating rational map $\nu \colon
V \to \bbA^e$ there exists $\bfb\in V(K)$ such that $\bfa=\nu(\bfb)
\in K_0^e$.
\end{definition}

\begin{remark}
In fact, \cite{JardenRazon1994} considers a more general
setting, that is to say, it allows $K_0$ to be a subring or even
a subset of $K$, see Definition~\ref{def:PAC}.
\end{remark}

\begin{remark}
Note that $K_0$ must be infinite. Indeed, if $K_0$ were finite, then
$\nu^{-1}(K_0^e)$ would be also finite, and thus $\tilde V =
V\smallsetminus \nu^{-1}(K_0^e)$ would have no point $\bfb\in \tilde
V(K)$ satisfying $\nu(\bfa)\in K_0^e$.
\end{remark}

\begin{remark}
If an extension $K/K_0$ is PAC, then the field $K$ is obviously
PAC. In particular, $K$ is PAC if and only if $K/K$ is.

If $K$ is a $\bbZ_l$ extension of a finite prime field $\bbF_p$,
then $K$ is PAC \cite{FriedJarden2005}. However, any proper subfield
of $K$ is finite. Hence $K$ is not a PAC extension of any proper
subfield.

In zero characteristic there are also examples of PAC fields which
are PAC extensions of no proper subfields
(Corollary~\ref{cor:PACofnoSubField}).
\end{remark}

The following proposition gives several equivalent definitions of
PAC extensions in terms of polynomials and places, including a
reduction to plane curves. A proof of that proposition essentially
appears in \cite{JardenRazon1994}. Nevertheless, for the sake of
completeness, we give here a formal proof.

\begin{proposition}\label{prop:DefinitionPACextension}
The following conditions are equivalent for a field extension
$K/K_0$.
\begin{label1}
\item \label{con pac:DefinitionPACextension}%
$K/K_0$ is PAC.
\item \label{con poly r:DefinitionPACextension}%
For every absolutely irreducible polynomial $f(\bfT,X)\in
K[T_1,\ldots, T_e,X]$ that is separable in $X$, and nonzero
$r(\bfT)\in K[\bfT]$ there exists $(\bfa,b)\in K_0^e\times K$ for
which $r(\bfa)\neq 0$ and $f(\bfa, b) = 0$.
\item \label{con poly:DefinitionPACextension}%
For every absolutely irreducible polynomial $f(T,X)\in K[T,X]$ that
is separable in $X$, and nonzero $r(T)\in K(T)$ there exists
$(a,b)\in K_0\times K$ for which $r(a)\neq 0$ and $f(a,b) = 0$.
\item\label{con a:DefinitionPACextension}
For every finitely generated regular extension $E/K$ with a
separating transcendence basis $\bft = (t_1, \ldots , t_e)$ and
every nonzero $r(\bft)\in K(\bft)$, there exists a $K$-place $\phi$
of $E$ unramified over $K(\bft)$ such that $\Egag = K$,
$\overline{K_0(\bft)} = K_0$, $\bfa = \phi(\bft)$ is finite, and
$r(\bfa)\neq 0,\infty$.
\item\label{con b:DefinitionPACextension}%
For every finitely generated regular extension $E/K$ of
transcendence degree $1$ with a separating transcendence basis $t$
and every nonzero $r(t)\in K[t]$ there exists a $K$-place $\phi$ of
$E$ unramified over $K(t)$ such that $\Egag = K$, $\overline{K_0(t)}
= K_0$, $a = \phi(t)\neq \infty$, and $r(a)\neq0,\infty$.
\end{label1}
\end{proposition}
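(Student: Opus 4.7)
The plan is to prove the equivalences via the cycle (a) $\Rightarrow$ (b) $\Rightarrow$ (d) $\Rightarrow$ (a), together with the reductions (e) $\Rightarrow$ (c) and (c) $\Rightarrow$ (b); the remaining implications (b) $\Rightarrow$ (c) and (d) $\Rightarrow$ (e) are trivial specializations to $e=1$. The underlying technique is the standard dictionary between absolutely irreducible varieties, absolutely irreducible polynomials, and regular function-field extensions, together with Lemma~\ref{lem:RES} to extend specializations to unramified places.

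For (a) $\Rightarrow$ (b), given $f(\bfT,X)$ and $r(\bfT)\neq 0$, the open subset of the hypersurface $f=0$ obtained by removing $\{r=0\}$ is absolutely irreducible of dimension $e$, and the first projection $(\bfT,X)\mapsto\bfT$ is dominant and separable (since $f$ is separable in $X$); condition (a) supplies the needed $(\bfa,b)$. For (b) $\Rightarrow$ (d), take a primitive element $x\in E$ of $E/K(\bft)$ whose minimal polynomial $f(\bft,X)\in K[\bft,X]$ is absolutely irreducible (because $E/K$ is regular) and separable in $X$. Lemma~\ref{lem:RES} provides $0\neq g(\bft)\in K[\bft]$ such that every specialization $\bft\mapsto\bfa$ with $g(\bfa)\neq 0$ extends to a $K$-place of $E$ unramified over $K(\bft)$ with residue field $K(\xgag)$; applying (b) to $(f,r\cdot g)$ gives $(\bfa,b)\in K_0^e\times K$, and \cite[Lemma~2.2.7]{FriedJarden2005} lets us choose the intermediate place of $K(\bft)$ so that $\overline{K_0(\bft)}=K_0$. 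For (d) $\Rightarrow$ (a), set $E=K(V)$ with separating transcendence basis $\bft$ pulled back from $\bbA^e$ via $\nu$, and apply (d) with $r=1$: the resulting place has $\Egag=K$ and $\phi(\bft)\in K_0^e$, and by the point-place correspondence it arises from a $K$-rational point of $V$ whose $\nu$-image lies in $K_0^e$.

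For (e) $\Rightarrow$ (c), given $f(T,X)$ absolutely irreducible and separable in $X$, set $E=K(t,x)$ with $f(t,x)=0$; then $E/K$ is regular of transcendence degree one with separating basis $t$, and (e) yields $\phi$ with $a=\phi(t)\in K_0$, $b=\phi(x)\in K$, and $f(a,b)=0$. The hardest step is (c) $\Rightarrow$ (b), which follows the substitution argument of Lemma~\ref{lem:regularsolvablewithonet}: by iteratively combining Matsusaka--Zariski with Bertini--Noether one picks $\alpha_i,\beta_i\in K_0$ ($\beta_i\neq 0$) --- possible because $K_0$ is infinite, see the remark above --- so that $g(T_1,X):=f(T_1,\alpha_2+\beta_2 T_1,\ldots,\alpha_e+\beta_e T_1,X)$ stays absolutely irreducible over $K$ and the induced polynomial $\tilde r(T_1)$ stays nonzero. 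Applying (c) to $(g,\tilde r)$ gives $(a_1,b)\in K_0\times K$, and $\bfa=(a_1,\alpha_2+\beta_2 a_1,\ldots,\alpha_e+\beta_e a_1)\in K_0^e$ solves the original problem. The main obstacle is precisely this reduction, where one must simultaneously preserve absolute irreducibility, non-vanishing of the auxiliary polynomial, and integrality of the substitution parameters in $K_0$ --- which is why the assumption that $K_0$ is infinite is indispensable.
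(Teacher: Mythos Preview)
Your argument is correct and covers the same ground as the paper, but organized differently. The paper cites \cite[Lemma~1.3]{JardenRazon1994} for the equivalence of (a), (b), (c) and then proves only (b)$\Rightarrow$(d) and (e)$\Rightarrow$(c) explicitly, with (d)$\Rightarrow$(e) trivial. You instead make the proof self-contained: your (c)$\Rightarrow$(b) via Matsusaka--Zariski and Bertini--Noether (with the parameters $\alpha_i,\beta_i$ taken in $K_0$, using that $K_0$ is infinite) is exactly the content of the cited Jarden--Razon lemma, and you close the cycle through (d)$\Rightarrow$(a) rather than through (c)$\Rightarrow$(a). Your (b)$\Rightarrow$(d) and (e)$\Rightarrow$(c) match the paper's arguments essentially verbatim. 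The payoff of your route is that nothing is black-boxed; the paper's route is shorter on the page.

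One small gap: in (d)$\Rightarrow$(a) you apply (d) with $r=1$, but that does not suffice. A $K$-rational place of $E=K(V)$ need not have center on $V$ itself --- it may lie on the boundary of a compactification or over the locus where the rational map $\nu$ fails to be defined or finite. You must choose $r\in K[\bft]$ so that over $\{r\neq 0\}\subseteq\bbA^e$ the map $\nu$ restricts to a finite morphism from an open subset of $V$ (such $r$ exists because $\nu$ is generically finite and separable); then $\bfa=\phi(\bft)$ finite with $r(\bfa)\neq 0$ forces the center of $\phi$ to be a genuine $K$-point of $V$ mapping to $\bfa\in K_0^e$. This is routine to repair, but ``$r=1$'' as written is too hasty.
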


\begin{proof}
The proof of \cite[Lemma 1.3]{JardenRazon1994} gives the equivalence
between \eqref{con pac:DefinitionPACextension}, \eqref{con poly
r:DefinitionPACextension}, and \eqref{con
poly:DefinitionPACextension}. Obviously \eqref{con
a:DefinitionPACextension} implies \eqref{con
b:DefinitionPACextension}, so it suffices to prove that \eqref{con
poly r:DefinitionPACextension} implies \eqref{con
a:DefinitionPACextension} and that \eqref{con
b:DefinitionPACextension} implies \eqref{con
poly:DefinitionPACextension}.

\eqref{con poly r:DefinitionPACextension} $\Rightarrow$
\eqref{con a:DefinitionPACextension}: Let $x\in E/K(\bft)$ be
integral over $K[\bft]$ such that $E = K(\bft,x)$. Let
$f(\bfT,X)\in K[\bfT,X]$ be the absolutely irreducible
polynomial which is monic and separable in $X$ and for which
$f(\bft,x)=0$. Let $g(\bft)\in K[\bft]$ be the polynomial given
in Lemma~\ref{lem:RES} for the extension $E/K(\bft)$. We have
$(\bfa,b) \in K_0^e\times K$ such that $f(\bfa,b)=0$ and
$g(\bfa)r(\bfa)\neq 0,\infty$. Extend the specialization
$\bft\mapsto\bfa$ to a $K$-place $\phi$ of $E$ with the
following properties to conclude the implication: (1) $\phi(x) =
b\neq \infty$ (this is possible since $x$ is integral over
$K[\bft]$); (2) $\overline{K_0(\bft)} = K_0$ and $\Egag =
K(b)=K$ (Lemma~\ref{lem:RES}); (3) $\phi$ is unramified over
$K(\bft)$ (Lemma~\ref{lem:RES}).

\eqref{con b:DefinitionPACextension} $\Rightarrow$ \eqref{con
poly:DefinitionPACextension}: Let $f(T,X) = \sum_{k=0}^n a_k(T) X^k$
and $r(T)$ be as in \eqref{con poly:DefinitionPACextension}. Set
$r'(T) = r(T) a_n(T)$. Let $t$ be a transcendental element and let
$x\in \widetilde{K(t)}$ be such that $f(t,x) = 0$. Let $E = K(t,x)$.
Then $E$ is regular over $K$ and separable over $K(t)$. Applying
\eqref{con b:DefinitionPACextension} to $E$ and $r'(t)$ we get a
$K$-place $\phi$ of $E$ satisfying the following properties. (1)
$a=\phi(t)\in K_0$ which implies that $b=\phi(x)$ is finite, since
$\phi(f(t,x))=0$ and $f(a,X)$ has a nonzero leading coefficient; (2)
$\Egag=K$, which concludes the proof since $b\in \Egag=K$.
\end{proof}

\section{Geometric Solutions and PAC Fields}
The following result characterizes when a solution is geometric in
terms of a rational place of some regular extension. 

\begin{proposition}\label{prop: characterization of solutions}
Let $K$ be a field and consider a geometric embedding problem
\begin{equation}
\label{eq:ins-prop-char-of-sol}
(\mu\colon \gal(K)\to\gal(L/K),\alpha\colon \gal(F/E)\to
\gal(L/K))
\end{equation}
for $K$. Let $\theta\colon \gal(K)\to \gal(F/E)$ be a
weak solution. Then there exists a finite separable extension
$\Ehat/E$ such that $\Ehat/K$ is regular and for every place $\phi$
of $E/K$ that is unramified in $F$ the following two conditions are
equivalent.
\begin{enumerate}
\item \label{enu:characterization of solutions a}
$\phi$ extends to a place $\Phi$ of $F$ such that $\Egag= K$ and $\Phi^* = \theta$.
\item \label{enu:characterization of solutions b}
$\phi$ extends to a $K$-rational place of $\Ehat$.
\end{enumerate}
\end{proposition}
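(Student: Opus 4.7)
Let $N_0 = K_s^{\ker\theta}$, so $N_0/K$ is Galois and $\theta$ descends to an embedding $\iota\colon\gal(N_0/K)\hookrightarrow G:=\gal(F/E)$ with image $H=\theta(\gal(K))$. From $\alpha\theta=\mu$ we get $\ker\theta\subseteq\ker\mu$, hence $L=F\cap K_s\subseteq N_0$ and $F\cap N_0=L$. By Lemma~\ref{lem:GaloisandFiber}, $\gal(FN_0/E)$ identifies with $G\times_{\gal(L/K)}\gal(N_0/K)$ via $\sigma\mapsto(\sigma|_F,\sigma|_{N_0})$. Define
\[
\Delta_\iota=\{(\iota(h),h)\mid h\in\gal(N_0/K)\}\leq G\times_{\gal(L/K)}\gal(N_0/K),\qquad \Ehat=(FN_0)^{\Delta_\iota}.
\]
To see $\Ehat/K$ is regular: $F/L$ is regular and $F\cap N_0=L$, so $FN_0/N_0$ is regular and $FN_0\cap K_s=N_0$. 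Inside $FN_0/E$ one checks that $\langle\Delta_\iota,\gal(FN_0/N_0)\rangle=\langle\Delta_\iota,\ker\alpha\times\{1\}\rangle$ exhausts the whole fiber product (since $(g,h)=(\iota(h),h)(\iota(h)^{-1}g,1)$ and $\iota(h)^{-1}g\in\ker\alpha$), whence $\Ehat\cap N_0=E$. Consequently $\Ehat\cap K_s\subseteq\Ehat\cap N_0\cap K_s\subseteq E\cap K_s=K$.

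\noindent\textbf{(a)$\Rightarrow$(b).} Extend $\Phi$ to an $L$-place $\Psi$ of $FN_0$ by taking $\Psi|_{N_0}$ to be the natural inclusion $N_0\hookrightarrow K_s$; this is consistent with $\Phi|_L$ being the inclusion since $F\cap N_0=L$. For $\sigma\in\gal(K)$ and $x\in N_0$, the defining identity $\Psi(\Psi^*(\sigma)x)=\sigma\Psi(x)$ and $\Psi(x)=x$ give $\Psi^*(\sigma)|_{N_0}=\sigma|_{N_0}$, while $\Psi^*(\sigma)|_F=\Phi^*(\sigma)=\theta(\sigma)=\iota(\sigma|_{N_0})$. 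Thus every $\Psi^*(\sigma)$ lies in $\Delta_\iota$, so $D_{\Psi/\phi}\subseteq\Delta_\iota$ and the decomposition field of $\Psi/\phi$ contains $\Ehat$; in particular $\Psi|_{\Ehat}$ is the desired $K$-rational place extending $\phi$.

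\noindent\textbf{(b)$\Rightarrow$(a).} Given a $K$-rational place $\Psi_0$ of $\Ehat$ extending $\phi$, extend $\Psi_0$ to an $L$-place $\Psi$ of $FN_0$ with $\Psi|_{N_0}$ equal to the inclusion; this choice is available because $\Ehat\cap N_0=E$ and $\Psi_0|_K=\mathrm{id}$. The inertia $I_{\Psi/\phi}$ projects into $I_{\Phi/\phi}=1$ in the first coordinate and into the trivial inertia of the inclusion place $\Psi|_{N_0}$ in the second, so $\Psi/\phi$ is unramified. Since $\Ehat$ lies in the decomposition field, $D_{\Psi/\phi}\leq\Delta_\iota$; conversely, $\Psi|_{N_0}$ being the inclusion forces the residue field of $\Psi$ to contain $N_0$, so $|D_{\Psi/\phi}|\geq[N_0:K]=|\Delta_\iota|$, yielding $D_{\Psi/\phi}=\Delta_\iota$. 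Setting $\Phi=\Psi|_F$ and repeating the calculation from the previous paragraph gives $\Phi^*(\sigma)=\iota(\sigma|_{N_0})=\theta(\sigma)$ for all $\sigma\in\gal(K)$.

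\noindent\textbf{Main obstacle.} The requirement $\Phi^*=\theta$ is strictly stronger than the coincidence of images $D_{\Phi/\phi}=H$: it demands that the canonical isomorphism $\gal(\Fgag/K)\cong D_{\Phi/\phi}$ coming from the unramified reduction agree with $\iota$ under $\Fgag=N_0$. The naive candidate $\Ehat=F^H$ fails precisely because a $K$-rational place of $F^H$ only controls the image of the decomposition group, not the rigidification. Building $\Ehat$ inside $FN_0$ and pinning down the extension of $\phi$ by imposing $\Psi|_{N_0}=\text{inclusion}$ is what forces the correct identification.
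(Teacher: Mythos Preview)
Your construction is exactly the paper's: your $N_0$ is the paper's $M$, your $\Delta_\iota$ is the image $\hat\theta(\gal(K))$ of the paper's map $\hat\theta(\sigma)=(\theta(\sigma),\sigma|_M)$, and $\Ehat=(FM)^{\Delta_\iota}$ is the same field. The two directions are argued the same way (extend the place $M$-linearly, i.e.\ so that $\Psi|_{N_0}$ is the inclusion, and read off the decomposition group), with the paper packaging the (b)$\Rightarrow$(a) step via a reduction to the special case $\gal(\Fhat/\Ehat)\cong\gal(M/K)$ while you compute $D_{\Psi/\phi}=\Delta_\iota$ directly.

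Two small slips worth cleaning up: you write $\gal(FN_0/N_0)$ where you mean $\gal(FN_0/EN_0)$, and ``$\Ehat\cap N_0=E$'' should read $\Ehat\cap EN_0=E$ (equivalently $\Ehat\cap N_0=K$). Relatedly, in (b)$\Rightarrow$(a) the justification for extending $\Psi_0$ so that $\Psi|_{N_0}$ is the inclusion is not merely that $\Ehat\cap N_0=K$, but that $\Ehat$ and $N_0$ are linearly disjoint over $K$ (equivalently $[FN_0:\Ehat]=|\Delta_\iota|=[N_0:K]$), which does follow from your computations but should be stated.
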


\begin{proof}
First we consider the special case when $\gal(F/E)\cong \gal(L/K)$.
Then there is a unique solution of $(\mu,\alpha)$, namely $\theta =
\alpha^{-1}\mu$. Let $\Phi$ be an extension of $\phi$ to $F$ and
take $\Ehat = E$. It is trivial that \eqref{enu:characterization of
solutions a} implies \eqref{enu:characterization of solutions b}.
Assume \eqref{enu:characterization of solutions b}. Then $\Phi^*$ is
defined, and from the uniqueness, $\Phi^* = \theta$.

Next we prove the general case. Let $M/K$ be a Galois extension such
that $\gal(M)= \ker(\theta)$ (in particular, $L\subseteq M$) and let
$\Fhat = FM$.
\[
\xymatrix{%
    & F\ar@{-}[r]
    & \Fhat\\
E\ar@{-}[r]\ar@{.}@/^/[urr]|(.25){\mbox{$\Ehat$}}
    & EL\ar@{-}[r]\ar@{-}[u]
        & EM\ar@{-}[u]\\
K\ar@{-}[r]\ar@{-}[u]
    & L\ar@{-}[r]\ar@{-}[u]
        & M\ar@{-}[u]\\
}%
\]
As $F$ and $M$ are linearly disjoint over $L$, the fields $F$ and
$EM$ are linearly disjoint over $EL$. We have
\[
\gal(\Fhat/E) = \gal(F/E) \times_{\gal(L/K)} \gal(M/K).
\]
Define $\hat\theta \colon \gal(K) \to \gal(\Fhat/E)$ by $\hat\theta
(\sigma) = (\theta(\sigma), \sigma|_{M})$. Let $\Ehat$ denote the
fixed field of $\hat\theta(\gal(K))$ in $\Fhat$. Then $\hat\theta$
is a solution in
\[
\xymatrix{%
    &\gal(K)\ar[d]^{\mu}\ar[dl]_{\hat\theta}\\
\gal(\Fhat/\Ehat)\ar[r]^{\hat\alpha}
    &\gal(L/K).
}%
\]
Here $\alphahat$ is the restriction map. In particular, $\Ehat/K$ is
regular. Also $\ker(\hat\theta) = \ker(\theta)\cap \gal(M) =
\gal(M)$, so $\gal(\Fhat/\Ehat)\cong \gal(M/K)$.

Assume $\phi$ extends to a $K$-rational place $\phihat$ of $\Ehat$.
Extend $\phihat$ $L$-linearly to a place $\Phihat$ of $\Fhat$. Then
$\Phihat/\phihat$ is unramified. Let $\Phi = \Phihat|_{F}$. Then by
the first part $\Phihat^* = \thetahat$.
Lemma~\ref{lem:geometricdomination} then asserts that $\Phi^* =
\theta$.

On the other hand, assume that $\phi$ extends to a place $\Phi$ of
$F$ such that $\Egag = K$ and $\Phi^*=\theta$. Extend $\Phi$
$M$-linearly to a place $\Phihat$ of $\Fhat$. Then, since
$\res_{\Fhat,F}(\Phihat^*)=\Phi^*$ and
$\res_{\Fhat,M}(\Phihat^*)=\res_{K_s,M}$, we have
\[
\Phihat^*(\sigma) = (\Phi^*(\sigma),\sigma|_M) = \thetahat.
\]
Therefore the residue field of $\Ehat$ is also $K$.
\end{proof}

\begin{remark}
In the proof it was shown that $\Ehat\subseteq FM$, where $M$ is
the solution field of $\theta$.
\end{remark}

\begin{remark}
In the proof we showed that $\ker \thetahat = \gal(M)$, that is
$\alphahat$ is an isomorphism. Thus $\Ehat M = \Fhat$. This implies
that our proof is a group theoretic formulation of the field
crossing argument.
\end{remark}

\begin{remark}
Proposition~\ref{prop: characterization of solutions} sharpens 
earlier works of Roquette on PAC Hilbertian fields \cite[Corollary
27.3.3]{FriedJarden2005}, Fried-Haran-Jarden on Frobenius
fields \cite[Proposition 24.1.4]{FriedJarden2005}, and of D\`ebes on Beckmann-Black problem \cite{Debes}. 

Although in each of the earlier works the authors prove a slightly weaker version of the proposition, e.g., Fried-Haran-Jarden consider PAC fields and D\`ebes considers the case where $L=K$, the proofs are essentially  the same as the proof given here. In other words one can easily extend the proof of \cite[Lemma 24.1.1]{FriedJarden2005} or alternatively the proof of \cite[Proposition~2.2]{Debes} to get a proof of this proposition. In fact, the author of this work chose the former. 

The innovation of our result is the focus on the solution $\theta$, rather on its image which is the decomposition group. This observation is the basis of the result on this dissretation.
\end{remark}

Proposition~\ref{prop: characterization of solutions} is extremely
useful. For example, the following result which is in fact a
reformulation of \cite[Lemma~24.1.1]{FriedJarden2005} (excluding the
part on $p$-independent elements) is a straightforward conclusion of
the proposition.

\begin{corollary}\label{cor:PACfield_solutionisgeometric}
Every weak solution of a finite embedding problem \eqref{EP:fields}
for a PAC field $K$ is geometric.
\end{corollary}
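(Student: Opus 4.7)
The plan is to combine the characterization result of Proposition~\ref{prop: characterization of solutions} with the PAC hypothesis on $K$. First, invoking the lemma that every finite embedding problem is equivalent to a geometric one, I may assume the given embedding problem is already of the form $(\mu,\alpha\colon\gal(F/E)\to\gal(L/K))$ arising from a finitely generated regular extension $E/K$ and a Galois extension $F/E$ with $L=F\cap K_s$. Let $\theta\colon\gal(K)\to\gal(F/E)$ be the weak solution whose geometricity we want to prove.

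Apply Proposition~\ref{prop: characterization of solutions} to $\theta$ to obtain a finite separable extension $\Ehat/E$ with $\Ehat/K$ regular, characterized by the property that a $K$-place $\phi$ of $E$ unramified in $F$ satisfies $\Phi^{*}=\theta$ for some extension $\Phi$ with $\Egag=K$ if and only if $\phi$ extends to a $K$-rational place of $\Ehat$. It therefore suffices to produce such a $\phi$, or equivalently, a $K$-rational place $\phihat$ of $\Ehat$ whose restriction to $E$ is unramified in $F$.

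This is where the PAC property enters: since $K$ is PAC, the trivial extension $K/K$ is PAC, so Proposition~\ref{prop:DefinitionPACextension}\eqref{con a:DefinitionPACextension} applies to the regular extension $\Ehat/K$. Concretely, for any separating transcendence basis $\bft$ of $\Ehat/K$ and any nonzero $r(\bft)\in K[\bft]$, there exists a $K$-place $\phihat$ of $\Ehat$ that is unramified over $K(\bft)$, has residue field $K$, and satisfies $r(\phihat(\bft))\neq 0,\infty$.

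The mildly technical heart of the argument is to choose $r(\bft)$ so that the restriction $\phi=\phihat|_{E}$ is automatically unramified in $F$. For this, apply Lemma~\ref{lem:RES} to a primitive element for $F/E$, viewed through an integral model of $\Ehat/K(\bft)$: the resulting polynomial $g(\bft)$ cuts out the locus of specialisations $\bft\mapsto\bfa$ that extend to places of $F$ unramified over $E$. Absorbing $g$ into $r$, the $K$-rational place $\phihat$ provided by the PAC hypothesis restricts to a $K$-place $\phi$ of $E$ unramified in $F$, and Proposition~\ref{prop: characterization of solutions} then produces an extension $\Phi$ of $\phi$ with $\Phi^{*}=\theta$. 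Hence $\theta$ is geometric, as required.
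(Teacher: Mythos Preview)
Your proposal is correct and follows the same approach as the paper: apply Proposition~\ref{prop: characterization of solutions} to obtain the auxiliary regular extension $\Ehat/K$, then use the PAC hypothesis to supply a $K$-rational place of $\Ehat$. The paper's own proof is two sentences and simply asserts that ``as $K$ is PAC, there is always such a $K$-rational place,'' silently absorbing the unramifiedness-in-$F$ condition into the generic freedom of choosing the place; you have unpacked this via Lemma~\ref{lem:RES} and Proposition~\ref{prop:DefinitionPACextension}, which is a legitimate and arguably cleaner way to write it out. One minor redundancy: the statement already refers to \eqref{EP:fields}, which \emph{is} a geometric embedding problem, so your opening reduction step is unnecessary (though harmless).
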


\begin{proof}
By Proposition~\ref{prop: characterization of solutions} for each
weak solution there exists a finitely generated regular extension
$\Ehat$ of $K$ such that the solution is geometric if and only if
there exists a $K$-rational place of $\Ehat$. Thus, as $K$ is PAC,
there is always such a $K$-rational place.
\end{proof}

\begin{proof}[Proof of Lemma~24.1.1 of Fried-Jarden]
Let $K$ be a PAC field, $S/R$ a regular finitely generated Galois
ring cover over $K$, and $F/E$ the corresponding fraction fields
extension. Let $L$ denote the algebraic closure of $K$ in $F$. Let
$E'$ be a subextension of $F/E$ such that the restriction map
$\alpha' \colon \gal(F/E') \to \gal(L/K)$ is surjective. Assume the
existence of an epimorphism $\gamma \colon \gal(K) \to \gal(F/E')$
such that $\alpha\gamma = \nu$, where $\nu\colon \gal(K) \to
\gal(L/K)$ is the restriction map. Let $M$ be the fixed field of
$\ker \gamma$ in $K_s$.

We have to prove the existence of an epimorphism $\phi\colon S\to M$
such that $\phi(R) = K$ and $D_\phi = \gal(F/E')$. In the notation
of this work, $\gamma$ is a solution of $(\nu,\alpha')$. The
previous corollary implies that $\gamma = \phi^*$, for a place
$\phi$ of $E$ unramified in $F$ and such $\Egag = K$. Thus
\[
D_\phi = \phi^*(\gal(K))= \gamma(\gal(K)).
\]
In fact we can choose $\phi$ such that $S\subseteq  \calO_\phi$,
since the only requirement is that the residue field of $\Ehat$ is
$K$. Then $\phi(S) = \Fgag = M$ and $\phi(R) = \Egag = K$.
\end{proof}

\section{Double Embedding Problems} \label{sec:DoubleEmbeddingProblem}
In this section we generalize the notion of embedding problems to
field extensions. The reader is advised to recall the definitions
and notation concerning embedding problems, if he needs to.

\subsection{The Definition of Double Embedding Problems}
Let $K/K_0$ be a field extension. A \textbf{double embedding problem
(DEP)} for $K/K_0$ consists on two embedding problems, one
$(\mu\colon \gal(K) \to G, \alpha\colon H\to G)$ for $K$ and one
$(\mu_0\colon \gal(K_0) \to G_0, \alpha\colon H_0\to G_0)$ for $K_0$
which are compatible in the following sense. $H\leq H_0$, $G\leq
G_0$, and if we write $i\colon H\to H_0$ and $j\colon G\to G_0$ for
the inclusion maps, then the following diagram commutes.
\begin{equation}
\label{eq:DoubleEP}%
\xymatrix{%
    &\gal(K_0) \ar[dr]^{\mu_0}\ar@{.>}[dl]_{\exists \theta_0?}\\
H_0 \ar[rr]^(0.4){\alpha_0}
         &&G_0 \\
    &\gal(K) \ar'[u]^(0.7){r}[uu]\ar[dr]^{\mu}\ar@{.>}[dl]_{\exists \theta?}\\
H \ar@{^(->}[uu]^{i} \ar[rr]^(0.4){\alpha}
         &&G \ar@{^(->}[uu]^{j}
}
\end{equation}

Given a DEP for $K/K_0$, we refer to the corresponding embedding
problem for $K$ (resp.\ $K_0$) as the \textbf{lower} (resp.\ the
\textbf{higher}) embedding problem. We call a DEP \textbf{finite} if
the higher (and hence also the lower) embedding problem is finite.

A \textbf{weak solution} of a DEP \eqref{eq:DoubleEP} is a weak
solution $\theta_0$ of the higher embedding problem which restricts
to a solution $\theta$ of the lower embedding problem via the
restriction map $\gal(K)\to \gal(K_0)$. In case $K/K_0$ is a
separable algebraic extension, the restriction map is the inclusion
map, and hence the condition on $\theta_0$ reduces to
$\theta_0(\gal(K))\leq H$. To emphasize the existence of $\theta$,
we usually regard a weak solution of a DEP as a pair
$(\theta,\theta_0)$ (where $\theta$ is the restriction of $\theta_0$
to $\gal(K)$).

\subsection{Regularly Solvable Double Embedding Problems}
Consider a double embedding problem \eqref{eq:DoubleEP} and let
$L_0$ and $L$ be the fixed fields of the kernels of $\mu_0$ and
$\mu$, respectively. Then we have isomorphisms
$\mugag_0\colon\gal(L_0/K_0) \to G_0$ and $\mugag\colon\gal(L/K) \to
G$. Hence (as in the case of embedding problems) replacing $G_0$ and
$G$ with $\gal(L_0/K_0)$ and $\gal(L/K)$ (and replacing
correspondingly all the maps) gives us an equivalent DEP. The
compatibility condition is realized as $L = L_0 K$.

\begin{lemma}
Every DEP for which the upper embedding problem is rational is
equivalent to the following DEP:
\begin{equation}
\label{eq:RegularDoubleEP}%
\xymatrix@R=15pt@C=15pt{%
    &\gal(K_0) \ar[dr]^{\mu_0}\\
\gal(F_0/K(\bft)) \ar[rr]^(0.4){\alpha_0}
         &&\gal(L_0/K_0) \\
    &\gal(K) \ar'[u]^(0.7){r}[uu]\ar[dr]^{\mu}\\
\gal(F/E) \ar@{^(->}[uu]^{i} \ar[rr]^(0.4){\alpha}
         &&\gal(L/K). \ar@{^(->}[uu]^{j}
}
\end{equation}
Moreover, the results even holds if we take $t = \bft$ a single
variable.
\end{lemma}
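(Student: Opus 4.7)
The plan is to rewrite the upper embedding problem in its rational standard form and then perform a base change to $K$ that realizes the lower embedding problem.

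By hypothesis, the upper EP is rational, so up to equivalence we may take $H_0 = \gal(F_0/K_0(\bft))$ and $G_0 = \gal(L_0/K_0)$ with $\alpha_0, \mu_0$ the restriction maps, where $F_0/K_0(\bft)$ is finite Galois, $L_0 := F_0 \cap K_{0,s}$ (so $F_0/L_0$ is regular), and $\bft$ is algebraically independent over $K$. Let $H \leq H_0$ and $G \leq G_0$ denote the images of the original $H, G$ under these identifications. Tracing the compatibility $j\mu = \mu_0 r$ with $r : \gal(K) \to \gal(K_0)$ the natural restriction shows that $\ker \mu = r^{-1}(\gal(K_{0,s}/L_0)) = \gal(K_s/L_0 K)$, hence the fixed field of $\ker \mu$ is $L := L_0 K$, and $j(G) = \gal(L_0/L_0 \cap K) \subseteq G_0$.

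Now set $F := F_0 K$ and $E := K \cdot F_0^H$, with $F_0^H$ the fixed field of $H$ in $F_0$. Since $F_0/L_0$ is regular and $L_0/L_0 \cap K$ is Galois (being a subextension of the Galois extension $L_0/K_0$), transitivity of linear disjointness gives that $F_0$ and $K$ are linearly disjoint over $F_0 \cap K = L_0 \cap K$. Because $\alpha_0(H) = G$ fixes $L_0 \cap K$, we have $L_0 \cap K \subseteq F_0^H$, and the linear disjointness then yields $F_0 \cap E = F_0^H$. Consequently $F/E$ is Galois with $\gal(F/E) \cong \gal(F_0/F_0^H) = H$ via restriction, compatibly with the inclusion $i : H \hookrightarrow H_0$. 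The same regularity argument gives $F \cap K_s = L_0 K = L$, and the restriction $\gal(F/E) \to \gal(L/K)$ is identified with $\alpha$. All four squares commute, so the resulting DEP is equivalent to the original.

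For the moreover, apply Lemma~\ref{lem:regularsolvablewithonet} to the upper rational EP, with $K_0, L_0$ in the roles of $K, L$. It furnishes a $K_0(u)$-geometric solution whose residue field is a Galois extension $F_0'/K_0(u)$, regular over $L_0$, with $\gal(F_0'/K_0(u))$ canonically isomorphic to $\gal(F_0/K_0(\bft))$ as an EP over $\gal(L_0/K_0)$. Re-running the construction above with this one-variable realization in place of $F_0/K_0(\bft)$ produces an equivalent DEP in a single variable. The principal obstacle throughout is the joint identification $F_0 \cap E = F_0^H$ and $F \cap K_s = L$; both reduce to the linear disjointness of $F_0$ and $K$ over $L_0 \cap K$, which itself is the regularity of $F_0/L_0$ combined with the Galois descent $L_0/L_0 \cap K$.
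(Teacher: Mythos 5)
Your proposal is correct and follows essentially the same route as the paper's proof: put the higher embedding problem in rational standard form, set $F=F_0K$ and $L=L_0K$, take $E$ to be the fixed field of $H$ in $F$, and justify the identifications via the linear disjointness of $F_0$ and $K$ over $L_0\cap K$ (which the paper expresses as the isomorphism $\gal(F_0/(L_0\cap K)(\bft))\cong\gal(F/K(\bft))$), with the single-variable case handled by Lemma~\ref{lem:regularsolvablewithonet}. Your write-up simply spells out the linear-disjointness and compatibility checks that the paper leaves implicit.
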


\[
\xymatrix@R=12pt@C=12pt{
         &&K(\bft)\ar@{-}[rr]
                  &&L(\bft)\ar@{-}[r]
                       &F\\
    &K\ar@{-}[rr]\ar@{-}[ur]
            &&L\ar@{-}[ur]\\
        &&(L_0\cap K)(\bft)\ar@{-}'[r][rr]\ar@{-}'[u][uu]
                 &&L_0(\bft)\ar@{-}[r]\ar@{-}[uu]
                     &F_0\ar@{-}[uu]\\
K_0\ar@{-}[r]
    &L_0\cap K\ar@{-}[rr]\ar@{-}[uu]\ar@{-}[ur]
            &&L_0\ar@{-}[ur]\ar@{-}[uu]
}
\]

\begin{proof}
Let \eqref{eq:DoubleEP} be a rational double embedding problem. By
definition, it means that the higher embedding problem is rational,
so we may replace the higher embedding problem of
\eqref{eq:DoubleEP} with an equivalent embedding problem as in
\eqref{eq:RegularDoubleEP}. By
Lemma~\ref{lem:regularsolvablewithonet} we may assume that $\bft=t$.

Let $F=F_0K$ and $L = L_0K$. The compatibility condition implies
that $H$ embeds into $\gal(F_0/K_0(\bft))$ (via $i$) as a subgroup
of $\gal(F_0/ (L_0 \cap K) (\bft)) \cong \gal(F/K(\bft))$. Let
$E\subseteq F$ be the fixed field of $H$, i.e., $H = \gal(F/E)$.
Under this embedding, $\alpha\colon \gal(F/K(\bft))\to \gal(L/K)$ is
the restriction map. Therefore $\alpha(H) = \gal(L/K)$ implies that
$E\cap L = K$, and hence $E$ is regular over $K$.
\end{proof}

\begin{definition}
A double embedding problem \eqref{eq:RegularDoubleEP} as in the
above lemma is called \textbf{rational} double embedding problem.
\end{definition}

\begin{remark}
The converse of the above lemma is also valid, that is to say,
assume we have a rational double embedding problem as in
\eqref{eq:RegularDoubleEP}, i.e.\ a finitely generated regular
extension $E/K$, a separating transcendence basis $\bft$ for $E/K$,
and a finite Galois extension $F_0/K_0(\bft)$ such that $E\subseteq
F$, where $F=F_0K$. Then all the restriction maps in
\eqref{eq:RegularDoubleEP} are surjective, and hence
\eqref{eq:RegularDoubleEP} defines a finite double embedding
problem.
\end{remark}

\subsection{Geometric Solutions of Double Embedding Problems}
First recall that a weak solution $\theta$ of a geometric embedding
problem
\[
(\mu\colon \gal(K)\to \gal(L/K), \alpha\colon \gal(F/E)\to
\gal(L/K))
\]
is geometric if $\theta=\phi^*$, for some $K$-place $\phi$ of
$E$ that is unramified in $F$ and under which the residue field
of $E$ is $K$. Then we call a weak solution $(\theta,\theta_0)$
of \eqref{eq:RegularDoubleEP} \textbf{geometric solution} if
$(\theta,\theta_0)=(\phi^*,\phi_0^*)$, where $\phi^*$ is a
geometric solution of the lower embedding problem and $\phi_0 =
\phi|_{K_0}$.

Note that since $\phi_0^*$ is a solution of the higher embedding
problem, the residue field of $K_0(\bft)$ is $K_0$. In particular,
if $\phi(\bft)$ is finite, then $\phi(\bft) \in K_0^e$. Also note
that for a $K$-place of $E$ that is unramified at $F$ and such that
$\Egag= K$ and $\overline{K_0(\bft)}=K_0$, the pair
$(\phi^*,\phi_0^*)$ is indeed a weak solution of
\eqref{eq:RegularDoubleEP}, since $\phi_0^* =
\res_{K_s,K_{0s}}\phi^*$.

\section{The Lifting Property}
In this section we formulate and prove the lifting property.
First we reduce the discussion to separable algebraic extensions
by showing that if $K/K_0$ is PAC, then $K\cap K_{0s}/K_0$ is
PAC and $\gal(K)\cong \gal(K\cap K_{0s})$ via the restriction
map. Then we characterize separable algebraic PAC extensions in
terms of geometric solutions of double embedding problems. From
this characterization we establish the lifting property. Finally
we prove a strong (but complicated) version of the lifting
problem to PAC extensions of finitely generated fields.

\subsection{Reduction to Separable Algebraic Extensions}
In {\cite[Corollary 1.5]{JardenRazon1994}} Jarden and Razon show

\begin{lemma}[Jarden-Razon] If $K/K_0$ is PAC, then so is $K\cap
K_{0s}/K_0$.
\end{lemma}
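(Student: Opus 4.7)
The plan is to use the polynomial characterization of PAC extensions, namely Proposition~3.1.3 condition~(c) in the excerpt. Set $K' = K\cap K_{0s}$. To show that $K'/K_0$ is PAC, I will verify condition~(c): for every absolutely irreducible $f'(T,X)\in K'[T,X]$ that is separable in $X$ and every nonzero $r'(T)\in K'(T)$, there is a pair $(a,b)\in K_0\times K'$ with $r'(a)\neq 0$ and $f'(a,b)=0$.

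First I would regard $f'$ as a polynomial in $K[T,X]$ (since $K'\subseteq K$). Absolute irreducibility is preserved under enlarging the ground field to any intermediate field of the algebraic closure, and $f'$ remains separable in $X$. Now comes the main point: I want the root $b\in K$ that PACness of $K/K_0$ produces to automatically land in $K'$, i.e., to be separable algebraic over $K_0$. The trick is to replace the auxiliary function $r'(T)$ by
\[
R(T) \;=\; r'(T)\cdot \mathrm{disc}_X\!\bigl(f'(T,X)\bigr) \;\in\; K[T],
\]
which is nonzero precisely because $f'$ is separable in $X$.

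Next I would apply condition~(c) of Proposition~\ref{prop:DefinitionPACextension} to the PAC extension $K/K_0$ with the polynomial $f'$ and the function $R$. This produces a pair $(a,b)\in K_0\times K$ with $R(a)\neq 0$ and $f'(a,b)=0$; in fact the proposition gives infinitely many such pairs. The nonvanishing $R(a)\neq 0$ forces both $r'(a)\neq 0$ and $\mathrm{disc}_X(f'(a,X))\neq 0$, so that $f'(a,X)\in K'[X]$ is a separable polynomial. Since $b$ is a root of $f'(a,X)$, it is separable algebraic over $K'$, and $K'/K_0$ is separable algebraic, so $b$ is separable algebraic over $K_0$. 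Hence $b\in K\cap K_{0s}=K'$, which is exactly what was needed.

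The main (and only real) obstacle is this separability issue: without the discriminant factor, a PAC point $\bfb\in V(K)$ above $\bfa\in K_0^e$ need not be separable over $K_0$ in positive characteristic, so it may fail to lie in $K'$. Multiplying $r'$ by the $X$-discriminant of $f'$ circumvents this cleanly, and is the reason the polynomial characterization~(c) is more convenient here than the variety-theoretic definition. Everything else is bookkeeping: checking that $f'$ stays absolutely irreducible over $K$, that the discriminant is a nonzero element of $K[T]$, and that the infinitely-many-pairs conclusion of (c) yields infinitely many admissible $(a,b)\in K_0\times K'$, which gives condition~(c) for the extension $K'/K_0$.
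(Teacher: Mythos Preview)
Your proof is correct and is essentially the standard argument. The paper itself does not supply a proof of this lemma; it merely attributes the result to \cite[Corollary~1.5]{JardenRazon1994} and states it without argument. Your discriminant trick---multiplying $r'$ by $\mathrm{disc}_X f'$ so that the specialized polynomial $f'(a,X)\in K'[X]$ is separable, forcing any root $b\in K$ to be separable algebraic over $K'$ and hence over $K_0$, whence $b\in K\cap K_{0s}$---is exactly the expected approach and matches the original proof of Jarden and Razon.
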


Moreover, we have:

\begin{theorem}\label{thm:nonalgebraicPAC}
Let $K/K_0$ be a PAC extension. Then $K\cap K_{0s}/K_0$ is PAC and the
restriction map $\gal(K)\to \gal(K\cap K_{0s})$ is an isomorphism.
\end{theorem}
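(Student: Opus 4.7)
The first assertion, that $K\cap K_{0s}/K_0$ is PAC, is exactly the Jarden--Razon lemma cited immediately above. It remains to establish the Galois-group isomorphism. Write $K_1=K\cap K_{0s}$; since $K_1/K_0$ is separable algebraic, $K_{1s}=K_{0s}$, and under this identification the restriction map in the statement becomes
\[
\rho\colon \gal(K_s/K)\longrightarrow\gal(K_{0s}/K_1),\qquad\sigma\longmapsto\sigma|_{K_{0s}}.
\]
Its kernel is $\gal(K_s/K\cdot K_{0s})$, and it factors through $\gal(K_s/K)\twoheadrightarrow\gal(K\cdot K_{0s}/K)$, which the standard linear-disjointness statement (applicable because $K\cap K_{0s}=K_1$) identifies with $\gal(K_{0s}/K_1)$ via restriction. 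Thus $\rho$ is automatically surjective, and is an isomorphism exactly when $K\cdot K_{0s}=K_s$.

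The crux is therefore to show that every finite separable extension $L/K$ lies in $K\cdot K_{0s}$. I plan to argue by contradiction. Suppose $L/K$ is a finite Galois extension with $L\not\subseteq K\cdot K_{0s}$, and set $L_1:=L\cap K_{0s}$, a finite Galois extension of $K_1$. A standard Galois-theoretic computation gives $L\cap(K\cdot K_{0s})=K\cdot L_1\subsetneq L$, so $\gal(L/K\cdot L_1)$ is nontrivial. The idea is to encode this nontrivial Galois data into a regular extension $E/K$ of transcendence degree one, together with a finite Galois cover $F/E$ whose defining polynomial is arranged to have coefficients in $K_1\subseteq K_{0s}$, and then apply Proposition~\ref{prop:DefinitionPACextension}(d) to produce a $K$-place $\phi$ of $E$ satisfying $\overline{E}=K$, $\overline{K_0(t)}=K_0$, and unramified in $F$. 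Because the defining polynomial of $F/E$ lies over $K_1$, the residue field $\overline{F}$ under (an extension of) $\phi$ sits inside $K\cdot K_{0s}$. Arranging the geometric setup so that the residue field of $F$ at $\phi$ realizes a conjugate of a primitive element of $L/K$ then puts a generator of $L/K$ into $K\cdot K_{0s}$, contradicting $L\not\subseteq K\cdot K_{0s}$.

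The main obstacle is the construction of the cover $F/E$ realizing $L/K$ with its defining data descended to $K_1$: a priori $L$ need not come from any polynomial over $K_1$, so one needs to exploit the fact that $L_1\subseteq K_{0s}$ is presented over $K_0$ while $L$ itself is generated over $K\cdot L_1$ by one further element. The strategy is to choose $E=K(t)$ and to build $F$ as the splitting field of a polynomial encoding simultaneously (i) a $K_0$-defined model for $L_1/K_1$ and (ii) a one-parameter family of generators for $L/K\cdot L_1$; a careful bookkeeping, modeled on the place-theoretic argument that underlies the Jarden--Razon lemma, should allow one to match the residue data of a PAC-supplied place with a primitive element of $L$. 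This geometric/place-theoretic construction is the technical heart of the argument.
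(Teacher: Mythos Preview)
Your reduction is correct: the restriction map is automatically surjective, and the whole theorem reduces to showing that every finite Galois extension $L/K$ satisfies $L\subseteq K\cdot K_{0s}$. The gap is in the construction you propose for the ``technical heart''. You want a cover $F/E$ with $E/K$ regular and with defining polynomial over $K_1=K\cap K_{0s}$, such that a $K$-rational place of $E$ forces a generator of $L$ into the residue field of $F$. But if such a polynomial existed---i.e.\ if a primitive element of $L/K$ satisfied an irreducible polynomial with coefficients in $K_1[t]$---then after specializing $t$ you would already have placed $L$ inside $K\cdot K_{0s}$, which is exactly what you assumed fails. In other words, your plan tries to \emph{descend} the defining data of $L$ to $K_1$, and there is no reason this is possible; the two ingredients you list (a $K_0$-model for $L_1/K_1$ and ``a one-parameter family of generators for $L/K\cdot L_1$'') do not combine into a polynomial over $K_1$, because the second ingredient genuinely lives over $K$, not over $K_1$.

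The paper avoids this circularity by not attempting to descend $L$ at all. Instead it takes a \emph{universal} cover defined over $K_0$: a regular realization $F_0/K_0(\bft)$ of the full symmetric group $S_n$ with $n=|\gal(L/K)|$ (this exists over any field, Theorem~\ref{thm:regulargroups}). Setting $F=F_0K$, one has $\gal(FL/K(\bft))\cong S_n\times G$, and the field $E$ is taken to be the fixed field of the diagonal $\Delta=\{(g,g):g\in G\}$. Then $E/K$ is regular, $EL=FL=FE$, and applying the PAC property of $K/K_0$ to $E$ yields a place with $\bar E=K$ and $\overline{K_0(\bft)}=K_0$. The punchline is that $\bar F=\overline{F_0}\cdot K\subseteq K_{0s}\cdot K$ because $F_0$ is defined over $K_0$, while the diagonal trick forces $\overline{FL}=L$ and $\overline{FL}=\bar F$ simultaneously; hence $L\subseteq K_{0s}K$. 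The missing idea in your plan is precisely this diagonal/field-crossing construction against a generic $S_n$-cover over $K_0$.
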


\begin{proof}
It suffices to show that $K_s =  K_{0s} K$. Let $L/K$ be a finite
Galois extension with Galois group $G$ of order $n$. Embed $G$ into
the symmetric group $S_n$. Let $F_0/K_0(\bft)$ be a regular
realization of $S_n$ with $F_0$ algebraically independent from $K$
over $K_0$ (Theorem~\ref{thm:regulargroups}). Then $F = F_0 K$ is
regular over $K$ and $\gal(F/K(\bft)) \cong S_n$. Furthermore
\begin{eqnarray*}
\gal(FL/K(\bft)) &=& \gal(FL/L(\bft))\times \gal(FL/F)\\
    &\cong& \gal(F/K(\bft))\times \gal(L/K) \cong S_n \times G.
\end{eqnarray*}
Let $E$ be the fixed field of the subgroup $\Delta=\{(g,g)\mid g\in
G\}$ in $FL$, i.e., $\gal(FL/E) \cong \Delta$. By Galois
correspondence,  $S_n \Delta = S_n\times G$ implies that $E\cap L =
K$ and $1 = \Delta\cap S_n = G\cap \Delta$ implies that $FL = EL =
FE$. In particular, $E/K$ is regular.
\[
\xymatrix{%
F\ar@{-}[r]
    &FL\\
K(\bft)\ar@{-}[u]\ar@{-}[r]\ar@{-}[ur]|{E}
    &L(\bft)\ar@{-}[u]\\
K\ar@{-}[u]\ar@{-}[r]
    &L\ar@{-}[u].
}%
\]

Let $x_F, x_E, x_{FL} $ be primitive elements of $F/K(t)$,
$E/K(\bft)$, and $FL/K(\bft)$, respectively. Let $h(\bft)\in
K[\bft]$ be the products of the corresponding polynomials given
in Lemma~\ref{lem:RES}.

As $K/K_0$ is PAC, there is a $K$-place $\phi$ of $E$ such that
$\Egag = K$, $\overline{K_0(\bft)} = K_0$, $\bfa=\phi(\bft)$ is
finite, and $h(\bfa)\neq 0$. Extend $\phi$ to an $L$-place
$\Phi$ of $FL$.

Then, $FL = EL$ implies that $\overline{FL} =
\overline{EL}=L(\Phi(x_E)) = L$. However, as $F = F_0 K$, it
follows that $\Fgag = \Fgag_0 K$, hence, $L = \overline{FL} =
\overline{FE}= K(\Phi(x_E),\Phi(x_F))= \Fgag = \Fgag_0
K\subseteq K_{0s}K$, as needed.
\end{proof}

\begin{remark}
The above theorem also follows from the main result of
\cite{Razon2000}. However we shall prove that result using the
theorem.
\end{remark}

PAC fields have a nice elementary theory. Since $K$ and $K\cap
K_{0s}$ are PAC fields and since they have isomorphic absolute
Galois groups, they are elementary equivalent under some necessary
condition:

\begin{corollary}
Let $K/K_0$ be a separable PAC extension. Assume that $K$ and $K\cap
K_{0s}$ have the same imperfect degree. Then $K$ is an elementary
extension of $K\cap K_{0s}$.
\end{corollary}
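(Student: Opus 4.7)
The plan is to reduce this to the classical theorem on elementary embeddings of PAC fields. Set $K_1 = K \cap K_{0s}$. First I would collect the properties inherited from the preceding material: by Theorem~\ref{thm:nonalgebraicPAC} the extension $K_1/K_0$ is PAC, so $K_1$ is itself a PAC field; the field $K$ is PAC as noted immediately after Definition~\ref{def:PAC}; and the restriction map $\gal(K) \to \gal(K_1)$ is an isomorphism, again by Theorem~\ref{thm:nonalgebraicPAC}.

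Next I would verify that $K_1$ is algebraically closed in $K$. Any $x \in K$ algebraic over $K_1$ is algebraic over $K_0$ by transitivity, since $K_1 \subseteq K_{0s}$. The assumption that $K/K_0$ is separable forces the finitely generated subextension $K_0(x)/K_0$ to be separably generated; since $x$ is algebraic this means $K_0(x)/K_0$ is separable algebraic, so $x \in K_{0s}$, whence $x \in K \cap K_{0s} = K_1$.

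Finally I would invoke the elementary embedding theorem for PAC fields (see \cite[Chapter~20]{FriedJarden2005}): a PAC field $E$ is an elementary subfield of a PAC field $F \supseteq E$ whenever $E$ is algebraically closed in $F$, the restriction map $\gal(F) \to \gal(E)$ is an isomorphism, and the two fields have the same characteristic and the same imperfect degree. Applied to $E = K_1 \subseteq F = K$, with the imperfect-degree condition supplied exactly by the hypothesis of the corollary, this yields $K_1 \prec K$.

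The only genuinely nontrivial ingredient is the elementary embedding theorem for PAC fields; the rest is bookkeeping. The one subtlety to watch is that the separability hypothesis on $K/K_0$ is doing real work — it is what rules out inseparable algebraic elements of $K$ over $K_1$ and hence delivers the algebraic closedness required by the transfer principle. The imperfect-degree hypothesis plays its standard model-theoretic role in positive characteristic, and without it the elementary equivalence could fail even with isomorphic absolute Galois groups.
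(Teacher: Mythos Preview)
Your proof is correct and follows the same approach as the paper: the paper's proof consists of a single sentence citing \cite[Corollary~20.3.4]{FriedJarden2005} together with Theorem~\ref{thm:nonalgebraicPAC}, and you have simply unpacked the hypotheses of that corollary and verified them explicitly. Your explicit check that $K_1$ is algebraically closed in $K$ (using the separability of $K/K_0$) is a useful clarification of why the separability hypothesis is present in the statement.
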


\begin{proof}
The assertion follows from \cite[Corollary 20.3.4]{FriedJarden2005}
and Theorem~\ref{thm:nonalgebraicPAC}.
\end{proof}

\subsection{Characterization of Separable Algebraic PAC Extensions}

\begin{proposition}
\label{prop:ExistGeoSol}%
Let $K/K_0$ be a separable algebraic field extension. The following
conditions are equivalent:
\begin{label1}
\item
\label{con_a:ExistGeoSol} $K/K_0$ is PAC.
\item
\label{con_b_mid:ExistGeoSol}%
For every finite rational double embedding problem
\eqref{eq:RegularDoubleEP} for $K/K_0$ and every nonzero rational
function $r(\bft)\in K(\bft)$, there exists a  geometric weak
solution $(\phi^*,\phi_0^*)$ such that $\bfa=\phi(\bft)$ is finite
and $r(\bfa)\neq 0,\infty$.
\item
\label{con_b:ExistGeoSol}%
For every finite rational double embedding problem
\eqref{eq:RegularDoubleEP} for $K/K_0$ with $\bft=t$ a
transcendental element there exist infinitely many geometric weak
solution $(\phi^*,\phi_0^*)$.
\end{label1}
\end{proposition}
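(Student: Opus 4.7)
My plan is to prove the equivalence by the cycle $(a) \Rightarrow (b) \Rightarrow (c) \Rightarrow (a)$. The heart of the argument is $(a) \Rightarrow (b)$, which uses the place-theoretic characterization of PAC extensions from Proposition~\ref{prop:DefinitionPACextension}(d). Given a rational DEP as in \eqref{eq:RegularDoubleEP} and a nonzero $r(\bft) \in K(\bft)$, I first invoke Lemma~\ref{lem:RES} on the extension $F/E$ to produce an auxiliary polynomial $g(\bft) \in K[\bft]$ whose non-vanishing forces any extending place to be unramified over $E$. I then apply Proposition~\ref{prop:DefinitionPACextension}(d) to the regular extension $E/K$ with separating transcendence basis $\bft$ and the combined function $r(\bft)g(\bft)$, obtaining a $K$-place $\phi$ of $E$ with $\Egag = K$, $\overline{K_0(\bft)} = K_0$, $\bfa = \phi(\bft)$ finite, $r(\bfa), g(\bfa) \neq 0, \infty$, and $\phi$ unramified over $K(\bft)$. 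Extending $\phi$ to a place $\Phi$ of $F$, the defining property of $g$ makes $\Phi$ unramified over $E$, and hence over $K(\bft)$. Then the pair $(\phi^*, \phi_0^*)$ with $\phi_0 = \phi|_{K_0(\bft)}$ is the desired geometric weak solution.

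For $(b) \Rightarrow (c)$, I fix a rational DEP with $\bft = t$ a single transcendental and iterate: at stage $n$, with $a_1, \ldots, a_n \in K_0$ the $t$-values of the previously obtained geometric weak solutions, I apply (b) with $r(t) = \prod_{i=1}^{n}(t - a_i)$ to produce a new solution whose $t$-value is distinct from all prior $a_i$. The resulting sequence yields infinitely many pairwise distinct geometric weak solutions.

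For $(c) \Rightarrow (a)$, I verify condition (c) of Proposition~\ref{prop:DefinitionPACextension}. Given absolutely irreducible $f(T, X) \in K[T, X]$ separable in $X$ and a nonzero $r(T) \in K(T)$, I construct a rational DEP whose geometric solutions encode solutions of $f(a, b) = 0$ with $a \in K_0$, $b \in K$: let $\alpha$ satisfy $f(t, \alpha) = 0$, set $E = K(t, \alpha)$, let $F$ be the Galois closure of $E/K(t)$, and choose a finite Galois extension $L_0/K_0$ inside $K_s$ containing the coefficients of $f$ along with a finite Galois extension $F_0/K_0(t)$ such that $F_0 K = F$ and $L_0 K = L$, yielding the rational DEP. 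Applying (c) furnishes infinitely many geometric weak solutions; each yields $a = \phi(t) \in K_0$, and, using Lemma~\ref{lem:RES} applied to $K(t, \alpha)/K(t)$, $b = \phi(\alpha) \in K$ is finite off an auxiliary divisor, whereupon $f(a, b) = \phi(f(t, \alpha)) = 0$. Since only finitely many of these $a$'s can be zeros or poles of $r$ or of the auxiliary polynomial from Lemma~\ref{lem:RES}, some solution gives the required $(a, b)$ with $r(a) \neq 0, \infty$.

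The main obstacle will be in $(c) \Rightarrow (a)$, namely constructing the auxiliary $F_0/K_0(t)$ as a finite Galois extension with $F_0 K = F$ and $L_0 K = L$. This is delicate because $K/K_0$ need not be finite or Galois, so the Galois closure over $K_0$ of a subextension of $K$ may escape $K$; one must choose $L_0$ inside $K_s$ carefully, replacing it by its Galois closure over $K_0$ and then adjusting so that the compatibility $L = L_0 K$ of the rational DEP is preserved.
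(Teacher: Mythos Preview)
Your arguments for $(a)\Rightarrow(b)$ and $(b)\Rightarrow(c)$ are correct and essentially match the paper's proof, with more explicit detail on the unramification in $F$ (via Lemma~\ref{lem:RES}) and on the iteration producing infinitely many distinct $t$-values.

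In $(c)\Rightarrow(a)$ there is a genuine gap. You fix $F$ to be the Galois closure of $E/K(t)$ and then assert the existence of a finite Galois extension $F_0/K_0(t)$ with $F_0K=F$. Such an $F_0$ need not exist. Take $K_0=\bbQ$, $K=\bbQ(2^{1/3})$, and $f(T,X)=X^3-2^{1/3}T$. Then $E=K(t,\alpha)$ with $\alpha^3=2^{1/3}t$, and $F=\bbQ(2^{1/3},\omega,\alpha)$ has $[F:\bbQ(t)]=18$. Any $\bbQ(t)$-automorphism sending $2^{1/3}\mapsto\omega\,2^{1/3}$ would have to send $\alpha$ to $\zeta_9\alpha$ for a primitive ninth root of unity $\zeta_9$, but $\zeta_9\notin F$; hence $F/\bbQ(t)$ is not Galois. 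Running through the possible constant fields $L_0\in\{\bbQ,\bbQ(\omega),\bbQ(2^{1/3},\omega)\}$ (the only Galois subextensions of $F\cap\bbQgal$ over $\bbQ$) shows that no Galois $F_0/\bbQ(t)$ inside $F$ satisfies $F_0K=F$. Your proposed fix, replacing $L_0$ by its Galois closure, forces $F_0$ and hence $F_0K$ to grow strictly beyond the prescribed $F$, so it cannot preserve $F_0K=F$.

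The remedy, which is exactly what the paper does, is to refrain from prescribing $F$. Given $E$ regular over $K$ (in your notation $E=K(t,\alpha)$), simply choose any finite Galois extension $F_0/K_0(t)$ with $E\subseteq F_0K$; this exists because $K/K_0$ is separable algebraic (pick a finite $K_1\subseteq K$ with $f\in K_1[t,X]$, take the splitting field of $f$ over $K_1(t)$, then its Galois closure over $K_0(t)$). Now \emph{define} $F=F_0K$, $L_0=F_0\cap K_{0,s}$, and $L=F\cap K_s$. This produces a rational DEP with $E\subseteq F$, and the rest of your argument (infinitely many geometric weak solutions, avoiding finitely many bad $a$'s) goes through unchanged. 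The paper carries this out to verify condition~\eqref{con b:DefinitionPACextension} of Proposition~\ref{prop:DefinitionPACextension} rather than the polynomial condition~\eqref{con poly:DefinitionPACextension}, but that difference is inessential.
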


\begin{proof}
The implication
\eqref{con_a:ExistGeoSol}$\Rightarrow$\eqref{con_b_mid:ExistGeoSol}
follows from Proposition~\ref{prop:DefinitionPACextension} (part
\ref{con a:DefinitionPACextension}) and the definition of geometric
weak solutions.

The implication
\eqref{con_b_mid:ExistGeoSol}$\Rightarrow$\eqref{con_b:ExistGeoSol}
is immediate.

\eqref{con_b:ExistGeoSol}$\Rightarrow$\eqref{con_a:ExistGeoSol}:
We apply Proposition~\ref{prop:DefinitionPACextension} and show
that \eqref{con b:DefinitionPACextension} holds. Let $E/K$ be a
regular extension with a separating transcendence basis $t$ and
let $r(t)\in K[t]$ be nonzero. Choose $F_0$ to be a finite
Galois extension of $K_0(t)$ such that $E\subseteq F_0 K$ (such
$F_0$ exists since $K/K_0$ is separable and algebraic) and let
$F = F_0K$, $L = F\cap K_s$, and $L_0 = F_0\cap K_{0s}$. By
assumption there are infinitely many geometric weak solution
$(\phi^*,\phi_0^*)$ of the DEP
\[
\xymatrix{
    &\gal(K_0)\ar[dr]\ar@{.>}[dl]_{\phi_0^*}\\
\gal(F_0/K_0(t))\ar[rr]
        &&\gal(L_0/K_0)\\
    &\gal(K)\ar'[u][uu]\ar[dr]\ar@{.>}[dl]_{\phi^*}\\
\gal(F/E)\ar[uu]\ar[rr]
        &&\gal(L/K).\ar[uu]
}
\]
Since for only finitely many solutions $\phi(t)$ is infinite or
$r(\phi(t))=0,\infty$ we can find a solution such that
$\phi(t)\neq \infty$ and $r(\phi(t))\neq 0,\infty$. In
particular, $\Egag = K$ and $\overline{K_0(t)}=K_0$, as required
in \eqref{con b:DefinitionPACextension}.
\end{proof}

Let $K/K_0$ be a PAC extension and consider a rational DEP for
$K/K_0$. The following key property -- the lifting property --
asserts that any weak solution of the lower embedding problem can be
lifted to a geometric weak solution of the DEP.

\begin{proposition}[The lifting property]
\label{prop:ExtensionSoltoGeoSol_DEP}%
Let $K/K_0$ be a PAC extension, let \eqref{eq:RegularDoubleEP} be a
rational DEP for $K/K_0$, and let $\theta\colon \gal(K)\to
\gal(F/E)$ be a weak solution of the lower embedding problem in
\eqref{eq:RegularDoubleEP}. Then there exists a geometric weak
solution $(\phi^*,\phi_0^*)$ of \eqref{eq:RegularDoubleEP}
such that $\theta = \phi^*$. \\
Moreover, if $r(\bft)\in K(\bft)$ is nonzero, we can choose $\phi$
such that $\bfa = \phi(\bft)\in K_0^e$ and $r(\bfa)\neq 0,\infty$.
\end{proposition}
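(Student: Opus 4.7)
My plan is to use Proposition~\ref{prop: characterization of solutions} to reformulate the existence of a geometric lift of $\theta$ as the existence of a $K$-rational place on a certain finitely generated regular extension of $K$, and then invoke the PAC property of $K/K_0$ to produce such a place whose associated specialization point lies in $K_0^e$.

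First, by Theorem~\ref{thm:nonalgebraicPAC}, I may replace $K$ by $K\cap K_{0s}$ and so assume that $K/K_0$ is separable algebraic; this preserves $\gal(K)$, the DEP, and its weak solutions. Applying Proposition~\ref{prop: characterization of solutions} to the lower embedding problem equipped with the given weak solution $\theta$ yields a finite separable extension $\Ehat/E$, with $\Ehat/K$ regular, such that a $K$-place $\phi$ of $E$ unramified in $F$ satisfies $\Egag=K$ and $\phi^*=\theta$ if and only if $\phi$ extends to a $K$-rational place of $\Ehat$. This reduction is the decisive step: finding the desired geometric lift is now equivalent to producing a single $K$-rational place of $\Ehat$ with appropriate additional specialization properties.

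Since $\Ehat\subseteq FM$ for $M$ the solution field of $\theta$ (which is separable algebraic over $K$), the extension $\Ehat/K(\bft)$ is finite separable, so $\bft$ is a separating transcendence basis of $\Ehat/K$, and I can apply Proposition~\ref{prop:DefinitionPACextension}(a) to the regular extension $\Ehat/K$. To secure unramifiedness beyond $\Ehat/K$, I enlarge $r$ to $r\cdot g$, where $g\in K[\bft]$ is a nonzero polynomial formed by multiplying together the nonzero polynomials produced by Lemma~\ref{lem:RES} applied to the extensions $F/E$ and $F_0/K_0(\bft)$ (using primitive elements of each). The PAC property of $K/K_0$ then furnishes a $K$-place $\phihat$ of $\Ehat$ with $\overline{\Ehat}=K$, $\overline{K_0(\bft)}=K_0$, finite $\bfa=\phihat(\bft)\in K_0^e$, and $(rg)(\bfa)\neq 0,\infty$.

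Setting $\phi=\phihat|_E$ and $\phi_0=\phi|_{K_0(\bft)}$, the characterization proposition gives an extension $\Phi$ of $\phi$ to $F$ with $\Phi^*=\theta$. Taking $\Phi_0=\Phi|_{F_0}$, the condition $\overline{K_0(\bft)}=K_0$ together with the unramifiedness guaranteed by $g$ ensures that $\phi_0^*:=\Phi_0^*$ is a well-defined weak solution of the higher embedding problem; compatibility of the pair $(\phi^*,\phi_0^*)$ as a weak solution of the DEP is automatic from the functoriality of the $\Phi\mapsto\Phi^*$ construction under restriction to the subfield $F_0\subseteq F$. The main obstacle, as I see it, is the simultaneous enforcement of $\Egag=K$ and $\overline{K_0(\bft)}=K_0$: this is exactly where the PACness of the extension $K/K_0$ (rather than mere PACness of $K$ as a field, which would only give Corollary~\ref{cor:PACfield_solutionisgeometric}) is indispensable, since one must locate a specialization point in $K_0^e$, not merely in $K^e$.
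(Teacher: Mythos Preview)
Your proof is correct and follows essentially the same route as the paper: invoke Proposition~\ref{prop: characterization of solutions} to produce the auxiliary regular extension $\Ehat/K$, then use the PAC property of $K/K_0$ (via Proposition~\ref{prop:DefinitionPACextension}) to obtain a $K$-rational place of $\Ehat$ with $\bfa\in K_0^e$ and the needed nonvanishing/unramifiedness conditions. Your initial reduction to the separable-algebraic case via Theorem~\ref{thm:nonalgebraicPAC} and your explicit bookkeeping with Lemma~\ref{lem:RES} for unramifiedness in both $F/E$ and $F_0/K_0(\bft)$ are extra care that the paper leaves implicit, but the argument is otherwise identical.
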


\begin{proof}
By Proposition~\ref{prop: characterization of solutions} there
exists a finite separable extension $\Ehat/E$ that is regular over
$K$ with the following property. If $\Phi$ is a $K$-rational place
of $\Ehat$ and if $\phi = \Phi|_{E}$ is unramified in $F$, then
$\phi^* = \theta$.

By the PACness of $K/K_0$ there exists a $K$-rational place $\Phi$
of $\Ehat$ such that $\phi = \Phi|_{E}$ is unramified in $F$, the
residue field of $K_0(\bft)$ is $K_0$, $\bfa = \Phi(\bft)$ is
finite, and $r(\bfa)\neq 0,\infty$. If we let
$\phi_0=\phi|_{K_0(\bft)}$, then we get that $(\phi^*,\phi_0^*)$ is
a geometric weak solution and that $\phi^*=\theta$.
\end{proof}

A first and an easy consequence is the transitivity of PAC
extensions. To the best of our knowledge there is no other way to
prove this property in the literature.

\begin{lemma}\label{lem:transitive}
Let $K_0 \subseteq K_1 \subseteq K_2$ be a tower of separable
algebraic extensions. Assume that $K_2/K_1$ and $K_1/K_0$ are PAC
extensions. Then $K_2/K_0$ is PAC.
\end{lemma}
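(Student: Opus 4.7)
The plan is to verify condition \eqref{con b:DefinitionPACextension} of Proposition~\ref{prop:DefinitionPACextension} for $K_2/K_0$. Given a finitely generated regular extension $E/K_2$ of transcendence degree $1$ with separating basis $t$ and a nonzero $r(t)\in K_2[t]$, I must produce a $K_2$-place $\phi$ of $E$ unramified over $K_2(t)$ with $\Egag = K_2$, $\overline{K_0(t)} = K_0$, $a=\phi(t)\ne\infty$, and $r(a)\ne 0,\infty$. Since $K_2/K_0$ is separable algebraic, I can enclose $E$ in a finite Galois extension $F/E$ of the shape $F = F_0 K_2$ with $F_0/K_0(t)$ finite Galois. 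Setting $F_1 = F_0 K_1$, $L = F\cap K_{2,s}$, $L_1 = F_1\cap K_{1,s}$, and $L_0 = F_0\cap K_{0,s}$, this arrangement packages into two compatible rational DEPs: a DEP $D_{21}$ for $K_2/K_1$ with upper EP $\alpha_1\colon\gal(F_1/K_1(t))\to\gal(L_1/K_1)$ and lower EP $\alpha\colon\gal(F/E)\to\gal(L/K_2)$, and a DEP $D_{10}$ for $K_1/K_0$ with upper EP $\alpha_0\colon\gal(F_0/K_0(t))\to\gal(L_0/K_0)$ and lower EP $\alpha_1$.

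Next I would apply the lifting property (Proposition~\ref{prop:ExtensionSoltoGeoSol_DEP}) twice, chaining along the shared middle embedding problem. First, using the PAC extension $K_2/K_1$, I feed the lifting property a weak solution $\theta_2\colon\gal(K_2)\to\gal(F/E)$ of the lower EP of $D_{21}$, producing a geometric weak solution $(\phi^*,\phi_1^*)$ of $D_{21}$ with $\phi^*=\theta_2$. Thus $\phi$ is a $K_2$-place of $E$ with $\Egag = K_2$ and $a=\phi(t)\in K_1$. Second, the restriction $\phi_1^*$ is a weak solution of the lower EP of $D_{10}$, so the lifting property for the PAC extension $K_1/K_0$ yields a geometric weak solution $(\psi_1^*,\psi_0^*)$ of $D_{10}$ with $\psi_1^*=\phi_1^*$; in particular, $\psi_1$ is a $K_1$-place of $K_1(t)$ with $\psi_1(t)\in K_0$. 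The ``moreover'' clause of the lifting property lets me drag the nonvanishing condition on $r$ through both steps.

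The technical heart of the argument is combining these two geometric solutions into a single geometric weak solution of the DEP for $K_2/K_0$: concretely, one $K_2$-place $\phi'$ of $E$ realizing both $\phi'^* = \phi^*$ (so $\Egag = K_2$) and $\phi'|_{K_0(t)}$ having residue field $K_0$ (so $\phi'(t)\in K_0$). The equality $\phi_1^* = \psi_1^*$ means $\phi_1 = \phi|_{K_1(t)}$ and $\psi_1$ induce the same Galois data, so by Proposition~\ref{prop: characterization of solutions} both arise from $K_1$-rational places of a common regular extension $\widehat{E}_1$ of $K_1$. A further application of Proposition~\ref{prop: characterization of solutions} to $\phi^*$ gives a regular extension $\widehat{E}_2$ of $K_2$ whose $K_2$-rational places correspond to weak solutions with $\phi'^* = \phi^*$; compatibility of the Galois data via the fiber product description in Lemma~\ref{lem:GaloisandFiber} then yields a $K_2$-rational place of $\widehat{E}_2$ whose restriction to $K_0(t)$ meets $\psi_0$. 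Restricting this to $E$ produces the desired $\phi'$. This patching step, turning the group-theoretic compatibility $\phi_1^* = \psi_1^*$ into genuine agreement of places at the level of $K_0(t)$, is the step I expect to require the most care.
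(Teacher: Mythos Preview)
Your overall strategy matches the paper's: build the three-level diagram, obtain a weak solution $(\phi_2^*,\phi_1^*)$ of $D_{21}$ from the PACness of $K_2/K_1$, then lift $\phi_1^*$ via the lifting property for $K_1/K_0$ to a geometric weak solution $(\phi_1^*,\phi_0^*)$ of $D_{10}$. (A minor wrinkle: in your first step you invoke the lifting property for $K_2/K_1$, which needs a starting weak solution $\theta_2$ that you never produce; the paper just uses Proposition~\ref{prop:ExistGeoSol} instead, which requires no input.)

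Where your plan breaks is the ``patching'' step. You want to glue the place $\phi$ of $E$ (from step one) with the place $\psi_1$ of $K_1(t)$ (from step two) into a single place $\phi'$ of $E$. But the mechanism you propose cannot work: the equality $\phi_1^* = \psi_1^*$ only tells you that $\phi|_{K_1(t)}$ and $\psi_1$ both extend to $K_1$-rational places of the same curve $\widehat E_1$ --- that is, they are two \emph{distinct} rational points on one variety. No fiber-product or ``Galois compatibility'' argument merges two points into one; a place is not determined by its $*$-map, and Lemma~\ref{lem:GaloisandFiber} describes Galois groups, not places.

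The paper does no patching. It discards the \emph{place} $\phi$ from the first step, keeping only the homomorphism $\phi_2^*$, and concludes directly from the restriction identity $\phi_0^*|_{\gal(K_2)} = \phi_1^*|_{\gal(K_2)} = \phi_2^*$ that $(\phi_2^*,\phi_0^*)$ is a geometric weak solution of the DEP for $K_2/K_0$. The single witnessing place is $\psi_1$ itself, extended $K_2$-linearly up through $E$ to $F_2$: since $\psi_1^* = \phi_1^*$ and $\phi_1^*(\gal(K_2)) = \phi_2^*(\gal(K_2)) \le \gal(F_2/E)$, the decomposition group of this extension over $K_2(t)$ lies inside $\gal(F_2/E)$, forcing $\Egag = K_2$; and $\overline{K_0(t)} = K_0$ is inherited from $\psi_1$. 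So one place does all the work, and the place from step one is never needed again.
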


\begin{proof}
Let
\begin{eqnarray*}
&&((\mu_0\colon \gal(K_0)\to \gal(L_0/K_0),\alpha_0\colon \gal(F_0/K_0(t))\to \gal(L_0/K_0),\\
&&(\mu_2\colon \gal(K_2)\to \gal(L_2/K_2),\alpha_2\colon \gal(F_2/E)
\to \gal(L_2/K_2))
\end{eqnarray*}
be a rational finite DEP for $K_2/K_0$. By
Lemma~\ref{prop:ExistGeoSol} it suffices to find a geometric weak
solution to $((\mu_0,\alpha_0),(\mu_2,\alpha_2))$. Set $F_1 = F_0
K_1$, $L_1 = L_0 K_1$. Then, since $K_2/K_1$ is PAC there exists a
weak solution $(\phi_2^*,\phi_1^*)$ of the double embedding problem
defined by the lower part of the following commutative diagram.
\begin{equation}
\xymatrix{
    &\gal(K_0)\ar[dr]^{\mu_0}\ar@{.>}[dl]_{\phi_0^*}\\
\gal(F_0/K_0(t))\ar[rr]^(.4){\alpha_0}
        &&\gal(L_0/K_0)\\
    &\gal(K_1)\ar'[u][uu]\ar[dr]^{\mu_1}\ar@{.>}[dl]_{\phi_1^*}\\
\gal(F_1/K_1(t))\ar[uu]\ar[rr]^(.4){\alpha_1}
        &&\gal(L_1/K_1)\ar[uu]\\
    &\gal(K_2)\ar'[u][uu]\ar[dr]^{\mu_2}\ar@{.>}[dl]_{\phi_2^*}\\
\gal(F_2/E)\ar[uu]\ar[rr]^(.4){\alpha_2}
        &&\gal(L_2/K_2).\ar[uu]
}
\end{equation}
Now we lift $\phi_1^*$ to a geometric weak solution
$(\phi_1^*,\phi_0^*)$ of the DEP for $K_1/K_0$ defined by the higher
part of the diagram. This is possible by the lifting property
applied to the PAC extension $K_1/K_0$.

Since $\phi_0^*|_{\gal(K_2)} = \phi^*_1|_{\gal(K_2)}=\phi_2^*$ we
get that $(\phi_2^*,\phi_0^*)$ is a geometric weak solution of the
DEP we started from.
\end{proof}

\begin{proposition}\label{prop:transitivityofPACextension}
Let $\kappa$ be an ordinal number and let
\[
K_0 \subseteq K_1 \subseteq K_2 \subseteq \cdots \subseteq K_\kappa
\]
be a tower of separable algebraic extensions. Assume that
$K_{\alpha+1}/K_\alpha$ is PAC for every $\alpha<\kappa$ and that
$K_\alpha = \bigcup_{\beta<\alpha} K_\beta$ for every limit
$\alpha\leq \kappa$. Then $K_\kappa/K_0$ is PAC.
\end{proposition}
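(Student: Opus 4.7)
The plan is to prove the statement by transfinite induction on $\kappa$, reducing the successor step to Lemma~\ref{lem:transitive} and handling the limit step by finitary reduction via the polynomial characterization of Proposition~\ref{prop:DefinitionPACextension}.

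The base case $\kappa=0$ is vacuous, and $\kappa=1$ is given by hypothesis. For the successor case $\kappa = \alpha+1$, the induction hypothesis provides that $K_\alpha/K_0$ is PAC, while $K_{\alpha+1}/K_\alpha$ is PAC by assumption. Since $K_0\subseteq K_\alpha\subseteq K_{\alpha+1}$ is a tower of separable algebraic extensions, Lemma~\ref{lem:transitive} applies directly and yields that $K_{\alpha+1}/K_0$ is PAC.

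For the limit case, assume $\kappa$ is a limit ordinal and $K_\kappa = \bigcup_{\alpha<\kappa} K_\alpha$. I would use the equivalent criterion \ref{con poly:DefinitionPACextension} of Proposition~\ref{prop:DefinitionPACextension}: it suffices to show that for every absolutely irreducible polynomial $f(T,X)\in K_\kappa[T,X]$ separable in $X$ and every nonzero $r(T)\in K_\kappa(T)$, there exists $(a,b)\in K_0\times K_\kappa$ with $r(a)\neq 0$ and $f(a,b)=0$. The key observation is that $f$ and $r$ together involve only finitely many elements of $K_\kappa$, so there exists some $\alpha<\kappa$ with $f\in K_\alpha[T,X]$ and $r\in K_\alpha(T)$. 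Since $K_\kappa/K_\alpha$ is algebraic, the algebraic closures of $K_\alpha$ and $K_\kappa$ coincide, so $f$ remains absolutely irreducible as a polynomial over $K_\alpha$; separability in $X$ is clearly preserved. By the induction hypothesis, $K_\alpha/K_0$ is PAC, hence there exists $(a,b)\in K_0\times K_\alpha\subseteq K_0\times K_\kappa$ with $r(a)\neq 0,\infty$ and $f(a,b)=0$, as required.

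The main conceptual content sits in Lemma~\ref{lem:transitive} (which itself depends on the lifting property of Proposition~\ref{prop:ExtensionSoltoGeoSol_DEP}); no new obstacle arises in extending this from length-two towers to arbitrary ordinal length. The only subtle point is the limit step, but it reduces to the trivial finitary remark that the condition in Proposition~\ref{prop:DefinitionPACextension}\ref{con poly:DefinitionPACextension} involves only finitely many coefficients, so can be tested inside some $K_\alpha$ appearing in the directed union.
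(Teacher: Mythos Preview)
Your proof is correct and follows the same transfinite-induction skeleton as the paper, with an identical successor step via Lemma~\ref{lem:transitive}. The limit step differs: the paper works through the DEP characterization of Proposition~\ref{prop:ExistGeoSol}, taking a rational DEP for $K_\kappa/K_0$ and observing that the finite Galois data $\gal(F_\kappa/K_\kappa)$, $\gal(L_\kappa/K_\kappa)$ already stabilize at some $K_\beta$ with $\beta<\kappa$, whereas you go through the polynomial characterization of Proposition~\ref{prop:DefinitionPACextension}\ref{con poly:DefinitionPACextension} and descend $f$ and $r$ to some $K_\alpha$ by the finiteness of their coefficients. Both arguments are the same ``finitely many data descend to a stage of the union'' idea, just expressed in different languages; yours is slightly more elementary since it avoids invoking the DEP machinery for the limit case, while the paper's version stays within the framework it has been developing throughout the chapter.
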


\begin{proof}
We apply transfinite induction. Let $\alpha\leq \kappa$. If $\alpha$
is a successor ordinal, then the assertion follows from the previous
lemma.

Let $\alpha$ be a limit ordinal. Let
\begin{equation}\label{eq:transfinitetransitive}
\xymatrix{%
    &\gal(K_0)\ar[dr]\\
\gal(F_0/K_0(\bft))\ar[rr]
    &&\gal(L_0/K_0)\\
    &\gal(K_\alpha)\ar[dr]\ar'[u][uu]\\
\gal(F_\alpha/E_\alpha)\ar[rr]\ar[uu]
    &&\gal(L_\alpha/K_\alpha)\ar[uu]
}%
\end{equation}
be a rational DEP for $K_\alpha/K_0$. Here $F_\alpha = F_0 K_\alpha$
and $L_\alpha = L_0K_\alpha$. Now since all the extensions are
finite, there exists $\beta<\alpha$ such that
$\gal(F_\alpha/K_\alpha) \cong \gal(F_\beta/K_\beta)$ and
$\gal(L_\alpha/K_\alpha) \cong \gal(L_\beta/K_\beta)$ (via the
corresponding restriction maps), where $F_\beta = F_0 K_\beta$ and
$L_\beta=L_0K_\beta$.

Induction gives a weak solution of the double embedding problem
\eqref{eq:transfinitetransitive} with $\beta$ replacing $\alpha$.
This weak solution induces a weak solution of
\eqref{eq:transfinitetransitive} via the above isomorphisms.
\end{proof}

\section{Strong Lifting Property for PAC Extensions of Finitely
Generated Fields} Let $K_0$ be a finitely generated field (over its
prime field). In this section we prove a strong lifting property for
PAC extensions $K/K_0$. The additional ingredient is the Mordell
conjecture for finitely generated fields (now a theorem due to
Faltings in characteristic $0$ \cite{Faltings83} and to
Grauert-Manin in positive characteristic \cite[page 107]{Samuel66}).

The following lemma is based on the Mordell conjecture.

\begin{lemma}[{\cite[Proposition 5.4]{JardenRazon1994}}]\label{lem:Mordell}
Let $K_0$ be an finitely generated infinite field, $f\in
K_0[T,X]$ an absolutely irreducible polynomial which is
separable in $X$, $g\in K_0[T,X]$ an irreducible polynomial
which is separable in $X$, and $0\ne r\in K_0[T]$. Then there
exist a finite purely inseparable extension $K_0'$ of $K_0$, a
nonconstant rational function $q\in K_0'(T)$, and a finite
subset $B$ of $K_0'$ such that $f(q(T),X)$ is absolutely
irreducible, $g(q(a),X)$ is irreducible in $K_0'[X]$, and
$r(q(a))\ne 0$ for any $a\in K_0'\smallsetminus B$.
\end{lemma}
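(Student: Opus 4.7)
The strategy is to exhibit a rational substitution $T \mapsto q(T)$ of sufficiently large degree so that, on one hand, $f(q(T),X)$ remains absolutely irreducible (via Bertini--Noether) and, on the other hand, the finitely many auxiliary covers of the $T$-line that detect a nontrivial factorization of $g(q(a),X)$ all acquire genus at least $2$ after pullback by $q$; the Mordell conjecture for finitely generated fields (Faltings in characteristic zero, Grauert--Manin in positive characteristic) will then confine their $K_0'$-rational points, and hence the bad specializations $a$, to a finite set.

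To set up, I would first pass to a finite purely inseparable extension $K_0'/K_0$ over which each absolutely irreducible factor of $g$ is defined and replace $g$ by such a factor (in characteristic zero one takes $K_0' = K_0$). Let $\widetilde{E}/K_0'(T)$ denote the Galois closure of $K_0'(T)[X]/(g)$ with group $G$, and let $H < G$ be the stabilizer of a fixed root. A specialization $T\mapsto c\in K_0'$ at which $g(c,X)$ becomes reducible forces the decomposition group at some prolongation to $\widetilde{E}$ to sit inside some proper intermediate subgroup $H\leq H'<G$, which in turn yields a $K_0'$-rational point on the smooth projective model $C_{H'}$ of $\widetilde{E}^{H'}$ lying above $c$.

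The main technical step is then to pick $q(T) = U + V T^s$ with $s$ large and $(U,V)\in K_0'^{\,2}$ generic. Applying the Bertini--Noether lemma as in the proof of Lemma~\ref{lem:regularsolvablewithonet} yields a nonzero polynomial condition on $(U,V)$ under which $f(q(T),X)$ remains absolutely irreducible, and likewise keeps the finitely many base-changed curves $C_{H',q} := C_{H'}\times_{\mathbb{P}^1_T}\mathbb{P}^1_S$ geometrically irreducible. Applying Riemann--Hurwitz to $C_{H',q}\to \mathbb{P}^1_S$, the ramification contribution is multiplied by $s$ under the pullback along $q$, so the genus of $C_{H',q}$ grows linearly in $s$. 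Since $\mathbb{P}^1$ admits no nontrivial unramified connected cover, each $\pi_{H'}\colon C_{H'}\to \mathbb{P}^1_T$ with $H<H'<G$ has positive ramification, and so a single $s$ taken large enough makes every $C_{H',q}$ of genus $\geq 2$.

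Finally, by the Mordell conjecture each set $C_{H',q}(K_0')$ is finite. Defining $B$ to be the union of the $T$-coordinates of these finitely many $K_0'$-points together with the finitely many zeros of $r(q(T))$ produces the desired finite exceptional set: for every $a\in K_0'\smallsetminus B$ the polynomial $g(q(a),X)$ is irreducible over $K_0'$, $f(q(T),X)$ is absolutely irreducible, and $r(q(a))\neq 0$. The principal obstacle I expect is arranging a single substitution $q$ that simultaneously raises the genera of all the intermediate covers above the Mordell threshold, which is what pins the argument to a high-degree substitution and to the infiniteness of $K_0'$; handling the positive-characteristic case, where one must additionally rule out spurious unramified subcovers coming from inseparable constant field extensions of $\widetilde{E}\cap\overline{K_0'}(T)$, is what forces the passage to a finite purely inseparable extension at the outset.
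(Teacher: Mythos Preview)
The paper does not prove this lemma; it is imported wholesale from \cite[Proposition~5.4]{JardenRazon1994} with only the one-line remark that it ``is based on the Mordell conjecture.'' So there is no in-paper argument to compare against, only the cited source.

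Your sketch does match the architecture of the Jarden--Razon argument: a high-degree rational substitution to push the genera of the finitely many ``exceptional'' covers above~$1$, Faltings/Grauert--Manin to make their $K_0'$-rational points finite, and Bertini--Noether to preserve the absolute irreducibility of $f(q(T),X)$. The passage to a purely inseparable extension and the Riemann--Hurwitz count (the ramification of $C_{H'}\to\mathbb{P}^1$ gets replicated $\deg q$ times in the pullback, so the genus grows linearly in $s$) are also the right moves.

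There is, however, a genuine gap in your reducibility criterion. You claim that $g(c,X)$ reducible forces the decomposition group $D$ into some $H'$ with $H\le H'<G$. This is false: take $G=S_4$, $H=S_3$ the stabilizer of a point (so $H$ is maximal), and $D=\langle(12)(34)\rangle$. Then $D$ acts on $G/H\cong\{1,2,3,4\}$ with two orbits of size~$2$, so the specialization is reducible, yet $D$ lies in no conjugate of any $H'$ with $H\le H'<G$ (the only such $H'$ is $H$ itself, whose conjugates are point-stabilizers, and $(12)(34)$ fixes no point). The correct bookkeeping is to run over \emph{all} maximal subgroups $M<G$ (not just those containing $H$): then $D\ne G$ iff $D$ lies in some conjugate of some $M$, and $D=G$ certainly forces irreducibility. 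This is what the Jarden--Razon proof does---it secures the stronger conclusion that the full Galois group specializes---and once you make that replacement your outline goes through.
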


Let $K/K_0$ be an extension. Consider a rational double embedding
problem \eqref{eq:RegularDoubleEP} for $K/K_0$ (with $\bft=t$). For
any subextension $K_1$ of $K/K_0$ we have a corresponding rational
double embedding problem. Namely
\begin{equation}
\label{eq:CorresponingRegularDoubleEP}%
\xymatrix{%
    &\gal(K_1)\ar[dr]^{\mu_1}\\
\gal(F_1/K_1(t))\ar[rr]^(.4){\alpha_1}
    &&\gal(L_1/K_1)\\
    &\gal(K)\ar[dr]^{\mu}\ar'[u][uu]\\
\gal(F/E)\ar[rr]^{\alpha}\ar[uu]
    &&\gal(L/K),\ar[uu]
}%
\end{equation}
where $F_1 = F_0 K_1$, $L_1=L_0K_1$, and $\mu_1$ and $\alpha_1$ are
the restriction maps.

Assume that  $K_1'/K_1$ is a purely inseparable extension.
Then the double embedding problem
\eqref{eq:CorresponingRegularDoubleEP} remains the same if we
replace all fields by their compositum with $K_1'$.

\begin{proposition}[Strong Lifting Property]
Let $K$ be a PAC extension of a finitely generated field $K_0$. Let
\[
\calE(K)  = (\mu\colon \gal(K) \to \gal(L/K), \alpha \colon
\gal(F/E)\to \gal(L/K))
\]
be an embedding problem as in \eqref{EP:fields}\footnote{That is to
say, $E$ is regular (of transcendence degree $1$) over $K$, $F/E$ is
finite and Galois, $L=F\cap K_s$, and all the maps are the
restriction maps.}, and let $\theta\colon \gal(K) \to \gal(F/E)$ be
a weak solution of $\calE(K)$. Then there exist a finite
subextension $K_1/K_0$ and a finite purely inseparable extension
$K_1'/K_1$ satisfying the following properties.
\begin{enumerate}
\item For any rational double embedding problem
\eqref{eq:RegularDoubleEP} (whose lower embedding problem is
$\calE(K)$), we can lift $\theta$ to a weak solution
$(\theta,\theta_1)$ of the double embedding problem
\eqref{eq:CorresponingRegularDoubleEP} in such a way that $\theta_1$
is surjective.
\item
The solution $(\theta,\theta_1)$ is a geometric solution of the
double embedding problem that we get from
\eqref{eq:CorresponingRegularDoubleEP} by replacing all fields with
their compositum with $K_1'$.
\end{enumerate}
\end{proposition}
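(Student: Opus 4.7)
The strategy is to combine the characterization of geometric solutions (Proposition \ref{prop: characterization of solutions}) with the Jarden--Razon Lemma \ref{lem:Mordell} (whose essential content is Faltings'/Grauert--Manin's theorem) in order to rigidify the curve associated to the weak solution $\theta$, and then to exploit the PACness of $K/K_0$ to produce $K$-rational specializations that automatically land in the smaller field $K_1'$. First I would reduce to the separable algebraic case by Theorem \ref{thm:nonalgebraicPAC}. Then, applying Proposition \ref{prop: characterization of solutions} to the weak solution $\theta$ of $\calE(K)$, I obtain a finite separable extension $\Ehat/E$ with $\Ehat/K$ regular of transcendence degree one, such that every $K$-rational place $\Phi$ of $\Ehat$ whose restriction to $E$ is unramified in $F$ induces $\Phi^{\ast}|_E = \theta$.

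Next I pick a primitive element $\xhat$ for $\Ehat/K(t)$ with absolutely irreducible minimal polynomial $\fhat(t,X)\in K[t,X]$, a primitive element $y$ for $E/K(t)$ with irreducible minimal polynomial $g(t,X)\in K[t,X]$, and I let $r(t)\in K[t]$ collect all the leading coefficients and discriminants furnished by Lemma \ref{lem:RES} applied to $\Ehat/K(t)$ and to $E/K(t)$. Since $K/K_0$ is separable algebraic, the finitely many coefficients of $\fhat,g,r$ lie in a finite subextension $K_1/K_0$ of $K$; after possibly enlarging $K_1$, I arrange that $\fhat$ remains absolutely irreducible and $g$ irreducible over $K_1$, and that the Galois-theoretic data (most importantly the natural inclusion $\gal(F/E)\hookrightarrow \gal(F_1/K_1(t))$ that will appear in every DEP with lower part $\calE(K)$) descends correctly. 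Because $K_1$ is finitely generated and infinite, I apply Lemma \ref{lem:Mordell} to $\fhat,g,r\in K_1[t,X]$ to obtain a finite purely inseparable extension $K_1'/K_1$, a nonconstant $q(T)\in K_1'(T)$, and a finite set $B\subseteq K_1'$ such that $\fhat(q(T),X)$ is absolutely irreducible, $g(q(a),X)$ is irreducible over $K_1'$, and $r(q(a))\neq 0$ for every $a\in K_1'\smallsetminus B$.

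Given any rational DEP \eqref{eq:RegularDoubleEP} with lower part $\calE(K)$, I use that $KK_1'/K_1'$ is PAC (which follows from Lemma \ref{lem:transitive} together with the standard observation that PACness of an extension is preserved under a purely inseparable algebraic base change) combined with the absolute irreducibility of $\fhat(q(T),X)$ over $K_1'$ to produce infinitely many $(a,b)\in K_1'\times K$ with $\fhat(q(a),b)=0$ and $r(q(a))\neq 0$. The specialization $(t,\xhat)\mapsto(q(a),b)$ extends to a $K$-rational place $\Phi$ of $\Ehat$ which is unramified over $E$, so $\Phi^{\ast}|_E=\theta$ by the first step. The restriction $\phi_1=\Phi|_{F_1K_1'}$ is a $K_1'$-place of $E_1K_1'$ whose residue field is $K_1'$: on the $g$-side this uses that $g(q(a),X)$ is irreducible of the correct degree over $K_1'$, which forces the full residue Galois group to be realized, yielding at once the required \emph{surjective} lift $\theta_1=\phi_1^{\ast}$ and the geometric character of $(\theta,\theta_1)$ after compositum with $K_1'$.

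The main obstacle I expect is the bookkeeping in the middle step: choosing $K_1$ large enough to carry uniformly, i.e.\ for \emph{every} DEP with the fixed lower part $\calE(K)$, the Galois-theoretic descent (in particular, the inclusion $\gal(F/E)\hookrightarrow \gal(F_1/K_1(t))$ and the fact that $\theta$ extends to a weak solution of the $K_1$-embedding problem) while keeping $K_1$ independent of the choice of DEP; and, once Mordell is invoked, extracting the surjectivity of $\theta_1$ precisely from the irreducibility of $g(q(a),X)$ over $K_1'$, so that no further enlargement of $K_1'$ is required.
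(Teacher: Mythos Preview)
Your overall strategy coincides with the paper's: invoke Proposition~\ref{prop: characterization of solutions} to produce $\Ehat$ and an absolutely irreducible $\fhat$, apply Lemma~\ref{lem:Mordell}, and then use that $KK_1'/K_1'$ is PAC to find the desired specialization. The gap is in your choice of $g$.

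You take $g$ to be the minimal polynomial of a primitive element of $E/K(t)$. But irreducibility of $g(q(a),X)$ over $K_1'$ then only says that the decomposition group acts transitively on the roots of $g$; it does \emph{not} force the decomposition group to be all of $\gal(F_1K_1'/K_1'(t))$, which is exactly what surjectivity of $\theta_1$ means. (There is not even a reason why a root of $g$ should lie in $F_1=F_0K_1$, since your primitive element $y\in E=K(t,y)$ is only known to lie in $F=F_0K$.) The paper instead lets $g\in K_1[t,X]$ be an irreducible polynomial whose root generates the \emph{Galois} extension $F_1/K_1(t)$; then irreducibility of $g(q(a),X)$ over $K_1'$ forces the residue field of $F_1K_1'$ to have degree $[F_1:K_1(t)]$ over $K_1'$, so the decomposition group equals $\gal(F_1K_1'/K_1'(t))$ and $\phi_1^\ast$ is surjective.

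This correct choice of $g$ depends on $F_1=F_0K_1$ and hence on the DEP, so the $K_1'$ produced by Lemma~\ref{lem:Mordell} also depends on the DEP; your attempt to achieve uniformity by tying $g$ to $E$ alone is precisely what breaks the surjectivity step. The paper fixes $K_1$ (independently of the DEP) by requiring it to contain the coefficients of $\fhat$ \emph{and} of a Galois polynomial $h$ for the solution field $M$ of $\theta$, with $h$ remaining Galois over $K_1$; this is the concrete content behind your phrase ``the Galois-theoretic data descends correctly.''
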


\begin{proof}
By Proposition~\ref{prop: characterization of solutions} there
exists a finite separable $\Ehat/E$ that is regular over $K$ such
that a $K$-place $\phi$ of $E$ that is unramified in $F$ can be extended to a place $\Phi$ of $F$ such that
$\Phi^*=\theta$ if and only if $\phi$ extends to a $K$-rational
place of $\Ehat$. Let $f(t,X)\in K[t,X]$ be an absolutely
irreducible polynomial whose root $x$ generates $\Ehat/K(t)$, i.e.\
$\Ehat = K(t,x)$. Let $M$ be the fixed field of $\ker(\theta)$ in
$K_s$. Then $M/K$ is a finite Galois extension. Let $h(X)\in K[X]$
be a Galois irreducible polynomial whose root generates $M/K$.

Let $K_1$ be a finite subextension of $K/K_0$ that contains the
coefficients of $f$ and $h$ and such that $h$ is Galois over it. Let
$M_1$ be the splitting field of $h$ over $K_1$ and let $L_1$, $F_1$
be as in the corresponding rational double embedding problem
\eqref{eq:CorresponingRegularDoubleEP}. Then $\gal(M/K)\cong
\gal(M_1/K_1)$, and thus also $\gal(L/K)\cong \gal(L_1/K_1)$.

\[
\xymatrix@R=10pt{
            &&&F\\
    &E \ar@{-}[r]
        &EL\ar@{-}[rr]\ar@{-}[ru]
                &&EM\\
    &K(t) \ar@{-}[r]\ar@{-}[u]
        &L(t)\ar@{-}[rr]\ar@{-}[u]
                &&M(t)\ar@{-}[u]\\
            &&&F_1\ar@{-}'[u]'[uu][uuu]\\
K_0(t)\ar@{-}[r]
    &K_1(t) \ar@{-}[r]\ar@{-}[uu]
        &L_1(t)\ar@{-}[rr]\ar@{-}[uu]\ar@{-}[ur]
                &&M_1(t)\ar@{-}[uu]
    }
\]

Let $g(t,X)\in K_1[T,X]$ be an irreducible polynomial whose root
generates $F_1/K_1(t)$. Choose $r(t)\in K_1(t)$ such that $r(a)\neq
0$ implies that the prime $(t-a)$ is unramified in $F_1$ and that
the leading coefficients of $f(t,X)$ and $g(t,X)$ do not vanish at
$a$. Let $K_1'/K_1$ be the purely inseparable extension, $B\subseteq
K_1'$ the finite subset, and $q\in K_1'(T)$ the nonconstant rational
function that Lemma~\ref{lem:Mordell} gives for $K_1$, $g$, $f$, and
$r$. Let $K' = KK_1'$.

Since $K'/K_1'$ is PAC (\cite[Corollary 2.5]{JardenRazon1994}) there
exist $a\in K_1'\smallsetminus B$ and $b\in K'$ for which
$f(q(a),b)=0$ (Proposition~\ref{prop:DefinitionPACextension}).
Extend $t\mapsto q(a)$ to a $K'$-rational place $\Phi$ of $\Ehat
K_1'$. Then $\phi=\Phi|_{E K_1'}$ is unramified in $FK_1'$ (since
$r(q(a))\neq 0$). It follows that $(\phi^*,\phi_1^*)$ is a geometric
weak solution of the DEP $((\mu,\alpha),(\mu_1,\alpha_1))$ that we
get from \eqref{eq:CorresponingRegularDoubleEP} by replacing all
fields with their compositum with $K_1'$.

Moreover, since $F_1K_1'/K_1'(t)$ is generated by $g(t,X)$ and
$g(q(a),X)$ is irreducible, we get that $\phi_1^*$ is surjective.
This proves (b). Now assertion (a) follows since $(\phi^*,\phi_1^*)$
is a (not necessarily geometric) solution of
$((\mu,\alpha),(\mu_1,\alpha_1))$.
\end{proof}


\chapter{On the Galois Structure of PAC Extensions}
In \cite{JardenRazon1994} Jarden and Razon use a deep theorem of
Faltings to prove that some specific Galois extensions of $\bbQ$
are not PAC extensions of any number field. Then they ask
whether this is a general phenomenon or a coincidence.

In \cite{Jarden2006} Jarden settles the above question by
showing that the only Galois PAC extension of a number field is
its algebraic closure. This result is based on Frobenius'
density theorem, on Neukirch's characterization of $p$-adically
closed fields among all algebraic extensions of $\bbQ$, and on
the special property of $\bbQ$ that it has no proper subfields
(!).

In \cite{Bary-SorokerJarden} Jarden and the author elaborate the
Jarden-Razon method and extend Jarden's answer to the larger
family of finitely generated fields $K$ using the Mordell
conjecture (proved by Faltings in characteristic $0$ and
Grauert-Manin in positive characteristic).

Using the lifting property, we can elementarily reprove all the
above results. This new proof does not use any of the properties of
finitely generated fields, and hence is valid for an arbitrary field
$K$ (Theorem~\ref{thm:GaloisClosurePAC}). This result and other
results appearing in this chapter (e.g.\ descent results) manifest
that the right approach to PAC extension is via double embedding
problems.

\section{Descent Features}
The following result allows us to descend Galois groups in a PAC
extension.

\begin{theorem}
\label{thm realizing group}%
Let $K/K_0$ be a PAC extension, let $\phi\colon \gal(K)\to
\gal(K_0)$ be the restriction map, and let $\theta\colon\gal(K)
\to G$ be an epimorphism onto a finite group $G$. Furthermore,
let $i\colon G\hookrightarrow G_0$ be an embedding of $G$ into a
finite group $G_0$ which is regularly realizable over $K_0$.
Then there exists a homomorphism $\theta_0\colon \gal(K_0)\to
G_0$ such that $\theta_0 \phi = i\theta$:
\[
\xymatrix{%
{\gal}(K)\ar[r]^{\phi}\ar[d]_{\theta}
    & {\gal}(K_0) \ar[d]^{\theta_0}\\
G \ar[r]_{i}
    & G_0.
}%
\]
\end{theorem}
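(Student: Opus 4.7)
The plan is to apply the lifting property (Proposition~\ref{prop:ExtensionSoltoGeoSol_DEP}) to a suitably degenerate rational double embedding problem. First, I would use Theorem~\ref{thm:nonalgebraicPAC} to replace $K$ by $K\cap K_{0s}$: this induces an isomorphism $\gal(K)\xrightarrow{\sim}\gal(K\cap K_{0s})$ compatible with the restriction map to $\gal(K_0)$, transports $\theta$ unchanged, and preserves the hypothesis that $G_0$ is regularly realizable over $K_0$. Henceforth I assume $K/K_0$ is separable algebraic, so that $K_s=K_{0s}$ and $\phi\colon\gal(K)\to\gal(K_0)$ is the inclusion of Galois groups.

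Since $G_0$ is regularly realizable over $K_0$, I fix a finite Galois extension $F_0/K_0(t)$ in one transcendental $t$ with $\gal(F_0/K_0(t))=G_0$ and $F_0\cap K_{0s}=K_0$. Regularity of $F_0/K_0$ together with algebraicity of $K/K_0$ yields linear disjointness of $F_0$ and $K$ over $K_0$, so that $F:=F_0K$ is Galois over $E:=K(t)$, the restriction identifies $\gal(F/E)=\gal(F_0/K_0(t))=G_0$, and $F\cap K_s=K$. This produces a rational DEP in which both right columns collapse (as $L_0=K_0$ and $L=K$): the higher embedding problem is $(\gal(K_0)\to 1,\ G_0\to 1)$, the lower is $(\gal(K)\to 1,\ G_0\to 1)$, and the DEP-inclusion $\gal(F/E)\hookrightarrow\gal(F_0/K_0(t))$ is the identity on $G_0$.

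With the lower embedding problem trivial, the homomorphism $i\theta\colon\gal(K)\to G_0$ is automatically a weak solution. The lifting property then supplies a $K$-place $\psi$ of $E$ that is unramified in $F$, together with a geometric weak solution $(\psi^*,\psi_0^*)$ of the DEP satisfying $\psi^*=i\theta$, where $\psi_0:=\psi|_{K_0(t)}$. The compatibility built into the definition of a weak solution of a DEP then reads $\psi_0^*\circ\phi=\psi^*=i\theta$, so setting $\theta_0:=\psi_0^*$ produces the required homomorphism. The only step requiring real care is unwinding this DEP compatibility through the identifications $\gal(F/E)=\gal(F_0/K_0(t))=G_0$; this is routine bookkeeping with restriction maps, but the conflation of the two senses of ``inclusion'' (the DEP one, which here is the identity, and the original $i\colon G\hookrightarrow G_0$) is easy to confuse and is where I expect the only friction to lie.
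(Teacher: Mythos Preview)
Your proof is correct and follows essentially the same route as the paper's: set up a rational DEP from the regular realization of $G_0$ over $K_0$ with trivial right-hand columns, and apply the lifting property (Proposition~\ref{prop:ExtensionSoltoGeoSol_DEP}). The paper's proof is slightly more economical: it takes the lower group to be $G$ itself (with $E$ the fixed field of $i(G)$ in $F$) so that $\theta$ is directly a solution of the lower embedding problem, and it skips the preliminary reduction to separable algebraic extensions since the lifting property is stated for arbitrary PAC extensions. Your choice to take the lower group as $G_0$ and feed in $i\theta$ as a \emph{weak} solution works equally well, since the lifting property only requires a weak solution below; the bookkeeping you flag as potential friction is, as you suspected, routine.
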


\begin{proof}
The map $\theta\colon \gal(K)\to G$ is a solution of the lower
embedding problem of the double embedding problem
\begin{equation}
\label{eq thm realizing groups}%
\xymatrix{%
    &\gal(K_0)\ar[dr]\ar@{.>}[dl]_{\theta_0}\\
G_0\ar[rr]
        &&1    \\
&\gal(K)\ar[dr]\ar[dl]_{\theta}\ar'[u][uu]\\
G\ar[rr]\ar[uu]
        &&1.\ar[uu]
}%
\end{equation}
Since $G_0$ is regularly realizable, the DEP is in fact rational.
Thus $\theta$ extends to a geometric weak solution
$(\theta,\theta_0)$ of \eqref{eq thm realizing groups} by the
lifting property (Proposition~\ref{prop:ExtensionSoltoGeoSol_DEP}).
\end{proof}

In case $G = G_0$, Theorem~\ref{thm realizing group} immediately
yields:

\begin{corollary}\label{cor:decentofregularlyrealizable}
Let $K/K_0$ be a PAC extension. Let $G$ be a finite Galois group
over $K$ that is regularly realizable over $K_0$. Then $G$ also
occurs as a Galois group over $K_0$.
\end{corollary}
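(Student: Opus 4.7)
The plan is to apply Theorem~\ref{thm realizing group} directly, with $G_0$ taken to be $G$ itself and $i\colon G\hookrightarrow G$ the identity. The hypothesis ``$G$ is a finite Galois group over $K$'' provides some finite Galois extension $N/K$ with $\gal(N/K)\cong G$, and hence an epimorphism $\theta\colon \gal(K)\to G$ given by restriction to $N$. Since $G$ is assumed to be regularly realizable over $K_0$, the theorem applies and yields a homomorphism $\theta_0\colon \gal(K_0)\to G$ making the square commute, i.e.\ $\theta_0 \phi = \theta$, where $\phi\colon \gal(K)\to \gal(K_0)$ is the restriction map.

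The remaining step is to verify that $\theta_0$ is surjective, so that its kernel cuts out a Galois extension of $K_0$ with group $G$. This is immediate: for any $g\in G$, pick $x\in \gal(K)$ with $\theta(x)=g$ (possible because $\theta$ is surjective), and then $\theta_0(\phi(x)) = \theta(x) = g$, so $g\in \theta_0(\gal(K_0))$. Therefore $\theta_0$ is an epimorphism, and letting $L_0\subseteq K_{0s}$ denote the fixed field of $\ker(\theta_0)$ gives a Galois extension $L_0/K_0$ with $\gal(L_0/K_0)\cong G$, as required.

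I do not foresee a genuine obstacle: the content is already packaged into Theorem~\ref{thm realizing group}, and what remains is the trivial surjectivity check above. The only place where care is needed is formally producing the map $\theta$ from the data ``$G$ is a Galois group over $K$''; this is standard Galois theory and requires no appeal to the PAC hypothesis.
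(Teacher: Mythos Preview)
Your proposal is correct and follows exactly the approach the paper takes: apply Theorem~\ref{thm realizing group} with $G_0=G$ and $i=\id$, then observe that $\theta_0\phi=\theta$ forces $\theta_0$ to be surjective since $\theta$ is. The paper simply writes ``In case $G=G_0$, Theorem~\ref{thm realizing group} immediately yields'' without spelling out the surjectivity check, but your added detail is precisely the intended reasoning.
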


Since every abelian group is regularly realizable over any field
(Theorem~\ref{thm:regulargroups}), we get the following
\begin{corollary}
Let $K/K_0$ be a PAC extension. Then $K^{\rm ab} = K_0^{\rm ab} K$.
\end{corollary}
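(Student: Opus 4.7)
The plan is to establish the two inclusions $K_0^{\rm ab}K \subseteq K^{\rm ab}$ and $K^{\rm ab} \subseteq K_0^{\rm ab}K$ separately. The first is elementary Galois theory: since $K_0^{\rm ab}/K_0$ is abelian, the restriction map embeds $\gal(K_0^{\rm ab}K/K)$ into $\gal(K_0^{\rm ab}/K_0^{\rm ab}\cap K)$, which is a quotient of the abelian group $\gal(K_0^{\rm ab}/K_0)$; therefore $K_0^{\rm ab}K/K$ is abelian and contained in $K^{\rm ab}$.

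For the reverse inclusion, my strategy is to take an arbitrary finite abelian extension $L/K$ and show that $L \subseteq K_0^{\rm ab}K$. Setting $G = \gal(L/K)$ and letting $\theta\colon \gal(K) \to G$ be the restriction map, I would invoke Theorem~\ref{thm realizing group} with the identity embedding $i\colon G \hookrightarrow G_0 = G$. The hypothesis that $G_0 = G$ is regularly realizable over $K_0$ is satisfied automatically since every abelian group is regularly realizable over any field (Theorem~\ref{thm:regulargroups}). The theorem then supplies a homomorphism $\theta_0\colon \gal(K_0) \to G$ with $\theta_0\phi = \theta$, where $\phi\colon \gal(K) \to \gal(K_0)$ is the restriction map.

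The remaining step is to interpret $\theta_0$ field-theoretically. Let $M_0 \subseteq K_{0s}$ be the fixed field of $\ker\theta_0$. Since $\theta_0$ takes values in the abelian group $G$, the extension $M_0/K_0$ is abelian, so $M_0 \subseteq K_0^{\rm ab}$. I then need to verify $L \subseteq M_0K$. By Theorem~\ref{thm:nonalgebraicPAC} we have $K_s = K_{0s}K$, so $\phi$ is injective and the Galois correspondence gives $\gal(K_s/M_0K) = \phi^{-1}(\ker\theta_0) = \ker(\theta_0\phi) = \ker\theta = \gal(K_s/L)$, hence $M_0K = L$. Thus $L \subseteq K_0^{\rm ab}K$, and taking the union over all finite abelian $L/K$ completes the proof.

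There is no real obstacle: once Theorem~\ref{thm realizing group} is in hand, the corollary is immediate, and the only minor technicality is that $K/K_0$ need not be separable algebraic. This is handled cleanly by the identity $K_s = K_{0s}K$ from Theorem~\ref{thm:nonalgebraicPAC}, which ensures the Galois-theoretic bookkeeping above goes through in full generality.
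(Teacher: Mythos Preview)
Your proof is correct and follows exactly the approach the paper intends: apply Theorem~\ref{thm realizing group} with $G_0 = G$ abelian (hence regularly realizable over $K_0$ by Theorem~\ref{thm:regulargroups}), and read off $L = M_0K$ from $\theta_0\phi = \theta$. The appeal to Theorem~\ref{thm:nonalgebraicPAC} is harmless but unnecessary, since the identity $\gal(K_s/M_0K) = \phi^{-1}(\ker\theta_0) = \ker(\theta_0\phi) = \ker\theta$ holds without any injectivity of $\phi$.
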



We reprove \cite[Theorem 5]{Razon2000} using Theorem~\ref{thm
realizing group} and the fact that the symmetric group is regularly
realizable over any field. This proof provides an insight into
Razon's original technical proof.

\begin{corollary}[Razon]\label{cor:Razon}%
Let $K/K_0$ be a PAC extension. Let $L/K$ be a separable algebraic
extension. Then there exists a separable algebraic extension
$L_0/K_0$ that is linearly disjoint from $K$ over $K_0$ for which
$L=L_0K$.
\end{corollary}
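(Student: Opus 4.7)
The plan is to apply Theorem~\ref{thm realizing group} with target group $S_n$, exploiting that $S_n$ is regularly realizable over any field by Theorem~\ref{thm:regulargroups}. Two preliminary reductions clear the ground. By Theorem~\ref{thm:nonalgebraicPAC}, restriction induces $\gal(K)\cong\gal(K\cap K_{0s})$, and the separable algebraic extensions of $K$ correspond bijectively (via $L\mapsto L\cap K_{0s}$, with inverse $L'\mapsto L'K$) to those of $K\cap K_{0s}$, so I may assume $K/K_0$ itself is separable algebraic. A direct-limit argument on finite subextensions of $L/K$ then reduces the problem to the case $n=[L:K]<\infty$.

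Now let $N/K$ be the Galois closure of $L/K$, and put $G=\gal(N/K)$, $H=\gal(N/L)$. The left action of $G$ on the $n$-element coset space $G/H$ is faithful (because $H$ contains no nontrivial normal subgroup of $G$) and transitive, giving an embedding $i\colon G\hookrightarrow S_n$ under which $H=i^{-1}(S_{n-1})$, where $S_{n-1}\leq S_n$ is the stabilizer of the coset $H$. Write $\theta\colon\gal(K)\twoheadrightarrow G$ for the restriction and $\phi\colon\gal(K)\hookrightarrow\gal(K_0)$ for the inclusion. Theorem~\ref{thm realizing group} then produces a homomorphism $\theta_0\colon\gal(K_0)\to S_n$ with $\theta_0\phi=i\theta$. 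I define $L_0$ to be the fixed field in $K_{0s}$ of the open subgroup $\theta_0^{-1}(S_{n-1})$.

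The equality $L_0K=L$ follows from the Galois correspondence applied to
\[
\theta_0^{-1}(S_{n-1})\cap\gal(K) \;=\; \theta^{-1}\bigl(i^{-1}(S_{n-1})\bigr) \;=\; \theta^{-1}(H),
\]
which is exactly the subgroup of $\gal(K)$ corresponding to $L$. For the linear disjointness of $L_0$ and $K$ over $K_0$, set $H_0=\theta_0(\gal(K_0))$; it suffices to verify the decomposition $H_0=(H_0\cap S_{n-1})\cdot i(G)$. Given $h\in H_0$, the transitivity of $i(G)$ on $\{1,\dots,n\}$ supplies $g\in i(G)$ with $g(1)=h(1)$, whence $g^{-1}h\in H_0\cap S_{n-1}$ and $h\in i(G)\cdot(H_0\cap S_{n-1})$. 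A degree count
\[
[L_0:K_0]=[H_0:H_0\cap S_{n-1}]=[i(G):i(G)\cap S_{n-1}]=[G:H]=n=[L:K]=[L_0K:K]
\]
then delivers the linear disjointness of $L_0$ and $K$ over $K_0$.

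I expect the main technical point to be this last verification (rather than the weaker $L_0\cap K=K_0$): it is precisely where the transitivity of $G$ on $G/H$, the group-theoretic shadow of irreducibility of a primitive minimal polynomial for $L/K$, enters decisively through the coset decomposition $S_n=i(G)\cdot S_{n-1}$. The passage from the finite case to general separable algebraic $L/K$ is handled by patching the finite solutions $L_{0,\alpha}$ compatibly as $\alpha$ ranges over the directed system of finite subextensions of $L/K$.
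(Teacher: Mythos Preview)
Your proof is correct and follows essentially the same route as the paper's: embed $G=\gal(N/K)$ into $S_n$ via the coset action, apply Theorem~\ref{thm realizing group} with $G_0=S_n$, define $L_0$ as the fixed field of $\theta_0^{-1}(S_{n-1})$, and verify $L_0K=L$ together with linear disjointness by a degree count. The paper phrases the last step slightly more directly---observing that $\theta_0(\gal(K_0))\supseteq i(G)$ is transitive, hence has point-stabilizer of index $n$---whereas you unpack this as the decomposition $H_0=i(G)\cdot(H_0\cap S_{n-1})$; these are equivalent. For the infinite case the paper invokes Zorn's Lemma and refers to \cite{Razon2000} for the compatibility details, so your sketch via a direct limit is at the same level of rigor.
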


\begin{proof}
First assume that $[L:K]$ is finite. Let $M$ be the Galois closure
of $L/K$, $G=\gal(M/K)$, $G' = \gal(M/L)$, and $\theta\colon \gal(K)
\to G$ the quotient map. The action of $G$ on the cosets
$\Sigma=G/G'$ admits an embedding $i\colon G\to S_\Sigma$.

As $S_\Sigma$ is regularly realizable
(Theorem~\ref{thm:regulargroups}), Theorem~\ref{thm realizing group}
gives a homomorphism $\theta_0\colon \gal(K_0)\to S_\Sigma$ for
which $\theta_0|_{\gal(K)} = i  \theta$ (note that
$\theta_0|_{\gal(K_0)} = \phi\theta_0$ in the diagram above). In
particular, the image $H$ of $\theta_0$ contains $i(G)$, and hence
$H$ is transitive. Thus $(H:H') = |\Sigma|= [L:K]$, where $H'$ is
the stabilizer in $H$ of the coset $G'\in \Sigma$. Also, as the
subgroup $G'\leq G$ is the stabilizer in $G$ of the coset $G'\in
\Sigma$, it follows that $H'\cap i(G)=i(G')$. Hence $i^{-1}(H')=G'$

Let $M_0$ be the fixed field of $\ker (\theta_0)$ (i.e.\ $\gal(M_0)=
\ker(\theta_0)$) and let $L_0\subseteq M_0$ be the corresponding
fixed field of $H'$ (i.e.\ $\gal(L_0) = \theta_0^{-1}(H')$). Since
$i$ is an embedding and $\theta_0|_{\gal(K)} = i  \theta$, we have
\begin{eqnarray*}
\gal(L)
    &=
        & \theta^{-1}(G')= \theta^{-1}i^{-1}(H') = \phi^{-1}
        \theta_0^{-1} (H') = \phi^{-1} (\gal(L_0)) = \gal(L_0K).
\end{eqnarray*}
Hence $L = L_0K$. In addition, $L_0$ is linearly disjoint from $K$,
since $[L_0:K_0]=(H:H') = [L:K]$, as needed.

The case where $L/K$ is an infinite extension follows from Zorn's
Lemma. The main point is that for a tower of algebraic extensions
$L_1\subseteq L_2\subseteq L_3$, $L_3/L_1$ is separable if and only
if both $L_2/L_1$ and $L_3/L_2$ are. The details can be found in
\cite{Razon2000}.
\end{proof}

\begin{remark}
In the last proof $H'$ was the stabilizer of a point of a subgroup
of $S_n$. This stabilizer is, in general, not normal even if $L/K$
is Galois. That is to say, $L_0/K_0$ need not be Galois, even if
$L/K$ is.
\end{remark}

\begin{example}
In this example we build a field tower $K_0\subseteq K_1 \subseteq
K$ such that $K/K_1$ is PAC, $K/K_0$ is not PAC, and $K_1/K_0$ is
finite.

Let $K_0 = \bbQ_{\rm sol}$ the maximal pro-solvable extension of
$\bbQ$. Then $K_0$ has no extensions of degree $2$. This imply that
if $K$ is an extension of $K$ and there exists $L/K$ of degree $2$,
then $K/K_0$ is not PAC. Indeed, had $K/K_0$ PAC, we would have got
$L_0/K_0$ of degree $2$ such that $L = L_0 K$
(Corollary~\ref{cor:Razon}), a contradiction.

Let $K_1/K_0$ be a finite proper extension and $e\geq 1$ an integer.
Weissauer's theorem \cite[Theorem~13.9.1]{FriedJarden2005} asserts
that $K_1$ is Hilbertian. Hence for almost all $\bfsigma\in
\gal(K_1)^e$ the extension $K/K_1$ is PAC and the absolute Galois
group of $K$ is free on $e$ generators, where $K =
\bbQgal(\bfsigma)$. In particular, $K$ has an extension of degree
$2$, so by the previous paragraph, $K/K_0$ is not PAC.
\end{example}

\section{Restrictions on the Galois Structure of $K/K_0$}
\label{sec:restriction}
Not every extension can be a PAC extension. This section reveals
some of the restrictions.

Warning: The reader is advised to recall the definitions and basic
properties of wreath products appearing in
Section~\ref{sec:wreathproduct} before continuing any further.

\begin{lemma}\label{lem:noGaloisExt}
Let $K/K_0$ be a proper Galois PAC extension. Then $K$ is separably
closed.
\end{lemma}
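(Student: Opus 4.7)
Suppose, for contradiction, that $K/K_0$ is a proper Galois PAC extension with $K\neq K_s$. Since $K_0\subsetneq K$ and $K/K_0$ is Galois, I pick a nontrivial finite Galois subextension $K_1/K_0$ inside $K$ and set $G:=\gal(K_1/K_0)\neq 1$. Since $K\neq K_s$, I pick a nontrivial finite Galois extension $L/K$, set $A:=\gal(L/K)\neq 1$, and write $\theta\colon\gal(K)\to A$ for the canonical surjection. To sidestep any regular-realizability issue for $A$ itself, I fix an injection $i\colon A\hookrightarrow A'$ into a finite group $A'$ that is regularly realizable over $K_0$; for instance $A'=S_n$ with $n=|A|$ acting via its regular representation (Theorem~\ref{thm:regulargroups}).

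The plan is to set up a rational DEP for $K/K_0$ whose higher embedding problem is the wreath product problem
\[
(\res\colon \gal(K_0)\to G,\ \alpha\colon A'\wr G\to G),
\]
which is rational by Corollary~\ref{cor:wreath_regularlysolvable} applied to the tower $K_0\subseteq K_1\subseteq K$; let $F_0/K_0(t)$ realize it, so that $F_0/K_1$ is regular and $F_0\cap K_s=K_1$. Taking $E=K(t)$ and $F=F_0K$, the regularity of $F_0/K_1$ gives $F_0\cap K(t)=K_1(t)$, hence $\gal(F/E)=\ker\alpha=\Ind_1^G(A')=(A')^G$, and the associated lower embedding problem reduces to $(\gal(K)\to 1,\ (A')^G\to 1)$ (the base is trivial because $\gal(K)\subseteq\gal(K_1)$). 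I take as the lower weak solution the homomorphism $\eta\colon\gal(K)\to(A')^G$ defined by $\eta(\tau)=f_{i\theta(\tau)}$, where, for $a\in A'$, $f_a\in(A')^G$ is the function $G\to A'$ supported at $1\in G$ with value $a$. The lifting property (Proposition~\ref{prop:ExtensionSoltoGeoSol_DEP}) then lifts $\eta$ to a geometric weak solution $(\eta,\eta_0)$; in particular $\eta_0\colon\gal(K_0)\to A'\wr G$ is a homomorphism satisfying $\alpha\circ\eta_0=\res$ and $\eta_0|_{\gal(K)}=\eta$.

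The contradiction will come from the normality $\gal(K)\triangleleft\gal(K_0)$ (the only place where the Galois hypothesis on $K/K_0$ is used). I choose $\sigma\in\gal(K_0)$ whose image $\bar\sigma\in G$ is nontrivial (possible as $G\neq 1$) and any $\tau\in\gal(K)$. Since $\sigma\tau\sigma^{-1}\in\gal(K)$,
\[
\eta_0(\sigma\tau\sigma^{-1})=\eta(\sigma\tau\sigma^{-1})=f_{i\theta(\sigma\tau\sigma^{-1})}
\]
is a function supported at $1\in G$. On the other hand, writing $\eta_0(\sigma)=h\bar\sigma$ with $h\in(A')^G$ and using the wreath-product multiplication $(f\sigma)(g\tau)=fg^{\sigma^{-1}}\sigma\tau$ from Section~\ref{sec:wreathproduct}, a direct computation yields
\[
\eta_0(\sigma)\,\eta(\tau)\,\eta_0(\sigma)^{-1}=h\cdot f_{i\theta(\tau)}^{\bar\sigma^{-1}}\cdot h^{-1},
\]
which, read pointwise, is a function supported at $\bar\sigma\in G$ with value $h(\bar\sigma)\,i\theta(\tau)\,h(\bar\sigma)^{-1}$ there. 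Since $\bar\sigma\neq 1$, the two sides of $\eta_0(\sigma\tau\sigma^{-1})=\eta_0(\sigma)\eta(\tau)\eta_0(\sigma)^{-1}$ are elements of $(A')^G$ with disjoint supports; equality forces both to be the trivial function, so in particular $i\theta(\tau)=1$ and hence $\theta(\tau)=1$ for every $\tau\in\gal(K)$. This contradicts the surjectivity of $\theta$ onto $A\neq 1$.

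The main (essentially only) technical obstacle is the wreath-product conjugation step: one must carefully unwind the conventions of Section~\ref{sec:wreathproduct} to see that conjugating a function supported at $1\in G$ by an element of $A'\wr G$ that projects to $\bar\sigma\in G$ produces a function supported at $\bar\sigma$. Once this computation is verified, everything else is a bookkeeping exercise, and the proof is a one-shot application of the lifting property to a DEP whose higher level is handed to us by Corollary~\ref{cor:wreath_regularlysolvable}.
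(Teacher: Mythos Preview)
Your proof is correct and follows essentially the same approach as the paper's own proof. Both arguments embed $A$ into $S_n$, take the wreath product $S_n\wr G$ as the higher embedding problem (rational by Corollary~\ref{cor:wreath_regularlysolvable}), place $A$ inside it via the ``supported at $1$'' functions $f_a$, invoke the lifting property, and then exploit normality of $\gal(K)$ in $\gal(K_0)$ to obtain the support--mismatch contradiction; the only difference is that the paper phrases the endgame abstractly as ``$i(A)$ is invariant under $\theta_0(\gal(K_0))$ while $i(A)\cap i(A)^{f\sigma}=1$'', whereas you unwind the same computation element by element.
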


\begin{proof}
Let $A$ be a Galois group over $K$ and let $\theta\colon \gal(K) \to
A$ be a corresponding epimorphism. Let $G$ be a nontrivial finite
quotient of $\gal(K/K_0)$ and $\phi \colon \gal(K_0) \to G$ the
restriction map. We may assume that $A\leq S_n$ for some large $n$.
Define an embedding $i\colon A \to S_n\wre G$ by setting $i(a) =
(f_a,1)$, where $f_a(1)=a$ and $f_a(\sigma)=1$ for any nontrivial
$\sigma \in G$. As $S_n$ is regularly realizable over any field,
$(\phi,S_n\wre G \to G)$ is rational over $K_0$
(Corollary~\ref{cor:wreath_regularlysolvable}).

By the lifting property we can extend the solution $\theta$ of the
lower embedding problem to a weak solution $(\theta,\theta_0)$ of
the DEP
\[
\xymatrix{%
    & \gal(K_0)\ar[dr]\ar[dl]_{\theta_0}\\
S_n\wre G\ar[rr]
    &&G
\\
    & \gal(K)\ar[dr]\ar[dl]_{\theta}\ar'[u][uu]\\
A\ar[rr]\ar[uu]
    &&1.\ar[uu]
}%
\]
As $\gal(K)$ is normal in $\gal(K_0)$, we have that $i(A)$ is
invariant under the image of $\gal(K_0)$. Let $1\neq \sigma\in G$,
extend it to $\hat \sigma\in \gal(K_0)$, and let $f\sigma =
\theta_0(\hat\sigma)$. Then $i(A)\cap i(A)^{f\sigma} = i(A)\cap
i(A)^\sigma = 1$. But $i(A)$ is invariant under $f\sigma$; we thus
get that $i(A)=1$. Therefore $K$ is separably closed, as desired.
\end{proof}

Some Galois extensions of $\bbQ$ are known to be PAC as fields. So
we get below PAC fields which are not PAC over any proper subfield.

\begin{corollary} \label{cor:PACofnoSubField}
Let $K$ be a Galois extension of $\bbQ$ which is not algebraically
closed. Then $K$ is a PAC extension of no proper subfield. This
result holds even if $K$ is PAC. In particular it holds in the
following cases.
\begin{enumerate}
\item
the Galois hull $K=\bbQgal[\bfsigma]$ of $\bbQ$ in
$\bbQgal(\bfsigma)$, for almost all $\bfsigma\in \gal(\bbQ)^e$
\cite[Theorem 18.10.2]{FriedJarden2005}.
\item
$K=\bbQ_{\textnormal{tot},\bbR}(i)$, where
$\bbQ_{\textnormal{tot},\bbR}$ is the maximal real Galois extension
of $\bbQ$ and $i^2 = -1$ \cite{Pop1996}.
\item
The compositum $K=\bbQ_{\textnormal{sym}}$ of all Galois extensions
of $\bbQ$ with a symmetric Galois group
\cite[Theorem~18.10.3]{FriedJarden2005}.
\end{enumerate}
\end{corollary}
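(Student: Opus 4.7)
The plan is to reduce the main assertion directly to Lemma~\ref{lem:noGaloisExt}, which has already been proved, and then to verify its hypotheses in each of the three explicit examples.

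First I would establish the main claim. Suppose, for contradiction, that some proper subfield $K_0 \subsetneq K$ makes $K/K_0$ a PAC extension. Because $\bbQ$ is the prime field of characteristic zero, automatically $\bbQ \subseteq K_0 \subsetneq K$. Since $K/\bbQ$ is Galois, its intermediate extension $K/K_0$ is also Galois (the Galois correspondence makes it the fixed field of a subgroup), and it is proper by assumption. Lemma~\ref{lem:noGaloisExt} therefore forces $K$ to be separably closed; in characteristic zero this is the same as algebraically closed, so $K=\bbQgal$, contradicting the hypothesis.

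For the three ``in particular'' cases I would simply check that the stated field is Galois over $\bbQ$, PAC, and not algebraically closed, and then invoke the main claim. In (a), $\bbQgal[\bfsigma]$ is Galois over $\bbQ$ by construction; by the cited \cite[Theorem~18.10.2]{FriedJarden2005} it is PAC and a proper subfield of $\bbQgal$ for almost all $\bfsigma$. In (b), $\bbQ_{\textnormal{tot},\bbR}/\bbQ$ is Galois by definition and $\bbQ(i)/\bbQ$ is Galois, so the compositum $\bbQ_{\textnormal{tot},\bbR}(i)/\bbQ$ is Galois; Pop's theorem \cite{Pop1996} supplies PACness, and the field is evidently not algebraically closed. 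In (c), $\bbQ_{\textnormal{sym}}$ is a compositum of Galois extensions, hence Galois over $\bbQ$, and \cite[Theorem~18.10.3]{FriedJarden2005} supplies PACness and properness in $\bbQgal$.

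I anticipate no real obstacle: essentially all of the work is already packaged in Lemma~\ref{lem:noGaloisExt}, whose proof combined the lifting property with an embedding into a wreath product of the form $S_n \wr G$ to annihilate any nontrivial finite quotient of $\gal(K)$. The only point worth highlighting is that the hypothesis ``base field $\bbQ$'' enters solely to ensure that every subfield of $K$ contains the prime field, which is what upgrades the given Galoisness of $K/\bbQ$ to Galoisness of $K/K_0$; any perfect ground field with this property would work equally well.
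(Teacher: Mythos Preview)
Your argument is correct and is exactly the paper's approach: any subfield $K_0$ of $K$ contains $\bbQ$, so $K/K_0$ is Galois, and Lemma~\ref{lem:noGaloisExt} then rules out a proper PAC extension since $K$ is not separably closed. The paper's proof is the same one-line reduction, and it simply defers the verification of the three examples to the cited references rather than spelling them out as you do.
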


\begin{proof}
If $K/\bbQ$ is Galois, then, since any subfield $K_0$ of $K$
contains $\bbQ$, $K/K_0$ is Galois and hence not PAC (as an
extension).
\end{proof}

A somewhat stronger result is the following.

\begin{theorem}\label{thm:GaloisClosurePAC}
The Galois closure of an algebraic separable proper PAC extension is
the separable closure.
\end{theorem}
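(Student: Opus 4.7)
The plan is to generalize the wreath-product argument of Lemma~\ref{lem:noGaloisExt} by using a twisted wreath product $S_n \wr_{G_0} G$ (instead of the plain wreath $S_n \wr G$) to encode the coset structure $G/G_0$, and by using the normality of $\gal(\Nhat)$ in $\gal(K)$ (instead of the absent normality of $\gal(M)$ in $\gal(K)$) to drive the final support-shifting contradiction. Here $\Nhat$ denotes the Galois closure of $M/K$.

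Suppose $\Nhat \ne K_s$, and pick a finite subextension $K \subsetneq M_0 \subseteq M$. Iterating the substitution $M_0 \leftarrow N_0 \cap M$, where $N_0$ is the Galois closure of $M_0/K$, I may assume $M_0 = N_0 \cap M$; writing $G := \gal(N_0/K)$ and $G_0 := \gal(N_0/M_0)$, this ensures that $G_0 \lneq G$, the core of $G_0$ in $G$ is trivial, and $G_0$ is the image of $\gal(M) \to G$. From $\Nhat \ne K_s$ I further choose a finite Galois extension $L/K$ with $L \not\subseteq \Nhat$, so that $A := \gal(LM/M) \cong \gal(L/L \cap M)$ is nontrivial; the canonical epimorphism $\theta_M^A:\gal(M) \to A$ then satisfies $\theta_M^A(\gal(\Nhat)) = A' := \gal(L/L \cap \Nhat) \ne 1$. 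Fix a Cayley embedding $\iota : A \hookrightarrow S_n$ with $n = |A|$.

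I now form a rational DEP for $M/K$. The upper embedding problem (over $K$) is $(\res: \gal(K) \to G,\ \alpha: S_n \wr_{G_0} G \to G)$ with $G_0$ acting trivially on $S_n$; its rationality is supplied by Lemma~\ref{lem:wreath_regularlysolvable} applied to $M_0/K$, to the Galois extension $L = N_0$, and to a regular realization of $S_n$ over $M_0(u)$. For $c \in S_n$, let $f_c \in \Ind_{G_0}^G(S_n)$ be the function taking value $c$ on $G_0$ and $1$ elsewhere. Since $G_0$ acts trivially on $S_n$, a short check shows that $(a, g_0) \mapsto (f_{\iota(a)}, g_0)$ is a group embedding $i_H: A \times G_0 \hookrightarrow \Ind_{G_0}^G(S_n) \rtimes G_0 \subseteq S_n \wr_{G_0} G$. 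I take the lower embedding problem (over $M$) to be $(\res: \gal(M) \to G_0,\ \Ind_{G_0}^G(S_n) \rtimes G_0 \to G_0)$, with $i_H$ as the compatibility embedding; then $\theta_M(\sigma) := (f_{\iota(\theta_M^A(\sigma))}, \res(\sigma))$ is a weak solution of the lower EP. By the lifting property (Proposition~\ref{prop:ExtensionSoltoGeoSol_DEP}), $\theta_M$ lifts to a weak solution $\theta_K: \gal(K) \to S_n \wr_{G_0} G$ of the upper EP with $\theta_K|_{\gal(M)} = \theta_M$.

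The contradiction is now the support-shifting argument inside the twisted wreath product. For $\sigma \in \gal(\Nhat) \leq \gal(N_0)$ one has $\res(\sigma) = 1$, so $\theta_K(\gal(\Nhat)) = \{(f_{\iota(a')}, 1) : a' \in A'\}$, each of whose function parts is supported on the single coset $G_0 \in G/G_0$. Normality of $\gal(\Nhat)$ in $\gal(K)$ makes this subgroup normal in $\theta_K(\gal(K))$. Picking $\sigma_0 \in G \setminus G_0$ (which exists since $G_0 \lneq G$) and $\hat\sigma_0 \in \gal(K)$ with $\res(\hat\sigma_0) = \sigma_0$, writing $\theta_K(\hat\sigma_0) = (f, \sigma_0)$, a computation in the twisted wreath product gives
\[
(f, \sigma_0)(f_c, 1)(f, \sigma_0)^{-1} = (f \cdot f_c^{\sigma_0^{-1}} \cdot f^{-1},\ 1),
\]
whose function part is supported on the coset $\sigma_0 G_0 \ne G_0$; hence this conjugate can lie in $\theta_K(\gal(\Nhat))$ only if $c = 1$. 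Normality therefore forces $A' = 1$, contradicting $L \not\subseteq \Nhat$, so $\Nhat = K_s$. The hardest ingredient is establishing rationality of the upper EP when $N_0 \not\subseteq M$: Corollary~\ref{cor:wreath_regularlysolvable} no longer applies, and Lemma~\ref{lem:wreath_regularlysolvable} must be invoked instead to produce the needed rational EP with Galois group the twisted wreath product $S_n \wr_{G_0} G$.
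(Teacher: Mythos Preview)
Your proof is correct, but it takes a genuinely different route from the paper's. The paper's argument is three lines: given the proper PAC extension $K/K_0$ with Galois closure $N$, it invokes Corollary~\ref{cor:Razon} (Razon's descent) to write $N = N_0 K$ with $N_0/K_0$ separable and linearly disjoint from $K$; then $N/N_0$ is a proper Galois PAC extension (PAC by base change, proper by linear disjointness, Galois since $N/K_0$ is), and Lemma~\ref{lem:noGaloisExt} finishes. Thus the paper reduces to the already-established Galois case by \emph{changing the base field}, rather than generalizing the wreath-product argument.

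Your approach instead upgrades the machinery of Lemma~\ref{lem:noGaloisExt}: you replace the plain wreath product $S_n\wr G$ by the twisted wreath product $S_n\wr_{G_0} G$ (with trivial $G_0$-action) to accommodate the non-normality of $\gal(M)$ in $\gal(K)$, and you use the normality of $\gal(\Nhat)$ in $\gal(K)$ as the substitute for the missing normality of $\gal(M)$. Your invocation of Lemma~\ref{lem:wreath_regularlysolvable} (with $N_1=M_0$, $L=N_0$, hence $L_0=M_0$ and trivial $G_0$-action on $S_n$) to obtain rationality of the upper EP is correct, as is the support-shifting computation inside $\Ind_{G_0}^G(S_n)\rtimes G$. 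One small remark: your ``iteration'' $M_0\leftarrow N_0\cap M$ in fact stabilizes after a single step, since the Galois closure of $N_0\cap M$ is contained in $N_0$.

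What each approach buys: the paper's proof is short and modular, leveraging Razon's descent (itself a consequence of the lifting property) and the Galois case already in hand. Your proof is more self-contained in that it bypasses Corollary~\ref{cor:Razon} entirely, and it demonstrates that the twisted-wreath-product technology of \S\ref{sec:wreathproduct} directly handles the non-Galois situation; this is closer in spirit to the projective-pairs analogue in Proposition~\ref{prop:normalprojectivepairs}, where one likewise passes through a normal-core argument rather than a descent.
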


\begin{proof}
Let $K/K_0$ be a proper PAC extension and let $N$ be its Galois
closure. By Corollary~\ref{cor:Razon}, there exists a separable
extension $N_0/K_0$, linearly disjoint from $K$ over $K_0$, such
that $N = N_0 K$. In particular, $N/N_0$ is a proper Galois PAC
extension, which implies, by Lemma~\ref{lem:noGaloisExt}, that $N$
is separably closed.
\end{proof}

Theorem~\ref{thm:GaloisClosurePAC} is a generalization of
Chatzidakis' result from 1986. Chatzidakis prove that if $K$ is a
countable Hilbertian and $e\geq 1$ an integer, then for almost all
$\bfsigma \in \gal(K)^e$ the field $K_s(\bfsigma)$ is a Galois
extension of no proper subextension $K\subseteq K' \subsetneq
K_s(\bfsigma)$. (To see this, recall that $K_s(\bfsigma)/K$ is PAC
for almost all $\bfsigma$.)

Let us discuss finite PAC extensions. It is clear that every PAC
field is a PAC extension of itself. Another trivial example for a
finite PAC extension is $C/R$, where $C$ is the algebraic closure of
a real closed field $R$. In this case $[C:R]=2$ \cite[VI\S
Corollary~9.3]{Lang2002}.

A family of finite PAC extensions are the purely inseparable ones:
Let $K$ be a purely inseparable extension of a PAC field $K_0$. Then
$K/K_0$ is PAC \cite[Corollary~2.3]{JardenRazon1994}. Hence if
$K/K_0$ is finite purely inseparable extension and $K_0$ is PAC,
then $K/K_0$ is also a finite PAC extension.

The following result asserts that, in fact, these are all the finite
PAC extensions.

\begin{corollary}
\label{cor: finite PAC extension}%
Let $K/K_0$ be a finite extension. Then $K/K_0$ is PAC if and only
if either
\begin{enumerate}
\item $K_0$ is real closed and $K$ is the algebraic closure of
$K_0$, or
\item $K_0$ is PAC and $K/K_0$ is purely inseparable.
\end{enumerate}
\end{corollary}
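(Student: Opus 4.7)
The plan is to peel $K/K_0$ apart into its maximal separable subextension $K_1 = K\cap K_{0s}$ and the purely inseparable layer $K/K_1$, and to treat each layer with the tools already developed. The ``if'' direction is a short verification, while the ``only if'' is the real content and will combine the Jarden--Razon lemma, Theorem~\ref{thm:GaloisClosurePAC}, the Artin--Schreier theorem, and a separability trick based on Proposition~\ref{prop:DefinitionPACextension}.

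For the ``if'' direction, case~(b) is immediate from \cite[Corollary~2.3]{JardenRazon1994}: a purely inseparable extension of a PAC field is automatically a PAC extension. For case~(a), with $K_0$ real closed and $K=\Kgal_0$, I would verify PACness via Proposition~\ref{prop:DefinitionPACextension}\eqref{con poly:DefinitionPACextension}: given an absolutely irreducible $f(T,X)\in K[T,X]$ separable in $X$ and a nonzero $r(T)\in K(T)$, the product of $r$ and the leading coefficient of $f$ in $X$ has only finitely many zeroes in the infinite field $K_0$, so some $a\in K_0$ makes $f(a,X)\in K[X]$ nonconstant with $r(a)\neq 0$; a root $b\in K$ then exists simply because $K$ is algebraically closed.

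For the ``only if'' direction, assume $K/K_0$ is a finite PAC extension. By the Jarden--Razon lemma recalled in the excerpt, $K_1 = K\cap K_{0s}$ is a finite separable algebraic PAC extension of $K_0$. If $K_1\neq K_0$, then $K_1/K_0$ is a \emph{proper} separable algebraic PAC extension, so Theorem~\ref{thm:GaloisClosurePAC} forces its Galois closure to be $K_{0s}$; since $K_1/K_0$ is finite, $[K_{0s}:K_0]$ is finite and nontrivial, and the Artin--Schreier theorem gives that $K_0$ is real closed, $\textnormal{char}(K_0)=0$, and $K_{0s}=\Kgal_0$. Hence the only proper separable subextension of $K_0$ inside $\Kgal_0$ is $\Kgal_0$ itself, forcing $K_1=\Kgal_0$; and since $K/K_1$ is purely inseparable while we are in characteristic zero, $K=K_1=\Kgal_0$, which is case~(a). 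If instead $K_1=K_0$, then $K/K_0$ is purely inseparable, and it remains to prove that $K_0$ is PAC. For this, given an absolutely irreducible $g(T,X)\in K_0[T,X]$ separable in $X$, I would apply Proposition~\ref{prop:DefinitionPACextension}\eqref{con poly r:DefinitionPACextension} to $g$ viewed over $K$ with $r(T)$ the product of the leading coefficient of $g$ in $X$ and the $X$-discriminant of $g$. This yields $(a,b)\in K_0\times K$ with $g(a,b)=0$ such that $g(a,X)\in K_0[X]$ is a nonzero separable polynomial; hence the root $b$ is separable over $K_0$, and because $K/K_0$ is purely inseparable, $b\in K_0$. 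This gives the required $K_0$-rational zero and proves that $K_0$ is PAC.

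The main obstacle is the ``only if'' direction, and its two delicate points are the invocation of Artin--Schreier (legitimate only because Theorem~\ref{thm:GaloisClosurePAC} has already cut the absolute Galois group of $K_0$ down to a finite group) and the separability trick in the purely inseparable case, where the freedom to specify $r(T)$ in the PAC definition is used to force the produced root to land inside $K_{0s}$ and therefore inside $K_0$.
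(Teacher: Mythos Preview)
Your proof is correct and follows essentially the same route as the paper's: decompose via the maximal separable subextension $K_1$, use the Jarden--Razon lemma to get $K_1/K_0$ PAC, then invoke Theorem~\ref{thm:GaloisClosurePAC} and Artin--Schreier in the case $K_1\neq K_0$. Your separability trick in the case $K_1=K_0$ is fine but unnecessary---since $K_1/K_0$ is a PAC extension, $K_1$ is automatically a PAC field, so $K_0=K_1$ is PAC with no further work.
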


\begin{proof}
Let $K_1$ be the maximal separable extension of $K_0$ contained
in $K$. Then $K/K_1$ is purely inseparable \cite[V\S6
Proposition 6.6]{Lang2002}. By \cite[Corollary
2.3]{JardenRazon1994} $K_1/K_0$ is PAC, and in particular $K_1$
is a PAC field. If $K_1 = K_0$, we are done.

Assume $K_1\neq K_0$. By Theorem~\ref{thm:GaloisClosurePAC}, the
Galois closure $N$ of $K_1/K_0$ is the separable closure. Hence, by
Artin-Schreier Theorem \cite[Corollary 9.3]{Lang2002} $N$ is, in
fact, algebraically closed and $K_0$ is real closed (recall that
$1<[N:K_0]<\infty$). In particular, the characteristic of $K$ is
$0$, and hence $K_1=K$.
\end{proof}

\chapter{Projective Pairs}\label{chapter:projectivepairs}
In this chapter we define the group theoretic analog of PAC
extensions -- \textit{projective pairs}. This analogy motivates the
study of these pairs.

This study influences the understanding of PAC extension in two
ways. First we have a characterization of PAC extensions of PAC
fields in term of projective pairs.

\begin{proposition}\label{prop:projpair-PACext}
\begin{enumerate}
\item
Let $M$ be a PAC extension of a PAC field $K$. Then the pair
$(\gal(M), \gal(K))$ is projective.
\item
Let $M$ be an algebraic extension of a PAC field $K$. Then $M/K$ is
PAC if and only if the restriction map $(\gal(M),\gal(K))$ is
projective.
\item
Let $(\Gamma, \Lambda)$ be a projective pair. Then there exists a
separable algebraic PAC extension $M$ of a PAC field $K$ such that
$\Gamma\cong \gal(M)$, $\Lambda\cong \gal(K)$.
\end{enumerate}
\end{proposition}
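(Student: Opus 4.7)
The plan is to reduce each of the three statements about the pair $(\gal(M),\gal(K))$ to a corresponding statement about double embedding problems (DEPs) for the field extension $M/K$, and then apply the machinery developed in Chapter~3 -- principally the lifting property (Proposition~\ref{prop:ExtensionSoltoGeoSol_DEP}), the characterization of geometric solutions (Proposition~\ref{prop: characterization of solutions}), the geometricity of weak solutions over a PAC field (Corollary~\ref{cor:PACfield_solutionisgeometric}), and the criterion of Proposition~\ref{prop:ExistGeoSol}.

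\textbf{Part (a).} Since $M/K$ is PAC and $K$ is PAC, $M$ is itself a PAC field, so $\gal(M)$ is projective by Ax's theorem. A finite DEP for $(\gal(M),\gal(K))$ is equivalent to a rational DEP for $M/K$. Its lower embedding problem is a finite embedding problem for the projective group $\gal(M)$, hence weakly solvable. The lifting property lifts this weak solution to a geometric weak solution of the full rational DEP, which in turn yields a weak solution of the original group-theoretic DEP.

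\textbf{Part (b).} The forward direction is part (a). For the reverse, assume the pair $(\gal(M),\gal(K))$ is projective and $K$ is PAC. By Proposition~\ref{prop:ExistGeoSol} it suffices to exhibit a geometric weak solution for each finite rational DEP for $M/K$. Projectivity of the pair produces a weak solution $(\theta,\theta_0)$; since $K$ is PAC, Corollary~\ref{cor:PACfield_solutionisgeometric} yields $\theta_0 = \phi_0^*$ for some $K$-place $\phi_0$ of $K(t)$ unramified in $F_0$ with residue field $K$. The essential step is to upgrade this to a geometric realization of $\theta$: apply Proposition~\ref{prop: characterization of solutions} to $\theta$, obtaining a field $\Ehat$ regular over $M$; and to $\theta_0$, obtaining $\Ehat_0$ regular over $K$. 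Tracking solution fields (namely $N = MN_0$, where $N_0$ is the solution field of $\theta_0$) shows $\Ehat = \Ehat_0 M$ via the fiber product $\gal(\Fhat/E) = \gal(\Fhat_0/K(t)) \times_{\gal(N_0/K)} \gal(MN_0/M)$. The PACness of $K$ provides a $K$-rational place of $\Ehat_0$, which extends $M$-linearly to an $M$-rational place of $\Ehat$; restricting to $E$ gives a place $\phi$ with $\phi^* = \theta$ and $\phi|_{K(t)} = \phi_0$, so $(\phi^*,\phi_0^*)$ is the desired geometric weak solution.

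\textbf{Part (c).} A projective pair $(\Gamma,\Lambda)$ forces $\Lambda$ itself to be projective: given a finite embedding problem $(\mu_0,\alpha_0)$ for $\Lambda$, set $G = \mu_0(\Gamma)$, $H = \alpha_0^{-1}(G)$, and form the DEP with lower data $(\mu_0|_\Gamma,\alpha_0|_H)$; any weak solution of the DEP automatically solves $(\mu_0,\alpha_0)$. By the Lubotzky--v.d.Dries theorem there exists a PAC field $K$ with $\gal(K) \cong \Lambda$; letting $M$ be the fixed field in $K_s$ of (the image of) $\Gamma$ gives $\gal(M) \cong \Gamma$, and part (b) shows that $M/K$ is PAC. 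The main technical difficulty throughout is the key step in (b), namely the passage from a $K$-rational realization of $\theta_0$ to an $M$-rational realization of $\theta$; this requires the careful fiber-product/solution-field bookkeeping sketched above, in the spirit of the proof of Proposition~\ref{prop: characterization of solutions}.
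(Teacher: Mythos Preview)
Your arguments for (b) and (c) are essentially correct, and in (b) your treatment is actually more explicit than the paper's. The paper simply says ``all weak solutions are geometric (Corollary~\ref{cor:PACfield_solutionisgeometric})'' and leaves the $M$-linear extension of the place implicit; what you spell out via the identification $\Ehat = \Ehat_0 M$ is precisely the content hidden behind that citation. (Your displayed fiber product for $\gal(\Fhat/E)$ is not literally right, but the conclusion $\Ehat = \Ehat_0 M$ is correct and is all you use.)

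There is, however, a genuine gap in (a). You assert that ``a finite DEP for $(\gal(M),\gal(K))$ is equivalent to a rational DEP for $M/K$,'' and then invoke the lifting property. But a DEP is \emph{rational} by definition only when its higher embedding problem is rational, and an arbitrary finite embedding problem for $K$ is \emph{not} known to be equivalent to a rational one---that would be a regular-realization statement. The lifting property (Proposition~\ref{prop:ExtensionSoltoGeoSol_DEP}) is stated only for rational DEPs, so as written your argument does not get off the ground. The paper fixes this in two steps that you omit: first, since $K$ is PAC, $\gal(K)$ is projective, so by Corollary~\ref{cor:ppreductiontodoublesplit} it suffices to weakly solve \emph{doubly split} DEPs; second, by the theorem of Pop (or Haran--Jarden) every finite \emph{split} embedding problem over a PAC field is rational, so a doubly split DEP over $K$ is automatically a rational DEP. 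Only then does Proposition~\ref{prop:ExistGeoSol} (or, if you prefer, the lifting property) apply. Insert this reduction and your proof of (a) is complete.
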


(See the definition of a projective pair and the proof of this
result in the body of the chapter.)

Although we cannot directly translate the results on projective
pairs to results on PAC extensions (of a non-PAC field), the ideas
from the study of the former do apply to the latter. This explains
the extensive use of group theoretic methods which appears in the
previous chapters. In particular, the key property, the lifting
property, comes from group theoretic considerations.

Using Proposition~\ref{prop:projpair-PACext} we can transfer result
about PAC extensions to result about projective pairs. Nevertheless
in this chapter we give direct proofs without going via
Proposition~\ref{prop:projpair-PACext} for two reasons.

First the group theoretic group are usually simpler and sometimes
give a stronger results. The second reason is that we work in a
slightly more general setting. Namely we work in the category of
pro-$\C$ groups, where $\C$ is a Melnikov formation of finite groups
(see below).

\section{The Basic Properties of Projective Pairs}
\subsection{Definitions}\label{sec:ppDefinition}
Throughout this chapter we fix a Melnikov formation $\C$ of finite
groups. That means that $\C$ is closed under fiber products and
given a short exact sequence
\[
\xymatrix@1{%
1\ar[r]& A \ar[r]& B \ar[r]& C\ar[r]&1
}%
\]
we have that $A,C\in \C$ if and only if $B\in \C$. In particular,
$\C$ is closed under direct products. The exact sequence
\[
\xymatrix@1{%
1\ar[r]& A^{(G:G_0)} \ar[r]& A\wr_{G_0} G \ar[r]& G\ar[r]&1
}%
\]
implies that $A,G\in \C$ if and only if $A\wr_{G_0} G\in \C$.

The following three families are examples of Melnikov formations.
The family of all finite groups; the family of all $p$-groups, for a
prime $p$; the family of all solvable groups.

Let $\Gamma\leq \Lambda$ be pro-$C$ groups. A $\C$-DEP for the pair
$(\Gamma, \Lambda)$ is a commutative diagram
\begin{eqnarray}
\label{double embedding problem}%
\xymatrix@C=40pt{%
    &\Lambda\ar[dr]^{\nu}\ar@{.>}[dl]_\theta\\
H\ar[rr]^(.4)\beta
        &&B\\
    &\Gamma\ar[dr]^{\mu}\ar'[u]^{\phi}[uu]\ar[dl]_{\eta}\\
,G\ar[rr]^{\alpha}\ar[uu]^{j}
        &&A\ar[uu]^{i}
}%
\end{eqnarray}
where $G,H,A,B\in \C$, $A\leq B$, $G\leq H$, $i,j,\phi$ are the
inclusion maps, and $\alpha,\mu,\beta,\nu$ are surjective. Therefore
a $\C$-DEP consists of two compatible $\C$ embedding problems: the
\textbf{lower} embedding problem for $\Gamma$ and the
\textbf{higher} embedding problem for $\Lambda$.

In case $\C$ is the family of all finite groups, we omit the $\C$
notation and say that \eqref{double embedding problem} is a double
embedding problem (as in \eqref{eq:DoubleEP}). Sometimes we
abbreviate \eqref{double embedding problem} and write
$((\mu,\alpha),(\nu,\beta))$.

A $\C$-DEP is said to be \textbf{split} if the higher embedding
problem splits, i.e., in \eqref{double embedding problem} there
exists $\beta'\colon B\to H$ for which $\beta\beta'=\id_B$. If,
in addition, the lower embedding problem splits, then we call
the DEP \textbf{doubly split}.

If we allow the groups $G,H,A,B$ to be pro-$\C$, then \eqref{double
embedding problem} is a \textbf{pro-$\C$-DEP}.

Giving a weak solution $\eta\colon \Gamma\to G$ of the lower
embedding problem and a weak solution $\theta\colon \Lambda\to H$ of
the higher embedding problem, we say that $(\eta,\theta)$ is a
\textbf{weak solution} of \eqref{double embedding problem} if $\eta$
and $\theta$ are compatible, i.e.\ $j\eta = \theta\phi$.

Note that a weak solution of \eqref{double embedding problem} is
completely determined by $\theta$. Indeed, $\theta$ induces a
solution $\eta$ of the $\C$-DEP if and only if $\theta(\Gamma)\leq
G$, and then $\eta=\theta|_{\Gamma}$.

\begin{definition}\label{def:pp}
A pair $(\Gamma, \Lambda)$ of pro-$\C$ groups is called
\textbf{$\C$-projective} if any $\C$-DEP is weakly solvable.
\end{definition}

Let us begin with some basic properties of projective pairs. The
first is trivial.

\begin{proposition}\label{prop:Cproj-simplecharac}
A pro-$\C$ group $\Lambda$ is $\C$-projective if and only if the
pair $(1, \Lambda)$ is $\C$-projective.
\end{proposition}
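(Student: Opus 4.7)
The plan is to show that when $\Gamma=1$ the definition of a $\C$-DEP collapses to that of an ordinary $\C$-embedding problem for $\Lambda$, so that $\C$-projectivity of the pair $(1,\Lambda)$ is literally the same condition as $\C$-projectivity of $\Lambda$ in the usual sense (every $\C$-embedding problem is weakly solvable). The whole proposition is then a matter of unwinding definitions.

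First I would observe the collapse. In a $\C$-DEP \eqref{double embedding problem} with $\Gamma=1$, surjectivity of $\mu\colon \Gamma\to A$ forces $A=1$, and commutativity of the front square ($\beta j = i\alpha$) forces $\beta|_G$ to be trivial, i.e., $G\leq \ker(\beta)$. The data $(G,\alpha,\mu,i)$ are therefore inessential, and a weak solution $(\eta,\theta)$ is determined entirely by $\theta\colon \Lambda\to H$ satisfying $\beta\theta = \nu$, since the compatibility condition $\theta(\Gamma)\leq G$ holds vacuously when $\Gamma=1$.

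For the direction ``$\Lambda$ is $\C$-projective $\Rightarrow$ $(1,\Lambda)$ is $\C$-projective'', I would take an arbitrary $\C$-DEP with $\Gamma=1$, apply the hypothesis to the higher embedding problem $(\nu,\beta)$ to produce $\theta\colon \Lambda\to H$ with $\beta\theta = \nu$, and pair it with the trivial map $\eta\colon 1\to G$ to obtain a weak solution. Conversely, given a $\C$-embedding problem $(\nu\colon \Lambda\to B,\beta\colon H\to B)$ for $\Lambda$, I would manufacture a $\C$-DEP by setting $\Gamma = A = G = 1$ (these lie in $\C$ automatically, since any Melnikov formation contains the trivial group) with $\alpha,\mu,i$ all trivial; the compatibility conditions hold vacuously, so the assumed projectivity of the pair produces the desired $\theta$. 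There is no real obstacle here beyond carefully reading off the degenerate diagram.
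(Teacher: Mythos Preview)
Your proposal is correct and matches the paper's own treatment: the paper simply declares this proposition ``trivial'' and gives no explicit proof, so your unwinding of the definitions is exactly the intended argument.
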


\begin{lemma}\label{lem:DEPdominated}
Consider a $\C$-DEP \eqref{double embedding problem} for a pair
$(\Gamma, \Lambda)$ of pro-$\C$ groups. Assume that both the higher
and lower embedding problems are weakly solvable. Then \eqref{double
embedding problem} is dominated by a doubly split $\C$-DEP.
\end{lemma}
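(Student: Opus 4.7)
The plan is to construct a dominating doubly split $\C$-DEP by applying Lemma~\ref{lem:dominatin_fiber_product} to both embedding problems simultaneously, with a common refined base. By hypothesis there are weak solutions $\theta\colon \Lambda\to H$ of the higher embedding problem and $\eta\colon \Gamma\to G$ of the lower embedding problem.

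The key step will be to produce an open normal subgroup $N\normal \Lambda$ satisfying $N\leq \ker\theta$ and $N\cap \Gamma\leq \ker\eta$ simultaneously. The strategy is: first choose an open subgroup $V\leq \Lambda$ with $V\cap \Gamma\leq \ker\eta$ (available because $\ker\eta$ is open in the closed subgroup $\Gamma$ of the profinite group $\Lambda$); replace $V$ by its normal core in $\Lambda$ to make it normal while preserving the intersection condition; finally intersect with $\ker\theta$ to obtain $N$.

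With $N$ in hand, set $M=N\cap \Gamma=\phi^{-1}(N)$, $B'=\Lambda/N$ and $A'=\Gamma/M$; both are finite, and being finite quotients of pro-$\C$ groups they lie in $\C$. Because $M=\phi^{-1}(N)$, the inclusion $\phi$ descends to an embedding $i'\colon A'\hookrightarrow B'$. The containments $N\leq \ker\theta\leq \ker\nu$ and $M\leq \ker\eta\leq \ker\mu$ yield induced epimorphisms $B'\to B$ and $A'\to A$ compatible with $i'$ and $i$. Form the fiber products $H'=H\times_B B'$ and $G'=G\times_A A'$ (both in $\C$ by closure under fiber products); the assignment $(g,a')\mapsto (j(g),i'(a'))$ is a well-defined embedding $G'\hookrightarrow H'$, and together with the quotient maps $\nu'\colon \Lambda\to B'$ and $\mu'\colon \Gamma\to A'$ these pieces assemble into a $\C$-DEP that dominates the original by Lemma~\ref{lem:dominatin_fiber_product}.

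Finally, Lemma~\ref{lem:dominatin_fiber_product}(a) applied to $\theta$ (note $\ker\nu'=N\leq \ker\theta$) shows that the refined higher embedding problem splits, and applied to $\eta$ (note $\ker\mu'=M\leq \ker\eta$) shows that the refined lower embedding problem splits; the dominating DEP is therefore doubly split. The main obstacle is the construction of $N$: since $\Gamma$ is not assumed to have finite index in $\Lambda$, one cannot simply intersect $\ker\theta$ with some finite-index subgroup built from $\Gamma$, and the normal-core trick inside $\Lambda$ is what lets us package both weak solutions into a single refinement. Everything after that is the standard functoriality of the fiber product construction.
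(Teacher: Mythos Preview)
Your proof is correct and follows essentially the same approach as the paper: choose an open normal subgroup $N\normal\Lambda$ with $N\leq\ker\theta$ and $N\cap\Gamma\leq\ker\eta$, form the fiber products $H\times_B(\Lambda/N)$ and $G\times_A(\Gamma/(N\cap\Gamma))$, and invoke Lemma~\ref{lem:dominatin_fiber_product}(a) for both levels. The paper simply asserts the existence of such an $N$ without detail, whereas you spell out the standard construction via an open subgroup, its normal core, and intersection with $\ker\theta$; otherwise the arguments are the same.
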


\begin{proof}
Let $\theta\colon \Lambda \to H$ be a weak solution of the higher
embedding problem and $\eta\colon \Gamma\to G$ a weak solution of
the lower embedding problem. Choose an open normal subgroup $N\leq
\Lambda$ such that $N\leq \ker(\theta)$ and $\Gamma\cap N\leq
\ker(\eta)$.

Let $\Bhat=\Lambda/N$, $\Ahat=\Gamma/\Gamma\cap N$ and let $\Hhat =
H\times_B \Bhat$, $\Ghat = G\times_A \Ahat$. Then the following
commutative diagram defines a dominating $\C$-DEP.
\[
\xymatrix{%
    &\Gamma\ar[d]\\
\Ghat\ar[r]\ar[d]
    &\Ahat\ar[d]\\
G\ar[r]^\alpha
    &A
}%
\qquad
\xymatrix{%
    &\Lambda\ar[d]\\
\Hhat\ar[r]\ar[d]
    &\Bhat\ar[d]\\
H\ar[r]^\beta
    &B
}%
\]
(Here all the maps are canonically defined.)
Lemma~\ref{lem:dominatin_fiber_product} implies that this $\C$-DEP
doubly splits.
\end{proof}

\begin{corollary}\label{cor:ppreductiontodoublesplit}
Let $(\Gamma,\Lambda)$ be a pair of pro-$\C$ groups and suppose that
$\Lambda$ is $\C$-projective. Then $(\Gamma,\Lambda)$ is
$\C$-projective if and only if every doubly split $\C$-DEP is weakly
solvable.
\end{corollary}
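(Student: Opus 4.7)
The forward direction is immediate from Definition~\ref{def:pp}: if every $\C$-DEP for $(\Gamma,\Lambda)$ is weakly solvable, then so is every doubly split one.

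For the converse, assume every doubly split $\C$-DEP for $(\Gamma,\Lambda)$ is weakly solvable and let $\mathcal{D}=((\mu,\alpha),(\nu,\beta))$ be an arbitrary $\C$-DEP. The $\C$-projectivity of $\Lambda$ supplies a weak solution of the higher embedding problem $(\nu,\beta)$. The plan is then to invoke Lemma~\ref{lem:DEPdominated}: once weak solutions of both the higher and the lower embedding problems are available, the lemma produces a doubly split $\C$-DEP dominating $\mathcal{D}$, by hypothesis this dominating DEP is weakly solvable, and domination transports a weak solution back to $\mathcal{D}$. The heart of the argument thus lies in producing a weak solution of the lower embedding problem $(\mu,\alpha)$.

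To this end I would prove the intermediate assertion that $\Gamma$ is itself $\C$-projective. Given any $\C$-EP $(\mu'\colon \Gamma \to A',\alpha'\colon G'\to A')$ for $\Gamma$, I would construct a doubly split $\C$-DEP for $(\Gamma,\Lambda)$ whose lower EP dominates $(\mu',\alpha')$ and then apply the hypothesis. The construction takes an open normal subgroup $N\leq\Lambda$ with $N\cap\Gamma\leq\ker\mu'$, sets $\Bhat=\Lambda/N$ and $\Ahat=\Gamma N/N\hookrightarrow\Bhat$, and uses the twisted wreath products $A'\wr_{\Ahat}\Bhat$ and $G'\wr_{\Ahat}\Bhat$ from Section~\ref{sec:wreathproduct}. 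The embedding $A'\rtimes\Ahat\hookrightarrow A'\wr_{\Ahat}\Bhat$ from Lemma~\ref{lem:embeddinginwreath} places $A'$ inside the higher base, and the canonical sections of the wreath products provide the splittings that make the auxiliary DEP doubly split. Applying the hypothesis to this auxiliary DEP and passing the resulting weak solution through the Shapiro map of Lemma~\ref{lem:wreath_solution} yields a weak solution of $(\mu',\alpha')$, which shows $\Gamma$ is $\C$-projective.

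The main obstacle is verifying that the auxiliary DEP really is a bona fide doubly split $\C$-DEP dominating $(\mu',\alpha')$: one must produce a surjective homomorphism $\nu'\colon\Lambda\to A'\wr_{\Ahat}\Bhat$ whose restriction to $\Gamma$ factors through the natural embedding $A'\hookrightarrow A'\wr_{\Ahat}\Bhat$ via $\mu'$, which requires combining the quotient $\Lambda\to\Bhat$ with the twisted wreath structure, and one must check that the natural wreath-product sections descend to splittings of both the higher and the lower embedding problems that are compatible with the domination maps. Once this is done, $\Gamma$ is $\C$-projective, the lower embedding problem of the original DEP $\mathcal{D}$ has a weak solution, and Lemma~\ref{lem:DEPdominated} together with the doubly split hypothesis completes the proof.
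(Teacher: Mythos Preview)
Your overall plan coincides with the paper's: show that $\Gamma$ is $\C$-projective, deduce that both the higher and lower embedding problems of an arbitrary $\C$-DEP are weakly solvable, and then invoke Lemma~\ref{lem:DEPdominated} to dominate by a doubly split $\C$-DEP. Where you diverge is in the argument that $\Gamma$ is $\C$-projective. The paper dispatches this in one line: closed subgroups of $\C$-projective pro-$\C$ groups are $\C$-projective (a standard fact, see e.g.\ \cite{RibesZalesskii}). Since $\Gamma\leq\Lambda$ and $\Lambda$ is assumed $\C$-projective, $\Gamma$ is too, and the proof is finished.

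Your attempt to derive the projectivity of $\Gamma$ from the doubly split hypothesis itself via twisted wreath products is both unnecessary and, as sketched, problematic. To form $A'\wr_{\Ahat}\Bhat$ and $G'\wr_{\Ahat}\Bhat$ you need actions of $\Ahat$ on $A'$ and on $G'$, which you never specify; there is no canonical such action coming from the data of an arbitrary embedding problem $(\mu',\alpha')$. More seriously, you write that one must produce a surjection $\nu'\colon\Lambda\to A'\wr_{\Ahat}\Bhat$, but in a DEP the map $\nu$ lands in the base $B$, not in $H$; the roles of the groups in your auxiliary DEP are not clearly laid out. Finally, there is a latent circularity: the usual way to make a DEP doubly split (Lemma~\ref{lem:dominatin_fiber_product}) already requires a weak solution of the lower embedding problem, which is precisely what you are trying to produce. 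Just cite the subgroup property of projectivity and the proof becomes two sentences.
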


\begin{proof}
Since $\Lambda$ is $\C$-projective, $\Gamma$ is also
$\C$-projective. In other words, every finite embedding problem for
$\Lambda$ (resp.\ $\Gamma$) is weakly solvable.
Lemma~\ref{lem:DEPdominated} implies that every $\C$-DEP for
$(\Gamma,\Lambda)$ is dominated by a doubly split $\C$-DEP.

The converse is trivial.
\end{proof}

\begin{proposition}
\label{prop infinite EPs}%
Let $(\Gamma,\Lambda)$ be a $\C$-projective pair. Then any
pro-$\C$-DEP for $\phi$ is weakly solvable.
\end{proposition}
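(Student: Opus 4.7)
The plan is to run the standard inverse-limit compactness argument, reducing to the finite case by truncating $H$ by its open normal subgroups. Given a pro-$\C$-DEP
\[
((\mu\colon\Gamma\to A,\ \alpha\colon G\to A),\ (\nu\colon\Lambda\to B,\ \beta\colon H\to B))
\]
with $G\leq H$ and $A\leq B$ pro-$\C$, let $\calN$ be the directed set of open normal subgroups of $H$, so that $H=\varprojlim_{N\in\calN}H/N$. For each $N\in\calN$, the natural projections produce a finite diagram with groups $H/N$, $B/\beta(N)$, $GN/N$ and $A\beta(N)/\beta(N)$, together with the composed surjections $\mu_N\colon\Gamma\to A\beta(N)/\beta(N)$, $\nu_N\colon\Lambda\to B/\beta(N)$. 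This is a finite $\C$-DEP for $(\Gamma,\Lambda)$, so by hypothesis it admits a nonempty set $S_N$ of weak solutions $\theta_N\colon\Lambda\to H/N$ with $\beta_N\theta_N=\nu_N$ and $\theta_N(\Gamma)\leq GN/N$.

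A weak solution $\theta_N\in S_N$ sends each $x\in\Lambda$ into the finite fiber $\beta_N^{-1}(\nu_N(x))$, so $S_N$ embeds into the compact product $\prod_{x\in\Lambda}\beta_N^{-1}(\nu_N(x))$ by Tychonoff. The defining conditions (being a homomorphism, and mapping $\Gamma$ into $GN/N$) are closed in this product, so $S_N$ is compact Hausdorff; continuity of $\theta_N$ is automatic, since $\ker\theta_N$ is closed of finite index in $\Lambda$, hence open. For $N_1\supseteq N_2$ in $\calN$, the canonical quotient $H/N_2\to H/N_1$ induces by post-composition a map $S_{N_2}\to S_{N_1}$, giving $\{S_N\}_{N\in\calN}$ the structure of an inverse system of nonempty compact Hausdorff spaces. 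Therefore $\varprojlim_{N}S_N\neq\emptyset$, and a compatible family $(\theta_N)$ in this limit assembles, by the universal property of $H=\varprojlim H/N$, into a continuous homomorphism $\theta\colon\Lambda\to H$ satisfying $\beta\theta=\nu$ and $\theta(\Gamma)\leq G$---a weak solution of the original pro-$\C$-DEP.

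The only real work is bookkeeping: checking that each truncated diagram really is a $\C$-DEP (the subquotients $GN/N$ and $A\beta(N)/\beta(N)$ remain in $\C$ by the closure properties of a Melnikov formation, since $H/N,B/\beta(N)\in\C$), and checking that the transition maps $S_{N_2}\to S_{N_1}$ are well-defined by functoriality. Beyond these routine verifications the finite hypothesis together with the nonemptiness of inverse limits of nonempty compact Hausdorff spaces does all of the work; no new idea is needed to pass from the finite to the pro-$\C$ setting.
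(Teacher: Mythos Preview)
Your overall strategy---truncate by open normal $N\trianglelefteq H$, solve the resulting finite $\C$-DEPs, and pass to an inverse limit---is natural and different from the paper's route (which follows \cite[Lemma~22.3.2]{FriedJarden2005}: Zorn's lemma to reduce to the finite-kernel case, then a cartesian-square lifting). However, your argument has a genuine gap at the continuity step.

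You assert that ``continuity of $\theta_N$ is automatic, since $\ker\theta_N$ is closed of finite index in $\Lambda$, hence open.'' But nothing in your setup forces $\ker\theta_N$ to be closed: a finite-index subgroup of a profinite group need not be closed unless the group is (topologically) finitely generated. Concretely, $\Lambda=\Fhat_\omega$ is $\C$-projective and admits a continuous surjection onto $(\bbZ/2\bbZ)^{\bbN}$; composing with any non-continuous linear functional $(\bbZ/2\bbZ)^{\bbN}\to\bbZ/2\bbZ$ yields a non-continuous homomorphism $\Lambda\to\bbZ/2\bbZ$. Taking $\Gamma=1$, $B=1$, $H/N=\bbZ/2\bbZ$ shows that your constraints (being a homomorphism, $\beta_N\theta_N=\nu_N$, $\theta_N(\Gamma)\le GN/N$) do not force continuity. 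Consequently, if you let $S_N$ consist of all abstract homomorphisms satisfying these constraints, $S_N$ is indeed compact and the inverse limit is nonempty, but the resulting $\theta\colon\Lambda\to H$ need not be continuous. If instead you restrict $S_N$ to continuous weak solutions, the same example shows that this subset is not closed in $\prod_x\beta_N^{-1}(\nu_N(x))$ (continuous homs are dense among abstract homs there), so your compactness argument collapses.

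The paper's Zorn's-lemma argument sidesteps this: one works throughout with a single continuous $\theta$ and pushes its kernel down. For a descending chain $(L_i,\theta_i)$ the lower bound $\theta=\varprojlim\theta_i$ is continuous because each $\theta_i$ is, and the finite-kernel step uses a cartesian square to lift continuously. To repair your approach you would need a genuinely different compactness setup---for instance, indexing simultaneously by open normals of $H$ and of $\Lambda$ so that each stage is a finite set of maps between finite groups---rather than relying on pointwise convergence in $(H/N)^\Lambda$.
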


\begin{proof}
We follow the proof of \cite[Lemma 22.3.2]{FriedJarden2005}. In
order to solve pro-$\C$-DEPs for $(\Gamma,\Lambda)$ we need to solve
more general pro-$\C$-DEPs, in which the maps of the higher
embedding problem are not necessarily surjective.

In case of $\C$-DEPs, we can solve such $\C$-DEPs. Indeed, assume
that in \eqref{double embedding problem} $\nu,\mu$ are not
surjective. First $\ker(\alpha),\ker(\beta)\in \C$ since $\C$ is a
Melnikov formation. Next $\nu(\Gamma),\mu(\Lambda)\in \C$ since
$\Gamma,\Lambda$ are pro-$\C$. Finally
$\alpha^{-1}(\nu(\Gamma)),\beta^{-1}(\mu(\Lambda))\in\C$, again
since $\C$ is a Melnikov formation.
\[
\xymatrix{%
1\ar[r]&\ker(\alpha)\ar[r]&\alpha^{-1}(\nu(\Gamma))\ar[r]^(0.6)\alpha&\nu(\Gamma)\ar[r]&1\\
1\ar[r]&\ker(\beta)\ar[r]&\beta^{-1}(\mu(\Lambda))\ar[r]^(0.6)\beta&\mu(\Lambda)\ar[r]&1
}%
\]
Replace $A,B$ with $\nu(\Gamma),\mu(\Lambda)$ and $G,H$ with
$\alpha^{-1}(\nu(\Gamma)),\beta^{-1}(\mu(\Lambda))$. In this new
$\C$-DEP all the maps are surjective. So by assumption there is a
weak solution.

Let us move to the more general case of pro-$\C$-DEP: Consider a
pro-$\C$-DEP \eqref{double embedding problem} and write $K =
\ker(\beta)$. We prove the assertion in two steps.

\vspace{10pt}%
\noindent\textsc{Step A:} \textit{Finite Kernel.} Assume $K$ is
finite. Then $G$ is open in $KG$ since $(KG:G)\leq |K|$. Choose an
open normal subgroup $U \leq H$ for which $U\cap KG \leq G$ and
$K\cap U = 1$ (note that $K$ is finite and $H$ is Hausdorff). Then
$U\cap KG = U\cap G$. By the second isomorphism theorem (in the
group $UG$) we have that $(KG\cap UG:G) = (U\cap (KG\cap UG): U\cap
G)=(U\cap KG:U\cap G)=1$, i.e.\
\begin{equation}
\label{eq KG intersection UG = G}
   (K G) \cap (U G) = G.
\end{equation}
Write $\Hgag = H/U$, let $\pi \colon H\to \Hgag$ be the quotient
map, $\Ggag = \pi(G)$, $\Bgag = B/\beta(U)$, $\Agag=A/A\cap\beta(U)$
and $\betagag\colon \Hgag\to \Bgag$, $\alphagag\colon \Ggag\to
\Agag$ the epimorphisms induced from $\beta$, $\alpha$,
respectively.

Since $\Hgag\in \C$, there is a homomorphism $\thetagag\colon
\Lambda \to \Hgag$ with $\thetagag(\Gamma)\leq \Ggag$ (let
$\etagag=\thetagag|_{\Gamma}$) for which
\[
\xymatrix{%
    &\Gamma\ar[d]^{\nu}\ar@{.>}[ddl]^(0.7){\etagag}\\
G\ar[r]^{\alpha}\ar[d]
    & A\ar[d]\\
\Ggag\ar[r]^-{\alphagag}
    &\Agag
}%
\qquad
\xymatrix{%
            &&&\Lambda\ar[d]^{\mu}\ar@{.>}[ddl]^(0.7){\thetagag}\\
1\ar[r]
    & K \ar[r]\ar@{=}[d]
        & H \ar[r]^{\beta}\ar[d]_{\pi}
            &B \ar[d] \ar[r]
                &1\\
1\ar[r]
    & K \ar[r]
        & \Hgag \ar[r]^{\betagag}
            &\Bgag \ar[r]
                &1
}%
\]
are commutative diagrams. The right square in the right embedding
problem is a cartesian square, since $K\cap U = 1$ (\cite[Example
22.2.7(c)]{FriedJarden2005}). Hence we can lift $\thetagag$ to
$\theta \colon \Lambda \to H$ such that $\beta \theta=\mu$ (see
\cite[Lemma 22.2.1]{FriedJarden2005}). We claim that
$\theta(\Gamma)\leq G$. Indeed,
\[
A \geq \mu(\Gamma) = \beta(\theta(\Gamma)),
\]
hence $\theta(\Gamma) \leq K \beta^{-1}(A) = K \alpha^{-1}(A) =
KG$. Also,
\[
\Ggag \geq \thetagag(\Gamma) = \pi(\theta(\Gamma)),
\]
hence $\theta(\Gamma)\leq UG$. Then, from \eqref{eq KG intersection
UG = G} we have $\theta(\Gamma) \leq (KG) \cap (UG) = G$, as
claimed.

\vspace{10pt}%
\noindent\textsc{Step B:} \textit{The General Case.} We use Zorn's
Lemma. Consider the family of pairs $(L,\theta)$ where $L\subseteq
K$ is normal in $H$, $\theta$ is a weak solution of the following
embedding problem, and $\theta(\Gamma) \subseteq GL/L$.
\[
\xymatrix{ &&&\Lambda\ar[d]\ar@{.>}[ld]_{\theta}
\\
1\ar[r]&K/L\ar[r]& H/L\ar[r]^{\bar\beta}&B\ar[r]&1. }
\]
We say that $(L,\theta) \leq (L',\theta')$ if $L\subseteq L'$ and
\[
\xymatrix{
&\Lambda\ar[d]\ar@{.>}[ld]_{\theta}\ar@{.>}[ldd]^{\theta'}
\\
H/L\ar[r]\ar[d]& B\ar@{=}[d]\\
H/L'\ar[r]& B}
\]
is commutative. For a chain $\{(L_i,\theta_i)\}$ we define a lower
bound $(L,\theta)$ by $L = \bigcap_{i}L_i$ and $\theta = \invlim
\theta_i$ (note that $\theta(\Gamma)\subset GL/L$ by
\cite[Lemma~1.2.2(b)]{FriedJarden2005}). By Zorn's Lemma there
exists a minimal element $(L,\theta)$ in the family. We claim that
$L = 1$. Otherwise, there is an open normal subgroup $U$ of $H$ with
$L \not\leq U$. Part A gives (since $L/U\cap L$ is finite) a weak
solution $\theta'$ of the following embedding problem such that
$\theta'(\Gamma) \subseteq G(U\cap L)/(U\cap L)$.
\[
\xymatrix{ &&&\Lambda\ar[d]_{\theta}\ar@{.>}[dl]_{\theta'}
\\
1\ar[r]&L/U\cap L\ar[r] &H/U\cap L \ar[r]& H/L. }
\]
Hence $(L,\theta)$ is not minimal. This contradiction implies that
$L=1$, as claimed.
\end{proof}

Recall that a pro-$\C$ group $\Lambda$ is projective if and only if
any short exact sequence of pro-$\C$ groups
\[
\xymatrix{1\ar[r]&K\ar[r]& \Delta\ar[r]& \Lambda\ar[r]&1}
\]
splits. Similar characterization is given in the next result for a
pair $(\Gamma,\Lambda)$ of pro-$\C$ groups.

\begin{corollary}\label{cor:splitting}
Let $(\Gamma,\Lambda)$ be a pair of pro-$\C$ groups. Then
$(\Gamma,\Lambda)$ is $\C$-projective if and only if the rows of
any exact commutative diagram of pro-$\calC$ groups
\[
\xymatrix{%
\Delta \ar[r]^\beta
    & \Lambda \ar[r]\ar@<1ex>@{.>}[l]^{\beta'}
        &1 \\
E \ar[r]^\alpha \ar[u]^{\psi}
    & \Gamma\ar[r]\ar[u]_{\phi}\ar@<1ex>@{.>}[l]^{\alpha'}
        &1
        \\
1\ar[u]&1\ar[u]
}%
\]
\emph{compatibly} split. That is to say, there exists a splitting
$\beta'\colon \Lambda\to \Delta$ of $\beta$ (i.e.\
$\beta\beta'=\id$) such that $\beta' \phi (\Gamma)\leq \psi(E)$ and
$\alpha'$ defined by $\psi\alpha'=\beta'\phi$ is a splitting of
$\alpha$.
\end{corollary}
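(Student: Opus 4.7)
The proof converts between $\C$-DEPs and exact commutative squares of pro-$\C$ groups via fiber products; once this correspondence is in place, both directions are essentially bookkeeping.

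For the forward direction ($\Rightarrow$), I would recognise the given exact commutative diagram directly as a pro-$\C$-DEP for $(\Gamma,\Lambda)$: take the lower (resp.\ higher) embedding problem to be $(\mathrm{id}_\Gamma,\alpha\colon E\to\Gamma)$ and $(\mathrm{id}_\Lambda,\beta\colon\Delta\to\Lambda)$, with the column maps $\psi$ and $\phi$ playing the role of the inclusions $j$ and $i$ in \eqref{double embedding problem} (exactness of the columns makes $\psi$ injective, and $\phi$ is already the inclusion $\Gamma\le\Lambda$). Proposition~\ref{prop infinite EPs} then yields a weak solution $(\eta,\theta)$, and I would simply set $\beta':=\theta$ and $\alpha':=\eta$; the weak-solution conditions $\beta\theta=\mathrm{id}_\Lambda$, $\alpha\eta=\mathrm{id}_\Gamma$, and $\psi\eta=\theta\phi$ translate word-for-word into the compatible-splitting requirements.

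For the converse ($\Leftarrow$), starting with an arbitrary $\C$-DEP \eqref{double embedding problem}, I would manufacture the required exact square by fiber products: let $\Delta:=H\times_B\Lambda$ and $E:=G\times_A\Gamma$, both pro-$\C$ since $\C$ is a Melnikov formation and therefore closed under fiber products. The projections $\pi_\Lambda\colon\Delta\to\Lambda$ and $\pi_\Gamma\colon E\to\Gamma$ are surjective because $\beta$ and $\alpha$ are, yielding the exact rows. The commutativity relations of the DEP, namely $\beta j=i\alpha$ and $\nu\phi=i\mu$, show that $(g,\gamma)\mapsto\bigl(j(g),\phi(\gamma)\bigr)$ defines an injective map $\psi\colon E\to\Delta$ making the resulting square commute, and this square is exact in the sense of the statement.

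Applying the hypothesis to this square produces compatible splittings $\beta'\colon\Lambda\to\Delta$ of $\pi_\Lambda$ and $\alpha'\colon\Gamma\to E$ of $\pi_\Gamma$ satisfying $\psi\alpha'=\beta'\phi$. I would then recover a weak solution of the original DEP by composing with the other projections: set $\theta:=\pi_H\beta'\colon\Lambda\to H$ and $\eta:=\pi_G\alpha'\colon\Gamma\to G$, where $\pi_H\colon\Delta\to H$ and $\pi_G\colon E\to G$. A short diagram chase using the defining identities of the fiber products ($\beta\pi_H=\nu\pi_\Lambda$, $\alpha\pi_G=\mu\pi_\Gamma$, and $\pi_H\psi=j\pi_G$) gives $\beta\theta=\nu$, $\alpha\eta=\mu$, and $j\eta=\theta\phi$, so $(\eta,\theta)$ is a weak solution of the DEP. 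The only step with any content is the fiber-product construction; everything afterwards is routine diagram chasing, so I anticipate no serious obstacle.
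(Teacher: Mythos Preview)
Your proof is correct and follows the paper's approach: the correspondence between compatible splittings and weak solutions of the pro-$\C$-DEP $((\id_\Gamma,\alpha),(\id_\Lambda,\beta))$ gives the forward direction via Proposition~\ref{prop infinite EPs}, and your fiber-product construction for the converse is exactly the standard reduction the paper leaves implicit in its one-sentence proof.
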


\begin{proof}
Since the splittings of an epimorphism $\gamma\colon M\to N$
correspond bijectively to solutions of the embedding problem
$(\id\colon N\to N, \gamma\colon M\to N)$, the assertion follows
immediately from Proposition~\ref{prop infinite EPs}.
\end{proof}

\subsection{Basic Properties and Characterizations}

\begin{proposition}[The lifting property]
Let $(\Gamma,\Lambda)$ be $\C$-projective and consider a
pro-$\C$-DEP for $(\Gamma,\Lambda)$. Then any weak solution $\eta$
of the lower embedding problem can be lifted to a weak solution
$(\eta,\theta)$ of the DEP.
\end{proposition}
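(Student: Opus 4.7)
The plan is to construct an auxiliary pro-$\C$-DEP that dominates the given one and whose lower embedding problem has (essentially) the unique solution $\eta$; then any weak solution produced by $\C$-projectivity will automatically restrict to $j\eta$ on $\Gamma$. This mimics the field-crossing trick used in the proof of Proposition~\ref{prop:ExtensionSoltoGeoSol_DEP}.

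First I would choose an open normal subgroup $N\leq \Lambda$ with $N\leq \ker\nu$ and $N\cap \Gamma\leq \ker\eta$. Setting $\bar\Lambda := \Lambda/N$ and $\bar\Gamma := \Gamma N/N$, both are finite $\C$-groups with $\bar\Gamma\leq \bar\Lambda$, and $\nu,\eta$ factor through the quotient maps $\bar\nu\colon \Lambda\to \bar\Lambda$ and $\bar\mu\colon \Gamma\to \bar\Gamma$ as $\nu=\bar\nu_B\bar\nu$ and $\eta=\bar\eta\bar\mu$ for certain induced maps $\bar\nu_B\colon \bar\Lambda\to B$ and $\bar\eta\colon \bar\Gamma\to G$. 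Now form the fiber product $\tilde H := H\times_B \bar\Lambda$, with projections $\pi\colon \tilde H\to H$ and $\tilde\beta\colon \tilde H\to \bar\Lambda$, and inside it consider the ``graph'' subgroup
\[
\tilde G := \{(j\bar\eta(\gamma),\gamma) : \gamma\in \bar\Gamma\}\leq \tilde H.
\]
The commutativity $\nu\phi=i\mu$ built into the original DEP gives $\beta(j\bar\eta(\gamma))=i\alpha\bar\eta(\gamma)=\bar\nu_B(\gamma)$ for all $\gamma\in \bar\Gamma$, so $\tilde G$ genuinely lands in $\tilde H$; moreover the projection $\tilde\alpha\colon \tilde G\to \bar\Gamma$ is an isomorphism.

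Next I assemble the pro-$\C$-DEP for $(\Gamma,\Lambda)$ with lower EP $(\bar\mu,\tilde\alpha\colon \tilde G\to \bar\Gamma)$ and higher EP $(\bar\nu,\tilde\beta\colon \tilde H\to \bar\Lambda)$. The inclusion $\tilde G\hookrightarrow \tilde H$ projects to the inclusion $\bar\Gamma\hookrightarrow \bar\Lambda$, so the DEP is compatible; it is pro-$\C$ since $\tilde H$ is a closed subgroup of the pro-$\C$ group $H\times \bar\Lambda$. Because $\tilde\alpha$ is an isomorphism, the lower EP has the unique solution $\tilde\eta\colon \gamma\mapsto (j\eta(\gamma),\bar\mu(\gamma))$. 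By Proposition~\ref{prop infinite EPs} the pair $(\Gamma,\Lambda)$ solves every pro-$\C$-DEP, so there exists a weak solution $(\tilde\eta,\tilde\theta)$ of the DEP above. Setting $\theta := \pi\tilde\theta\colon \Lambda\to H$, the fiber product identity $\beta\pi=\bar\nu_B\tilde\beta$ yields $\beta\theta=\bar\nu_B\bar\nu=\nu$, while for $\gamma\in \Gamma$ we have $\theta\phi(\gamma)=\pi\tilde\eta(\gamma)=j\eta(\gamma)$. Thus $(\eta,\theta)$ is the desired lifted weak solution of the original DEP.

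The only nontrivial point is the verification that the graph $\tilde G$ sits inside $\tilde H$, and this is exactly where the compatibility $\nu\phi=i\mu$ of the original DEP is repackaged as a constraint forcing the lower EP of the auxiliary DEP to have the unique solution $\tilde\eta$. Everything else is then routine bookkeeping with fiber products, and the existence of $\tilde\theta$ is precisely the power of Proposition~\ref{prop infinite EPs}.
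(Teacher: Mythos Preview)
Your approach is the same as the paper's---build an auxiliary DEP by fiber products so that the lower embedding problem has a unique weak solution (the graph of $\eta$), then invoke Proposition~\ref{prop infinite EPs}---but there is a genuine gap in your first step. You require an open normal subgroup $N\leq \Lambda$ with $N\leq \ker\nu$. In a pro-$\C$-DEP the group $B$ may be infinite, so $\ker\nu$ need not be open and hence need not contain any open normal subgroup of $\Lambda$ (for instance, take $\nu=\id_\Lambda$). Thus the passage to the finite quotients $\bar\Lambda,\bar\Gamma$ cannot be carried out in general.

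The fix is simply to drop that step: work directly with $\Lambda$ and $\Gamma$ in place of $\bar\Lambda$ and $\bar\Gamma$. Set $\tilde H := H\times_B \Lambda$ (fiber product along $\beta$ and $\nu$) and $\tilde G := \{(j\eta(\gamma),\gamma):\gamma\in\Gamma\}\leq \tilde H$, with $\tilde\beta\colon \tilde H\to \Lambda$ and $\tilde\alpha\colon \tilde G\to \Gamma$ the second projections. The rest of your argument then goes through verbatim: $\tilde\alpha$ is an isomorphism, the resulting DEP is pro-$\C$, Proposition~\ref{prop infinite EPs} furnishes a weak solution $(\tilde\eta,\tilde\theta)$, and $\theta:=\pi\tilde\theta$ satisfies $\beta\theta=\nu$ and $\theta|_\Gamma=j\eta$. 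This is precisely the paper's proof. The finiteness you tried to arrange was never needed, since you already appeal to Proposition~\ref{prop infinite EPs} rather than to the definition of $\C$-projectivity directly.
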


\begin{proof}
Let \eqref{double embedding problem} be a pro-$\C$ DEP and let
$\eta\colon \Gamma\to G$ be a weak solution of the lower
embedding problem. Define $\Hhat = H \times_B \Lambda$ and let
$\Ghat =\{(\eta(\gamma),\gamma )\mid \gamma\in \Gamma\}\leq
G\times_A \Gamma$. Since $\Ghat\cong \Gamma$, there is a unique
weak solution of the lower embedding problem of
\[
\xymatrix{
    & \Gamma\ar[d]^{\id}\\
\Ghat\ar[r]^{\alphahat}
    & \Gamma
}\qquad \xymatrix{
    & \Lambda\ar[d]^{\id}\\
\Hhat\ar[r]^{\betahat}
    & \Lambda.}
\]
Now by Proposition~\ref{prop infinite EPs} there exists a weak
solution $(\etahat,\thetahat)$. Hence $\etahat(\gamma) =
\alphahat^{-1}(\gamma) = (\eta(\gamma),\gamma )$. This implies that
$(\eta,\theta)$ is a solution of the DEP, where
$\theta=\pi\thetahat$ and $\pi\colon \Hhat\to H$ is the quotient
map.
\end{proof}

The next result follows from the lifting property using the same
argument that implied Corollary~\ref{cor:splitting} from
Proposition~\ref{prop infinite EPs}.

\begin{corollary}\label{cor:lifting_splitting}
Let $(\Gamma,\Lambda)$ be a $\C$-projective pair and consider a
diagram as in Corollary~\ref{cor:splitting}. Then any splitting
$\alpha'$ of $\alpha$ can be lifted to a splitting $\beta'$ of
$\beta$.
\end{corollary}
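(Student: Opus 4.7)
The plan is to mimic the deduction of Corollary~\ref{cor:splitting} from Proposition~\ref{prop infinite EPs}, but replacing the plain solvability statement by the lifting property. The key observation, already used in the proof of Corollary~\ref{cor:splitting}, is that a splitting of an epimorphism $\gamma\colon M\to N$ is exactly the same data as a weak solution of the embedding problem $(\id_N\colon N\to N,\ \gamma\colon M\to N)$, and that this correspondence is natural with respect to commutative diagrams.

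First I would package the given diagram as a pro-$\C$-DEP for $(\Gamma,\Lambda)$: take the higher embedding problem to be $(\id_\Lambda\colon \Lambda\to\Lambda,\ \beta\colon \Delta\to\Lambda)$ and the lower embedding problem to be $(\id_\Gamma\colon \Gamma\to\Gamma,\ \alpha\colon E\to\Gamma)$. The two compatibility squares needed for a DEP are trivial: $\id_\Lambda\circ\phi = \phi\circ\id_\Gamma$, while $\beta\circ \psi = \phi\circ \alpha$ is just the commutativity of the original square. So this is indeed a bona fide pro-$\C$-DEP for the pair $(\Gamma,\Lambda)$.

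Next I would apply the lifting property (the proposition immediately preceding the corollary): the hypothesised splitting $\alpha'\colon \Gamma\to E$ is a weak solution of the lower embedding problem, hence lifts to a weak solution $(\alpha',\beta')$ of the entire DEP. Unwinding the definitions, $\beta'\colon\Lambda\to\Delta$ satisfies $\beta\beta' = \id_\Lambda$ (so it is a splitting of $\beta$) together with the compatibility $\psi\alpha' = \beta'\phi$, which is exactly what is required in Corollary~\ref{cor:splitting} for $\beta'$ to ``lift'' $\alpha'$.

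The only potentially delicate point -- really just bookkeeping -- is to verify that $\phi$ and $\psi$ satisfy the formal DEP convention of Section~\ref{sec:ppDefinition} that $G\leq H$ and $A\leq B$ via inclusion maps. Since $(\Gamma,\Lambda)$ is a pair by hypothesis, $\phi$ is an inclusion, and $\psi$ plays the analogous role of realising $E$ as a subgroup of $\Delta$ (otherwise one may pass to the image $\psi(E)\leq \Delta$ and argue there); no genuine obstruction arises, so the whole argument is a direct transcription of the splittings-versus-solutions dictionary into the DEP framework, followed by one invocation of the lifting property.
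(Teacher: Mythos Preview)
Your proposal is correct and is exactly the argument the paper intends: it explicitly states that Corollary~\ref{cor:lifting_splitting} follows from the lifting property by the same splittings-versus-solutions dictionary used to deduce Corollary~\ref{cor:splitting} from Proposition~\ref{prop infinite EPs}. Your remark about $\psi$ being an inclusion is fine too, since the exactness of the columns in the diagram of Corollary~\ref{cor:splitting} (the $1$'s at the bottom) forces $\psi$ and $\phi$ to be injective.
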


\begin{proposition}[Transitivity]
Let $\Lambda_3\leq \Lambda_2\leq \Lambda_1$ be pro-$\C$ groups.
Then
\begin{enumerate}
\item
    If $(\Lambda_3,\Lambda_1)$ is $\C$-projective, then $(\Lambda_3,\Lambda_2)$ is
    $\C$-projective.
\item
    If $(\Lambda_3,\Lambda_2)$ and $(\Lambda_2,\Lambda_1)$ are $\C$-projective, then so is
    $(\Lambda_3,\Lambda_1)$.
\end{enumerate}
\end{proposition}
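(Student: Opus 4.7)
\textbf{For part (b)}, the plan is the standard two-step restrict-and-lift argument. Given a $\C$-DEP for $(\Lambda_3,\Lambda_1)$ with higher $(\nu\colon\Lambda_1\to B,\beta\colon H\to B)$ and lower $(\mu,\alpha)$, I would restrict the higher EP to $\Lambda_2$ by setting $B_2=\nu(\Lambda_2)$, $H_2=\beta^{-1}(B_2)$, $\nu_2=\nu|_{\Lambda_2}$, $\beta_2=\beta|_{H_2}$. Because $\mu(\Lambda_3)=A\leq B_2$ and $G\leq H_2$, the quadruple $((\mu,\alpha),(\nu_2,\beta_2))$ is a legitimate $\C$-DEP for $(\Lambda_3,\Lambda_2)$; by hypothesis it admits a weak solution $(\eta,\theta_2)$ with $\theta_2\colon\Lambda_2\to H_2\hookrightarrow H$. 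Now $((\nu_2,\beta_2),(\nu,\beta))$ is a $\C$-DEP for $(\Lambda_2,\Lambda_1)$ of which $\theta_2$ weakly solves the lower part; the lifting property for the $\C$-projective pair $(\Lambda_2,\Lambda_1)$ extends $\theta_2$ to $\theta_1\colon\Lambda_1\to H$ with $\beta\theta_1=\nu$ and $\theta_1|_{\Lambda_2}=\theta_2$. The pair $(\eta,\theta_1)$ is then the required weak solution of the original DEP.

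\textbf{For part (a)}, I would first observe that $\C$-projectivity of $(\Lambda_3,\Lambda_1)$ forces $\Lambda_1$ itself to be $\C$-projective: given any $\C$ embedding problem $(\nu\colon\Lambda_1\to B,\beta\colon H\to B)$ for $\Lambda_1$, one makes it the higher part of a $\C$-DEP for $(\Lambda_3,\Lambda_1)$ with lower part $(\nu|_{\Lambda_3}\colon\Lambda_3\to A,\beta|_G\colon G\to A)$, where $A=\nu(\Lambda_3)$ and $G=\beta^{-1}(A)$; any weak solution of this DEP already weakly solves the given EP. Hence $\Lambda_1$, and therefore its closed subgroup $\Lambda_2$, is $\C$-projective. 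Now, given a $\C$-DEP for $(\Lambda_3,\Lambda_2)$ with higher EP $(\nu,\beta)$, the plan is to dominate it by a pro-$\C$-DEP for $(\Lambda_3,\Lambda_1)$ built via a twisted wreath product: choose an open normal $N\trianglelefteq\Lambda_1$ with $N\cap\Lambda_3\leq\ker\mu$ and $N\cap\Lambda_2\leq\ker\nu$, set $G_1=\Lambda_1/N$ and $G_0=\Lambda_2N/N\leq G_1$, reduce to the doubly split case via Lemma~\ref{lem:DEPdominated}, and construct the dominating embedding problem using the twisted wreath product $\ker(\beta)\wr_{G_0}G_1$ in the spirit of Lemma~\ref{lem:wreath_regularlysolvable}. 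Proposition~\ref{prop infinite EPs} applied to the $\C$-projective pair $(\Lambda_3,\Lambda_1)$ solves the resulting pro-$\C$-DEP, and the Shapiro map of Lemma~\ref{lem:wreath_solution} descends the solution to a weak solution of the original DEP.

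\textbf{The hard part} will be (a). Part (b) is essentially forced by the nested structure $\Lambda_3\leq\Lambda_2\leq\Lambda_1$ once one writes out the correct restriction. The substance of (a) lies in the fact that $\nu\colon\Lambda_2\to B$ need not extend to any homomorphism out of $\Lambda_1$ onto a pro-$\C$ group containing $B$; the twisted wreath product is the standard way to circumvent this obstacle, and the main technical burden will be choosing $N$ and the wreath data so that the dominating pro-$\C$-DEP genuinely covers both embedding problems of the original DEP, and so that the Shapiro descent preserves the compatibility between the lower and higher parts.
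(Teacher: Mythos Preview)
Your proposal is correct and follows essentially the same route as the paper: part (b) via the restrict-then-lift argument (the paper phrases this as the group-theoretic analog of Lemma~\ref{lem:transitive}, and also records an equivalent version via Corollaries~\ref{cor:splitting} and~\ref{cor:lifting_splitting}), and part (a) via a twisted wreath product plus the Shapiro map. Two small corrections: the relevant embedding result for (a) is Proposition~\ref{prop:embeddingtwotowreath} rather than the field-theoretic Lemma~\ref{lem:wreath_regularlysolvable}, and the dominating DEP you build is actually finite (twisted wreath products of finite groups are finite), so you don't need Proposition~\ref{prop infinite EPs}---ordinary $\C$-projectivity of $(\Lambda_3,\Lambda_1)$ suffices.
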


\begin{proof}
To show (a) it suffices to solve a doubly split $\C$-DEP
\begin{equation}\label{eq:DEP13to12}
\xymatrix{%
    &\Lambda_3\ar[d]^{\phi_3}
\\
G_3\ar[r]^{\alpha_3}
    &A_3
}%
\qquad%
\xymatrix{%
        &&&\Lambda_2\ar[d]^{\phi_2}
\\
1\ar[r]
    &K_2\ar[r]
        &G_2\ar[r]^{\alpha_2}
            &A_2\ar[r]
                &1
}%
\end{equation}
for $(\Lambda_3,\Lambda_2)$. Fix a splitting of $\alpha_2$ to
identify $G_2$ and $K_2\rtimes A_2$. We can assume that
$\ker(\phi_2) = \Lambda_2\cap L$, where $L$ is open normal subgroup
of $\Lambda_1$. (Otherwise, we can choose an open normal subgroup
$L$ of $\Lambda_1$ such that $\ker(\phi_2) \geq \Lambda_2\cap L$ and
replace $A_i$ with $\Lambda_i/(\Lambda_i\cap L)$ and $G_i$ with the
corresponding fiber product, $i=2,3$.)

Let $A_1 = \Lambda_1/L$, let $\phi_1\colon \Lambda_1\to A_1$ be the
quotient map. Embed $G_3$ inside $K_2\wr_{A_2} A_1$ as in
Proposition~\ref{prop:embeddingtwotowreath}. In particular, under
the Shapiro map
\[
\pi\colon \Ind_{A_2}^{A_1}(K_2) \rtimes A_2 \to K_2\rtimes A_2
\]
we have $\pi(G_3) = G_3$. The $\C$-DEP
\[
\xymatrix{%
    &\Lambda_3\ar[d]^{\phi_3}
\\
G_3\ar[r]^{\alpha_3}
    &A_3
}%
\qquad%
\xymatrix{%
        &\Lambda_1\ar[d]^{\phi_1}
\\
K_2\wr_{A_2}A_1\ar[r]^(.6){\alpha_2}
        &A_1
}%
\]
has a weak solution $(\eta'_3,\eta'_1)$ since
$(\Lambda_3,\Lambda_1)$ is $C$-projective. Let $\eta_i = \pi
\eta'_1|_{\Lambda_i}$, $i=2,3$. Then $(\eta_3,\eta_2)$ is a weak
solution of \eqref{eq:DEP13to12} and the proof of (a) is concluded.

The proof of (b) is analogous to the proof of
Lemma~\ref{lem:transitive}, one merely need to disregard the word
`geometric' everywhere it appears.

For the sake of completeness we give a proof based on exact
sequences. Consider the following commutative diagram with injective
rows.
\[
\xymatrix{ \Delta_1\ar[r]^{\alpha_1}
     &\Lambda_1\ar[r]
          &1\\
\Delta_2\ar[r]^{\alpha_2}\ar[u]
     &\Lambda_2\ar[r]\ar[u]
          &1\\
\Delta_3\ar[r]^{\alpha_3}\ar[u]
     &\Lambda_3\ar[r]\ar[u]
          &1\\
}
\]
By Corollary~\ref{cor:splitting} it suffices to show that the higher
and lower rows compatibly split. As $(\Lambda_3,\Lambda_2)$ is
$\C$-projective, the two bottom rows compatibly split, let
$\alpha_3'$, $\alpha_2'$ be the corresponding sections. Now by
Corollary~\ref{cor:lifting_splitting}, $\alpha_2'$ extends to a
section $\alpha_1'$ of $\alpha_1$. Thus $\alpha_3'$, $\alpha_1'$ are
compatible sections, as needed.
\end{proof}

\begin{remark}
In the above proposition it might happen that
$(\Lambda_3,\Lambda_1)$ is $\C$-projective but
$(\Lambda_2,\Lambda_1)$ is not. For example, take $\Lambda_1$ to be
$\C$-projective, take $1\neq \Lambda_2\normal \Lambda_1$ and
$\Lambda_3=1$. Then $(1, \Lambda_1)$ is $\C$-projective
(Proposition~\ref{prop:Cproj-simplecharac}) while if $\Lambda_2\neq
\Lambda_1$, then $(\Lambda_2\leq \Lambda_1)$ is not
(Proposition~\ref{prop:normalprojectivepairs}).
\end{remark}

Projective pairs behave well under taking subgroups.
\begin{proposition}\label{prop:ppundersubgroups}
Let $( \Gamma, \Lambda)$ be a $\C$-projective pair, let
$\Lambda_0\leq \Lambda$ be a subgroup, and write $\Gamma_0 =
\Lambda_0\cap \Gamma$. Then $( \Gamma_0, \Lambda_0)$ is
$\C$-projective.
\end{proposition}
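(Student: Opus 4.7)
The plan is to reduce to doubly split $\C$-DEPs for $(\Gamma_0,\Lambda_0)$ and then lift each such DEP to an auxiliary $\C$-DEP for $(\Gamma,\Lambda)$, built from a twisted wreath product, which is solvable by hypothesis; one then descends the solution via the Shapiro map. First, $(\Gamma,\Lambda)$ being $\C$-projective forces $\Lambda$ itself to be $\C$-projective (any EP $(\nu,\beta)$ for $\Lambda$ appears as the higher EP of a DEP whose lower EP is $(\nu|_\Gamma\colon\Gamma\to\nu(\Gamma),\ \beta|_{\beta^{-1}(\nu(\Gamma))})$, which is automatically compatible with any weak solution of the higher part). Invoking the classical fact that closed subgroups of $\C$-projective pro-$\C$ groups are $\C$-projective, $\Lambda_0$ is $\C$-projective, so Corollary~\ref{cor:ppreductiontodoublesplit} lets me reduce to weakly solving any doubly split $\C$-DEP for $(\Gamma_0,\Lambda_0)$. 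I record such a DEP in the form $\nu_0\colon\Lambda_0\to B_0$, $\beta_0\colon H_0=K_H\rtimes B_0\to B_0$ together with $\mu_0\colon\Gamma_0\to A_0\leq B_0$, $\alpha_0\colon G_0=K_G\rtimes A_0\to A_0$, with $K_G\leq K_H$ compatibly.

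Pick an open normal $N\normal\Lambda$ with $N\cap\Lambda_0\leq\ker\nu_0$, put $B=\Lambda/N$ and let $\nu\colon\Lambda\to B$ be the quotient. Dominating the DEP along $\nu|_{\Lambda_0}$ and $\nu|_{\Gamma_0}$ via Lemma~\ref{lem:dominatin_fiber_product} (this preserves double splitting), I may assume $B_0=\nu(\Lambda_0)$ and $A_0=\nu(\Gamma_0)$ sit as subgroups of $B$. Set $A=\nu(\Gamma)\leq B$ and $D=A\cap B_0\supseteq A_0$, and form the twisted wreath product $H=K_H\wr_{B_0}B$ with quotient $\beta\colon H\to B$ and Shapiro map $\pi\colon\beta^{-1}(B_0)\to K_H\rtimes B_0=H_0$.

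The decisive step is to apply Proposition~\ref{prop:embeddingtwotowreath} with the \emph{trivial} splitting $i\colon D\to K_H\rtimes D$, $i(d)=(1,d)$. Because the DEP is doubly split, $i(A_0)\subseteq\{1\}\rtimes A_0\subseteq K_G\rtimes A_0=G_0$ automatically, so one does not have to extend the nontrivial splitting $\alpha_0'$ from $A_0$ to the possibly larger $D$---an extension that is in general obstructed. The proposition yields an embedding $j\colon A\hookrightarrow H$ with $\beta j$ equal to the inclusion $A\hookrightarrow B$ and $\pi j|_D=i$. I then assemble the auxiliary $\C$-DEP for $(\Gamma,\Lambda)$ with higher EP $(\nu,\beta)$ and lower EP $(\mu=\nu|_\Gamma\colon\Gamma\to A,\ \beta|_{j(A)}\colon j(A)\to A)$; the lower EP is an isomorphism with unique weak solution $\eta=j\mu$, and $\C$-projectivity of $(\Gamma,\Lambda)$ (equivalently, the lifting property) produces $\theta\colon\Lambda\to H$ with $\beta\theta=\nu$ and $\theta|_\Gamma=j\mu$.

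It remains to set $\theta_0=\pi\circ\theta|_{\Lambda_0}\colon\Lambda_0\to H_0$ and check that it is a weak solution of the original DEP. One verifies $\beta_0\theta_0=\nu_0$ directly from the Shapiro map, and for $\gamma_0\in\Gamma_0$ computes $\theta(\gamma_0)=j(\nu_0(\gamma_0))$ with $\nu_0(\gamma_0)\in A_0\leq D$, whence $\theta_0(\gamma_0)=\pi j(\nu_0(\gamma_0))=i(\nu_0(\gamma_0))=(1,\nu_0(\gamma_0))\in G_0$, as required. The main obstacle conceptually is recognizing that the doubly split reduction is precisely what makes the trivial splitting in Proposition~\ref{prop:embeddingtwotowreath} already suffice; a direct attack on a non--doubly split DEP would force one into a general extension-of-splittings problem on $D/A_0$ that can carry a genuine cohomological obstruction.
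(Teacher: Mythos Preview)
Your argument has a genuine gap at the step ``$i(A_0)\subseteq\{1\}\rtimes A_0\subseteq K_G\rtimes A_0=G_0$''. Having fixed a splitting $\beta_0'$ of $\beta_0$ to write $H_0=K_H\rtimes B_0$, your trivial section $i(d)=(1,d)$ is precisely $\beta_0'|_D$, so the claim $i(A_0)\subseteq G_0$ amounts to $\beta_0'(A_0)\subseteq G_0$, i.e.\ that the chosen splitting of the \emph{higher} embedding problem restricts to a splitting of the \emph{lower} one. ``Doubly split'' only says that \emph{some} section $\alpha_0'\colon A_0\to G_0$ exists, not that it coincides with $\beta_0'|_{A_0}$; your phrase ``$G_0=K_G\rtimes A_0$ with $K_G\leq K_H$ compatibly'' hides exactly this incompatibility. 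A two-line counterexample: $H_0=\bbZ/2\times\bbZ/2$, $\beta_0$ the second projection, $\beta_0'(a)=(0,a)$, $A_0=B_0=\bbZ/2$, $G_0=\{(0,0),(1,1)\}$; both embedding problems split, yet $\beta_0'(1)=(0,1)\notin G_0$.

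This is precisely the extension-of-splittings obstruction you claim to be sidestepping, only relocated: either you use the trivial section on all of $D$ (and then need $\beta_0'(A_0)\subseteq G_0$, unjustified), or you use the given $\alpha_0'$ on $A_0$ (and then must extend it to $D\supsetneq A_0$, obstructed in general). The paper's proof avoids the dilemma by first treating the case where $\Lambda_0$ is \emph{open} in $\Lambda$: one may then choose $N\normal\Lambda$ with $N\leq\Lambda_0$, and a short computation yields $A\cap B_0=A_0$, so $D=A_0$ and Proposition~\ref{prop:embeddingtwotowreath} applies with $i=\alpha_0'$ itself --- no extension needed. The passage from open to arbitrary closed $\Lambda_0$ is then a separate compactness-type reduction (extend $\nu_0$ trivially across an open normal $N$ and locate an open $\Lambda'\supseteq\Lambda_0 N$ with $\Lambda'\cap\Gamma\subseteq\Gamma_0(N\cap\Gamma)$), which your single-shot argument also omits.
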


\begin{proof}
By Lemma~\ref{lem:DEPdominated} it suffices to show that any doubly
split $\C$-DEP for $( \Gamma_0, \Lambda_0)$ is weakly solvable. Let
\[
((\mu_1\colon \Gamma_0 \to A_1, \alpha_1\colon G_1\to A_1),
(\nu_1\colon \Lambda_0 \to B_1, \beta_1\colon H_1\to B_1))
\]
be a doubly split $\C$-DEP for $( \Gamma_0, \Lambda_0)$.

\vskip5pt %
\noindent\textsc{The case where $\Lambda_0$ is open in $\Lambda$.}
Choose an open normal subgroup $N$ of $\Lambda$ such that $N\leq
\ker \nu_1$. Then $N\cap\Gamma_0\leq \ker(\mu_1)$. Let $\nu\colon
\Lambda \to B=\Lambda/N$ be the quotient map. Let $B_0$, $A$, and
$A_0$ be the respective images of $\Lambda_0$, $\Gamma$, and
$\Gamma_0$ under $\nu$ and let $\nu_0$, $\mu$, and $\mu_0$ be the
restrictions of $\nu$ to the respective subgroups. Let
$G_0=G_1\times_{A_1} A_0$, $H_0 = H_1\times_{B_1} B_0$ and let
$\alpha_0\colon G_0\to A_0$ and $\beta_0\colon H_0\to B_0$ be the
projection maps. Then the doubly split $\C$-DEP
$((\mu_0,\alpha_0),(\nu_0,\beta_0))$ dominates
$((\mu_1,\alpha_1),(\nu_1,\beta_1))$
(Lemma~\ref{lem:dominatin_fiber_product}). Hence it suffices to
weakly solve $((\mu_0,\alpha_0),(\nu_0,\beta_0))$.

Clearly $A_0\leq B_0 \cap A$. Let $\xgag\in B_0 \cap A$. Then $\xgag
= \nu(x)$, where $x = y n$, $x\in \Lambda_0$, $y\in \Gamma$, and
$n\in N$. Since $N\leq \Lambda_0$ we get that $y = xn^{-1} \in
\Lambda_0$, hence $y\in \Gamma_0$. Since $\nu(y) = \nu(x)=\xgag$ we
have $\nu(\Gamma_0)\geq \nu (\Lambda_0) \cap \nu (\Gamma)$.
Consequently $A_0 = B_0 \cap A$.

If $\alpha_0'\colon A_0\to G_0$ is a splitting of $\alpha_0$, then
it suffices to find a weak solution $(\eta_0,\theta_0)$ such that
$\eta_0(\Gamma_0) = \alpha_0'(A_0)$. Therefore, we can replace $G_0$
by $\alpha_0'(A_0)$ to assume that $G_0\cong A_0$.

Let $\beta_0'\colon B_0\to H_0$ be a splitting of $\beta_0$.
Identify $H_0$ with $K\rtimes B_0$ via $\beta_0'$, where $K = \ker
\beta_0$. This identification induces an embedding $i\colon A_0 \to
K_0\rtimes B_0$ satisfying $\beta_0 i = \alpha_0$.

\[
\xymatrix{
K_0\rtimes B_0 \ar@{>>}[r]^{\beta_0}&B_0\ar@{^(->}[r]&B\\
A_0\ar[r]^{\alpha_0}\ar@{^(->}[u]^{i}&A_0\ar@{^(->}[r]\ar@{^(->}[u]&A\ar@{^(->}[u]
    }
\]

Let $\beta\colon K\wr_{B_0} B\to B$ be the quotient map and
$\pi\colon \Ind_{B_0}^B(K)\rtimes B_0\to K\rtimes B_0$ the
corresponding Shapiro map.
Proposition~\ref{prop:embeddingtwotowreath} gives an embedding
$j\colon A \to K \wr_{B_0} B$ such that $\pi j|_{A_0} = i$, and in
particular $\beta j (a) = a$, for all $a\in A$. Let
$\alpha=\beta|_{j(A)} \colon j(A)\to A$.

\[
\xymatrix@!C@C20pt{
    &\Lambda\ar[dl]_\theta\ar[dr]^{\nu}\\
K\wr_{B_0} B \ar[rr]^(.4){\beta}
        &&B\\
    &\Gamma\ar'[u][uu]\ar[dl]_{\eta}\ar[dr]^{\mu}\\
j(A)\ar[rr]^{\alpha}\ar[uu]
        &&A\ar[uu]
    }
\]

Since $(\Gamma,\Lambda)$ is $\C$-projective, there exists a weak
solution $(\eta,\theta)$ of $((\mu,\alpha),(\nu,\beta))$. Set
$\eta_0 = \pi \eta|_{\Gamma_0}$ and $\theta_0 = \pi
\theta|_{\Lambda_0}$. Then $(\eta_0,\theta_0)$ is a weak solution of
$((\mu_0,\alpha_0),(\nu_0,\beta_0))$.

\[
\xymatrix{
    &\Gamma_0\ar@/^7pt/[ddl]^{\eta_0}\ar[dl]_{\eta}\ar[d]^{\mu_0}\\
j(A_0) \ar[r]^(.55){\alpha}\ar[d]^\pi
        &A_0\\
A_0 \ar[r]^{\alpha_0}
        &A_0\ar@{=}[u]
        }
\qquad \xymatrix{
    &\Lambda_0\ar@/^10pt/[ddl]^{\theta_0}\ar[dl]_{\theta}\ar[d]^{\nu_0}\\
\Ind_{B_0}^B(K)\rtimes B_0 \ar[r]^(.65){\beta}\ar[d]^\pi
        &B_0\\
K\rtimes B_0 \ar[r]^{\beta_0}
        &B_0.
\ar@{=}[u]
       }
\]

\vskip5pt %
\noindent\textsc{The general case.} Fix an open normal subgroup $N$
of $\Lambda$ such that $N\cap \Lambda_0 \leq \ker\nu_1$. Then $N\cap
\Gamma_0 \leq \ker (\mu_1)$. We can extend $\nu_1$ to $\Lambda_0 N$
by setting $\nu_1(\lambda n) = \nu_1(\lambda)$ for each $\lambda\in
\Lambda_0$ and $n\in N$. Similarly, $\Gamma_0 (N\cap \Gamma)$ is an
open subgroup of $\Gamma$ and $\mu_1$ extends to it. Note that the
restriction of $\nu_1$ to $\Gamma_0 (N\cap \Gamma)$ equals $\mu_1$.

By \cite[Lemma 1.2.2(b)]{FriedJarden2005} we have an open subgroup
$\Lambda'$ of $\Lambda$ that contains $\Lambda_0 N$ and such that
$\Lambda'\cap \Gamma\leq \Gamma_0 (N\cap \Gamma)$. Let $\Gamma' =
\Lambda' \cap \Gamma$. Now we have the $\C$-DEP
\[
((\mu_1\colon \Gamma' \to A_1, \alpha_1\colon G_1\to A_1),
(\nu_1\colon \Lambda' \to B_1, \beta_1\colon H_1\to B_1))
\]
for $(\Gamma',\Lambda')$. By the first part $(\Gamma',\Lambda')$ is
$\C$-projective, hence there exists a weak solution $(\eta,\theta)$
of this DEP. Now if we set $\eta_0 =\eta|_{\Gamma_0}$ and $\theta_0
= \theta|_{\Lambda_0}$, we get the weak solution $(\eta_0,\theta_0)$
of the original DEP.
\end{proof}

\begin{remark}
The analogous result for PAC extensions -- if $M/K$ is a PAC
extension and $L/K$ algebraic, then $ML/L$ is PAC -- is proved with
a descent argument (see \cite[Lemma 2.1]{JardenRazon1994}). This has
suggested us that wreath product should be the tool for this group
theoretic version.
\end{remark}

\begin{corollary}\label{cor:splittingpp}
Let $(\Gamma, \Lambda)$ be a $\C$-projective pair and let $N\leq
\Gamma$. Then there exists $M\leq \Lambda$ such that $N = \Gamma
\cap M$ and $\Gamma\cdot M = \Lambda$. Moreover, if $N\normal
\Gamma$, then $M\normal \Lambda$.
\end{corollary}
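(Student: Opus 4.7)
The plan is to treat the case $N\normal\Gamma$ first and then reduce the general case to it via the normal core.

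For the normal case, I would set $Q=\Gamma/N$, which is pro-$\C$ as a quotient of $\Gamma$, and let $q\colon\Gamma\to Q$ denote the quotient map. The goal is to produce $\theta\colon\Lambda\to Q$ extending $q$; then $M:=\ker\theta$ answers the question. To construct $\theta$, consider the pro-$\C$-DEP for $(\Gamma,\Lambda)$ in which the lower and higher embedding problems both have target $1$ and source $Q$, with $j=\id_Q\colon Q\hookrightarrow Q$. The compatibility condition $j\eta=\theta\phi$ on a weak solution $(\eta,\theta)$ of this DEP collapses to the single equation $\theta|_\Gamma=\eta$, so the lifting property, applied to the weak solution $\eta=q$ of the lower embedding problem, produces a $\theta\colon\Lambda\to Q$ extending $q$. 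Setting $M=\ker\theta$ gives a closed normal subgroup of $\Lambda$ with $M\cap\Gamma=\ker q=N$; and since $\theta(\Lambda)\supseteq\theta(\Gamma)=Q$ forces $\theta$ to be surjective, we also obtain $\Gamma M=\Lambda$.

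For the general case, I would let $N_c=\bigcap_{\gamma\in\Gamma}\gamma N\gamma^{-1}$ be the normal core of $N$ in $\Gamma$, which is closed and normal in $\Gamma$. Applying the normal case to $N_c$ yields $M_c\normal\Lambda$ with $M_c\cap\Gamma=N_c$ and $\Gamma M_c=\Lambda$, and the second identity is precisely the statement that the inclusion $\Gamma\hookrightarrow\Lambda$ induces an isomorphism $\Gamma/N_c\cong\Lambda/M_c$. I would then let $M:=NM_c$, equivalently the preimage in $\Lambda$ of the closed subgroup $N/N_c\leq\Gamma/N_c\cong\Lambda/M_c$; this is a closed subgroup of $\Lambda$. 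Then $M\supseteq M_c$ gives $\Gamma M\supseteq\Gamma M_c=\Lambda$, and a short coset computation using $\Gamma\cap M_c=N_c\subseteq N$ shows $M\cap\Gamma=N$.

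There is no substantial obstacle in this argument. The key conceptual observation is that the vacuous DEP, with trivial targets, already encodes the desired extension property through its compatibility condition, so the normal case is essentially a single application of the lifting property. The reduction to the general case through the normal core is then purely formal.
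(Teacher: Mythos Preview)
Your proof is correct and follows essentially the same approach as the paper: both arguments pass to the normal core, apply the lifting property to the pro-$\C$-DEP with trivial targets $(\Gamma\to 1,\ \Gamma/N_c\to 1)$ and $(\Lambda\to 1,\ \Gamma/N_c\to 1)$, and then take the preimage of $N/N_c$ to obtain $M$. The only difference is organizational --- you prove the normal case first and reduce the general case to it, whereas the paper handles the general case directly (defining $M=\theta^{-1}(N/\Nhat)$ in one step) and reads off the normal case as the specialization $N=\Nhat$; your $M=NM_c$ coincides with the paper's $\theta^{-1}(N/\Nhat)$.
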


\begin{proof}
Let $\Nhat = \bigcap_{\sigma} N^\sigma$ and let $\eta\colon \Gamma
\to \Gamma/\Nhat$ be the natural quotient map. Lift $\eta$ to a
solution $(\eta,\theta)$ of the DEP
\[
((\Gamma\to 1, \Gamma/\Nhat\to 1), (\Lambda\to 1, \Gamma/\Nhat\to
1)).
\]
Let $\Mhat = \ker(\theta)$ and $M = \theta^{-1}(N/\Nhat)$. Then
(since $\eta = \theta|_{\Gamma}$)
\[
N = \eta^{-1}(N/\Nhat) = \Gamma \cap \theta^{-1}(N/\Nhat) =
\Gamma\cap M.
\]
Since $\theta(\Gamma) = \Gamma/\Nhat$, it follows  that $\Gamma
\Mhat = \Lambda$, and in particular, $\Gamma \cdot M = \Lambda$.

To conclude the proof, note that if $N\normal\Gamma$, then
$N=\Nhat$, and hence $M=\Mhat$. So $M\normal \Lambda$, as needed.
\end{proof}

Taking $N=1$ in the above result we get the following splitting
corollary.

\begin{corollary}\label{cor:semidirectproductpp}
If $(\Gamma, \Lambda)$ is $\C$-projective, then $\Lambda \cong M
\rtimes \Gamma$ for some $M\normal \Lambda$.
\end{corollary}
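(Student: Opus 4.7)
The plan is to simply specialize Corollary~\ref{cor:splittingpp} to the case $N=1$. Recall that by that corollary, for any $N \leq \Gamma$ there exists $M \leq \Lambda$ with $N = \Gamma \cap M$ and $\Gamma \cdot M = \Lambda$, and moreover $M \normal \Lambda$ whenever $N \normal \Gamma$. Taking $N = 1$ (which is trivially normal in $\Gamma$), I obtain a closed normal subgroup $M \normal \Lambda$ satisfying
\[
\Gamma \cap M = 1 \quad \text{and} \quad \Gamma \cdot M = \Lambda.
\]

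From these two conditions the claimed internal semidirect product decomposition $\Lambda \cong M \rtimes \Gamma$ is immediate: every $\lambda \in \Lambda$ factors uniquely as $\lambda = m\gamma$ with $m \in M$, $\gamma \in \Gamma$ (existence from $\Lambda = M\Gamma$, uniqueness from $M \cap \Gamma = 1$), and $M$ is normal in $\Lambda$ by construction. Thus the multiplication map $M \rtimes \Gamma \to \Lambda$, $(m,\gamma) \mapsto m\gamma$, is a continuous bijective homomorphism of profinite groups, hence a topological isomorphism (continuous bijections of compact Hausdorff groups are homeomorphisms).

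There is no real obstacle here; the entire content has been packed into Corollary~\ref{cor:splittingpp}, whose proof is where the work happens (lifting the quotient map $\Gamma \to \Gamma/\Nhat$ to a weak solution over $\Lambda$ using the definition of a $\C$-projective pair). So the proof of the present corollary is essentially a one-line specialization.
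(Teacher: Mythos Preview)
Your proof is correct and follows exactly the paper's approach: the paper simply remarks that the corollary is obtained by taking $N=1$ in Corollary~\ref{cor:splittingpp}, which is precisely what you do.
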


\section{Families of Projective Pairs}
In this section, for simplicity, we assume that $\C$ is the
family of all finite groups.
\subsection{Free Products}
Let $\Gamma_1,\Gamma_2$ be profinite groups. The (profinite)
free product $\Gamma = \Gamma_1\ast \Gamma_2$ is defined by the
following universal property. Let $(\mu\colon \Gamma \to A,
\alpha\colon H\to A)$ be an arbitrary embedding problem for
$\Lambda$. Let $A_1 = \mu(\Gamma_1)$, $A_2 = \mu(\Gamma_2)$,
$H_1=\alpha^{-1}(A_1)$, and $H_2=\alpha^{-1}(A_2)$. Also let
$\mu_1,\mu_2$ the respective restriction of $\mu$ to
$\Gamma_1,\Gamma_2$ and $\alpha_1,\alpha_2$ the respective
restriction of $\alpha$ to $H_1,H_2$. Then for every weak
solutions $\theta_1,\theta_2$ of $(\mu_1,\alpha_1)$ and
$(\mu_2,\alpha_2)$ there exists a unique lifting $\theta$ to a
solution of $(\mu,\alpha)$.

\begin{proposition}\label{prop_freefactor}
Let $\Gamma$ be a free factor of a projective group $\Lambda$. Then
$(\Gamma, \Lambda)$ is projective.
\end{proposition}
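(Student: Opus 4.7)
The plan is to exploit the universal property of the free product decomposition $\Lambda = \Gamma \ast \Gamma'$ directly: given a DEP, I will weakly solve the lower embedding problem on $\Gamma$ using projectivity, weakly solve the ``complementary'' piece of the higher embedding problem on $\Gamma'$ using projectivity, and then glue via the free product universal property to obtain a compatible weak solution on $\Lambda$.

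First I would record the preparatory observation that $\Gamma'$ (and, symmetrically, $\Gamma$) is projective. Indeed, the quotient map $\Lambda = \Gamma \ast \Gamma' \to \Gamma'$ induced by sending $\Gamma$ to $1$ and being the identity on $\Gamma'$ (which exists by the universal property) exhibits $\Gamma'$ as a retract of the projective group $\Lambda$; retracts of projective profinite groups are projective. (Equivalently, one invokes the fact that free factors are closed subgroups and that closed subgroups of projective profinite groups are projective.) In particular, every finite embedding problem for $\Gamma$ or for $\Gamma'$ has a weak solution.

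Now consider a DEP as in \eqref{double embedding problem}. By projectivity of $\Gamma$, choose a weak solution $\eta\colon \Gamma \to G$ of the lower embedding problem $(\mu,\alpha)$. Turn to the higher embedding problem $(\nu,\beta)$ and apply the free product universal property to $\Lambda = \Gamma \ast \Gamma'$. On the factor $\Gamma$, the map $j\eta\colon \Gamma \to H$ satisfies $\beta(j\eta) = i\alpha\eta = i\mu = \nu|_\Gamma$, so it is a weak solution of the restriction of $(\nu,\beta)$ to $\Gamma$. On the factor $\Gamma'$, the restricted embedding problem $(\nu|_{\Gamma'}\colon \Gamma'\to \nu(\Gamma'),\ \beta^{-1}(\nu(\Gamma'))\to \nu(\Gamma'))$ admits a weak solution $\theta'\colon \Gamma' \to H$ by projectivity of $\Gamma'$. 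The universal property then produces a (unique) homomorphism $\theta\colon \Lambda \to H$ with $\theta|_\Gamma = j\eta$ and $\theta|_{\Gamma'} = \theta'$, and the identity $\beta\theta = \nu$ holds on both factors, hence on all of $\Lambda$. Since $\theta|_\Gamma$ factors through $G \leq H$, the pair $(\eta,\theta)$ is a weak solution of the DEP.

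The only point requiring real care is the projectivity of $\Gamma'$: without it, one cannot start the construction on the second factor. Once that is in place, the argument is a direct application of the universal property; no appeal to Corollary~\ref{cor:ppreductiontodoublesplit} or to the wreath product machinery is needed.
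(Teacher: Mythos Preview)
Your proof is correct and follows essentially the same approach as the paper: use projectivity of each free factor to obtain compatible weak solutions on $\Gamma$ and on the complementary factor, then glue them to a weak solution on $\Lambda$ via the universal property of the free product. The paper's argument is the same, only with slightly different notation (it calls the second factor $N$ and records the relevant restricted embedding problem a bit more tersely).
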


\begin{proof}
By assumption $\Lambda = \Gamma\ast N$. The subgroups $\Gamma$,
$N$ are projective as closed subgroups of a projective group.
Consider a finte DEP \eqref{double embedding problem}. Let $A_1
= \nu(N)$ and $G_1 = \beta^{-1}(A_1)$. Let $\eta_1,\eta$ be
respective weak solutions of $(\mu_1,\alpha_1)$ and
$(\mu,\alpha)$, where $\mu_1=\nu|_{N}$ and $\alpha_1 =
\beta|_{G_1}$.

By the definition of free products, we can lift $\eta_1,\eta$ to a
weak solution $\theta$ of $(\nu,\beta)$. Hence $(\eta,\theta)$ is a
weak solution of \eqref{double embedding problem}.
\end{proof}

The following result appears in \cite{HaranLubotzky1985}.

\begin{lemma}[Haran and Lubotzky]\label{lem:ppHL}
Let $\kappa$ be a cardinal and let $P$ be a projective profinite
group of rank $\leq \kappa$. Then $\Fhat_\kappa \cong P\ast
\Fhat_{\kappa}$.
\end{lemma}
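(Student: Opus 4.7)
The plan is to verify that $G := P \ast \Fhat_\kappa$ satisfies the standard characterization of the free profinite group of rank $\kappa$ — namely, that $G$ is projective of rank $\kappa$ and that every non-trivial finite embedding problem for $G$ admits $\kappa$ solutions — and then conclude via \cite[Theorem~25.1.7]{FriedJarden2005} that $G \cong \Fhat_\kappa$.

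First I would check that $G$ is projective of rank $\kappa$. Projectivity follows from the same universal-property argument used in Proposition~\ref{prop_freefactor}: any finite embedding problem for $G$ restricts to embedding problems on the factors $P$ and $\Fhat_\kappa$, each of which admits a weak solution by projectivity of the factor, and these glue uniquely to a weak solution on $G$. For the rank, the union of topological generating sets of $P$ and $\Fhat_\kappa$ generates $G$, giving $\rank(G) \le \kappa + \kappa = \kappa$; conversely, plugging the identity of $\Fhat_\kappa$ and the trivial map $P \to 1$ into the universal property produces an epimorphism $G \twoheadrightarrow \Fhat_\kappa$, forcing $\rank(G) \ge \kappa$.

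The bulk of the work is to exhibit $\kappa$ distinct solutions of an arbitrary non-trivial finite embedding problem $(\mu\colon G \to A,\ \alpha\colon H \to A)$ with $K := \ker(\alpha) \neq 1$. Set $A_F := \mu(\Fhat_\kappa)$, $H_F := \alpha^{-1}(A_F)$, and $\alpha_F := \alpha|_{H_F}\colon H_F \to A_F$. Since $K \subseteq \alpha^{-1}(A_F) = H_F$, we have $\ker(\alpha_F) = K \neq 1$, so $(\mu|_{\Fhat_\kappa},\alpha_F)$ is a non-trivial finite embedding problem for $\Fhat_\kappa$. By freeness of $\Fhat_\kappa$ it has a family $\{\theta_F^{(i)}\}_{i\in I}$ of $\kappa$ distinct solutions. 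Using projectivity of $P$, fix any weak solution $\theta_P\colon P \to H$ of the embedding problem $(\mu|_P,\alpha|_{\alpha^{-1}(\mu(P))})$. The universal property of the free profinite product now gives, for each $i \in I$, a unique continuous $\theta^{(i)}\colon G \to H$ extending $\theta_P$ and $\theta_F^{(i)}$, with $\alpha\theta^{(i)} = \mu$. The image of $\theta^{(i)}$ contains $\theta_F^{(i)}(\Fhat_\kappa) = H_F \supseteq K = \ker(\alpha)$, so Lemma~\ref{lem:criterionforproperness} forces $\theta^{(i)}$ to be surjective. Distinct $\theta_F^{(i)}$ yield distinct $\theta^{(i)}$ by the uniqueness clause of the universal property, producing $\kappa$ solutions of $(\mu,\alpha)$.

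The one delicate point is the precise formulation of ``$\kappa$ solutions'' in \cite[Theorem~25.1.7]{FriedJarden2005}: if one must produce $\kappa$ \emph{independent} solutions in the sense of Lemma~\ref{lem:independent_fiber}, then the same construction should be applied to the fibered power embedding problem $(H^n_A,\alpha_n)$ for every finite $n$, using that the $\theta_F^{(i)}$ can be chosen independent for $\Fhat_\kappa$ and that independence then propagates to the $\theta^{(i)}$ through the epimorphism $G \twoheadrightarrow \Fhat_\kappa$. This is the only place where minor extra bookkeeping is needed; the rest of the argument is a direct application of the universal property of the free profinite product together with the characterization of $\Fhat_\kappa$.
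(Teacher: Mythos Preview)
The paper does not give a proof of this lemma; it simply attributes the result to Haran--Lubotzky \cite{HaranLubotzky1985} and moves on. So there is no ``paper's own proof'' to compare against.

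Your argument is correct and is in fact the natural modern route: verify that $G = P \ast \Fhat_\kappa$ has rank $\kappa$ and that every non-trivial finite embedding problem for $G$ has $\kappa$ proper solutions, then invoke the characterization \cite[Theorem~25.1.7]{FriedJarden2005}. Two minor remarks. First, your last paragraph is overcautious: the theorem as quoted in the paper (and as stated in Fried--Jarden) requires only $\kappa$ distinct solutions, not independent ones, so the bookkeeping you flag is unnecessary. Second, both your rank computation ($\kappa + \kappa = \kappa$) and the cited characterization require $\kappa$ to be infinite; the lemma as printed says merely ``a cardinal,'' but it is false for finite $\kappa$ (e.g.\ $\Fhat_1 \ast \Fhat_1 \not\cong \Fhat_1$), and the only application in the paper (Corollary~\ref{cor:ppHL}) explicitly takes $\kappa$ infinite, so this is clearly the intended hypothesis.
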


As a consequence we get a family of examples of projective pairs
inside a free group.

\begin{corollary}\label{cor:ppHL}
Let $\Lambda$ be a free profinite group of infinite rank $\kappa$.
Then for any projective group $\Gamma$ of rank $\leq \kappa$ there
exists an embedding of $\Gamma$ in $\Lambda$ such that
$(\Gamma,\Lambda)$ is projective.
\end{corollary}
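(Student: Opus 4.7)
The plan is to combine the two immediately preceding results in the natural way: use Lemma~\ref{lem:ppHL} to realize $\Gamma$ as a free factor of $\Lambda$, then invoke Proposition~\ref{prop_freefactor} to conclude that the resulting pair is projective.

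More precisely, since $\Lambda$ is free profinite of infinite rank $\kappa$, we may identify $\Lambda$ with $\Fhat_\kappa$. Given a projective group $\Gamma$ of rank $\leq \kappa$, Lemma~\ref{lem:ppHL} of Haran--Lubotzky yields an isomorphism
\[
\Lambda \;\cong\; \Fhat_\kappa \;\cong\; \Gamma \ast \Fhat_\kappa.
\]
Under this isomorphism, $\Gamma$ sits inside $\Lambda$ as a free factor. This gives the desired embedding $\Gamma\hookrightarrow \Lambda$.

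Having arranged $\Gamma$ as a free factor of $\Lambda$, Proposition~\ref{prop_freefactor} applies directly (it requires only that $\Lambda$ be projective, which is true here because $\Lambda$ is free, and that $\Gamma$ be a free factor) and asserts that the pair $(\Gamma,\Lambda)$ is projective. This finishes the proof.

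There is no serious obstacle here; the entire argument is a two-line composition of the cited lemma and proposition. The only thing to verify is that the hypotheses of Lemma~\ref{lem:ppHL} are met, namely that $\Gamma$ is projective of rank at most $\kappa$, which is exactly our assumption, and that $\Lambda$ is free of the same infinite rank $\kappa$, which is the hypothesis of the corollary.
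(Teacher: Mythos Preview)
Your proof is correct and is exactly the argument the paper intends: the corollary is stated without proof immediately after Lemma~\ref{lem:ppHL}, as a direct consequence of combining that lemma with Proposition~\ref{prop_freefactor}.
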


\subsection{Random Subgroups}
Let us start by fixing some notation. We write $e$-tuples in
bold face letters, e.g., $\bfb=(b_1\nek b_e)$. For a
homomorphism of profinite groups $\beta\colon H\to B$, we write
that $\beta(\bfh)=\bfb$ if $\beta(h_i) = b_i$ for all $i=1\nek
e$. Let $C$ be a coset of a subgroup $N^e$ in $H^e$, where
$N\leq \ker(\beta)$. By abuse of notation we write $\beta(C)$
for the unique element $\bfb\in B^e$ such that
$\beta(\bfc)=\bfb$ for every $\bfc \in C$.

\begin{proposition}
Let $\Lambda = \Fhat_\omega$. Then for almost all $\bfsigma \in
\Lambda^e$, $(\left<\bfsigma\right>, \Lambda)$ is projective.
\end{proposition}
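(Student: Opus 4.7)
The plan is to apply Corollary~\ref{cor:ppreductiontodoublesplit}: since $\Lambda = \Fhat_\omega$ is free and therefore projective, it suffices to show that for almost all $\bfsigma\in\Lambda^e$, every finite doubly split DEP for $(\Gamma,\Lambda)$ is weakly solvable, where $\Gamma = \langle\bfsigma\rangle$. (Almost surely $\Gamma$ is itself free of rank $e$ by Jarden's classical theorem, and in particular projective, so the reduction machinery applies.)

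I would fix a finite doubly split DEP $((\mu\colon\Gamma\to A,\alpha\colon G\to A),(\nu\colon\Lambda\to B,\beta\colon H\to B))$ and use the splittings of $\alpha$ and $\beta$ to identify $G = K\rtimes A$ and $H = N\rtimes B$, where $K=\ker\alpha$ and $N=\ker\beta$. A weak solution is then the same data as a continuous $1$-cocycle $\delta\colon\Lambda\to N$ (with respect to the $\Lambda$-action on $N$ via $\nu$) whose values $\delta(\sigma_i)$ lie in prescribed cosets of $K$ in $N$ for each $i = 1,\dots,e$. Since $\Lambda$ is free of countable rank, continuous cocycles $\Lambda\to N$ are freely parameterized by their values on a topological basis of $\Lambda$; hence the existence of a suitable $\delta$ is equivalent to the surjectivity of the evaluation map $Z^1(\Lambda,N)\to N^e$, $\delta\mapsto (\delta(\sigma_1),\dots,\delta(\sigma_e))$, for the given $\bfsigma$.

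The measure-theoretic core is to establish this surjectivity for almost all $\bfsigma$. Since $\Lambda$ is free and $\bfsigma$ is Haar-distributed, $\nu(\bfsigma)$ is Haar-uniform in $B^e$ for every finite quotient $\nu\colon\Lambda\to B$. Conditioning on $\nu(\bfsigma)$, the ``remainder'' $(\sigma_i\tilde\sigma_i^{-1})_i$ (for fixed coset representatives $\tilde\sigma_i$) is Haar-uniform in the open subgroup $\ker\nu$, which itself is free of countable rank. An independence argument in the spirit of Lemma~\ref{lem:independent_fiber} then forces the evaluation map to be almost surely surjective. Since the finite data $(A\le B,\,K\le N,\,\dots)$ range over a countable set up to isomorphism, a standard union bound yields full Haar measure for the ``good'' $\bfsigma$.

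The main obstacle is precisely this measure-theoretic surjectivity step: making rigorous the genericity that ensures a Haar-random $\bfsigma$ admits a cocycle extension taking arbitrary prescribed values at $\sigma_1,\dots,\sigma_e$. An equivalent and perhaps slicker formulation is that almost every $\bfsigma$ extends to a topological basis of $\Lambda$, which would make $\langle\bfsigma\rangle$ a free factor of $\Lambda$; Proposition~\ref{prop_freefactor} then gives projectivity of the pair at once. Either route reduces the proof to the abundance of independent lifts of finite embedding problems over a free profinite group of countable rank, which is the classical characterization of freeness for such groups.
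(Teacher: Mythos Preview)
Your cocycle-evaluation reformulation is equivalent to what the paper actually proves (see the Remark following the proof): for almost all $\bfsigma$, every finite embedding problem $(\nu,\beta)$ for $\Lambda$ admits a weak solution $\theta$ with $\theta(\bfsigma)$ equal to \emph{any} prescribed $\bfh$ satisfying $\beta(\bfh)=\nu(\bfsigma)$. So the target is right. But you explicitly leave the central step---the ``measure-theoretic surjectivity''---as an unsolved obstacle, and the gesture toward ``an independence argument in the spirit of Lemma~\ref{lem:independent_fiber}'' is not a proof.

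Here is how the paper closes this gap, bypassing both the doubly-split reduction and the cocycle language. Fix a finite embedding problem $(\nu\colon\Lambda\to B,\ \beta\colon H\to B)$ and a target $\bfh\in H^e$ with $\beta(\bfh)=\bfb$. Since $\Lambda=\Fhat_\omega$, one can solve the single embedding problem $(\nu,\ \betahat\colon H^{\bbN}_B\to B)$, where $H^{\bbN}_B$ is the fiber product of countably many copies of $H$ over $B$; projecting to the factors yields an \emph{infinite} family $\{\theta_i\}$ of solutions of $(\nu,\beta)$ whose kernels are independent in $\ker\nu$ (this is precisely Lemma~\ref{lem:independent_fiber}). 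The cosets $X_i=\{\bfsigma:\theta_i(\bfsigma)=\bfh\}$ are then independent inside $C=\{\bfsigma:\nu(\bfsigma)=\bfb\}$, each of conditional measure $(|B|/|H|)^e$, so Borel--Cantelli gives $m_C\bigl(\limsup X_i\bigr)=1$. Intersecting over the countably many triples $(\nu,\beta,\bfh)$ finishes.

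Two further comments. First, routing through Corollary~\ref{cor:ppreductiontodoublesplit} is awkward here because the lower embedding problem---in particular $A=\nu(\langle\bfsigma\rangle)$---depends on $\bfsigma$, so ``fix a DEP, then vary $\bfsigma$'' is not well-posed; the paper sidesteps this by working only with the upper embedding problem and proving the stronger pointwise statement. Second, your free-factor alternative is attractive, but showing that almost every $\bfsigma$ extends to a basis of $\Fhat_\omega$ is essentially equivalent to the strong lifting property above, so it does not shortcut the argument.
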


\begin{proof}
As a profinite group, $\Lambda$ is equipped with a normalized Haar
measure $m$. Let
\begin{eqnarray}
\label{eq_ep}%
\fEP{\Lambda}{\mu}{H}{\beta}{B}
\end{eqnarray}
be a finite embedding problem for $\Lambda$, let $\bfb \in B^e$, let
$A = \<\bfb\>$ be the subgroup of $B$ generated by $\bfb$, and let
$\bfh \in H^e$ be such that $\beta(\bfh) = \bfb$. Define $\Sigma =
\Sigma(\bfb,\bfh,\mu,\beta) \subseteq \Lambda^e$ to be the following
set.
\[
\Sigma=\{\bfsigma \in \Lambda^e\mid (\mu(\bfsigma) = \bfb)\
\Rightarrow\ (\exists \theta\colon \Lambda \to H, \
(\beta\theta=\mu)\wedge (\theta(\bfsigma)=\bfh))\},
\]
that is to say, all $\bfsigma \in \Lambda^e$ such that there exists
a weak solution $\theta$ of \eqref{eq_ep} with $\theta(\bfsigma) =
\bfh$, provided $\mu(\bfsigma) = \bfb$. Note that $\Sigma =
(\Sigma\cap C)\cup (\Lambda^e \smallsetminus C)$, where $C$ is the
coset of $\ker(\mu)^e$ in $\Lambda^e$ for which $\mu(C)=\bfb$.

We break the proof into three parts. In the first two we show that
$m(\Sigma\cap C) = m(C)$, and hence $m(\Sigma) = 1$.

\scsltitle{Part A:}{Construction of solutions.} Let
\[
\Delta = \{ (b_i)\in H^{\bbN} \mid \beta(b_i) = \beta(b_j)\ \forall
i,j\in \bbN\}.
\]
It is equipped with canonical projections $\pi_i\colon \Delta\to
H_i$, $i\in \bbN$. Set $\betahat\colon \Delta \to B$ by $\betahat(x)
= \beta\pi_i(x)$, $x\in \Delta$. Note that this definition is
independent of $i$ and $\betahat$ is an epimorphism.

Let $\theta\colon \Lambda \to \Delta$ be a solution of $(\mu\colon
\Lambda \to B, \betahat\colon \Delta\to B)$ (for the existence of
$\theta$ see \cite[Theorem~3.5.9]{RibesZalesskii}). Then for every
$i\in \bbN$ the map $\theta_i = \pi_i\theta$ is a solution of
\eqref{eq_ep}. Moreover, by Lemma~\ref{lem:independent_fiber} the
set $\{\ker(\theta_i)\}$ is an independent set of subgroups of
$\ker(\mu)$.

\vspace{10pt} \scsltitle{Part B:}{Calculating $m(\Sigma)$.} For each
$i\in \bbN$ take the coset $X_i$ of $\ker(\theta_i)^e$ with
$\theta_i(X_i) = \bfh$. Then, since
\[
\mu(X_i) = \beta \theta_i(X_i) = \beta(\bfh) = \bfb,
\]
it follows that $X_i \subseteq C$. Moreover, from it follows Part A
that $\{X_i\mid i\in \bbN\}$ is an independent set in $C$.

By Borel-Cantelli Lemma \cite[Lemma 18.3.5]{FriedJarden2005},
$\sum_i m_{C}(X_i) = \sum_i \frac{|B|^e}{|H|^e} = \infty$ implies
that $m_C(X) = 1$, where $m_C$ is the normalized Haar measure on $C$
and $X = \cap_{j = 1}^\infty \cup_{i=j}^\infty X_i$. So it suffices
to show that $X\subseteq \Sigma$.

Indeed, let $\bfsigma \in X$. Then $\bfsigma \in X_i$ for some $i$.
It implies that $\theta_i$ is a solution of \eqref{eq_ep} and that
$\theta_i(\bfsigma) =\bfh$. Hence $\bfsigma\in \Sigma$ and
$X\subseteq \Sigma$, as desired.
\vspace{10pt}\scsltitle{Part C:}{Conclusion.} Let $\Upsilon$ be the
intersection of all $\Sigma(\bfb,\bfh,\mu,\beta)$. Since there are
only countably many of them and each is of measure $1$, we have
$m(\Upsilon)=1$. Let $\bfsigma \in \Upsilon$ and let $\Gamma =
\<\bfsigma\>$.

Then $\Gamma\leq\Lambda$ is projective. Indeed, consider a double
embedding problem as in \eqref{double embedding problem} and choose
$\bfh\in G$ such that $\beta(\bfh)=\mu(\bfsigma)$. Then, since
$\bfsigma \in \Sigma(\mu(\bfsigma), \bfh ,\mu,\beta)$, there exists
an homomorphism $\theta\colon \Lambda \to H$ such that
$\theta(\Gamma)=\<\theta(\bfsigma)\>=\<\bfh\>\leq G$.
\end{proof}

\begin{remark}
In the above theorem we actually prove  that for almost all
$\bfsigma\in \Lambda^e$ the pair $(\<\bfsigma\> ,\Lambda)$ has the
following strong lifting property. For any embedding problem
\eqref{double embedding problem} and for any $\bfh\in G^e$ that
satisfies $\alpha(\bfh)=\mu(\bfsigma)$ there exists a weak solution
$\theta\colon \Lambda\to B$ with $\theta(\bfsigma) = \bfh$.
\end{remark}

\section{Restrictions on Projective Pairs}
We prove the analogs for projective pairs of PAC extensions
properties.

\begin{lemma}\label{lem:normalprojectivepairs}
Let $(\Gamma, \Lambda)$ be a $\C$-projective pair and assume that
$\Gamma\normal \Lambda$. Then either $\Gamma=1$ or $\Gamma=\Lambda$.
\end{lemma}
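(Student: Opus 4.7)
Suppose toward contradiction that $1 \neq \Gamma \neq \Lambda$. Since $\Gamma$ is a nontrivial pro-$\C$ group, fix an epimorphism $\eta\colon \Gamma \to A$ onto a nontrivial $A \in \C$. Since $\Lambda/\Gamma$ is also a nontrivial pro-$\C$ group, there is an open normal subgroup $N \leq \Lambda$ with $\Gamma \leq N$ and $G = \Lambda/N$ nontrivial in $\C$; let $\nu\colon \Lambda \to G$ be the quotient. The plan is to build a $\C$-DEP whose lower EP has $\eta$ as a weak solution, use the lifting property to lift $\eta$ to a weak solution $\theta\colon \Lambda \to A\wr G$ of the whole DEP, and then exploit the normality of $\Gamma$ inside $\Lambda$ to force $A = 1$.

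The DEP I would use is
\[
\xymatrix@C=40pt{
    &\Lambda\ar[dr]^{\nu}\ar@{.>}[dl]_{\theta}\\
A\wr G\ar[rr]^(.4){\alpha}
        &&G\\
    &\Gamma\ar[dr]^{\mbox{\scriptsize triv}}\ar'[u][uu]\ar[dl]_{\eta}\\
A\ar[rr]\ar[uu]^{i}
        &&1,\ar[uu]
}
\]
where $\alpha$ is the quotient map of the wreath product and $i\colon A \to A\wr G$ is the embedding $a \mapsto f_a$ from Lemma~\ref{lem:embeddinginwreath} (with $G_0 = 1$). All four groups lie in $\C$ since $\C$ is a Melnikov formation, so this is a $\C$-DEP, and the trivial map together with $\eta$ form a weak solution of the lower EP. By the Lifting Property there exists a compatible $\theta\colon \Lambda \to A\wr G$ with $\alpha\theta = \nu$ and $\theta|_{\Gamma} = i\eta$; in particular $\theta(\Gamma) = i(A)$ and $\alpha(\theta(\Lambda)) = G$.

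The main computation will be to check that in $A \wr G$,
\[
i(A)^{f\sigma} \cap i(A) = 1 \quad \text{whenever } \sigma \neq 1.
\]
Indeed, using the multiplication rule $(f\sigma)(g\tau) = fg^{\sigma^{-1}}\sigma\tau$, a direct computation yields $(f\sigma)^{-1} f_a (f\sigma) = (f^{-1}f_a f)^{\sigma}$, which is the element of $\Ind_1^G(A)$ supported at $\sigma^{-1}$ with value $f(1)^{-1}a f(1)$; this lies in $i(A)$ (functions supported at $1$) only when $\sigma = 1$ or $a = 1$.

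Now comes the payoff from normality. Since $\Gamma \normal \Lambda$, the subgroup $\theta(\Gamma) = i(A)$ is stable under conjugation by $\theta(\Lambda)$. Picking any $\sigma \neq 1$ in $G$ and lifting via $\alpha\theta = \nu$ to some $f\sigma \in \theta(\Lambda)$, we get $i(A)^{f\sigma} \subseteq i(A)$, which together with the displayed identity forces $i(A) = 1$ and hence $A = 1$, contradicting the choice of $A$. The main obstacle is purely computational — verifying the conjugation formula in the twisted wreath product and checking the supports of the resulting $\Ind$-functions — but the group-theoretic mechanism is precisely the wreath-product trick used in the proof of Lemma~\ref{lem:noGaloisExt} for Galois PAC extensions, now freed from any regular-realizability assumption thanks to the Melnikov closure of $\C$.
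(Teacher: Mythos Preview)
Your proof is correct and follows essentially the same route as the paper's: construct the $\C$-DEP with $A\wr G$ on top via the embedding $a\mapsto f_a$, apply the lifting property to extend $\eta$ to $\theta$, and then use normality of $\Gamma$ in $\Lambda$ to force $i(A)=1$ from the disjointness $i(A)^{f\sigma}\cap i(A)=1$ for $\sigma\neq 1$. The only cosmetic difference is that the paper first invokes Lemma~\ref{lem:criterionforpropernesswreath} to upgrade $\theta$ to a surjection onto $A\wr G$ (so that $i(A)\normal A\wr G$), whereas you observe directly that $i(A)\normal\theta(\Lambda)$ and that $\alpha\theta=\nu$ already guarantees an element $f\sigma\in\theta(\Lambda)$ with $\sigma\neq 1$; this saves one citation but is the same argument.
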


\begin{proof}
Let $N\normal \Gamma$ be an open normal subgroup of $\Gamma$ and let
$D\normal \Lambda$ be an open normal subgroup of $\Lambda$ such that
$\Gamma\leq D$. Write $A=\Gamma/N$ and $G = \Lambda/D$ and let
$\eta\colon \Gamma\to A$ and $\mu \colon \Lambda \to G$ be the
natural quotient maps. Identify $A$ with the subgroup $\{f_a\mid
a\in A\}$ of $A\wr G$. (Recall that $f_a(1)=a$ and $f_a(\sigma)=1$
for any $\sigma\neq 1$.)

Extend $\eta$ to a weak solution $\theta$ of
\[
\xymatrix{%
    &\Gamma\ar[d] \ar[ld]_{\eta}\\
A\ar[r]
    &1
}%
\qquad
\xymatrix{%
    &\Lambda\ar[d]^{\mu} \ar[ld]_{\theta}\\
A\wr G\ar[r]^-{\pi}
    &G.
}%
\]
Lemma~\ref{lem:criterionforpropernesswreath} implies that $\theta$
is surjective (since $A=\theta(\Gamma)\leq \theta(\Lambda)$). Since
$\Gamma\normal \Lambda$ we have $A\normal A\wr G$. This evidently
implies that either $A=1$ or $G=1$, and hence the assertion.
\end{proof}

\begin{proposition}\label{prop:normalprojectivepairs}
Let $(\Gamma,\Lambda)$ be a $\C$-projective pair such that
$\Gamma\neq \Lambda$. Then $\bigcap_{x\in \Lambda}\Gamma^x = 1$.
\end{proposition}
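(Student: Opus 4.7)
The plan is to reduce this to the normal case already settled in Lemma~\ref{lem:normalprojectivepairs}, by using the splitting result Corollary~\ref{cor:splittingpp} to carve out a subpair $(N,M)$ of $(\Gamma,\Lambda)$ in which the left entry becomes normal on the nose. Set $N=\bigcap_{x\in\Lambda}\Gamma^x$; this is the normal core, so $N\normal\Lambda$ and in particular $N\normal\Gamma$.

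The first step is to feed $N$ into Corollary~\ref{cor:splittingpp}. Since $N\normal\Gamma$, the ``moreover'' clause of that corollary produces a subgroup $M\leq\Lambda$ with
\[
N=\Gamma\cap M,\qquad \Gamma\cdot M=\Lambda,\qquad M\normal\Lambda.
\]
Next I would invoke Proposition~\ref{prop:ppundersubgroups} with $\Lambda_0=M$, yielding that the pair $(\Gamma\cap M,\,M)=(N,M)$ is itself $\C$-projective. Because $N\normal\Lambda$ and $N\leq M$, we have $N\normal M$, so Lemma~\ref{lem:normalprojectivepairs} applies to $(N,M)$ and gives the dichotomy $N=1$ or $N=M$.

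Finally I would eliminate the second alternative: if $N=M$, then $M=N\leq\Gamma$, and hence $\Lambda=\Gamma M=\Gamma$, contradicting the hypothesis $\Gamma\neq\Lambda$. Therefore $N=1$, as claimed. I do not anticipate any serious obstacle here; the argument is essentially formal, with the only delicate point being the verification that the $M$ provided by Corollary~\ref{cor:splittingpp} is \emph{normal} in $\Lambda$ (which is exactly the content of the last sentence of that corollary and what makes the reduction to Lemma~\ref{lem:normalprojectivepairs} available).
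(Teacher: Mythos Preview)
Your proof is correct and follows essentially the same route as the paper's: apply Corollary~\ref{cor:splittingpp} to the normal core, pass to the subpair via Proposition~\ref{prop:ppundersubgroups}, and finish with Lemma~\ref{lem:normalprojectivepairs}. One small remark: the normality of $M$ in $\Lambda$ that you flag as the ``delicate point'' is actually not needed---what you use is $N\normal M$, which follows already from $N\normal\Lambda$ and $N\leq M$, and indeed the paper's proof does not invoke the ``moreover'' clause of Corollary~\ref{cor:splittingpp}.
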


\begin{proof}
Let $\Gamma_0 = \bigcap_{x\in \Lambda}\Gamma^x$. By
Corollary~\ref{cor:splittingpp} there exists $\Lambda_0$ such that
$\Gamma_0 = \Lambda_0\cap \Gamma$ and $\Gamma\Lambda_0=\Lambda$. In
particular $\Gamma_0 \neq \Lambda_0$. Moreover, by
Proposition~\ref{prop:ppundersubgroups}, $(\Gamma_0, \Lambda_0)$ is
a $\C$-projective pair. But since $\Gamma_0\normal \Lambda_0$ and
$\Gamma_0\neq \Lambda_0$, Lemma~\ref{lem:normalprojectivepairs}
implies $\Gamma_0=1$.
\end{proof}

\begin{corollary}\label{cor:openpp}
Let $(\Gamma,\Lambda)$ be a $\C$-projective pair. Assume that
$\Gamma$ is open in $\Lambda$. Then $\Gamma=\Lambda$.
\end{corollary}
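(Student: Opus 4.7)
The approach is to show that a normal complement of $\Gamma$ in $\Lambda$ must be trivial. First I would apply Corollary~\ref{cor:semidirectproductpp} to the projective pair $(\Gamma,\Lambda)$ to obtain a decomposition $\Lambda = M\rtimes \Gamma$ with $M\normal \Lambda$. Since $\Gamma$ is open in the profinite group $\Lambda$, the index $[\Lambda:\Gamma]$ is finite; together with $\Lambda=M\Gamma$ and $M\cap \Gamma=1$ this forces $|M|=[\Lambda:\Gamma]<\infty$, so $M$ is a finite group.

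The second step is to invoke Proposition~\ref{prop:ppundersubgroups} with $\Lambda_0=M$. Since $M\cap \Gamma=1$, the pair $(1,M)$ is $\C$-projective, which by Proposition~\ref{prop:Cproj-simplecharac} means $M$ itself is a $\C$-projective pro-$\C$ group. It then remains only to show that a finite $\C$-projective pro-$\C$ group is trivial; once this is established we get $M=1$, hence $\Lambda=\Gamma$, as desired.

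The main obstacle is the last step. For $\C$ the class of all finite groups this is a standard fact, since a projective profinite group has cohomological dimension at most one while any nontrivial finite group has nonvanishing cohomology in arbitrarily high degrees. For a general Melnikov formation $\C$ one uses that normal subgroups of groups in $\C$ again lie in $\C$ (by the short exact sequence axiom): a minimal normal subgroup of a nontrivial finite $M\in\C$, together with a standard cohomological construction, yields a nonsplit $\C$-extension of $M$ whose kernel lies in $\C$, contradicting the $\C$-projectivity of $M$.
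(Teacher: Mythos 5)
Your proof is correct and takes a genuinely different route from the paper's. The paper argues via the normal core: openness of $\Gamma$ makes $\bigcap_{x\in\Lambda}\Gamma^x$ open, Proposition~\ref{prop:normalprojectivepairs} forces that core to be trivial if $\Gamma\neq\Lambda$, hence $\Lambda$ would be finite and nontrivial, contradicting the $\C$-projectivity of $\Lambda$. You instead pass to the complement: $\Lambda=M\rtimes\Gamma$ from Corollary~\ref{cor:semidirectproductpp}, $|M|=(\Lambda:\Gamma)<\infty$, and Propositions~\ref{prop:ppundersubgroups} and~\ref{prop:Cproj-simplecharac} exhibit $M$ as a finite $\C$-projective group. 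Both arguments therefore terminate in the same assertion --- a nontrivial finite group cannot be $\C$-projective --- which the paper states as a bare ``contradiction'' and which you at least flag and attempt to justify. Be aware, though, that your cohomological sketch for a general Melnikov formation does not go through: when the finite group has only non-abelian composition factors, every minimal normal subgroup is centerless, the relevant $H^2$-groups vanish, and the extensions one can build inside $\C$ all split (for instance, for the Melnikov formation of finite groups all of whose composition factors are isomorphic to $A_5$, the group $A_5$ itself appears to be $\C$-projective, so the terminal fact really does need some restriction on $\C$, e.g.\ to the three subgroup-closed formations the paper lists). Since the paper's own proof leans on exactly the same fact with no justification at all, this does not put your argument behind the paper's; yours is simply a different assembly of Corollary~\ref{cor:splittingpp} and Proposition~\ref{prop:ppundersubgroups}, the same ingredients that underlie the paper's Proposition~\ref{prop:normalprojectivepairs}.
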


\begin{proof}
Assume that $\Gamma\neq \Lambda$ (and in particular, $\Lambda\neq
1$). Since $\Gamma$ is open, the normal core $\bigcap_{x\in \Lambda}
\Gamma^x$ is also open. By
Proposition~\ref{prop:normalprojectivepairs}, $\bigcap_{x\in
\Lambda} \Gamma^x=1$. Consequently $\Lambda/\bigcap_{x\in \Lambda}
\Gamma^x = \Lambda$ is finite. This contradicts the fact that
$\Lambda$ is $\C$-projective.
\end{proof}

\begin{proposition}
Let $\Lambda$ be a $\C$-profinite group and $\Gamma$ a $p$-Sylow
subgroup. Assume that $\Lambda$ has a non-abelian simple quotient
that is divisible by $p$. Then the pair $(\Gamma,\Lambda)$ is not
$\C$-projective.
\end{proposition}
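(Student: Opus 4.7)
The plan is to assume $(\Gamma,\Lambda)$ is $\C$-projective and derive a contradiction by inspecting the non-abelian simple quotient $S$ through the semidirect product decomposition coming from Corollary~\ref{cor:semidirectproductpp}. That corollary yields $\Lambda \cong M\rtimes\Gamma$ for some closed normal subgroup $M\normal\Lambda$; in particular $M\cap\Gamma = 1$ and $M\Gamma = \Lambda$.

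First I would exploit that $\Gamma$ is a $p$-Sylow of $\Lambda$. Since $M$ is a closed complement to $\Gamma$, the supernatural order of $M$ equals the index $|\Lambda:\Gamma|$; by the standard profinite Sylow theory the index of a $p$-Sylow is coprime to $p$, so $|M|$ is coprime to $p$ as a supernatural number.

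Next let $\pi\colon \Lambda \to S$ be the given surjection onto a non-abelian simple group $S$ with $p\mid|S|$. Because $M\normal\Lambda$, the image $\pi(M)$ is a normal subgroup of $S$, so by simplicity either $\pi(M) = 1$ or $\pi(M) = S$. In the first case $S = \pi(\Gamma)$ is a continuous image of the pro-$p$ group $\Gamma$, hence a nontrivial finite $p$-group; this is impossible, since a nontrivial $p$-group has nontrivial center and cannot be non-abelian simple. In the second case $\pi|_{M}\colon M\to S$ is surjective, so $|S|$ divides $|M|$ as supernatural numbers, which is incompatible with $p\mid|S|$ together with $p\nmid|M|$. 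Either way we reach a contradiction.

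There is no real obstacle in the argument; the proof is essentially bookkeeping once Corollary~\ref{cor:semidirectproductpp} is in hand. The only point deserving explicit mention is the profinite Sylow fact that the supernatural index $|\Lambda:\Gamma|$ is coprime to $p$, which is precisely what forces the numerical contradiction in the second case.
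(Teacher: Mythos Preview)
Your proof is correct and follows essentially the same route as the paper: assume projectivity, invoke Corollary~\ref{cor:semidirectproductpp} to get $\Lambda = M\rtimes\Gamma$ with $p\nmid|M|$, and then analyze the image of $M$ in the simple quotient $S$ to reach a contradiction. The only cosmetic difference is that the paper first rules out $\psi(M)=S$ and then derives the contradiction from $S=\psi(M)\psi(\Gamma)=\psi(\Gamma)\lneq S$, whereas you phrase it as a clean two-case split; the content is identical.
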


\begin{proof}
Assume the contrary, i.e.\ $(\Gamma,\Lambda)$ is $\C$-projective.
Hence, by Corollary~\ref{cor:semidirectproductpp}, we have that
$\Lambda = M\rtimes \Gamma$. Note that $p\nmid (\Lambda:\Gamma) =
|M|$ since $\Gamma$ is $p$-Sylow.

Let $\psi\colon \Lambda \to S$ be an epimorphism onto a non-abelian
simple group of order divisible by $p$. Then $\psi(M)\neq S$. On the
one hand, $\psi(M)=1$, since $\psi(M)\normal S$. On the other hand,
$\psi(\Gamma)$ is a proper subgroup of $S$. (Otherwise $S$ would be
a $p$-group, which is solvable.) The assertion now follows from the
contradiction $S = \psi(\Lambda) = \psi(M)\psi(\Gamma) =
\psi(\Gamma)< S$.
\end{proof}

\section{Applications to PAC Extensions}

\subsection{Proof of Proposition~\ref{prop:projpair-PACext}}
\begin{trivlist}
\item
Let $M$ be a PAC extension of a PAC field $K$. Since $\gal(M)$ and
$\gal(M\cap K_s)$ are isomorphic and $M\cap K_s/K$ is PAC
(Theorem~\ref{thm:nonalgebraicPAC}), we can assume that $M/K$ is
separable and algebraic. Let $\Gamma = \gal(M)$ and
$\Lambda=\gal(K)$.

Since $K$ is PAC, $\Lambda$ is projective \cite[Theorem
11.6.2]{FriedJarden2005}. By
Corollary~\ref{cor:ppreductiontodoublesplit}, to show that $\phi$ is
projective it suffices to solve a doubly split double embedding
problem \eqref{double embedding problem}. Over PAC fields any finite
split embedding problem is rational \cite{Pop1996,HaranJarden1998},
and hence \eqref{double embedding problem} is rational by
definition. Hence there exists a weak solution
(Proposition~\ref{prop:ExistGeoSol}).

Next assume that $M/K$ is algebraic and separable and that
$(\gal(M),\gal(K))$ is projective. By
Proposition~\ref{prop:ExistGeoSol}, to show that $M/K$ is PAC it
suffices to geometrically solve (in the weak sense) all finite
rational double embedding problems. Since $(\gal(M),\gal(K))$ is
projective, the double embedding problem is weakly solvable.
Moreover, all weak solutions are geometric
(Corollary~\ref{cor:PACfield_solutionisgeometric}), as needed.

Let $(\Gamma,\Lambda)$ be a projective pair. We would like to find a
PAC extension $M/K$ such that $\Gamma = \gal(M)$, $\Lambda=\gal(K)$.

By \cite[Corollary 23.1.2]{FriedJarden2005}, there exists a PAC
field $K$ such that $\gal(K)\cong \Lambda$ (since $\Lambda$ is
projective). Let $M$ be the fixed field of $\Gamma$, i.e., $\gal(M)
= \Gamma$. Since $(\Gamma,\Lambda)$ is projective, by the third
paragraph, $M/K$ is PAC. \qed
\end{trivlist}

\subsection{New Examples of PAC Extensions}

\begin{proposition}
Let $K_0$ be a field which has a PAC extension $K/K_0$. Assume that
$\gal(K)$ is free of infinite rank $\kappa$. Then for any projective
group $P$ of rank $\leq \kappa$ there exists a PAC extension $M/K_0$
such that $P\cong \gal(M)$.
\end{proposition}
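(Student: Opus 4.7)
The plan is to realize $P$ as a free factor of $\gal(K)$, take $M$ to be the corresponding fixed field, and then use the characterization of PAC extensions of PAC fields together with transitivity of PAC extensions.

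First, I would reduce to the case where $K/K_0$ is separable and algebraic. Indeed, by Theorem~\ref{thm:nonalgebraicPAC}, the intersection $K':=K\cap K_{0s}$ is a PAC extension of $K_0$ and the restriction map $\gal(K)\to \gal(K')$ is an isomorphism; in particular $\gal(K')$ is free of infinite rank $\kappa$. So I replace $K$ by $K'$ and assume $K\subseteq K_{0s}$. Note that since $K/K_0$ is PAC the field $K$ is itself PAC (take the trivial case of the definition).

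Next, since $\gal(K)\cong\Fhat_\kappa$ is free of infinite rank $\kappa$ and $P$ is projective of rank $\leq\kappa$, Corollary~\ref{cor:ppHL} (which in turn rests on the Haran--Lubotzky decomposition $\Fhat_\kappa\cong P\ast\Fhat_\kappa$ of Lemma~\ref{lem:ppHL} together with Proposition~\ref{prop_freefactor}) provides an embedding of $P$ into $\gal(K)$ such that the pair $(P,\gal(K))$ is projective. Let $M$ be the fixed field of (the image of) $P$ in $K_s$. Then $M\supseteq K$, $M/K$ is separable algebraic, and $\gal(M)\cong P$ under the restriction map $\gal(K)\to\gal(M/K)$ composed with the embedding.

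Now I invoke Proposition~\ref{prop:projpair-PACext}(b): since $K$ is a PAC field, $M/K$ is algebraic, and $(\gal(M),\gal(K))=(P,\gal(K))$ is projective, the extension $M/K$ is PAC. Finally, since both $M/K$ and $K/K_0$ are separable algebraic PAC extensions, the transitivity statement of Proposition~\ref{prop:transitivityofPACextension} gives that $M/K_0$ is PAC, and by construction $\gal(M)\cong P$, which is the desired conclusion.

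No step appears genuinely hard: the main content has been packaged into Corollary~\ref{cor:ppHL}, Proposition~\ref{prop:projpair-PACext}, and the transitivity proposition. The only minor subtlety is the reduction to separable algebraic extensions, which is handled cleanly by Theorem~\ref{thm:nonalgebraicPAC}.
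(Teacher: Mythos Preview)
Your proof is correct and follows essentially the same route as the paper: reduce to a separable algebraic extension via Theorem~\ref{thm:nonalgebraicPAC}, use Corollary~\ref{cor:ppHL} together with Proposition~\ref{prop:projpair-PACext} to obtain a PAC extension $M/K$ with $\gal(M)\cong P$, and conclude by transitivity (Proposition~\ref{prop:transitivityofPACextension}). One small expository slip: once $M$ is the fixed field of the image of $P$ in $\gal(K)$, the identification $\gal(M)\cong P$ is immediate from Galois theory---there is no restriction map $\gal(K)\to\gal(M/K)$ involved.
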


\begin{proof}
By Theorem~\ref{thm:nonalgebraicPAC}, we can assume that $K/K_0$ is
a separable algebraic extension. By Corollary~\ref{cor:ppHL} and
Proposition~\ref{prop:projpair-PACext} it follows that there exists
a (separable algebraic) PAC extension $M/K$ with $\gal(M) \cong P$.
Now transitivity of PAC extensions
(Proposition~\ref{prop:transitivityofPACextension}) implies that
$M/K_0$ is PAC.
\end{proof}

Recall that a Galois extension $N/K$ is unbounded if the set
$\{\ord(\sigma)\mid \sigma\in \gal(N/K)\}$ is unbounded.

\begin{corollary}\label{cor:ppAbelianExtension}
Let $P$ be a projective group of at most countable rank, let $E$ be
a countable Hilbertian field, and let $K_0/E$ be an unbounded
abelian extension. Then $K_0$ has a PAC extension $M$ such that
$P\cong \gal(M)$.
\end{corollary}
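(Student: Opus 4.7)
The plan is to reduce to the preceding Proposition by producing a PAC extension $K/K_0$ whose absolute Galois group is free of countably infinite rank; the Proposition then realizes any projective $P$ of rank $\leq\aleph_0$ as $\gal(M)$ for some PAC extension $M/K_0$.

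First I would verify that $K_0$ is itself a countable Hilbertian field. Countability is immediate since $K_0/E$ is algebraic and $E$ is countable. For Hilbertianity, apply Razon's theorem from the introduction with $N=K_0$: for almost all $\bfsigma\in\gal(E)^e$ every subextension of $E_s(\bfsigma)K_0/K_0$ is Hilbertian, and taking the trivial subextension $M=K_0$ yields that $K_0$ is Hilbertian.

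Next I would apply the countable-rank version of Theorem~A to $K_0$: for almost all $\bfsigma\in\gal(K_0)^{\bbN}$ the field $K:=K_{0,s}(\bfsigma)$ is a PAC extension of $K_0$ and $\gal(K)=\langle\bfsigma\rangle$ is a free profinite group of countable rank. Fix such $\bfsigma$; the preceding Proposition, applied to $K/K_0$, then produces a PAC extension $M/K_0$ with $\gal(M)\cong P$, as required.

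The main technical point to verify is the $e=\aleph_0$ analogue of Theorem~A, which is stated in the introduction only for positive integers $e$; the measure-theoretic argument behind Theorem~A extends routinely to countable-length tuples over a countable Hilbertian field (cf.\ \cite[Chapters~18 and~25]{FriedJarden2005}). Alternatively, one may bypass the need for Hilbertianity of $K_0$ directly by using \cite[Lemma~2.1]{JardenRazon1994} together with Jarden's theorem to construct a countable Hilbertian PAC intermediate extension $E_s(\bfsigma)K_0/K_0$, then applying Theorem~A to this intermediate field and invoking transitivity of PAC extensions (Proposition~\ref{prop:transitivityofPACextension}).
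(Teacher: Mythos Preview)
Your main line has a real gap: you rely on an $e=\aleph_0$ version of Theorem~A (Jarden--Razon), asserting that for almost all countable tuples $\bfsigma\in\gal(K_0)^{\bbN}$ the field $K_{0,s}(\bfsigma)$ is a PAC \emph{extension} of $K_0$ with $\gal\cong\Fhat_\omega$. Jarden's classical result does give PAC fields with free absolute Galois group of countable rank for almost all countable tuples, but Jarden--Razon's strengthening to PAC \emph{extensions} is proved only for finite $e$, and the passage to countable products is not the same measure-theoretic argument; calling it ``routine'' is optimistic without a reference or a proof.

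The paper avoids this issue completely. It cites Razon's argument (Proposition~3.8 of \cite{Razon1997}), which works with a \emph{finite} tuple $\bfsigma\in\gal(E)^e$: the field $K=E_s(\bfsigma)K_0$ is PAC over $K_0$ (by Jarden--Razon plus base change, \cite[Lemma~2.1]{JardenRazon1994}) and Hilbertian (by Razon's theorem); being countable, PAC, and Hilbertian, it is $\omega$-free by Theorem~\ref{thm:Roquette}, so $\gal(K)\cong\Fhat_\omega$. Your ``alternative'' paragraph is actually headed in exactly this direction, but you stop short of the key step: rather than ``applying Theorem~A to this intermediate field'' (which would only yield finite rank and leave you no better off), you should invoke Roquette--Fried--V\"olklein--Pop to get $\Fhat_\omega$ directly from PAC $+$ Hilbertian. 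With that one change your alternative becomes the paper's proof.
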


\begin{proof}
In the proof of \cite[Proposition 3.8]{Razon1997} it is shown that
there exists a PAC extension $K/K_0$ such that $\gal(K)\cong
\Fhat_\omega$. Hence the assertion follows from the previous
proposition.
\end{proof}

\begin{corollary}
Let $P$ be a projective profinite group. Then there exists a
Hilbertian field $K$ and a PAC extension $M/K$ such that
$\gal(M)\cong P$.
\end{corollary}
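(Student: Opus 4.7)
The plan is to reduce the statement to the preceding proposition. Setting $\kappa = \max(\operatorname{rank}(P), \aleph_0)$, it suffices to construct a Hilbertian field $K_0$ together with a PAC extension $K/K_0$ whose absolute Galois group is the free profinite group $\Fhat_\kappa$: once that is in place, the preceding proposition (applied to the projective group $P$ of rank $\le \kappa$) produces a PAC extension $M/K_0$ with $\gal(M)\cong P$, and the Hilbertian field of the corollary is then $K_0$ itself. In particular, for countable $\kappa$ this is exactly Corollary~\ref{cor:ppAbelianExtension}, so the real content is the uncountable case.

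For the base field $K_0$, I would start from a prime Hilbertian field (say $\bbQ$ or $\bbF_p(t)$) and adjoin a set of algebraically independent transcendentals of cardinality $\kappa$ to obtain a Hilbertian field $E$ of cardinality $\kappa$. Then take $K_0$ to be an unbounded abelian extension of $E$, for example $K_0 = E(\zeta_\infty)$, the compositum of all cyclotomic extensions of $E$. Weissauer's theorem \cite[Theorem 13.9.1]{FriedJarden2005} guarantees that $K_0$ remains Hilbertian. This is essentially the setup of Corollary~\ref{cor:ppAbelianExtension}, only with the cardinality of $E$ boosted to accommodate $\kappa$.

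The remaining task is the uncountable generalization of \cite[Proposition~3.8]{Razon1997}: produce a PAC extension $K/K_0$ with $\gal(K)\cong\Fhat_\kappa$. I would build $K$ by transfinite induction of length $\kappa$. At each successor stage $\alpha+1$, having already built a PAC extension $K_\alpha/K_0$ whose absolute Galois group is free of rank $|\alpha|+\aleph_0$, apply Razon's countable-rank result to $K_\alpha$ (which is still Hilbertian, being an abelian-like extension of a Hilbertian field) to adjoin one further free generator while keeping the extension PAC; at limit stages take unions, invoking transitivity of PAC extensions (Proposition~\ref{prop:transitivityofPACextension}) to keep $K_\alpha/K_0$ PAC throughout. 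Finally, after $\kappa$ steps, verify freeness of $\gal(K)$ via the Iwasawa-style criterion that every nontrivial finite embedding problem has $\kappa$ independent solutions, using Lemma~\ref{lem:independent_fiber} at each stage to arrange the solutions.

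The main obstacle is arranging this transfinite construction so that the resulting $\gal(K)$ is genuinely free of rank $\kappa$, and not merely a free profinite group with many independent countable-rank quotients. The cleanest way to control this is to book-keep, at each stage, all finite embedding problems available at that stage (there are at most $|K_\alpha| \le \kappa$ of them) and to ensure each receives $\kappa$ independent solutions across the construction; one then invokes the characterization $\Fhat_\kappa\cong \Fhat_\kappa\ast \Fhat_\kappa$ implicit in Lemma~\ref{lem:ppHL} to see that enough freedom remains at every stage to add the next generator without destroying PACness.
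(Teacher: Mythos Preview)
Your reduction to the preceding proposition is correct, and for countable rank you are right that Corollary~\ref{cor:ppAbelianExtension} already does the job (the base $K_0$ there is Hilbertian because abelian extensions of Hilbertian fields are Hilbertian---this is Kuyk's theorem rather than Weissauer's, but that is a minor point).

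The uncountable case, however, has a real gap. Your transfinite induction never gets off the ground: at a successor step you propose to ``apply Razon's countable-rank result to $K_\alpha$,'' but Razon's result needs the base to be an unbounded abelian extension of a \emph{countable} Hilbertian field, and your $K_\alpha$ is neither countable once $\alpha$ is large nor of that special shape---it is an algebraic PAC extension of $K_0$, not an abelian one, so the phrase ``abelian-like extension of a Hilbertian field'' is not a justification for its Hilbertianity. Moreover, Razon's construction produces a PAC extension with absolute Galois group $\Fhat_\omega$ all at once; it does not ``adjoin one further free generator'' to an existing free group, so the inductive step as written does not do what you want. The bookkeeping sketch in your last paragraph acknowledges the difficulty but does not resolve it.

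The paper states this corollary without proof because there is a two-line argument that bypasses the transfinite construction entirely. Set $\kappa=\max(\rank P,\aleph_0)$. Since $\Fhat_\kappa$ is projective, Lubotzky--v.d.~Dries \cite[Corollary~23.1.2]{FriedJarden2005} gives a PAC field $K$ with $\gal(K)\cong\Fhat_\kappa$. A free profinite group of infinite rank is $\omega$-free, so by Roquette's direction of Theorem~\ref{thm:Roquette} the field $K$ is Hilbertian. Now $K/K$ is trivially a PAC extension with $\gal(K)$ free of rank $\kappa\ge\rank P$, and the preceding proposition furnishes the desired PAC extension $M/K$ with $\gal(M)\cong P$.
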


\chapter{Weak Hilbert's Irreducibility Theorem}
Recall that a field $K$ is \textbf{Hilbertian} if it satisfies the
following condition.
\begin{enumerate}
\renewcommand{\labelenumi}{\theenumi}
\renewcommand{\theenumi}{$\blacktriangle$}
\item\label{eq:irrSpeciallll}
Every irreducible polynomial $f(T,X)\in K(T)[X]$ that is separable
in $X$ admits a specialization $T\mapsto a\in K$ under which
$f(a,X)$ remains irreducible.
\end{enumerate}
Although it seems that Hilbertianity and PACness are contradictory,
there is a surprising link between them.
\begin{theorem}\label{thm:Roquette}
A PAC field $K$ is Hilbertian if and only if it is $\omega$-free.
\end{theorem}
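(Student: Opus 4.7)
The strategy is to use the bridge between finite embedding problems for $\gal(K)$ and irreducible specializations of polynomials, built via places of one-variable function fields: a weak solution of a geometric embedding problem coming from a $K$-place $\phi$ (that is, a geometric one, in the sense of \S\ref{sec:geometricsolution}) has image equal to the decomposition group $D_\phi$, and by Lemma~\ref{lem:act_geo_sol} its action on the roots of the defining polynomial is conjugate to the Galois action on the roots of the specialized polynomial. Over a PAC field \emph{every} weak solution of a finite embedding problem is automatically geometric (Corollary~\ref{cor:PACfield_solutionisgeometric}), so both directions of the equivalence become purely combinatorial assertions about decomposition groups.

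For the direction $(\Leftarrow)$, assume $K$ is $\omega$-free. Given an irreducible $f(T,X)\in K(T)[X]$ separable in $X$, let $F$ be the Galois closure of $K(T,x)/K(T)$ for a root $x$ of $f$, set $G=\gal(F/K(T))$ and $L=F\cap K_s$, and consider the geometric embedding problem $(\mu\colon\gal(K)\to\gal(L/K),\alpha\colon G\to\gal(L/K))$. By $\omega$-freeness it has a solution $\theta\colon\gal(K)\to G$. By Corollary~\ref{cor:PACfield_solutionisgeometric}, $\theta=\phi^*$ for some $K$-place $\phi$ of $K(T)$ unramified in $F$ with residue field $K$; put $a=\phi(T)\in K$. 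Since $\theta$ is surjective, the decomposition group $D_\phi$ equals $G$, which acts transitively on the roots of $f$ because $f$ is irreducible. By Lemma~\ref{lem:act_geo_sol} the residue Galois group acts transitively on the roots of $f(a,X)$, and hence $f(a,X)$ is irreducible over $K$.

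For the direction $(\Rightarrow)$, assume $K$ is Hilbertian and consider a finite embedding problem $(\mu\colon\gal(K)\to A,\alpha\colon G\to A)$. It is equivalent to a geometric embedding problem, and a standard reduction combining that equivalence with Lemma~\ref{lem:regularsolvablewithonet} puts it into rational form $(\mu\colon\gal(K)\to\gal(L/K),\alpha\colon\gal(F/K(t))\to\gal(L/K))$ with $L=F\cap K_s$ and a single transcendental $t$. Let $g(t,Y)\in K(t)[Y]$ be a Galois irreducible polynomial whose roots generate $F$ over $K(t)$. Hilbertianity applied to $g$, together with an auxiliary factor excluding the finitely many bad specializations (ramification, poles, discriminant), produces $a\in K$ such that $g(a,Y)$ is irreducible over $K$ and such that $t\mapsto a$ extends to an $L$-place $\phi$ of $F$ unramified in $F$ with residue field $K$. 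Then $\phi^*\colon\gal(K)\to\gal(F/K(t))$ is a weak solution of the embedding problem, and its image $D_\phi$ acts transitively on the roots of $g$ by Lemma~\ref{lem:act_geo_sol} and the irreducibility of $g(a,Y)$. Because $g$ is Galois over $K(t)$ the stabilizer of a root is trivial, so $|D_\phi|=\deg g=|\gal(F/K(t))|$, whence $D_\phi=\gal(F/K(t))$ and $\phi^*$ is an honest solution. The most delicate points are the reduction to rational form and the extraction of the Galois polynomial $g$; once these are in place the argument is purely a translation dictionary between places, decomposition groups, and irreducible specializations.
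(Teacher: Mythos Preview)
The paper does not actually prove this theorem; it records it as background, attributing the direction $(\Leftarrow)$ to Roquette and the direction $(\Rightarrow)$ to Fried--V\"olklein (characteristic $0$) and Pop (general characteristic), with an alternative proof by Haran--Jarden via algebraic patching. So there is no ``paper's proof'' to compare against, and I evaluate your argument on its own merits.

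Your treatment of $(\Leftarrow)$ is essentially Roquette's argument and is correct: once $K$ is PAC, Corollary~\ref{cor:PACfield_solutionisgeometric} makes every solution geometric, and surjectivity of $\theta=\phi^*$ forces the decomposition group to be all of $G$, hence transitive on the roots, hence $f(a,X)$ irreducible. (You should mention that PAC gives you enough freedom to pick $\phi$ with $\phi(T)$ finite and avoiding the discriminant locus, but this is routine.)

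The direction $(\Rightarrow)$ contains a genuine gap, and it is precisely the deep step. You write that a finite embedding problem ``is equivalent to a geometric embedding problem, and a standard reduction combining that equivalence with Lemma~\ref{lem:regularsolvablewithonet} puts it into rational form.'' This is not so. The lemma you cite only reduces the number of variables in an embedding problem that is \emph{already} rational; it does not convert a geometric embedding problem (with base field $E$ a general regular extension of $K$) into one with $E=K(t)$. Passing from geometric to rational is exactly the content of the Fried--V\"olklein/Pop theorem: over a PAC (more generally, large) field every finite split embedding problem is regularly solvable. The paper itself invokes this fact explicitly elsewhere (see the proof of Proposition~\ref{prop:projpair-PACext}, where it cites \cite{Pop1996,HaranJarden1998} for ``over PAC fields any finite split embedding problem is rational''). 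Without that input your Hilbertianity argument has nothing to bite on: Hilbert sets live in affine space $K^e$, not in the $K$-points of an arbitrary variety, so you cannot specialize the generator of $F/E$ unless $E$ is rational. In short, your $(\Rightarrow)$ argument is circular in spirit---the ``standard reduction'' you appeal to is the theorem itself.
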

Here a field $K$ is \textbf{$\omega$-free} if
\begin{enumerate}
\renewcommand{\labelenumi}{\theenumi}
\renewcommand{\theenumi}{$\vartriangle$}
\item\label{eq:omegafree}
Every finite embedding problem for $K$ is solvable.
\end{enumerate}

Roquette proved the right-to-left direction of this theorem (see
\cite[Corollary 27.3.3]{FriedJarden2005}). The opposite direction
was later proved by Fried-V\"olklein (in characteristic zero)
\cite{FriedVolklein1992} and Pop (in arbitrary characteristic)
\cite{Pop1996}. It is important to mention that Haran and Jarden
reprove this result using the simpler method of algebraic patching
\cite{HaranJarden1998}.

In this chapter we extend Theorem~\ref{thm:Roquette} and
establish a quantitative form -- the ``weak Hilbert's
irreducibility theorem." Note that Theorem~\ref{thm:Roquette}
assumes that \textbf{all} curves have rational points (i.e.\ $K$
is PAC), and under this assumption it connects the property
\ref{eq:omegafree} of \textbf{all} finite embedding problems for
$K$ with the property \ref{eq:irrSpeciallll} of \textbf{all}
irreducible polynomials. In the proof of
Theorem~\ref{thm:Roquette}, in order to show that a
specialization keeps the polynomial irreducible, a stronger
object is preserved -- the Galois group of the polynomial.

Our quantitative form, on the other hand, deals with one (arbitrary)
polynomial. We find a specific finite embedding problem with the
property that if it is `transitively solvable,' then there is an
irreducible specialization, provided that a certain curve has a
point. The main ingredient is Proposition~\ref{prop:
characterization of solutions}.
In the case that $K$ is PAC, this condition, i.e.\ the existence of
a transitive solution, becomes necessary and sufficient for the
existence of an irreducible specialization. It is somewhat
surprising that, in contrast to the above, there might exist an
irreducible specialization even if the Galois group of the
polynomial does not occur as Galois group over $K$.

Next we apply weak Hilbert's irreducibility theorem to a field $K$
which has a PAC extension. We get some sufficient conditions to have
irreducible specializations of a polynomial in terms of solutions of
some double embedding problem. This result has some interesting
applications which will be discuss in the following chapters.

\section{Embedding Problems and Polynomials}\label{sec:EPandPol}
Let $E$ be a finitely generated regular extension of a field $K$.
For a separable polynomial $f(X)\in E[X]$, let $F/E$ be its
splitting field, and let $L = F\cap K_s$. Recall that $\gal(F/E)$
acts faithfully on the roots of $f(X)$. Set $A=\gal(L/K)$ and
$G=\gal(F/E)$. Let $G_0\leq G$ be the stabilizer of some fixed root
$x\in F$ of $f$.

Now $f$ admits the \textbf{induced embedding problem}
\begin{equation}\label{eq:inducedEP}
\calE_f=(\nu\colon \gal(K)\to A, \alpha\colon G\to A)
\end{equation}
with a \textbf{distinguished} subgroup $G_0$. Here $\nu$ and
$\alpha$ are the restriction maps.

Let $\theta\colon \gal(K)\to G$ be a weak solution of the induced
embedding problem, and denote its image by $D=\theta(\gal(K))$. If
$D$ acts transitively on the roots of $f$, we say that $\theta$ is
\textbf{transitive}. Since $G_0$ is the stabilizer of the action, a
solution $\theta$ is transitive if and only if $(G:G_0) = (D:D\cap
G_0)$, that is, transitivity is determined by the distinguished
subgroup. The following lemma trivially holds.

\begin{lemma}
If $\theta$ is a solution (i.e.\ surjective), then $\theta$ is
transitive.
\end{lemma}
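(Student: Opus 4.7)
The plan is to observe that this lemma is almost tautological from the setup, so no real machinery is required; I only need to unpack the definition of transitivity given just before the statement.

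First I would recall that, by definition, $D = \theta(\gal(K))$ and that $\theta$ is called transitive exactly when $D$ acts transitively on the roots of $f$, which the text characterizes by the numerical condition $(G:G_0) = (D: D\cap G_0)$ (this rests on $G = \gal(F/E)$ already acting transitively on the roots, cf.\ Lemma~\ref{lem:polynomial_action}(c), so that the orbit of the fixed root $x$ has size $(G:G_0)$ and equals the full root set). With this in hand, I would simply note that the hypothesis that $\theta$ is a solution means $\theta$ is surjective, i.e., $D = G$. Substituting $D = G$ into the criterion gives
\[
(D : D \cap G_0) \;=\; (G : G \cap G_0) \;=\; (G:G_0),
\]
so the transitivity condition holds trivially.

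Equivalently — and this is the route I would write out as the one-line proof — one can argue directly: if $D=G$, then the $D$-action on the roots of $f$ coincides with the $G$-action, which is transitive by Lemma~\ref{lem:polynomial_action}(c), so $\theta$ is transitive. There is no genuine obstacle here; the only thing to be mindful of is making the reader see that the numerical characterization of transitivity specializes to a tautology under surjectivity, so that the reduction from ``solution'' to ``transitive'' is immediate and no properties of $K$, $E$, or $f$ beyond those already packaged into the induced embedding problem $\calE_f$ are used.
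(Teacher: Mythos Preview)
Your proposal is correct and matches the paper's treatment: the paper simply declares that the lemma ``trivially holds'' and gives no further argument, and your unpacking of the definition (observing that $D=G$ makes the index condition $(G:G_0)=(D:D\cap G_0)$ a tautology, or equivalently that the $D$-action coincides with the transitive $G$-action) is exactly the obvious justification the paper is taking for granted.
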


Next we give a criterion for a polynomial $f(X)\in E[X]$ to remain
irreducible after applying a place on $E$.

\begin{lemma}\label{lem:characterizationofirrspecialization}
Let $E$ and $f$ be as above and let $\phi$ be a $K$-rational place
of $E/K$. 
Assume that $\phi(f)$ is well defined, separable and of the same
degree of $f$. Then $\phi(f)$ is irreducible over $K$ if and only if
the geometric solution $\phi^*$ of the induced embedding problem is
transitive.
\end{lemma}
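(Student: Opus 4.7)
The plan is to reduce the statement to a direct combination of Lemma~\ref{lem:polynomial_action} (irreducibility $\iff$ transitivity of the Galois action on roots) and Lemma~\ref{lem:act_geo_sol} (the decomposition group acts on the roots of $f$ the same way the residue group acts on the roots of $\phi(f)$, under the natural bijection induced by $\Phi$).

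First I would unpack what ``$\phi^*$ is a geometric solution'' means: by the definition in Subsection~\ref{sec:geometricsolution}, $\phi$ is a $K$-rational place of $E$ that is unramified in $F$, and for any extension $\Phi$ of $\phi$ to $F$ we have $\phi^*(\gal(K)) = D_{\Phi/\phi}$ with $D_{\Phi/\phi} \cong \gal(\Fgag/K)$ via the natural map. Next, using that $\phi(f)$ is separable and has the same degree as $f$, I would observe that $\Phi$ gives a bijection $\rho\colon \calX \to \calXgag$ between the root set $\calX$ of $f$ (inside $F$) and the root set $\calXgag$ of $\phi(f)$ (inside $\Fgag$); indeed the $\deg f$ elements $\Phi(x_i)$ are roots of $\phi(f)$ and are distinct because $\phi(f)$ is separable and has the right number of roots. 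In particular, $\Fgag = K(\Phi(\calX))$ is the splitting field of $\phi(f)$ over $K$.

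Now I apply the two lemmas. By Lemma~\ref{lem:polynomial_action}(c), $\phi(f)$ is irreducible over $K$ if and only if the action of $\gal(\Fgag/K)$ on $\calXgag$ is transitive. But Lemma~\ref{lem:act_geo_sol} says that, under $\rho$, this action coincides with the action of the decomposition group $D_{\Phi/\phi}$ on $\calX$. Since $D_{\Phi/\phi} = \phi^*(\gal(K))$, transitivity of $\gal(\Fgag/K)$ on $\calXgag$ is equivalent to transitivity of $\phi^*(\gal(K))$ on $\calX$, which is by definition the condition that $\phi^*$ is a transitive weak solution of the induced embedding problem $\calE_f$.

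There is no significant obstacle; the only mild subtlety is to verify that the hypotheses on $\phi(f)$ (well-defined, separable, same degree) are exactly enough to guarantee that $\rho$ is a bijection and that $\Fgag$ is the splitting field of $\phi(f)$, so that Lemma~\ref{lem:act_geo_sol} can be invoked. Unramifiedness of $\phi$ in $F$, required by Lemma~\ref{lem:act_geo_sol}, is already built into the hypothesis that $\phi^*$ is a geometric solution.
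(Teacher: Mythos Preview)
Your proof is correct and follows essentially the same route as the paper: establish the bijection $\calX\to\calXgag$ from the degree and separability assumptions, then use Lemma~\ref{lem:act_geo_sol} (the paper cites it via the formula $\phi(\phi^*(\sigma)x)=\sigma\phi(x)$) to transport transitivity of $\phi^*(\gal(K))$ on $\calX$ to transitivity of $\gal(K)$ on $\calXgag$, which by Lemma~\ref{lem:polynomial_action}(c) is equivalent to irreducibility of $\phi(f)$. The only cosmetic difference is that you route the argument through $D_{\Phi/\phi}\cong\gal(\Fgag/K)$ explicitly, whereas the paper works directly with $\gal(K)$ and $\phi^*$; these are the same computation.
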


\begin{proof}
Let $\calX\subseteq F$ be the set of all roots of $f$ and
$\calXgag\subseteq K_s$ the set of all roots of $\phi(f)$. Then
$\phi(\calX) = \calXgag$. By assumptions, $|\calX|=\deg f = \deg
\phi(f) = |\calXgag|$, and thus $\phi$ bijectively maps $\calX$ onto
$\calXgag$.

Now $\phi(f)$ is irreducible if and only if $\gal(K)$ acts
transitively on $\calXgag$. Recall that $\phi(\phi^*(\sigma)(x)) =
\sigma (\phi(x))$ for all $\sigma\in \gal(K)$ and $x\in \calX$ (see
Lemma~\ref{lem:act_geo_sol}). Let $x_1,x_2\in \calX$ and
$\phi(x_1),\phi(x_2)\in \calXgag$. Then
\begin{eqnarray*}
\exists \sigma\in \gal(K), \sigma(\phi(x_1)) = \phi(x_2) &\Iff& \exists \sigma\in \gal(K), \phi(\phi^*(\sigma) (x_1)) = \phi(x_2)\\
&\Iff& \exists \sigma\in \gal(K), \phi^*(\sigma)(x_1) = x_2.
\end{eqnarray*}
In other words, $\gal(K)$ acts transitively on $\calXgag$ if and
only if $\phi^*$ acts transitively on $\calX$, that is $\phi^*$ is
transitive.
\end{proof}

The previous result in particular asserts that for an irreducible
polynomial to have a place under which it remains irreducible it is
necessary that the induced embedding problem have a transitive
solution. This condition also suffices, provided that certain
regular extension has a $K$-rational place:

\begin{lemma}\label{lem:irrplace}
Let $E/K$ be a finitely generated regular extension. Let $f\in E[X]$
be irreducible and separable. Assume that the induced embedding
problem is transitively solvable. Then there exists a finite
separable extension $\Ehat/E$ that is regular over $K$ satisfying
the following property. For any $K$-rational place $\phi$ of
$\Ehat$, $\phi(f)$ is irreducible, provided that $\phi(f)$ is well
defined, separable, and of the same degree as $f$.
\end{lemma}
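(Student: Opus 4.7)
The plan is to reduce to Proposition~\ref{prop: characterization of solutions} together with Lemma~\ref{lem:characterizationofirrspecialization}. First I would fix a transitive weak solution $\theta\colon \gal(K)\to G = \gal(F/E)$ of the induced embedding problem $\calE_f$, where $F$ is the splitting field of $f$ over $E$; such a $\theta$ exists by hypothesis. Applying Proposition~\ref{prop: characterization of solutions} to $\theta$, I obtain a finite separable extension $\Ehat/E$ that is regular over $K$ such that, for every place $\phi$ of $E/K$ unramified in $F$, $\phi$ extends to a place $\Phi$ of $F$ with $\Egag = K$ and $\Phi^* = \theta$ precisely when $\phi$ extends to a $K$-rational place of $\Ehat$. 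This is my candidate $\Ehat$.

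Next, given a $K$-rational place $\phi$ of $\Ehat$ for which $\phi(f)$ is well defined, separable, and of the same degree as $f$, I would verify that $\phi_E := \phi|_E$ is unramified in $F$. Since $\phi(f)$ is separable of degree $\deg f$, it has $\deg f$ distinct roots in the residue field; any extension of $\phi_E$ to $F$ must send the $\deg f$ distinct roots of $f$ into the roots of $\phi(f)$, and a counting argument shows this assignment is a bijection. The inertia group fixes residues, hence acts trivially on this bijective image, and thus acts trivially on the roots of $f$; since the roots generate $F$ over $E$ (cf.\ Lemma~\ref{lem:polynomial_action}(a)), the inertia group is trivial. If this direct argument proves insufficient, I can alternatively enlarge $\Ehat$ at the outset by the data coming from Lemma~\ref{lem:RES} applied to a primitive element of $F/E$, which forces unramification automatically under the degree/separability hypothesis.

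Having established that $\phi_E$ is unramified in $F$ with $K$-rational residue field, the defining property of $\Ehat$ yields an extension $\Phi$ of $\phi_E$ to $F$ with $\Phi^* = \theta$. By hypothesis $\theta$ is transitive on the roots of $f$, so $\phi_E^* = \Phi^*$ is a transitive geometric solution of $\calE_f$. Then Lemma~\ref{lem:characterizationofirrspecialization} applied to the place $\phi_E$ (whose hypotheses are satisfied by assumption) concludes that $\phi(f) = \phi_E(f)$ is irreducible over $K$.

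The main technical obstacle is the middle step: cleanly arguing that the hypotheses on $\phi(f)$ force $\phi_E$ to be unramified in $F$, rather than having to bake this in by enlarging $\Ehat$. This is where most of the bookkeeping lives, but it is standard, and Lemma~\ref{lem:RES} provides a safety net if the direct combinatorial argument becomes delicate.
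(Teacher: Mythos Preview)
Your proposal is correct and follows the same approach as the paper: fix a transitive solution $\theta$, take $\Ehat$ from Proposition~\ref{prop: characterization of solutions}, and then invoke Lemma~\ref{lem:characterizationofirrspecialization}. You are in fact more careful than the paper on one point: the paper simply asserts that a $K$-rational place of $\Ehat$ yields $\phi^*=\theta$ without checking that $\phi|_E$ is unramified in $F$, whereas you correctly observe this hypothesis is needed and supply the argument (the separability and degree conditions on $\phi(f)$ force the roots of $f$ to map bijectively to the roots of $\phi(f)$, so the inertia group, acting trivially on residues and faithfully on roots, must be trivial).
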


\begin{proof}
Let $\theta$ be a transitive solution of the induced embedding
problem. Let $\Ehat$ be as in Proposition~\ref{prop:
characterization of solutions}. Then the residue field of $\Ehat$ is
$K$ implies that $\phi^* = \theta$. Thus
Lemma~\ref{lem:characterizationofirrspecialization} gives the
irreducibility of $\phi(f)$.
\end{proof}

As every finitely generated regular extension of a PAC field has a
rational place we get the following result.

\begin{proposition}
Let $K$ be a PAC field, let $E/K$ be a finitely generated regular
extension, and let $f(X)\in E[X]$ be an irreducible polynomial. Then
there exists a place $\phi$ of $E$ under which $\phi(f)$ is well
defined, irreducible, and of the same degree as $f$ if and only if
the induced embedding problem $\calE_f$ is transitively solvable.
\end{proposition}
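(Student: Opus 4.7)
The plan is to prove the two directions separately, each essentially assembling results already established in Section~\ref{sec:EPandPol} together with the defining property of PAC fields.

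For the forward direction, suppose $\phi$ is a $K$-rational place of $E$ such that $\phi(f)$ is well defined, irreducible over $K$, and of the same degree as $f$. First I would observe that these assumptions imply $\phi(f)$ is separable: since $f$ is separable over $E$ and the leading coefficient of $f$ does not vanish under $\phi$, one has $\phi(\disc f) = \disc(\phi(f))$ up to a nonzero factor, and an irreducible polynomial cannot be the $p$-th power of another polynomial (any inseparability would contradict irreducibility of $\phi(f)$ over $K$ together with preserved degree, in the standard way). Consequently, Lemma~\ref{lem:characterizationofirrspecialization} applies and tells us that the geometric weak solution $\phi^*$ of $\calE_f$ is transitive on the roots of $f$. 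Hence $\calE_f$ is transitively solvable.

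For the backward direction, assume $\calE_f$ is transitively solvable. Apply Lemma~\ref{lem:irrplace} to get a finite separable extension $\Ehat/E$ which is regular over $K$, such that any $K$-rational place $\psi$ of $\Ehat$ yields an irreducible $\psi(f)$ provided $\psi(f)$ is well defined, separable, and of the same degree as $f$. What remains is to produce such a $K$-rational place of $\Ehat$, and it is at this stage that the PACness of $K$ is invoked.

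Concretely, I would fix a separating transcendence basis $\bft$ of $\Ehat/K$ (which exists since $\Ehat/K$ is regular and finitely generated), and then build a single nonzero $r(\bft)\in K[\bft]$ whose nonvanishing encodes the three openness conditions needed: (i) every coefficient of $f$ has a finite specialization, (ii) the leading coefficient of $f$ does not specialize to zero (so $\deg \phi(f) = \deg f$), and (iii) $\disc(f)$ does not specialize to zero (so $\phi(f)$ is separable). Now apply Proposition~\ref{prop:DefinitionPACextension}(d) to the PAC extension $K/K$, the regular extension $\Ehat/K$, and this $r$: it supplies a $K$-place $\phi$ of $\Ehat$ with residue field $K$, $\bfa = \phi(\bft)$ finite, and $r(\bfa) \ne 0, \infty$. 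By construction $\phi$ satisfies the hypotheses of Lemma~\ref{lem:irrplace}, so $\phi(f)$ is irreducible, and the restriction $\phi|_E$ is the desired $K$-rational place of $E$.

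The work here is essentially assembling existing machinery, so there is no real obstacle; the only slightly delicate point is the bookkeeping that packages ``well defined, same degree, separable'' into a single $r\ne 0,\infty$ condition, but each is the complement of a proper closed condition on $\bfa$, so their product does the job.
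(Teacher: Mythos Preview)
Your approach matches the paper's: backward via Lemma~\ref{lem:irrplace} plus the PAC property to supply a $K$-rational place of $\Ehat$ satisfying the needed nondegeneracy conditions, and forward via Lemma~\ref{lem:characterizationofirrspecialization}. The paper's own proof is the single sentence preceding the proposition, so your write-up is just a more explicit version of the same argument.

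One small correction: your forward-direction claim that irreducibility of $\phi(f)$ forces separability is not valid --- an irreducible polynomial over an imperfect field can perfectly well be inseparable (e.g.\ $X^p - t$ over $\bbF_p(t)$), and being of the form $g(X^p)$ is not the same as being a $p$-th power. Nothing in the stated hypotheses excludes $\phi(\disc f)=0$. The paper itself does not address this point either; it simply reads the forward direction off Lemma~\ref{lem:characterizationofirrspecialization}, in effect folding separability into ``$\phi(f)$ is well defined.''
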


\begin{remark}
To the best of our knowledge in previous works, to find an
irreducible specialization for a polynomial one always preserves the
Galois group of the polynomial.

The innovation of this work is that we do not insist that the Galois
group be preserved in order that $f$ remains irreducible. This
approach proves to be useful in applications. For example in
Chapter~\ref{chapter:Dirichlet} an irreducible specialization of a
polynomial whose Galois group is the symmetric group $S_n$ is
needed. The results of those chapters are valid e.g.\ for any
pro-solvable extension $K$ of $\bbQ$. Over such $K$ it may happen
that $S_n$ does not occur. Hence our observation is indeed important
to those applications.

Indeed, assume that $\bbQ_{\rm sol}$ and assume that $S_n$ occurs as
a Galois group over it. If $n\geq 2$, then as $\bbZ/2\bbZ$ is a
quotient of $S_n$, we get that $\bbZ/2\bbZ$ occurs as a Galois group
over $\bbQ_{\rm sol}$. This contradiction implies that $n=1$.
\end{remark}

\section{Weak Hilbert's Irreducibility Theorem for PAC Extensions}
In this section we consider a more classical setting. Instead of an
arbitrary regular extension $E/K$, we take $E=K(\bft)$, where $\bft$
is an $e$-tuple of algebraically independent transcendental
elements. In other words, we consider a polynomial $f(\bfT,X)$
defined over $K$ that is irreducible in the ring
$K[\bfT,X]$\footnote{If $f(\bfT,X)$ is irreducible in $K[\bfT,X]$,
then $f(\bft,X)$ is irreducible over $K(\bft)$ (by Gauss' lemma). On
the other hand, assume that $f(\bft,X)\in K(\bft)[X]$ is irreducible
over $K(\bft)$. Then there exists $c(\bft)\in K(\bft)$ such that
$f^*(\bfT,X)=c(\bfT)f(\bfT,X)\in K[\bfT,X]$ and is primitive. Hence
$f^*$ is irreducible in the ring $K[\bfT,X]$.} and asks whether
there is an irreducible specialization for $f$. That is to say, a
map $\bfT\mapsto\bfa\in K^e$ for which $f(\bfa,X)\in K[X]$ is
irreducible, separable, and $\deg(f(\bfa,X)) = \deg_X(f(\bfT,X))$.

Let $K$ be a field, $f\in K[\bfT,X]$ an irreducible polynomial that
is separable in $X$, and $\bft$ a tuple of algebraically independent
transcendental elements. Let $F$ be the splitting field of
$f(\bft,X)$ over $E=K(\bft)$, $L=F\cap K_s$, and %
\addtocounter{equation}{-1}
\begin{equation}\renewcommand{\theequation}{6.1}
\calE_f=(\mu\colon \gal(K)\to A, \alpha\colon G\to A)
\end{equation}
the induced embedding problem (see \eqref{eq:inducedEP}). Here $G =
\gal(F/K(\bft))$ and $A = \gal(L/K)$.

For a field extension $M/K$ that is algebraically independent of
$K(\bft)$ we define a corresponding embedding problem
\begin{equation}\label{eq:inducedcoresspondingEP}
\calE_f(M) = (\mu\colon \gal(M) \to B, \beta \colon H\to B).
\end{equation}
Here $N = LM$, $\Fhat=FM$, $B = \gal(N/M)\cong \gal(L/M\cap L)$, and
$H=\gal(\Fhat/M(\bft))\cong \gal (F/M\cap L(\bft))$. Then $B\leq A$
and $H\leq G$, so $\beta = \alpha|_H$.

\[
\xymatrix@C=10pt@R=10pt{
               &&&&\Fhat\\
            &&&F\ar@{-}[ur]\\
        &&M(\bft)\ar@{-}'[r][rr]
              &&N(\bft)\ar@{-}[uu] \\
K(\bft)\ar@{-}[r]
    &(M\cap L)(\bft)\ar@{-}[rr]\ar@{-}[ur]
             &&L(\bft)\ar@{-}[uu]\ar@{-}[ur]\\
        &&M\ar@{-}'[r][rr]\ar@{-}'[u][uu]
             &&N\ar@{-}[uu]\\
K\ar@{-}[r]\ar@{-}[uu]
    &M\cap L \ar@{-}[rr]\ar@{-}[uu]\ar@{-}[ur]
            &&L\ar@{-}[uu]\ar@{-}[ur]
}
\]

Note that the pair $(\calE_f(M),\calE_f)$ is a double embedding
problem for $M/K$.

\begin{proposition}\label{prop:PACextirr}
Let $K$ be a field, $f\in K[\bfT,X]$ an irreducible polynomial that
is separable in $X$, and $\calE_f=(\mu\colon \gal(K)\to A,
\alpha\colon G\to A)$ the induced embedding problem as in
\eqref{eq:inducedEP} with distinguished subgroup $G_0$. Further
assume that there exists a PAC extension $M/K$ (which is
algebraically independent of $K(\bft)$) and let $\calE_f(M)$ be the
corresponding embedding problem as in
\eqref{eq:inducedcoresspondingEP}.

Each of the following conditions suffices for the existence of an
irreducible specialization $\bfT\mapsto \bfa\in K^e$ for $f$.
\begin{enumerate}
\item \label{case:PACextirrproper}
$\calE_f(M)$ is solvable.
\item \label{case:PACextirrtransitive}
$(G:G_0) = (H:H_0)$, where $H_0 = H\cap G_0$ and $\calE_f(M)$ is
transitively solvable w.r.t.\ the distinguished subgroup $H_0$.
\end{enumerate}
\end{proposition}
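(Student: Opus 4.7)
The plan is to apply the lifting property (Proposition~\ref{prop:ExtensionSoltoGeoSol_DEP}) a single time and let the two cases be distinguished only by the choice of initial weak solution. Observe first that $(\calE_f(M),\calE_f)$ is a rational double embedding problem for $M/K$: the higher embedding problem $\calE_f$ is already given over $K(\bft)/K$ with splitting field $F$, and $\calE_f(M)$ is obtained by base change to $M$, so the compatibility conditions of \eqref{eq:RegularDoubleEP} are automatically met. Since $M/K$ is PAC, the lifting property yields, from any weak solution $\theta\colon\gal(M)\to H$ of $\calE_f(M)$, a geometric weak solution $(\phi^*,\phi_K^*)$ of this DEP with $\phi^*=\theta$. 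Applying the moreover clause with $r(\bft)\in K[\bft]$ equal to the product of the leading coefficient and the discriminant (both in $X$) of $f(\bft,X)$, we can furthermore arrange $\bfa:=\phi(\bft)\in K^e$ with $r(\bfa)\ne 0$, so that $f(\bfa,X)\in K[X]$ is separable of degree $\deg_X f$. Then $\phi_K:=\phi|_{K(\bft)}$ is a $K$-rational place of $K(\bft)$, and Lemma~\ref{lem:characterizationofirrspecialization} reduces the irreducibility of $f(\bfa,X)$ over $K$ to the transitivity of the weak solution $\phi_K^*$ of $\calE_f$ with respect to $G_0$.

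It remains, in each case, to choose $\theta$ so that $\phi_K^*$ is transitive. Write $D=\phi_K^*(\gal(K))\leq G$ and $D'=\phi^*(\gal(M))\leq H$; from $\alpha\phi_K^*=\mu$ with $\mu$ surjective one gets $\alpha(D)=A$, and the compatibility of the DEP identifies $D'$ with $\phi_K^*(\gal(M))$ inside $G$. The key group-theoretic input is that $H\leq G$ is the preimage under $\alpha$ of $B$ (viewed inside $A$ via the restriction isomorphism), so Galois correspondence gives $G/H\cong A/B$.

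In case (a), take $\theta$ surjective; then $H=D'\leq D\leq G$ and $\alpha(D)=A$, and the correspondence $G/H\cong A/B$ forces $D=G$, whence $\phi_K^*$ is surjective and a fortiori transitive. In case (b), take $\theta$ a transitive weak solution of $\calE_f(M)$ with respect to $H_0$, so $(D':D'\cap H_0)=(H:H_0)$, which equals $(G:G_0)$ by hypothesis. Since $D'\leq H$, one has $D'\cap G_0=D'\cap H\cap G_0=D'\cap H_0$, so $(D':D'\cap G_0)=(G:G_0)$. Combining with $D'\leq D\leq G$ in the standard chain of indices $(D':D'\cap G_0)\leq(D:D\cap G_0)\leq(G:G_0)$ forces equality throughout, and $\phi_K^*$ is transitive.

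I expect the main obstacle to be the group-theoretic bookkeeping that propagates the information at the lower level ($\phi^*$) up to the higher level ($\phi_K^*$). Once the identity $H=\alpha^{-1}(B)$ and the lattice isomorphism $G/H\cong A/B$ are secured, case (a) becomes a Galois-correspondence tautology, while case (b) hinges on the observation $D'\cap G_0=D'\cap H_0$ (which uses $D'\subseteq H$ and is exactly where the hypothesis $(G:G_0)=(H:H_0)$ enters). Everything else is the standard lifting-property machinery developed in Chapter~3.
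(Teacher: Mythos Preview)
Your proof is correct and follows essentially the same approach as the paper's: apply the lifting property once with $r(\bft)$ the product of leading coefficient and discriminant, then invoke Lemma~\ref{lem:characterizationofirrspecialization}. The only cosmetic difference is in case~(a): the paper argues directly that $\ker(\alpha)=\ker(\beta)\leq D'\leq D$ and cites Lemma~\ref{lem:criterionforproperness}, whereas you route through the lattice isomorphism $G/H\cong A/B$; these are equivalent, and your case~(b) is identical to the paper's with $D'$ in place of the paper's $C$.
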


\begin{proof}
Let $r(\bft)$ be the product of the leading coefficient of $f$ and
its discriminant. Assume there exists a weak solution $\eta\colon
\gal(M)\to H$ of $\calE_f(M)$.

By the lifting property
(Proposition~\ref{prop:ExtensionSoltoGeoSol_DEP}) we can lift $\eta$
to a geometric weak solution $\phi^*$ of $\calE_f$ satisfying
$\bfa=\phi(\bft)$ is finite (and hence in $K^e$) and $r(\bfa)\neq
0$. In particular, $\phi$ is unramified in $F$.

For \eqref{case:PACextirrproper} assume that $\eta$ is surjective.
Then its image $\eta(\gal(M))$ contains $\ker(\beta)$. But
$\ker(\beta) = \ker(\alpha)$, hence $\ker(\alpha) \leq
\eta(\gal(M))\leq \phi^*(\gal(K))$. By
Lemma~\ref{lem:criterionforproperness}, $\phi^*$ is surjective.
Consequently Lemma~\ref{lem:characterizationofirrspecialization}
implies the assertion.

For \eqref{case:PACextirrtransitive} assume that $(G:G_0)=(H:H_0)$
and that $\eta$ is transitive. Let $C = \eta(\gal(M))$ be the image
of $\eta$. Since $\eta$ is transitive we have $(C:C\cap H_0) =
(H:H_0)$ (see discussion after the definition of transitive
solutions). Let $D = \phi^*(\gal(K))$. Then $C \leq H\cap D$, hence
$C\cap H_0 = C\cap H\cap G_0 = C\cap G_0$. It implies that
\[
(G:G_0) \geq (D:D\cap G_0) \geq 
 = (C:C\cap H_0) = (H:H_0)=(G:G_0).
\]
Hence $(G:G_0) = (D:D\cap G_0)$, i.e.\ $\phi^*$ is transitive. Then
Lemma~\ref{lem:characterizationofirrspecialization} implies the
assertion.
\end{proof}

\begin{remark}\label{rem:overM}
In the proof we actually showed that $f(\bfa, X)$ is irreducible
over $M$.
\end{remark}

\begin{remark}[On the condition \eqref{case:PACextirrtransitive}]
In the notation of the above proposition, recall that $G_0$ is the
stabilizer of a root $x\in F$ of $f(\bft,X)$, so
$(G:G_0)=\deg_X(f)$. Now let $g(\bft,X)$ be the irreducible
polynomial of $x$ over $M(\bft)$. Then $g$ divides $f$ in the ring
$M(\bft)[X]$ and $(H:H_0)=\deg_X g$. Thus $(H:H_0)=(G:G_0)$ if and
only if $f$ is irreducible over $M(\bft)$.

In particular, if $f$ is absolutely irreducible, the condition
$(G:G_0)=(H:H_0)$ always holds.

Another example in which the condition $(G:G_0) = (H:H_0)$ holds is
when $A\cong B$. Indeed, since $\ker(\alpha) = \ker(\beta)$ we get
\[
|G| = |A| | \ker(\alpha)| = |B| |\ker(\beta)| = |H|,
\]
and thus $G\cong H$. Consequently $(G:G_0) = (H:H_0)$.
\end{remark}

In what follows we apply Proposition~\ref{prop:PACextirr} to
polynomials which are in some sense the most irreducible. These cases
are important to applications, since by transcendental
constructions, it is usually easier to get extreme irreducibilities.

Call a polynomial $f(\bfT,X)\in K[\bfT,X]$ of degree $n$ in $X$ a
\textbf{stable symmetric polynomial} if the Galois group of
$f(\bft,X)$ over $\Kgal(\bft)$ is the symmetric group $S_n$. Regard
stable symmetric polynomial as the most irreducible polynomials. A
stable symmetric polynomial is absolutely irreducible and its
splitting field over $K(\bft)$ is regular over $K$. Thus the induced
embedding problem is actually the following realization problem.
\[
\calE_f = (\gal(K) \to 1, S_n \to 1)
\]
Here the distinguished subgroup is $S_{n-1}$ (all permutations that
fix $1$).

\begin{remark}
Let $E/K$ be a regular extension. One can asks whether there exists
a separating transcendence basis $\bft$ of $E/K$ such that a
polynomial $f(\bft,X)\in K(\bft)[X]$ whose root generates
$E/K(\bft)$ is a stable symmetric polynomial of degree $n$.

The stability theorem (see \cite[Theorem 18.9.3]{FriedJarden2005})
asserts that indeed for infinitely many positive integers $n$ there
exists such a separating transcendence basis.
\end{remark}

We have the following nice sufficient condition to have irreducible
specialization of stable symmetric polynomials.

\begin{corollary}\label{cor:wHIT_stablesymmetric}
Let $K$ be a field, let $r(\bfT)\in K[\bfT]$ non-constant, and let
$f(\bfT,X)\in K[\bfT,X]$ be a stable symmetric polynomial of degree
$n$. Assume that there exists a PAC extension $M/K$ and a separable
extension $N/M$ of degree $n$. Then there exists $\bfa\in K^e$
such that $r(\bfa)\neq 0$ and $f(\bfa,X)$ is separable irreducible
polynomial of degree $n$.
\end{corollary}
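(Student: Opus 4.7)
The corollary is an immediate application of Proposition~\ref{prop:PACextirr}\eqref{case:PACextirrtransitive}, once its hypotheses are verified in the stable symmetric setting. I would proceed in three steps.

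First, I would identify the relevant embedding problems. Since $f$ is stable symmetric of degree $n$, the Galois group of $f(\bft,X)$ over $\Kgal(\bft)$ is $S_n$, so its splitting field $F$ over $K(\bft)$ is regular over $K$. Consequently $L = F \cap K_s = K$, and the induced embedding problem is $\calE_f = (\gal(K)\to 1,\ S_n \to 1)$ with distinguished subgroup $G_0 = S_{n-1}$ (the stabilizer of a chosen root). Taking $\bft$ algebraically independent over $M$, so that $M$ is algebraically disjoint from $K(\bft)$, the extension $FM/M(\bft)$ remains regular over $M$ with $\gal(FM/M(\bft)) \cong S_n$. Hence $\calE_f(M) = (\gal(M) \to 1,\ S_n \to 1)$ with $H_0 \cong S_{n-1}$. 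The compatibility condition $(G:G_0) = (H:H_0) = n$ required in Proposition~\ref{prop:PACextirr}\eqref{case:PACextirrtransitive} is automatic because $f$ is absolutely irreducible (see the remark after that proposition).

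Second, I would produce a transitive weak solution of $\calE_f(M)$. Since the higher group $B = 1$, any homomorphism $\eta\colon \gal(M) \to S_n$ is automatically a weak solution; I only need its image to act transitively on $\{1,\nek n\} \cong S_n/S_{n-1}$. This is exactly where the hypothesis on $N/M$ is used. Let $\Ngal/M$ be the Galois closure of $N/M$ and set $T = \gal(\Ngal/M)$, $T_0 = \gal(\Ngal/N)$, so $(T:T_0) = n$. The left action of $T$ on the coset space $T/T_0$ is faithful and transitive, giving an embedding $T \hookrightarrow S_n$ whose image is a transitive subgroup. Composing with the restriction $\gal(M) \twoheadrightarrow T$ yields the desired $\eta$.

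Third, to obtain the extra condition $r(\bfa) \neq 0$, I would trace through the proof of Proposition~\ref{prop:PACextirr}: there, the lifting property (Proposition~\ref{prop:ExtensionSoltoGeoSol_DEP}) is applied with a specific auxiliary function (the product of the leading coefficient of $f$ in $X$ and its discriminant). Since the \emph{moreover} clause of the lifting property allows any nonzero rational function, I simply replace this auxiliary function by its product with $r(\bft)$. The resulting $\bfa \in K^e$ then satisfies $r(\bfa)\neq 0$, $f(\bfa,X)$ is separable of degree $n$, and (by the transitive case of the proposition together with Lemma~\ref{lem:characterizationofirrspecialization}) $f(\bfa,X)$ is irreducible over $K$ (in fact over $M$, by Remark~\ref{rem:overM}). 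There is no real obstacle: the whole argument is a direct specialization of the proposition, the only creative step being the construction of the transitive representation $\gal(M) \to S_n$ out of the assumed degree-$n$ separable extension $N/M$.
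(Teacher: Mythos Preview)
Your proposal is correct and follows essentially the same approach as the paper: identify $\calE_f(M)=(\gal(M)\to 1,\ S_n\to 1)$ via regularity, build the transitive weak solution from the coset action of $\gal(\Nhat/M)$ on $\gal(\Nhat/M)/\gal(\Nhat/N)$, and invoke Proposition~\ref{prop:PACextirr}\eqref{case:PACextirrtransitive}. Your third step, multiplying the auxiliary function in the proof of Proposition~\ref{prop:PACextirr} by the given $r$, is the right way to secure $r(\bfa)\neq 0$; the paper leaves this implicit.
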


\begin{proof}
Since $F$ (the splitting field of $f(\bft,X)$ over $K(\bft)$) is
regular over $K$, we have $\gal(FM/M(\bft))\cong
\gal(F/K(\bft))\cong S_n$. Hence
\[
\calE_f(M) = (\gal(M) \to 1 , S_n \to 1)
\]
is the corresponding embedding problem.

Let $\Nhat$ be the Galois closure of $N/M$ and let $\eta\colon
\gal(M)\to \gal(\Nhat/M)$ be the restriction map. The action of
$\gal(\Nhat/M)$ on the cosets of $\gal(\Nhat/N)$ induces an
embedding $\gal(\Nhat/M)\to S_n$ with a transitive image. Identify
$\gal(\Nhat/M)$ with its image in $S_n$. Hence $\eta$ is a
transitive solution of the corresponding embedding problem. By
Proposition~\ref{prop:PACextirr} the assertion follows.
\end{proof}

\section{Some Corollaries}
Theorem~\ref{thm:intAlmostallPAC} from the introduction asserts
that a countable Hilbertian field has a PAC extension with free
absolute Galois group of rank $e$, for an arbitrary positive
integer $e\geq 1$. The following result implies that having such
PAC extension is also sufficient for Hilbertianity.

\begin{corollary}
Let $K$ be a field. Assume that there exists a PAC extension $M/K$
whose absolute Galois group $\gal(M)$ is free of rank $e$, for
infinitely many positive integers $e$. Then $K$ is Hilbertian.
\end{corollary}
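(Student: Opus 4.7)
My plan is to apply Proposition~\ref{prop:PACextirr}(a) to an arbitrary irreducible polynomial $f(T,X)\in K(T)[X]$ separable in $X$. The task then reduces to producing, for any such $f$, a PAC extension $M/K$ together with a surjective weak solution of the induced embedding problem $\calE_f(M)$ from~\eqref{eq:inducedcoresspondingEP}; once we have this, part~(a) of that proposition yields an irreducible specialization $T\mapsto a\in K$, so Hilbertianity of $K$ will follow from the arbitrariness of $f$.

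Fix $f$; clearing denominators in $T$ we may take $f\in K[T,X]$. Let $F$ be the splitting field of $f(t,X)$ over $K(t)$, set $L=F\cap K_s$, $G=\gal(F/K(t))$, $A=\gal(L/K)$, and let $\calE_f=(\mu\colon\gal(K)\to A,\ \alpha\colon G\to A)$ be the induced embedding problem. Since $G$ is finite, its minimal number of topological generators satisfies $\rank G\le\log_2|G|$, a bound depending only on $f$. Using the hypothesis I pick $e$ from the supplied infinite set with $e\ge\log_2|G|$ and a PAC extension $M/K$ with $\gal(M)\cong\Fhat_e$; by Theorem~\ref{thm:nonalgebraicPAC} I may replace $M$ by $M\cap K_s$ to assume $M\subseteq K_s$, so that $t$ is transcendental over $M$. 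With $H=\gal(FM/M(t))$ and $B=\gal(LM/M)$, the corresponding embedding problem is $\calE_f(M)=(\mu\colon\gal(M)\to B,\ \beta\colon H\to B)$.

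The crux is to exhibit a surjective weak solution of $\calE_f(M)$. The restriction map embeds $H$ into $G$, so $\rank H\le\log_2|H|\le\log_2|G|\le e$. Choose free generators $x_1,\ldots,x_e$ of $\gal(M)\cong\Fhat_e$ and set $b_i=\mu(x_i)$; the $b_i$ generate $B$ because $\mu$ is surjective. Gasch\"utz's lemma, applied to the surjection of finite groups $\beta\colon H\to B$, lifts the generating tuple $(b_1,\ldots,b_e)$ to a generating tuple $(h_1,\ldots,h_e)$ of $H$ with $\beta(h_i)=b_i$ for each $i$. The continuous homomorphism $\theta\colon\gal(M)\to H$ determined by $\theta(x_i)=h_i$ is then surjective and satisfies $\beta\theta=\mu$, i.e.\ it is a surjective weak solution of $\calE_f(M)$. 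Proposition~\ref{prop:PACextirr}(a) now produces $a\in K$ with $f(a,X)$ irreducible over $K$, completing the argument.

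No step looks likely to present a serious obstacle, and the proof essentially amounts to assembling existing pieces: the hypothesis supplies PAC extensions with free absolute Galois group of unbounded rank; Theorem~\ref{thm:nonalgebraicPAC} lets us pass to the separable-algebraic part so that $t$ is transcendental over $M$; Gasch\"utz's lemma is the standard tool for lifting generating sets along a quotient map of finite groups; and Proposition~\ref{prop:PACextirr}(a) converts the surjective weak solution into an irreducible specialization. The only point requiring a moment's care is the uniform bound $\rank H\le\log_2|G|$, which is independent of $M$; this is what allows the single choice $e\ge\log_2|G|$ to work regardless of how $M$ intersects $L$.
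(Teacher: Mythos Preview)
Your proof is correct and follows essentially the same approach as the paper: both reduce to showing the corresponding embedding problem $\calE_f(M)$ is solvable and then invoke Proposition~\ref{prop:PACextirr}(a). The paper cites the embedding property of free profinite groups (\cite[Proposition~17.7.3]{FriedJarden2005}) with the cruder bound $e\ge|G|$, whereas you unwind this via Gasch\"utz's lemma with the sharper bound $e\ge\log_2|G|$; you also make explicit the reduction to $M\subseteq K_s$ via Theorem~\ref{thm:nonalgebraicPAC}, which the paper leaves implicit.
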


\begin{proof}
Let $f\in K[\bfT,X]$ be an irreducible polynomial that is separable
in $X$ and let $\calE_f$ be its induced embedding problem as in
\eqref{eq:inducedEP}. Take a PAC extension $M/K$ whose absolute
Galois group $\gal(M)$ is free of rank $\geq |G|$.

Let $\calE_f(M)$ be the corresponding embedding problem as in
\eqref{eq:inducedcoresspondingEP}. We have
\[
\rank \gal(M) \geq |G| \geq |H| \geq \rank H.
\]
Hence, by the embedding property \cite[Proposition
17.7.3]{FriedJarden2005}, $\calE_f(M)$ is solvable. It follows that
$f$ has an irreducible specialization
(Proposition~\ref{prop:PACextirr}).
\end{proof}

If $K$ has a PAC extension $M$ whose absolute Galois group is
$\omega$-free (i.e.\ any finite embedding problem for $M$ is
solvable) then we get the following result of Razon \cite[Lemma
2.2]{Razon1997}.

\begin{corollary}[Razon]
Let $K$ be a field and $M/K$ a PAC extension whose absolute Galois
group is $\omega$-free. Then $M$ is Hilbertian over $K$. In
particular, $K$ is Hilbertian.
\end{corollary}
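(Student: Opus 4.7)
The plan is to deduce the corollary directly from the weak Hilbert's irreducibility machinery developed in Proposition~\ref{prop:PACextirr}, feeding $\omega$-freeness of $\gal(M)$ into the solvability hypothesis. I interpret ``$M$ is Hilbertian over $K$'' in the sense suggested by Remark~\ref{rem:overM}: for every irreducible $f(\bfT,X)\in K[\bfT,X]$ separable in $X$, there is $\bfa\in K^e$ such that $f(\bfa,X)$ is irreducible \emph{over} $M$; the ``in particular'' then follows since irreducibility over $M$ forces irreducibility over $K$.

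First I would fix such an $f$ and form its induced embedding problem $\calE_f = (\mu\colon \gal(K)\to A, \alpha\colon G\to A)$ as in \eqref{eq:inducedEP}, together with the corresponding embedding problem $\calE_f(M) = (\mu_M\colon \gal(M)\to B, \beta\colon H\to B)$ from \eqref{eq:inducedcoresspondingEP}. By possibly replacing $\bft$ by a translate I may assume that the transcendental tuple $\bft$ is algebraically independent from $M$ over $K$, so that $\calE_f(M)$ is well-defined and $H\leq G$, $B\leq A$, as required for the setting of Proposition~\ref{prop:PACextirr}.

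The key step is that $\calE_f(M)$ is a \emph{finite} embedding problem for $\gal(M)$ (both $H$ and $B$ are finite Galois groups of function field extensions). Since $\gal(M)$ is $\omega$-free, every finite embedding problem for $M$ is solvable; in particular $\calE_f(M)$ admits a (surjective) solution $\eta\colon \gal(M)\to H$. This places us exactly in case \eqref{case:PACextirrproper} of Proposition~\ref{prop:PACextirr}, so there exists $\bfa\in K^e$ for which $f(\bfa,X)$ is separable of the same degree as $f(\bft,X)$ and is irreducible over $K$. Moreover, Remark~\ref{rem:overM} upgrades this: the specialization of the \emph{entire} lifted solution $\phi^*$ produces irreducibility over $M$, not merely over $K$.

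The proof is essentially a one-shot application, so there is no serious obstacle; the only bookkeeping issues are (i) ensuring $\bft$ may be chosen algebraically independent of $M$ (trivial, since $M$ is algebraic over $K$ or one replaces $\bft$ by new variables algebraically independent from $M$), and (ii) checking that the $\omega$-freeness hypothesis applies to $\calE_f(M)$ rather than $\calE_f$, which is exactly the reason we pass through the double embedding problem rather than attempting to solve $\calE_f$ directly over $K$. The final sentence ``in particular, $K$ is Hilbertian'' is immediate because $M\supseteq K$ and an $f(\bfa,X)$ irreducible over $M$ is \emph{a fortiori} irreducible over $K$, verifying condition \ref{eq:irrSpeciallll} for $K$.
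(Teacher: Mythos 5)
Your proposal is correct and is essentially the paper's own proof: the paper likewise fixes an irreducible $f\in K[\bfT,X]$ separable in $X$, uses $\omega$-freeness of $\gal(M)$ to solve the finite embedding problem $\calE_f(M)$, invokes case \eqref{case:PACextirrproper} of Proposition~\ref{prop:PACextirr} to get an irreducible specialization in $K^e$, and cites Remark~\ref{rem:overM} for irreducibility over $M$. Your extra bookkeeping about the choice of $\bft$ and the interpretation of ``Hilbertian over $K$'' matches the (tacit) reading in the paper's own two-line argument.
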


\begin{proof}
Let $f(\bfT,X) \in K[\bfT,X]$ be an irreducible polynomial that is
separable in $X$. Since over $M$ every finite embedding problem is
solvable, by Proposition~\ref{prop:PACextirr} there exists an
irreducible specialization $\bfT\mapsto \bfa \in K^e$ for $f$.
Moreover, $f(\bfa,X)$ is irreducible over $M$
(Remark~\ref{rem:overM}).
\end{proof}

We pose a natural question.
\begin{question}
Let $K$ be a finitely generated infinite field. Does there exist a
PAC extension $M/K$ whose absolute Galois group is $\omega$-free?
\end{question}

We even do not know the answer to the following simpler problem in
the case when $K=\bbQ$.
\begin{question}
Let $K$ be a finitely generated infinite field and let $M/K$ be a
PAC extension. Is $\gal(M)$ finitely generated?
\end{question}

\chapter{Dirichlet's Theorem for Polynomial Rings} \label{chapter:Dirichlet}
\section{Introduction and the Main Result}
Dirichlet's classical theorem about primes in arithmetic
progressions states that if $a,b$ are relatively prime positive
integers, then there are infinitely many $c\in\bbN$ such that
$a+bc$ is a prime number. Following a suggestion of E.~Landau,
Kornblum \cite{Kornblum1919} proved an analog of Dirichlet's
theorem for the ring of polynomials $F[X]$ over a finite field
$F$. Later, Artin refined Kornblum's result and proved that if
$a(X),b(X)\in F[X]$ are relatively prime, then for every
sufficiently large integer $n$ there exists $c(X)\in F[X]$ such
that $a(X)+b(X)c(X)$ is irreducible of degree $n$ in $F[X]$
\cite[Theorem 4.8]{Rosen2002}.

To avoid repetition, we shall say that Dirichlet's theorem holds for
a polynomial ring $F[X]$ and for a set of positive integers $\calN$,
if for any relatively prime polynomials $a,b\in F[X]$ there exist
infinitely many $c\in F[X]$ such that $a+bc$ is irreducible of
degree $n$, provided that $n\in \calN$ is sufficiently large.

Jarden raised the question of whether the Artin-Kornblum result
generalizes to other fields. Of course, if $F$ is algebraically
closed, then the polynomial $a(X)+b(X)c(X)$ is reducible unless it
is of degree $1$. On the other hand, if $F$ is Hilbertian, then
Dirichlet's theorem trivially holds. Indeed, $a(X)+b(X)Y$ is
irreducible, and hence there are infinitely many $c\in F$ such that
$a(X)+b(X)c$ is irreducible in $F[X]$. To get irreducible
polynomials of higher degree in this case, just choose $c_0(X)\in
F[X]$ relatively prime to $a(X)$ and of high degree, and then repeat
the above argument for $a(X)+b(X)c_0(X)Y$.

By the weak Hilbert's irreducibility theorem, this simple argument
can be extended to fields having PAC extensions. We start by
introducing a piece of notation: For a field $F$ we let $\calN(F)$
be the set of all positive integers for which there exists a PAC
extension $M/F$ and a separable extension $N/M$ such that $n=[N:M]$.

\begin{theorem}\label{thm A}
Let $F$ be a field and $\calN = \calN(F)$. Then Dirichlet's theorem
holds for $F$ and $\calN$.
\end{theorem}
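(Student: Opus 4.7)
Let $n \in \calN(F)$ be sufficiently large, say $n > \deg a + \deg b$, and fix a PAC extension $M/F$ together with a separable extension $N/M$ of degree $n$. Put $m := n - \deg b \geq 1$, introduce indeterminates $\bfT = (T_0,\ldots,T_{m-1})$, and form
\[
f(\bfT, X) \;=\; a(X) \;+\; b(X)\bigl(X^m + T_{m-1} X^{m-1} + \cdots + T_0\bigr) \;\in\; F[\bfT,X],
\]
a polynomial of degree $n$ in $X$. The plan is to verify that $f$ is a \emph{stable symmetric} polynomial of degree $n$ and then to apply Corollary~\ref{cor:wHIT_stablesymmetric} with $K=F$: for each nonconstant $r(\bfT) \in F[\bfT]$ the corollary produces $\bfa=(a_0,\ldots,a_{m-1}) \in F^m$ with $r(\bfa)\neq 0$ and $f(\bfa,X) = a(X) + b(X)c_\bfa(X)$ separable irreducible of degree $n$, where $c_\bfa(X) = X^m + a_{m-1}X^{m-1} + \cdots + a_0 \in F[X]$. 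Infinitely many distinct such $c$ then follow by iteration: at step $k+1$, with $\bfa_1,\ldots,\bfa_k$ already constructed, choose $r(\bfT) = T_0\prod_{i=1}^{k}(T_0 - a_{i,0})$, a nonconstant polynomial; the resulting $\bfa_{k+1}$ has first coordinate different from $0, a_{1,0}, \ldots, a_{k,0}$, so $c_{\bfa_{k+1}}$ is new.

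To see that $f$ is stable symmetric, first observe that $f$ is absolutely irreducible: as a polynomial in $T_0$ it is linear, $f = b(X)T_0 + h$ with $h = a(X) + b(X)\bigl(X^m + \sum_{i \geq 1} T_i X^i\bigr)$; any common factor of $b(X)$ and $h$ in $\overline{F}[T_1,\ldots,T_{m-1},X]$ lies in $\overline{F}[X]$ and divides both $a(X)$ and $b(X)$, contradicting $\gcd(a,b)=1$. Consequently $\gal(f/\overline{F}(\bfT))$ is a transitive subgroup of $S_n$. To promote this to the full symmetric group, let $\alpha_1,\ldots,\alpha_n$ be the roots of $f$ in a splitting field and write $f = b_{d_b}\prod_i (X - \alpha_i)$, where $d_b = \deg b$. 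Matching coefficients of $X^j$ for $0 \leq j < n$ expresses $T_0,\ldots,T_{m-1}$ as polynomial combinations of the elementary symmetric functions of $\alpha_1,\ldots,\alpha_n$, while the remaining $d_b$ matching conditions amount to the divisibility $b(X) \mid b_{d_b}\prod_i (X-\alpha_i) - a(X)$ in $\overline{F}[\alpha_1,\ldots,\alpha_n][X]$. After a harmless reduction to $b$ separable (with roots $\beta_1,\ldots,\beta_{d_b} \in \overline{F}$), this divisibility is equivalent to the $S_n$-symmetric system
\[
b_{d_b}\prod_{i=1}^{n}(\beta_k - \alpha_i) \;=\; a(\beta_k), \qquad k = 1,\ldots,d_b.
\]
Since the defining relations are symmetric in the $\alpha_i$, the permutation action of $S_n$ on $\overline{F}(\alpha_1,\ldots,\alpha_n)$ fixes $\overline{F}(\bfT)$ pointwise, exhibiting the splitting field of $f$ as an $S_n$-Galois extension of $\overline{F}(\bfT)$.

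The hard part is making the last sentence of the preceding paragraph fully rigorous: one must show that the variety $V \subseteq \bbA^n_{\overline{F}}$ cut out by those $d_b$ symmetric equations is absolutely irreducible of the expected dimension $m = n - d_b$, and that the $S_n$-action on its function field is faithful, so that the resulting cover has degree exactly $n!$ and the Galois group is all of $S_n$ rather than merely a transitive subgroup. A concrete route is to exhibit a point of $V$ with pairwise distinct coordinates at which the Jacobian of the $d_b$ defining equations has full rank $d_b$, yielding smoothness and hence local irreducibility; globally, irreducibility then follows from the $S_n$-symmetry of the equations together with the fact that the projection $V \to \bbA^m$ sending $(\alpha_i)$ to the tuple of $T_j$'s is surjective onto the $m$-dimensional parameter space. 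Faithfulness of the $S_n$-action is automatic on the open locus where the $\alpha_i$ are pairwise distinct. With this algebro-geometric step in place, Corollary~\ref{cor:wHIT_stablesymmetric} applies and completes the argument.
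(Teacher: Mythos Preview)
Your overall strategy --- make the coefficients of $c$ into independent variables and apply Corollary~\ref{cor:wHIT_stablesymmetric} --- is a natural alternative to the paper's route, which instead proves Proposition~\ref{thmB}: it manufactures one specific $c(X)$ (via the Euclidean algorithm and a ramification computation producing an $e$-cycle and a transposition) so that the \emph{one}-parameter family $a(X)+b(X)c(X)Y$ already has Galois group $S_n$ over $\Fgal(Y)$. Your argument that $f(\bfT,X)$ is absolutely irreducible, and hence that the Galois group over $\Fgal(\bfT)$ is transitive, is correct.

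The gap is exactly where you flag it, and the route you sketch does not close it. Exhibiting one smooth point of $V$ with distinct coordinates shows only that a single component of $V$ is smooth there; it does not prove $V$ is irreducible. The combination ``smoothness $+$ $S_n$-symmetry $+$ surjectivity of $V\to\bbA^m$'' is compatible with $V$ breaking into several smooth components permuted transitively by $S_n$, each surjecting onto $\bbA^m$ (the map is $S_n$-invariant, so all components have the same image). In that situation the Galois group of $f$ over $\Fgal(\bfT)$ is the \emph{stabilizer} of one component, a proper transitive subgroup of $S_n$ --- precisely the possibility you must exclude. Proving irreducibility of $V$ is therefore the actual content of the theorem, not a routine verification; any argument that does it (e.g.\ by exhibiting a specialization of the $T_i$ for which the Galois group is all of $S_n$) is essentially equivalent in strength to the paper's Proposition~\ref{thmB}. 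As a smaller point, the ``harmless reduction to $b$ separable'' is not justified in positive characteristic, where repeated roots of $b$ give higher-order vanishing conditions and derivatives behave badly.
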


In Section~\ref{sec:3} we calculate $\calN(F)$ for certain
families of fields $F$. For example, if $F$ is a pro-solvable
extension of a countable Hilbertian fields, then $\calN(F)$
contains every $n\geq 5$.

Let $n\in \calN$ be sufficiently large. By
Corollary~\ref{cor:wHIT_stablesymmetric} and the above argument, it
suffices to find $c\in F[X]$ of degree $n$ such that $f(Y,X)=a(X) +
b(X) c(X) Y$ is of degree $n$ and its Galois group over $\Fgal(Y)$
is $S_n$. So the proof of Theorem~\ref{thm A} reduces to the
following

\begin{proposition}\label{thmB}
Let $F$ be an infinite field with an algebraic closure $\Fgal$ and
let $a,b\in F[X]$ be relatively prime polynomials. Then for any
$n\geq 2\max(\deg a,\deg b) + \log n (1+o(1))$ there exists $c(X)\in
F[X]$ such that $f(X,Y)=a(X)+b(X)c(X)Y$ is irreducible over $F(Y)$
of degree $n$ in $X$ and $\gal(f(X,Y),\Fgal(Y))\isom S_n$.
\end{proposition}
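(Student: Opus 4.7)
The plan is to construct $c\in F[X]$ of degree $d:=n-\deg b$ with $\gcd(c,ab)=1$ such that the monodromy group of the cover $\phi_c\colon \bbP^1_{\Fgal}\to\bbP^1_{\Fgal}$, $\phi_c(X)=-a(X)/(b(X)c(X))$, is the full symmetric group $S_n$. Once such $c$ is in hand, the identity $Y=\phi_c(X)$ forces $[\Fgal(X):\Fgal(Y)]=n$, so $f(X,Y)=a(X)+b(X)c(X)Y$ is automatically irreducible of degree $n$ in $X$ over $\Fgal(Y)$ (a fortiori over $F(Y)$), and $\gal(f,\Fgal(Y))$ coincides with the monodromy of $\phi_c$, hence is a transitive subgroup of $S_n$. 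So the whole game is to arrange the ramification of $\phi_c$.

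The strategy is to produce simultaneously a long prime-length cycle and a transposition in the Galois group. Using the hypothesis $n\geq 2\max(\deg a,\deg b)+\log n(1+o(1))$ and a prime-counting estimate in short intervals, I fix a prime $p$ with $n/2<p\leq\min(n-\deg b-1,\,n-3)$. I then look for $c$ of the form $c(X)=(X-\alpha)^p\,\tilde c(X)$, with $\alpha\in F$ not a root of $ab$ and $\tilde c\in F[X]$ of degree $d-p\geq 1$, square-free, and coprime to $ab(X-\alpha)$. For such $c$, the pole $\alpha$ of $\phi_c$ has order exactly $p$ while every other pole has order at most $\deg b<p$. Hence the inertia generator $\sigma$ of $\phi_c$ above $\infty$ has cycle type $(p,m_1,\ldots,m_r)$ with each $m_i<p$, and $\sigma^k$ for $k=\operatorname{lcm}(m_1,\ldots,m_r)$ (coprime to the prime $p$) is a genuine $p$-cycle in the Galois group $G$.

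Because $G$ is transitive of degree $n$ and contains a prime cycle of length $p>n/2$, a classical theorem of Jordan forces $G$ to be primitive: no nontrivial block system is compatible with the action of a long prime cycle. Because $G$ is then primitive and contains a cycle of prime length $p\leq n-3$, a second theorem of Jordan yields $G\supseteq A_n$. So $G$ equals $A_n$ or $S_n$, and to distinguish these I need only exhibit one odd permutation in $G$, for instance a transposition. Such a transposition arises as the inertia above a finite branch value $Y_0\notin\{0,\infty\}$ provided $\phi_c$ has a unique critical point above $Y_0$ and it is simple. For $\tilde c$ generic in its coefficient space, the critical points of $\phi_c$ away from $\{\alpha\}\cup\{\text{zeros of }ab\}$ are simple with pairwise distinct critical values, producing many such transpositions; nonemptiness of this locus is a Zariski open condition verified over $\Fgal$ by the nonvanishing of an explicit discriminant and a concrete construction.

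All the conditions on $(\alpha,\tilde c)$---coprimality with $ab$, square-freeness of $\tilde c$, and existence of a simple critical point of $\phi_c$ with an isolated critical value---cut out a nonempty Zariski open subset of the affine parameter space $\bbA^{d-p+2}_F$. Since $F$ is infinite, this open subset carries an $F$-rational point, yielding the required $c$. The main obstacle in the proof is the very first step: securing a prime $p$ of the prescribed size in the interval $(n/2,\,n-\deg b-1]$. The hypothesis $n\geq 2\max(\deg a,\deg b)+\log n(1+o(1))$ is tailored precisely to guarantee such a $p$ while leaving $\tilde c$ enough freedom to carry the generic transposition.
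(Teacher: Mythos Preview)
Your overall architecture is the same as the paper's---force a long cycle and a transposition into the monodromy of $\phi_c$, then conclude $S_n$ by primitivity---but there is a genuine gap in the first step, and it is exactly the step that governs the bound in the statement. You claim that the hypothesis $n\geq 2\max(\deg a,\deg b)+\log n(1+o(1))$ together with ``a prime-counting estimate in short intervals'' produces a prime $p$ in $(n/2,\,n-\deg b-1]$. This interval has length $n/2-\deg b-1$, which under the stated hypothesis may be as small as $\tfrac12\log n(1+o(1))$. No known unconditional result (and not even RH) guarantees a prime in an interval of length $O(\log n)$ around $n/2$; the best unconditional results need length $\gg n^{0.525}$. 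So your approach proves only a weaker statement, with $\log n$ replaced by a power of $n$. The paper sidesteps this entirely: it does not look for a \emph{prime} cycle but for a cycle of length $e$ with $n/2<e<n-m$ and $\gcd(e,n\cdot\mathrm{char}\,F)=1$. Such an $e$ can always be found once $n-2m\geq \log n(1+o(1))$ (this is an elementary counting argument, and the bound is sharp), and a cycle of length coprime to $n$ and $>n/2$ already forces primitivity; together with a transposition this gives $S_n$ directly, without passing through $A_n$ or invoking Jordan's theorem on long prime cycles.

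There is a second, independent gap: you ignore positive characteristic. Your assertion that the inertia above $\infty$ has cycle type $(p,m_1,\ldots,m_r)$ presupposes tame ramification, i.e.\ that $\mathrm{char}\,F$ divides none of the pole orders. You neither arrange $p\neq \mathrm{char}\,F$ nor control the multiplicities of the zeros of $b$; likewise the ``simple critical point $\Rightarrow$ transposition'' step needs care when $\mathrm{char}\,F=2$. The paper handles this by (i) choosing $e$ coprime to $\mathrm{char}\,F$ and (ii) explicitly constructing, via a Euclidean-algorithm computation, two finite branch values $\alpha_1,\alpha_2$ where the specialization of $a+bcY$ factors as $(X-\gamma_1)^e\cdot(\text{separable})$ and $(X-\gamma_2)^2\cdot(\text{separable})$ respectively; the ramification types $(e,1,\dots,1)$ and $(2,1,\dots,1)$ are then tame (with a separate easy argument in characteristic $2$ for the $(2,1,\dots,1)$ fiber). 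This explicit construction also replaces your Zariski-genericity argument for the transposition, which is fine in spirit but less explicit than what the paper does.
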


Note that each infinite algebraic extension $F$ of a finite field
$K$ is PAC \cite[Corollary 11.2.4]{FriedJarden2005}. By
Theorem~\ref{thm A}, $F[X]$ satisfies Dirichlet's theorem for
$\calN=\{n\mid \exists N/M,\ n=[N:M]\}$. This result already follows
from a
quantitative form of the result of Artin-Kornblum. %
Nevertheless, our proof has the advantage that the constructions are
essentially explicit: The polynomial $c(X)$ in Theorem~\ref{thm A}
equals to the polynomial $c(X)$ appearing in Proposition~\ref{thmB}
times some factor, say $\alpha$, coming from the PAC property. The
construction in Proposition~\ref{thmB} is explicit as it uses
nothing but the Euclidean algorithm.

\section{Polynomials over Infinite Fields}
\subsection{Calculations with Polynomials}
The following result is a special case of Gauss' Lemma.
\begin{lemma}
\label{lem1}%
A polynomial $f(X,Y) =a(X)+b(X)Y\in F[X,Y]$ is irreducible if and
only if $a(X)$ and $b(X)$ are relatively prime.
\end{lemma}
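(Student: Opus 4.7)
The plan is to deduce this from Gauss's lemma applied to $f(X,Y)$ viewed as a polynomial in $Y$ with coefficients in the UFD $F[X]$.

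First I would handle the forward direction by contrapositive: suppose $d(X)=\gcd(a(X),b(X))$ is non-constant. Then we can write
\[
f(X,Y)=d(X)\bigl(a_1(X)+b_1(X)Y\bigr),\qquad a=da_1,\ b=db_1,
\]
and both factors are non-units of $F[X,Y]$ (the first because $d$ has positive degree in $X$; the second because it has positive degree in $Y$, or positive degree in $X$ if $b_1=0$ and $a_1$ is non-constant). Hence $f$ is reducible.

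For the converse, assume $\gcd(a,b)=1$. The interesting case is $b\ne 0$ (if $b=0$, then $\gcd(a,b)=a$ forces $a$ to be a unit and there is nothing to prove for irreducibility in the strict sense). View $f$ as an element of $F[X][Y]$: its content with respect to $F[X]$ is exactly $\gcd(a,b)=1$, so $f$ is primitive. On the other hand, in $F(X)[Y]$ the polynomial $f=b(X)Y+a(X)$ has degree $1$ in $Y$, hence is irreducible there. Gauss's lemma for the UFD $F[X]$ (see e.g.\ \cite[IV\S2]{Lang2002}) then yields that $f$ is irreducible in $F[X][Y]=F[X,Y]$, completing the proof.

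There is no real obstacle: the entire argument is a direct invocation of Gauss's lemma, with the only care needed being the observation that the content of $a(X)+b(X)Y$ in $F[X][Y]$ coincides with $\gcd(a,b)$.
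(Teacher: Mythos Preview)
Your proposal is correct and is precisely the argument the paper has in mind: the paper does not give a detailed proof but simply states that the result ``is a special case of Gauss' Lemma,'' and you have spelled out exactly that deduction. There is nothing to compare---your write-up is just an explicit version of the one-line reference in the paper.
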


\begin{lemma}
\label{lem_separable}%
Let $a,b,c\in F[X]$ such that $\gcd(a,b) = 1$ and $c\neq 0$. Then
there exists a finite subset $S\subseteq F$ such that for each
$\alpha\in F\smallsetminus S$ the polynomials $a + \alpha b$ and $c$
are relatively prime. Moreover, if $b'\neq 0$, we may choose $S$
such that $a +\alpha b$ is separable.
\end{lemma}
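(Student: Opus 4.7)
The plan is to handle the two claims separately, and then take $S$ to be the union of the two finite exceptional sets produced.

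For the coprimality claim, I would look at the irreducible factors of $c$. If $p \in F[X]$ is an irreducible factor of $c$, the condition $p \mid a + \alpha b$ is a condition on the image of $\alpha$ in the residue field $F[X]/(p)$. Since $\gcd(a,b) = 1$, the class of $b$ is nonzero in that field, hence invertible, so $p \mid a+\alpha b$ forces $\alpha$ to be the unique element $-a/b \bmod p$ of $F[X]/(p)$. Because $\alpha$ lies in $F$, this pins down at most one value of $\alpha \in F$ per irreducible factor $p$ of $c$. Since $c$ has finitely many irreducible factors, the set $S_1$ of $\alpha$ violating $\gcd(a+\alpha b, c) = 1$ is finite.

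For the separability claim (assuming $b' \neq 0$), the plan is to show that $a + Yb$ and $a' + Yb'$ are coprime as elements of $F(Y)[X]$, and then specialize. Over $F[X,Y]$, any common factor $d(X,Y)$ of these two polynomials divides their $F[Y]$-linear combination
\[
b(a' + Yb') - b'(a+Yb) = ba' - ab',
\]
which depends only on $X$. A short case analysis shows that $ba' - ab' \neq 0$: if $ba' = ab'$ then $\gcd(a,b)=1$ forces $a \mid a'$, so either $a$ is constant or $a'=0$; either way one deduces $b' = 0$, contrary to hypothesis. So $d \in F[X]$ divides the constant-in-$Y$ polynomial $ba'-ab'$, and extracting the coefficient of each power of $Y$ in $a+Yb$ shows $d \mid a$ and $d \mid b$; hence $d$ is a unit by $\gcd(a,b)=1$.

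Given $\gcd(a+Yb, a'+Yb') = 1$ in $F(Y)[X]$, a Bezout identity cleared of denominators yields $U(X,Y)(a+Yb) + V(X,Y)(a'+Yb') = D(Y)$ for some $U,V \in F[X,Y]$ and some nonzero $D \in F[Y]$. Specializing $Y \mapsto \alpha$ with $D(\alpha) \neq 0$ shows that $a+\alpha b$ and $(a+\alpha b)' = a' + \alpha b'$ are coprime, so $a+\alpha b$ is separable. Thus the set $S_2$ of $\alpha$ failing separability is contained in the finite zero set of $D(Y)$, and $S := S_1 \cup S_2$ works. The only mildly delicate point, and the one I would be most careful about, is the verification that $ba' - ab' \neq 0$; everything else is bookkeeping.
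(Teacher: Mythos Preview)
Your proof is correct. The overall structure matches the paper's: handle the coprimality with $c$ and the separability separately, producing a finite exceptional set for each.

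For the coprimality claim your argument via irreducible factors of $c$ is essentially identical to the paper's, which instead phrases it in terms of the roots $\gamma$ of $c$ in $\bar F$ and collects the values $-a(\gamma)/b(\gamma)$ that happen to land in $F$.

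For the separability claim the two arguments diverge slightly. The paper simply takes $d(Y)$ to be the discriminant of $a(X)+Yb(X)$ over $F(Y)$ and argues that $d(Y)\neq 0$: since $\gcd(a,b)=1$, Lemma~\ref{lem1} gives that $a+Yb$ is irreducible in $F(Y)[X]$, and for an irreducible polynomial the nonvanishing of its $X$-derivative (guaranteed by $b'\neq 0$) forces separability, hence a nonzero discriminant. You instead prove directly that $\gcd(a+Yb,\,a'+Yb')=1$ in $F(Y)[X]$ by forming the Wronskian-type combination $ba'-ab'$ and checking it is nonzero; your $D(Y)$ obtained by clearing denominators in a B\'ezout identity plays exactly the role of the paper's discriminant. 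Your route is a bit longer but more self-contained, as it does not invoke the irreducibility of $a+Yb$; the paper's route is shorter because that irreducibility is already on hand from the preceding lemma.
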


\begin{proof}
Let $S = \{-\frac{a(\gamma)}{b(\gamma)}\mid \gamma\in \Fgal,
b(\gamma)\neq 0 \mbox{ and } c(\gamma) = 0\}\cap F$. Then $a +
\alpha b$ has no common zero in $\Fgal$ with $c$ for each
$\alpha\not\in S$, hence these polynomials are relatively prime.

Let $d(Y)\in F[Y]$ be the discriminant of $a(X) + Y b(X)$ over
$F(Y)$, then $b'(X)\neq 0$ implies that $\frac{\partial }{\partial
X}(a(X)+b(X)Y)\neq 0$, and hence $d(Y)\neq 0$. In this case add all
the roots of $d(Y)$ to $S$.
\end{proof}

\begin{lemma}
\label{lem_final_poly}%
Let $a,b,p_1,p_2\in F[X]$ be pairwise relatively prime polynomials
and let $\alpha_1, \alpha_2 \in F$ be distinct nonzero elements.
Then for each $n > \deg p_1 + \deg p_2$ there exists $c\in F[X]$ of
degree $n$ and separable $h_1,h_2\in F[X]$ such that $a = p_i h_i +
b c \alpha_i$ and $\gcd(h_i, a p_i) = 1$ for $i=1,2$.
\end{lemma}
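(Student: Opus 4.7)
The plan is to use the Chinese Remainder Theorem to parameterize candidate polynomials $c$ by a polynomial $q$, then reduce to a single free scalar parameter $\lambda\in F$ and invoke Lemma~\ref{lem_separable}. First, since $\gcd(b,p_i)=1$, CRT furnishes $c_0\in F[X]$ with $\deg c_0 < \deg p_1+\deg p_2$ satisfying $bc_0\alpha_i\equiv a\pmod{p_i}$ for $i=1,2$; I set $k_i:=(a-bc_0\alpha_i)/p_i$ and $m:=n-\deg p_1-\deg p_2\geq 1$. Any polynomial of the form $c=c_0+p_1p_2q$ with $\deg q=m$ then has $\deg c=n$ and satisfies $a=p_ih_i+bc\alpha_i$, where $h_i=k_i-bp_{3-i}q\alpha_i$.

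The crux will be to show $\gcd(k_i,bp_{3-i})=1$ for each $i$. Modulo $b$: $p_ik_i\equiv a\pmod b$ together with $\gcd(a,b)=1$ immediately yields $\gcd(k_i,b)=1$. Modulo $p_{3-i}$: I would substitute the congruence $bc_0\alpha_{3-i}\equiv a\pmod{p_{3-i}}$ into $p_ik_i=a-bc_0\alpha_i$, obtaining
\[
p_ik_i\equiv a\left(1-\tfrac{\alpha_i}{\alpha_{3-i}}\right)=\tfrac{\alpha_{3-i}-\alpha_i}{\alpha_{3-i}}\,a\pmod{p_{3-i}}.
\]
Since $\alpha_1\ne\alpha_2$ and $\alpha_{3-i}\ne 0$, the right-hand side is a nonzero scalar times $a$, and together with $\gcd(a,p_{3-i})=1$ this forces $\gcd(k_i,p_{3-i})=1$; hence $\gcd(k_i,bp_{3-i})=1$.

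Finally, I would parameterize $q=q_0+\lambda(X-\gamma)$ as follows. Pick $q_0\in F[X]$ of degree $m$ so that $A_i:=k_i-bp_{3-i}q_0\alpha_i$ is nonzero for $i=1,2$; this is possible because $A_i=0$ together with the crux forces $bp_{3-i}\mid k_i$, hence $bp_{3-i}$ must be a constant, which excludes at most one $q_0$ per $i$. Next pick $\gamma\in F$ outside the finite set of roots of $A_1A_2bp_1p_2$; this ensures $A_i(\gamma)\ne 0$ (so $\gcd(A_i,X-\gamma)=1$ and hence $\gcd(A_i,B_i)=1$ where $B_i:=bp_{3-i}(X-\gamma)\alpha_i$, using the crux) and $B_i'(\gamma)=\alpha_ib(\gamma)p_{3-i}(\gamma)\ne 0$ (so $B_i'\ne 0$). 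Since $h_i=A_i-\lambda B_i$, applying Lemma~\ref{lem_separable} to each $h_i$ with auxiliary polynomial $ap_i$ yields finite exceptional sets $S_1,S_2\subseteq F$; choosing $\lambda\in F\smallsetminus(S_1\cup S_2)$, and additionally avoiding the at most one value of $\lambda$ that could drop $\deg q$ below $m$ in the corner case $m=1$, gives the desired $c,h_1,h_2$. The main obstacle is the computation in the second paragraph — all the rest is essentially routine — and that computation is precisely where both hypotheses $\alpha_1\ne\alpha_2$ and $\alpha_i\ne 0$ are essential.
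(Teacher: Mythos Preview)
Your proof is correct and follows essentially the same route as the paper's: build a base solution $c_0$ via CRT, verify the key coprimality $\gcd(k_i,bp_{3-i})=1$ (where the hypothesis $\alpha_1\neq\alpha_2$ enters exactly as you found), then adjust by $p_1p_2\cdot(\text{polynomial with one free scalar})$ and invoke Lemma~\ref{lem_separable}. The only cosmetic difference is the parameterization of that free scalar: the paper fixes an $s$ of degree $m$ with $\gcd(s,k_i)=1$ and $(bp_{3-i}s)'\neq0$, then scales $s\mapsto\alpha s$, whereas you fix $q_0$ and perturb additively by $\lambda(X-\gamma)$; both reduce to the same application of Lemma~\ref{lem_separable}.
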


\begin{proof}
Write $b_i = b \alpha_i$. Since $\gcd(b_ip_{3-i},p_i) = 1$ for
$i=1,2$, there are $c_1,c_2,h_{1,0},h_{2,0}\in F[X]$ such that
\begin{equation} \label{eq a 0}
a = p_i h_{i,0} + b_i c_i p_{3-i} \qquad i=1,2,
\end{equation}
with $\deg c_i <\deg p_i$. For $\bar c = c_1 p_2 + c_2 p_1$ and
$h_{i,1} = h_{i,0} - c_{3-i} b_i$, we have
\begin{equation}\label{eq a 1}
a = p_i h_{i,1} + b_i \bar c \qquad i=1,2.
\end{equation}
Here $h_{i,1}$ is relatively prime to $b_i$, since $a$ is. Taking
\eqref{eq a 0} with $i=2$ and \eqref{eq a 1} with $i=1$, we get
\[
p_1 h_{1,1} \equiv a - b_1 \bar c \equiv a - b_1 c_{2} p_1  \equiv
b_2 c_2 p_1 - b_1 c_2 p_1 \equiv p_1 b c_2 (\alpha_2 - \alpha_1)
\pmod{p_2}.
\]
Therefore $h_{1,1}$ is relatively prime to $p_2$, since $bc_2p_1$ is
(by \eqref{eq a 0} with $i=2$). Similarly, $h_{2,1}$ is relatively
prime to $p_1$.

Take $c = \bar c + p_1 p_2 s$ for some $s\in F[X]$. Then, for $h_i =
h_{i,1} - b_i p_{3-i} s$, we have
\[
a = p_i h_i + b_i c \qquad i=1,2.
\]
To conclude the proof it suffices to find $s\in F[X]$ such that
$h_1$ and $h_2$ are separable, $\gcd(h_i, ap_i) = 1$, and $\deg c =
n$. Choose $s\in F[X]$ for which $\deg s = n - (\deg p_1 + \deg
p_2)\geq 1$, $(b p_{i} s)'\neq 0$, and $\gcd(s,h_{i,1}) = 1$ for
$i=1,2$ (e.g., $s(X) = (X-\beta)^{n-1}(X-\gamma)$, where
$\beta,\gamma\in F$ are not roots of $h_{1,1} h_{2,1} b p_1 p_2$.)

By Lemma~\ref{lem_separable} with $h_{i,1}$, $b p_{3-i}s$, $a p_i$
(for $i=1,2$) we get a finite set $S\subseteq F$ such that for each
$\alpha \in F \smallsetminus S$ the polynomial $h_{i,1} - \alpha b
p_{3-i} s$ is separable and relatively prime to $a p_i$. Replace $s$
with $\alpha s$, for some $\alpha\neq 0$ for which
$\alpha_i\alpha\in F\smallsetminus S$, if necessary, to assume that
$\alpha_1,\alpha_2\in F\smallsetminus S$. This $s$ has all the
required properties.
\end{proof}

\subsection{Other Lemmas}
The next lemma gives a sufficient criterion for a transitive group
to be primitive and to be the  symmetric group (cf. \cite[Lemma
4.4.3]{Serre1992}).

\begin{lemma}\label{lem_primitive}
Let $A\leq S_n$ be a transitive group and let $e$ be a positive
integer in the segment $\frac n2<e<n$ such that $\gcd(e,n)=1$. Then,
if $A$ contains an $e$-cycle, it is primitive. Moreover, if $A$
contains also a transposition, then $A = S_n$.
\end{lemma}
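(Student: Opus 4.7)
The plan is to treat the two claims separately. For primitivity I would argue by contradiction, using the hypothesis $\gcd(e,n)=1$ together with the fact that $\sigma$ permutes any block system. For the $S_n$ conclusion I would use the classical conjugation trick that turns the set of transpositions in $A$ into an $A$-invariant equivalence relation, which primitivity collapses.

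For primitivity, suppose $A$ preserves a nontrivial block system with blocks of common size $d$, where $d\mid n$ and $1<d<n$; in particular $d\le n/2$. Let $\sigma\in A$ be an $e$-cycle, write $S$ for its support (a single $\sigma$-orbit of length $e$), and let $B_{i_1},\dots,B_{i_r}$ be the blocks meeting $S$. Since $S$ is a single $\sigma$-orbit and $\sigma$ permutes blocks, these $r$ blocks form one $\sigma$-orbit on the block system, so $\sigma$ cyclically permutes them. If $r=1$, then $S$ is contained in a single block, giving $e\le d\le n/2$, which contradicts $e>n/2$. If $r\ge 2$, I claim no $B_{i_j}$ contains a $\sigma$-fixed point: any fixed point $x\in B_{i_j}$ would lie in $\sigma(B_{i_j})$, forcing $\sigma(B_{i_j})=B_{i_j}$ and contradicting the nontriviality of the $\sigma$-action on these blocks. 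Hence each $B_{i_j}$ is entirely contained in $S$, so $e=rd$. But then $d\mid e$ together with $d\mid n$ yields $d\mid\gcd(e,n)=1$, contradicting $d>1$.

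For the $S_n$ statement, assume $A$ is primitive and contains a transposition $\tau=(i_0,j_0)$. I would define the relation $i\sim j$ on $\{1,\dots,n\}$ to mean $i=j$ or $(i,j)\in A$. Reflexivity and symmetry are immediate; transitivity follows from the identity $(i,j)(j,k)(i,j)=(i,k)$, and $A$-invariance from $\pi(i,j)\pi^{-1}=(\pi(i),\pi(j))$. Thus the equivalence classes of $\sim$ form an $A$-invariant partition. Primitivity forces this partition to be either the trivial one or the discrete one; since $i_0\sim j_0$ with $i_0\ne j_0$, the discrete partition is ruled out, and so all of $\{1,\dots,n\}$ is a single class. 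Every transposition therefore lies in $A$, so $A=S_n$.

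The main delicate point is the block-counting step: specifically, identifying that when $\sigma$ moves the blocks meeting $S$ nontrivially, none of those blocks may harbour a fixed point, so $|S|=rd$. Once this is in hand, the numerical contradiction is immediate from the coprimality $\gcd(e,n)=1$, and the rest of the argument is standard.
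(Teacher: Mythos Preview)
Your proof is correct and follows essentially the same route as the paper's. For primitivity, both arguments show that a block meeting the support of the $e$-cycle must in fact be contained in that support, and then deduce that the block size divides $e$; the paper does this by focusing on a single block and observing it is a block for the cyclic group $\langle\sigma\rangle$ acting on its support, while you phrase it via the $\sigma$-orbit of blocks meeting $S$, but the content is the same. For the second claim the paper simply cites the classical fact that a primitive group containing a transposition equals $S_n$ (Dixon--Mortimer), whereas you supply the standard equivalence-relation proof of that fact; this makes your argument self-contained but is not a different strategy.
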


\begin{proof}
Let $\Delta \neq \{1,\ldots,n\}$ be a block of $A$. We have
$|\Delta| \leq \frac n2$, since $|\Delta|\mid n$. For the first
assertion, it suffices to show that $|\Delta| = 1$, and since
$\gcd(e,n)=1$, it even suffices to prove that $|\Delta|\mid e$.
Without loss of generality assume that $\sigma = (1\ 2\ \cdots\  e)
\in A$ and $1\in \Delta$. Then $\{1,\ldots,e\} \not\subseteq
\Delta$, since $e>\frac n2\geq |\Delta|$. Hence $\Delta \neq \sigma
\Delta$, and hence $\Delta\cap \sigma \Delta = \emptyset$. As
$\sigma(x) = x$ for any $n\geq x>e$, we have $\Delta \subseteq
\{1,\ldots,e\}$. Consequently, $\Delta$ is a block of
$\left<\sigma\right>$, so $|\Delta|\mid e$.

The second assertion follows since a primitive group containing a
transposition is the symmetric group \cite[Theorem
3.3A]{DixonMortimer}.
\end{proof}

The following number-theoretic lemma will be needed later.

\begin{lemma}\label{lem:numbertheoretic}
For any positive integers $n,m$ and prime $p$ satisfying $n\geq 2m +
\log n (1+o(1))$, there exists an integer $e$ in the segment $n-m> e
> n/2$ such that $\gcd(e,np)=1$.
\end{lemma}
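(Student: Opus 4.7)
The plan is to take $e$ to be an appropriately chosen prime in the open interval $(n/2, n-m)$ that differs from $p$; I will show any such prime is automatically coprime to $np$, and then invoke a quantitative form of the prime number theorem to ensure its existence.

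The first (elementary) step is a reduction. Suppose $e$ is a prime satisfying $n/2 < e < n - m < n$. If some prime $q$ divides $n$ with $q > n/2$, then $n/q$ is a positive integer strictly less than $2$, forcing $n/q = 1$ and hence $q = n$; but $e < n - m < n$ excludes $e = n$. Therefore no prime divisor of $n$ lies in $(n/2, n)$, so in particular $e \nmid n$. Combined with $e \neq p$ (both being primes) this gives $\gcd(e, np) = 1$. Hence it suffices to produce a prime in $(n/2, n-m)$ distinct from $p$, or equivalently to show that $(n/2, n-m)$ contains at least two primes.

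The hypothesis $n \geq 2m + \log n (1 + o(1))$ rearranges to say that the interval length $L = n/2 - m$ satisfies $L \geq (1 + o(1))(\log n)/2$. I would then invoke the prime number theorem in short intervals, $\pi(x + L) - \pi(x) = (1 + o(1)) L / \log x$ as $x \to \infty$, applied with $x$ of order $n$. For $n$ sufficiently large the implicit constant in the $(1+o(1))$ factor of the hypothesis is calibrated so that the prime count in $(n/2, n-m)$ exceeds $2$, whence we can pick a prime $\neq p$.

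The main obstacle — and the entire substance of the argument — is the precise form of PNT in short intervals of length comparable to $\log n$. The $(1+o(1))$ correction in the hypothesis is exactly the asymptotic factor needed to push the prime count in $(n/2, n-m)$ above a fixed threshold; once this quantitative input is in hand, the reduction to a prime-finding problem and the verification of coprimality are entirely elementary.
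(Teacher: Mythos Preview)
Your reduction is correct: any prime $e$ with $n/2 < e < n-m$ and $e \neq p$ is automatically coprime to $np$. The gap is in the second step. You want to apply the prime number theorem in short intervals of length $L \sim \tfrac{1}{2}\log n$ around $x \sim n/2$, and you explicitly acknowledge that this is ``the entire substance of the argument.'' But the asymptotic $\pi(x+L)-\pi(x)\sim L/\log x$ is \emph{not} known for intervals this short. Unconditionally the best results (Huxley, Baker--Harman--Pintz) require $L \gg x^{\theta}$ with $\theta$ around $0.525$; even under the Riemann Hypothesis one only reaches $L \gg x^{1/2}\log x$. The existence of even a single prime in every interval $[x, x+C\log x]$ is Cram\'er's conjecture, which is wide open. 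So the input you need simply does not exist, and the hypothesis $n \geq 2m + \log n(1+o(1))$ cannot be ``calibrated'' to make it work.

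The paper avoids this entirely by not insisting that $e$ be prime. It constructs $e$ directly as an integer slightly above $n/2$ that is coprime to $n$ (explicitly $n/2+1$, $n/2+2$, or $(n+1)/2$ depending on $n \bmod 4$), and if this first candidate happens to be divisible by $p$, it moves to the next integer above $n/2$ coprime to $n$, which is at most $n/2 + q$ where $q$ is the smallest prime not dividing $n$. The only analytic input is the elementary bound $q \leq \log n\,(1+o(1))$, coming from $\omega(n) \leq (1+o(1))\log n/\log\log n$ and $p_k \sim k\log k$. This is why the $\log n$ in the hypothesis arises: it measures how far one must go past $n/2$ to find a residue class coprime to $n$, not a prime.
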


\begin{proof}
Let $e$ be
\[
\begin{array}{ll}
\frac{n}{2}+2,
    & \mbox{if $n$ is even but not divisible by $4$,}\\
\frac{n}{2}+1,
    & \mbox{if $n$ is divisible by $4$, or}\\
\frac{n+1}{2},
    & \mbox{if $n$ is odd.}
\end{array}
\]
Then $e$ is the first integer greater than $\frac{n}{2}$ for which
$\gcd(e,n)=1$. If $p\nmid e$, we are done (and we only need $n>2m +
4$). Next assume that $p\mid e$ (and hence $p\nmid n$). Firstly, if
$n$ is even but not divisible by $4$, then the next candidate $e' =
e+2$ works, since $\gcd(e',n)=1$ and $p\nmid e'$ (otherwise, $p$
divides $e'-e=2$, hence $e$ is even, a contradiction). Next, if $n$
is divisible by $4$, then the first relatively prime to $n$ integer
greater than $e$ is $e' = \frac{n}{2}+q$, where $q$ is the smallest
prime not dividing $n$.

If $p\mid e'$, then $p\mid (e'-e) = q-1$. In particular, $p<q$, and
hence $p\mid n$ by minimality of $q$, a contradiction. Therefore
$p\nmid e'$. Finally, if $n$ is odd, the same argument will show
that $e' = \frac{n+q}{2}$ is relatively prime to $np$, where now $q$
is the smallest odd prime not dividing $n$.

It remains to evaluate $q$ -- a standard exercise in number theory:
Let $\omega(n)$ be the number of distinct prime divisors of $n$.
Then $q$ is no more than the $(\omega(n)+2)$-th prime number. Since
the $k$-th prime equals to $k\log k(1+o(1))$ and
$$\omega(n)\leq \frac{\log n}{\log\log n}(1 + o(1))$$
\cite[Theorem 2.10]{MontgomeryVaughan2007}, we have
\[
q\leq \omega(n)\log(\omega(n))(1+o(1)) = \log n (1+o(1)).
\]
Note that for $n = 4 \prod_{2<l<q} l$ (i.e., $4$ times the product
of all the odd prime numbers less than $q$) the inequality is in
fact equality. Thus the estimation $n>2m+\log(n)(1+o(1))$ is the
best possible.
\end{proof}

The following result is very well known, however, for the sake of
completeness, we give a proof.

\begin{proposition} \label{prop:cyclicelements}
Let $F$ be an algebraically closed field of characteristic $l\geq
0$. Let $E/K$ be a separable extension of degree $n$ of algebraic
function fields of one variable over $F$. Assume that a prime
divisor $\pfrak$ of $K$ factors as
\[
\pfrak = \Pfrak_1^{e_1} \cdots \Pfrak_r^{e_r}
\]
in $E$. Assume either
\begin{enumerate}
\item $l=0$,  or
\item $l>0$ and $\gcd(e_i,l)=1$, for $i=1,\ldots, r$, or
\item $l=e_r = 2$ and $e_1,\ldots, e_{r-1}$ are odd.
\end{enumerate}
Then the Galois group of the Galois closure of $E/K$ (as a subgroup
of $S_n$) contains an element of cyclic type $(e_1,\ldots,e_r)$.
\end{proposition}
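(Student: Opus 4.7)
The plan is to translate the given ramification data into group-theoretic data via the Galois closure. Let $\Ehat$ denote the Galois closure of $E/K$, set $G = \gal(\Ehat/K)$ and $H = \gal(\Ehat/E)$, and realize the embedding $G \hookrightarrow S_n$ through the transitive action on $\Omega = G/H$. Fix a prime $\widetilde{\Pfrak}$ of $\Ehat$ above $\pfrak$, and let $D \supseteq I$ be its decomposition and inertia groups. Since $F$ is algebraically closed, the residue field at every place is $F$ itself, so all residue degrees are $1$ and in particular $D = I$. A standard double-coset computation identifies the $I$-orbits on $\Omega$ with the primes $\Pfrak_1, \ldots, \Pfrak_r$ of $E$ above $\pfrak$, the $i$-th orbit having length $e_i$. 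Thus it suffices to produce $\sigma \in I$ whose orbits on $\Omega$ coincide with those of $I$, for then the cycle type of $\sigma$ is $(e_1, \ldots, e_r)$.

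The device for producing such $\sigma$ is the semidirect decomposition $I = P \rtimes C$, where $P$ is the wild inertia (the $l$-Sylow of $I$, trivial when $l=0$) and $C$ is cyclic of order prime to $l$; this splitting exists by Schur--Zassenhaus since $\gcd(|P|, |C|) = 1$. In case (a) there is no wild inertia, so $I = C$ is cyclic and any generator works. In case (b) I reach the same conclusion by eliminating $P$: the $P$-orbits on $\Omega$ refine the $I$-orbits and have $l$-power lengths, but they must also divide the relevant $e_i$, which is coprime to $l$; hence every $P$-orbit is a singleton, so $P$ fixes $\Omega$ pointwise. Since $\Ehat$ is the Galois closure, $G$ acts faithfully on $\Omega$, forcing $P = 1$, so $I$ is cyclic and any generator has the desired cycle type.

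The hard part is case (c), where the wild inertia can be nontrivial. The same orbit analysis shows that $P$ fixes pointwise each orbit of odd length $e_i$ ($i < r$), while on the last orbit of length $e_r = 2$ it acts either trivially or as a transposition. Now $|I|$ is divisible by $e_r = 2$ and $|C|$ is odd, so $2 \mid |P|$ and hence $P \neq 1$; combined with faithfulness, $P$ must embed into the symmetric group of that $2$-element orbit, so $|P| = 2$ and $P$ is generated by the transposition on orbit $r$. A generator $\sigma_0$ of $C$ then acts transitively on each of the first $r-1$ orbits (since $C = I/P$ acts there exactly as $I$ does) and trivially on the last orbit (since $C$-orbits there have odd length dividing $2$), giving $\sigma_0$ the cycle type $(e_1, \ldots, e_{r-1}, 1, 1)$. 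Letting $p$ be the nontrivial element of $P$, the product $\sigma := \sigma_0 p \in I$ agrees with $\sigma_0$ on the first $r-1$ orbits (since $p$ fixes them pointwise) and with $p$ on the last orbit (since $\sigma_0$ fixes it pointwise), so $\sigma$ has cycle type $(e_1, \ldots, e_r)$, completing the proof.
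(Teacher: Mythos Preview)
Your proof is correct and takes a genuinely different route from the paper. The paper passes to the completion $\Khat = F((Y))$ at $\pfrak$, factors the minimal polynomial $f = f_1\cdots f_r$ with $\deg f_i = e_i$, and then invokes the explicit classification of extensions of $F((Y))$: the tame degree-$e_i$ extension is $F((Y^{1/e_i}))$, so the splitting field of $f$ over $F((Y))$ is $F((Y^{1/e}))$ with $e = \mathrm{lcm}(e_i)$ in cases (a)--(b), and the compositum of $F((Y^{1/e'}))$ (with $e'$ odd) and a separable degree-$2$ extension in case (c). In every case the local Galois group turns out to be cyclic, and its generator visibly has cycle type $(e_1,\ldots,e_r)$ on the roots.

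You instead stay inside the global Galois closure and argue with the inertia group $I$ directly, using the double-coset identification of $I$-orbits on $G/H$ with the ramification indices and the structural decomposition $I = P \rtimes C$. This avoids completions and the local structure theory entirely, trading them for Schur--Zassenhaus and an orbit-length argument; in case (c) you build the desired element by hand as $\sigma_0 p$ rather than deducing that $I$ is cyclic. Your approach is a bit more group-theoretic and self-contained; the paper's is shorter once one is willing to cite the classification of totally ramified extensions of $F((Y))$. Both are perfectly valid, and in fact your case-(c) analysis implicitly recovers cyclicity of $I$ (since $P \cong \bbZ/2\bbZ$ has trivial automorphism group, the semidirect product is direct).
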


\begin{proof}
The completion $\Khat$ of $K$ at $\pfrak$ is a field of Laurent
series over $F$ \cite[Theorem II.2]{Serre1979}, say $\Khat =
F((Y))$. Let $x$ be a primitive element of $E/K$, integral at
$\pfrak$ and let $f$ be its irreducible polynomial over $K$.

Then $E=K[X]/(f)$. Let $f = f_1 \cdots f_r$ be the factorization of
$f$ over $\Khat$ into a product of distinct separable irreducible
monic polynomials. Then by \cite[Theorem II.1]{Serre1979}, the
completion of $E$ at $\Pfrak_i$ is $\Khat[X]/(f_i)$, and hence $\deg
f_i = e_i$, for $i=1,\ldots,r$.

If either $l=0$, or $l>0$ and $\gcd(e_i,l)=1$, then $F((Y))$ has a
unique extension of degree $e_i$, namely $F((Y^{1/e_i}))$
\cite[III\S6]{Chevalley} which is cyclic. We thus get that the
splitting field of $f$ over $F((Y))$ is $F((Y^{1/e}))$, where
$e={\rm lcm}(e_1,\ldots,e_r)$, unless $l=e_r=2$ and then the
splitting field of $f$ is the compositum of $F((Y^{1/e'}))$ with an
extension of degree $2$, where $e' = {\rm lcm}(e_1,\ldots,e_{r-1})$
is odd. In both cases the Galois group of $f$ over $F((Y))$ is
cyclic of order $e$. Its generator $\sigma$ acts cyclicly on the
roots of each of the $f_i$'s. Consequently, the cyclic type of
$\sigma$ is $(e_1,\ldots,e_r)$, as required.
\end{proof}

\subsection{Proof of Proposition~\ref{thmB}}
\begin{trivlist}
\item
Let $f(X,Y) = a(X) + b(X)Y\in F[X,Y]$ be an irreducible polynomial.
For a large integer $n$ we need to find $c(X)\in F[X]$ such that
$f(X,c(X)Y)=a(X) + b(X) c(X)Y$ is irreducible of degree $n$ and the
Galois group of $f(X,c(X)Y)$ over $\Fgal(Y)$ is $S_n$.

Lemma~\ref{lem:numbertheoretic} with $m=\max\{\deg a(X), 2 +
\deg b(X)\}$ and $p = \textnormal{char}(F)$ gives (for $n\geq
2m+\log n(1+o(1))$) a positive integer $e>\frac{n}{2}$ such that
\begin{eqnarray}
&& \label{eq_n_large} n > \max\{\deg a(X), e+2+\deg b(X)\}, \\
\label{eq_p_notdivide} &&\gcd(e,np)=1\ (\mbox{or $\gcd(e,n)=1$, if
$p=0$}).
\end{eqnarray}
Let $\alpha_1\neq \alpha_2$ and $\gamma_1\neq \gamma_2$ be elements
of $F$ such that $\alpha_i$ is nonzero and $\gamma_i$ is not a root
of $a(X)b(X)$, $i=1,2$. In Lemma~\ref{lem_final_poly} we have
constructed (for $a$, $b$, $p_1 = (X-\gamma_1)^e$, $p_2 =
(X-\gamma_2)^2$, $\alpha_1 $, and $\alpha_2$) a polynomial $c(X)\in
F[X]$ of degree $\deg c = n - \deg b(X)$ which is relatively prime
to $a(X)$ such that
\begin{eqnarray}
\label{eq:ram 1}%
f(X,c(X)\alpha_1 ) &=& a(X) + \alpha_1b(X) c(X) =  (X - \gamma_1)^e h_1(X),\\
\label{eq:ram alpha}%
f(X,c(X)\alpha_2 ) &=& a(X) + \alpha_2 b(X) c(X) = (X-\gamma_2)^2
h_2(X).
\end{eqnarray}
Here $h_1(X),h_2(X)\in F[X]$ are separable polynomials which are
relatively prime to $(X-\gamma_1)a(X)$, $(X-\gamma_2)a(X)$
respectively. In particular $\gcd(a,c)=1$, and hence $f(X,c(X)Y)$ is
irreducible (Lemma~\ref{lem1}). By \eqref{eq_n_large}, $\deg_X
f(X,c(X) Y)  = \deg b(X) +\deg c(X) = n$. Taking
\eqref{eq_p_notdivide}, \eqref{eq:ram 1}, and \eqref{eq:ram alpha}
in mind, Proposition~\ref{prop:cyclicelements} with $\pfrak =
(Y-\alpha_1)$ gives us an $e$-cycle in $\gal(f(X,c(X)Y),\Fgal(Y))$
and with $\pfrak=(Y-\alpha_2)$ gives a transposition. Thus
$\gal(f(X,c(X)Y),\Fgal(Y))=S_n$ (Lemma~\ref{lem_primitive}) as
needed. \qed
\end{trivlist}

\begin{question}
Let $f(X,Y)\in F[X,Y]$ be an absolutely irreducible polynomial. For
large $n$, is there a polynomial $c(X)\in F[X]$ for which (a)
$f(X,c(X)Y)$ is an $X$-stable polynomial of degree $n$? (b)
$\gal(f(X,c(X)Y), \Fgal(Y)) \cong S_n$?
\end{question}

\chapter{Families of PAC Extensions}
The aim of this chapter is to produce several new families of PAC
extensions. First we find that some fields (with high probability in
some sense) are PAC extensions over any subfield which is not
algebraic over a finite field. As an application we solve Problem
18.7.8 in \cite{FriedJarden2005} for finitely generated fields.

In the second section we fix a field and find PAC extensions of it.
Specifically we get that an extension of a countable Hilbertian
field having certain group theoretic features (e.g.\ pro-solvable
extensions) has many PAC extensions. This implies the results of
Chapter \ref{chapter:Dirichlet} and Theorem~\ref{thm:intTrace} for
those fields.

\section{PAC Extensions over Subfields}
Recall that Jarden and Razon prove that if $K$ is a countable
Hilbertian field, then $K_s(\bfsigma)/K$ is PAC for almost all
$\bfsigma\in \gal(K)^e$. In this section we generalize this result.

Let us start by introducing some notation. Let
\[
f_1(T_1,\ldots, T_e ,X_1, \ldots, X_n),\ldots, f_m(\bfT,\bfX)\in
K[\bfT,\bfX] \] be irreducible polynomials and $g(\bfT)\in K[\bfT]$
nonzero. The corresponding \textbf{Hilbert set} is the set of of all
irreducible specializations $\bfT\mapsto \bfa$ for $f_1,\ldots, f_m$
under which $g$ does not vanish, i.e.
\[
H_K(f_1,\ldots, f_m;g) = \{ \bfa\in K^r \mid \forall i\:
f_i(\bfa,\bfX) \mbox{ is irreducible in } K[\bfX]\ \mbox{and} \
g(\bfa)\neq 0\}.
\]
Now $K$ is \textbf{Hilbertian} if any Hilbert set is nonempty
provided that $f_i=f_i(\bfT,X)$ is separable in $X$ for each $i$.
(Some authors use the terminology `$K$ is separable Hilbertian'.) A
stronger property is that any Hilbert set for $K$ is nonempty. We
call such a field \textbf{s-Hilbertian}.

In case the characteristic of $K$ is zero, these two properties
coincide. If the characteristic of $K$ is positive, there is a
simple criterion for a Hilbertian field to be s-Hilbertian.

But first let us recall the following notion. Let $K$ be a field of
positive characteristic $p>0$. Then the subset of all $p$th-powers
$K^p$ is a subfield of $K$. If $[K:K^p]=1$, we call $K$
\textbf{perfect}, otherwise we say it is \textbf{imperfect}.

\begin{theorem}[Uchida]
Let $K$ be a Hilbertian field of positive characteristic. Then $K$
is s-Hilbertian if and only if $K$ is imperfect.
\end{theorem}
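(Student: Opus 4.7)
The plan is to prove the two implications separately, the short one first and then the main one.

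For the ``only if'' direction I argue the contrapositive: if $K$ is perfect, then $K$ fails to be s-Hilbertian. The witness is the polynomial $f(T,X)=X^p-T\in K[T,X]$, which is absolutely irreducible (it is Eisenstein at the prime $(T)$ of $K[T]$, hence irreducible in $K(T)[X]$ and, by Gauss' lemma, in $K[T,X]$). However, perfectness gives for each $a\in K$ a unique $b\in K$ with $b^p=a$, whence $X^p-a=(X-b)^p$ is not irreducible in $K[X]$. Thus the Hilbert set $H_K(f;1)$ is empty, so $K$ is not s-Hilbertian.

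For the ``if'' direction, assume $K$ is Hilbertian and imperfect, and fix once and for all some $\gamma\in K\smallsetminus K^p$. I want to show every Hilbert set is nonempty. Since finite intersections of Hilbert sets over a Hilbertian field are still Hilbert sets, I may restrict to a single defining polynomial; and the case where it is separable in $X$ is exactly Hilbertianity. So I consider an irreducible $f(\bfT,X)\in K[\bfT,X]$ that is inseparable in $X$, together with a nonzero $g(\bfT)\in K[\bfT]$, and need $\bfa\in K^r$ with $f(\bfa,X)$ irreducible of the expected degree and $g(\bfa)\neq 0$.

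Inseparability in $X$ forces $f\in K[\bfT,X^p]$; iterating the reduction $X\mapsto X^p$, which preserves irreducibility in $K[\bfT,X]$, I write $f(\bfT,X)=h(\bfT,X^{p^e})$ with $e\geq 1$ maximal and $h\in K[\bfT,Y]$ irreducible and separable in $Y$. For $\bfa$ in the Hilbert set of $h$, let $\alpha\in K_s$ be a root of $h(\bfa,Y)$, so that $[K(\alpha):K]=\deg_Y h$. Then $f(\bfa,X)=h(\bfa,X^{p^e})$ is irreducible in $K[X]$ if and only if $X^{p^e}-\alpha$ is irreducible over $K(\alpha)$; by the standard criterion for purely inseparable binomials, this is equivalent to $\alpha\notin K(\alpha)^p$. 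So the task is to produce, within the Hilbert set defined by $h$ and $g$, a specialization whose root additionally lies outside $K(\alpha)^p$.

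This extra ``non-$p$-th-power'' requirement is where $\gamma\notin K^p$ enters, and its encoding as a Hilbertianity hypothesis is the main obstacle of the proof. My plan is to build an auxiliary polynomial $\tilde h(\bfT,\bfS,Y)\in K[\bfT,\bfS,Y]$, irreducible in $K[\bfT,\bfS,Y]$ and separable in $Y$, such that any specialization $(\bfa,\bfb)$ for which $\tilde h(\bfa,\bfb,Y)$ remains irreducible in $K[Y]$ automatically forces the root $\alpha$ of $h(\bfa,Y)$ to satisfy $\alpha\notin K(\alpha)^p$. A natural candidate is an ``$\gamma$-twist'' of $h$ such as $\tilde h(\bfT,S,Y)=h(\bfT,Y)-\gamma S^p$; separability in $Y$ is inherited from $h$, and irreducibility follows from the fact that $S^p-\gamma^{-1}h(\bfT,Y)$ is Eisenstein-like in $S$ once one checks $\gamma^{-1}h\notin K(\bfT,Y)^p$ (which holds because $h$ is separable in $Y$ and $\gamma\notin K^p$). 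Applying Hilbertianity simultaneously to $h$, $\tilde h$ and $g$ (their joint Hilbert set is nonempty by the Hilbertianity of $K$) produces the desired $\bfa$; the substantive work then lies in converting irreducibility of the specialized $\tilde h$ into the inequality $\alpha\notin K(\alpha)^p$, which is the technical heart of the argument.
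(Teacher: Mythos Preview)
The paper does not prove this theorem at all: it is stated and attributed to Uchida, and then used later (in the proof of Corollary~\ref{cor:PACoverf.g.}) as a black box. So there is no ``paper's own proof'' to compare against; I can only evaluate your argument on its merits.

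Your ``only if'' direction is correct, and your reduction for the ``if'' direction is the right one: writing $f(\bfT,X)=h(\bfT,X^{p^e})$ with $h$ separable in $Y$, the problem becomes finding $\bfa$ in a separable Hilbert set such that the root $\alpha$ of $h(\bfa,Y)$ satisfies $\alpha\notin K(\alpha)^p$.

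However, your auxiliary polynomial $\tilde h(\bfT,S,Y)=h(\bfT,Y)-\gamma S^p$ does not do the job, and you yourself flag that the ``technical heart'' is left undone. In fact the approach fails already in the simplest case $h(T,Y)=Y-T$ (i.e.\ $f=X^p-T$): here $\tilde h(a,b,Y)=Y-a-\gamma b^p$ is linear in $Y$ and hence irreducible for \emph{every} choice of $(a,b)$, so its irreducibility imposes no constraint whatsoever, while what you actually need is $a\notin K^p$. More generally, shifting $h(\bfa,Y)$ by the constant $\gamma b^p$ does not interact with whether the root of $h(\bfa,Y)$ is a $p$-th power in its own field. So the proposed bridge between ``$\tilde h(\bfa,b,Y)$ irreducible'' and ``$\alpha\notin K(\alpha)^p$'' is not just unproven but false as stated.

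What is genuinely needed is a \emph{separable} polynomial in a single unknown whose irreducibility forces the non-$p$-th-power condition. One way to do this (and this is essentially Uchida's idea) is to substitute a separable additive-type polynomial such as $Y\mapsto X^{p^e}+\gamma X$ into $h$, producing $h(\bfT,X^{p^e}+\gamma X)$, which is separable in $X$; one then shows it is irreducible over $K(\bfT)$ and that irreducibility of its specialization forces $h(\bfa,X^{p^e})$ to be irreducible as well. Your $\tilde h$, by contrast, keeps $Y$ untouched and introduces an inseparable variable $S$, which is why it cannot encode the needed condition within the separable Hilbertian framework.
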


\begin{definition}
A field $E$ is said to be \textbf{Hilbertian over} a subset $K$ if
$H_E(f_1,\ldots,f_m;g)\cap K^r\neq \emptyset$ for any irreducible
$f_1,\ldots,f_m\in E[\bfT, X]$ that are separable in $X$ and any
nonzero $g(\bfT)\in E[\bfT]$. If furthermore
$H_E(f_1,\ldots,f_m;g)\cap K^r\neq \emptyset$ for any irreducible
$f_1,\ldots,f_m\in E[\bfT,\bfX]$, then we say that $E$ is
\textbf{s-Hilbertian over} $K$.
\end{definition}

Note that a field $K$ is Hilbertian (resp.\ s-Hilbertian) if and
only if it is Hilbertian (resp.\ s-Hilbertian) over itself.

We begin with the observation that the proof of \cite[Proposition
3.1]{JardenRazon1994} gives the following stronger statement:

\begin{theorem}[Jarden-Razon]
Let $E$ be a countable field that is Hilbertian over a subset $K$.
Then for almost all $\bfsigma\in \gal(E)^e$ the fields
$E_{s}(\bfsigma)$ and $\Egal(\bfsigma)$ are PAC over $K$.
\end{theorem}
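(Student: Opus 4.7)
The plan is to mimic the proof of \cite[Proposition~3.1]{JardenRazon1994}, observing that the only place where Hilbertianity of $E$ is invoked can be replaced throughout by Hilbertianity of $E$ over $K$. First I would reduce to the separable-algebraic case $M = E_s(\bfsigma)$, since $\Egal(\bfsigma)$ is purely inseparable over $E_s(\bfsigma)$ and the PAC property over $K$ transfers across purely inseparable extensions (cf.\ \cite[Corollary~2.3]{JardenRazon1994}). Then I apply Proposition~\ref{prop:DefinitionPACextension}(c): it suffices to verify that for almost all $\bfsigma$, every absolutely irreducible $f(T,X)\in M[T,X]$ separable in $X$ and every nonzero $r(T)\in M[T]$ admit some $(a,b)\in K\times M$ with $f(a,b)=0$ and $r(a)\neq 0,\infty$.

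Countability of $E$ makes $E_s$ countable, so the collection of pairs $(f,r)$ with $f\in E_s[T,X]$ absolutely irreducible and separable in $X$ and $0\ne r\in E_s[T]$ is countable. A countable intersection of full-measure sets in $\gal(E)^e$ still has full measure, so I would fix one such pair $(f,r)$ and show that the bad set
\[
B_{f,r}=\{\bfsigma\in\gal(E)^e\mid f,r\in M[T,X],\text{ yet no }(a,b)\in K\times M\text{ works}\}
\]
has measure zero. Choose a finite Galois extension $L/E$ containing all coefficients of $f$ and $r$; then $\{\bfsigma\mid f,r\in M[T,X]\}$ is a union of cosets of $\gal(L)^e$ in $\gal(E)^e$, so it is enough to show zero measure on each such coset (equivalently, in $\gal(L)^e$). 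The key observation here is that $L$ inherits Hilbertianity over $K$ from $E$: every $L$-Hilbert set pulls back via norms to an $E$-Hilbert set, whose intersection with $K^e$ is nonempty by hypothesis.

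The main work is then an independence-plus-Borel--Cantelli argument, which I would run as follows. Using Hilbertianity of $L$ over $K$, construct recursively a sequence $a_1,a_2,\ldots\in K$ and roots $b_i$ of $f(a_i,X)$ in $\Egal$ such that $f(a_i,X)$ is irreducible over $L(b_1,\ldots,b_{i-1})$ and $r(a_i)\ne 0$; this forces the extensions $L(b_i)/L$ to be linearly disjoint over $L$, each of degree $n=\deg_X f$. The event ``$b_i\in M$'' inside $\gal(L)^e$ is $\gal(L(b_i))^e$, which has normalized measure $n^{-e}$, and linear disjointness makes these events mutually independent. Since $\sum_i n^{-e}=\infty$, the Borel--Cantelli lemma (\cite[Lemma~18.3.5]{FriedJarden2005}) gives a full-measure set on which infinitely many $b_i$ lie in $M$, which provides the desired $(a,b)$ and shows $B_{f,r}$ has measure zero.

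The hard part will be the construction of the linearly disjoint tower of specializations inside $K$: at each stage one must ensure that the Hilbert set (for the new irreducible polynomial over $L(b_1,\ldots,b_{i-1})$, together with the nonvanishing of $r$ and of a suitable discriminant ensuring disjointness) meets $K$. This is precisely where Hilbertianity \emph{over $K$} rather than ordinary Hilbertianity of a larger field $L_i$ is needed, and it is also the step where one must verify that $L_i$ is Hilbertian over $K$ whenever $E$ is and $L_i/E$ is finite separable. Once that step is confirmed, everything else is a routine adaptation of Jarden--Razon.
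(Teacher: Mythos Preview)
Your proposal is correct and matches the paper's approach exactly: the paper gives no independent proof of this theorem but simply records it as ``the observation that the proof of \cite[Proposition~3.1]{JardenRazon1994} gives the following stronger statement,'' which is precisely what you propose to verify. Your sketch of the Jarden--Razon argument (reduction to the separable case, countability, Borel--Cantelli over a linearly disjoint tower of specializations chosen in $K$) is accurate, and you have correctly identified the one place where Hilbertianity over $K$ replaces ordinary Hilbertianity; in fact your write-up is more detailed than what the paper supplies.
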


(Recall that a field $E$ is a PAC extension of a subset $K$ if for
any $f(T,X)\in K[T,X]$ that is absolutely irreducible and separable
in $X$ there exists $(a,b)\in K\times E$ for which $f(a,b)=0$.) Next
we wish to find new PAC extensions, and we start by finding
Hilbertian fields over other fields.

\begin{lemma}\label{lem:sHilb_p_t}
Let $K$ be an s-Hilbertian field over a subset $S$ and $E/K$ a
purely transcendental extension. Then $E$ is s-Hilbertian over $S$.
\end{lemma}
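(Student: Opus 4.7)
Write $E=K(\bfs)$ with $\bfs=(s_1,\ldots,s_n)$ a transcendence basis of $E/K$; since each instance of the Hilbert set involves only finitely many $s_j$'s, we may assume $n$ is finite. The idea is to swap the roles of $\bfs$ and $\bfT$. Instead of treating $\bfs$ as scalars in $E$ and specializing $\bfT$, I view the given polynomials as polynomials over $K$ in indeterminates $\bfY=(\bfs,\bfX)$ with parameters $\bfT$, and apply the s-Hilbertianity of $K$ over $S$ directly.

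Given irreducible $f_1,\ldots,f_m\in E[\bfT,\bfX]$ and nonzero $g\in E[\bfT]$, since $K[\bfs]$ is a UFD the standard Gauss' lemma argument provides irreducible $F_i\in K[\bfs,\bfT,\bfX]$ and nonzero $G\in K[\bfs,\bfT]$ that are associate in $E[\bfT,\bfX]$ (resp.\ in $E[\bfT]$) to $f_i$ (resp.\ $g$). To control how specializations $\bfT\mapsto\bfa$ behave, expand the leading $\bfX$-coefficient $c_i(\bfT,\bfs)$ of $F_i$ as $c_i=\sum_\alpha c_{i,\alpha}(\bfT)\bfs^\alpha$ and pick $\alpha_i$ with $c_{i,\alpha_i}\neq 0$; similarly expand $G=\sum_\beta G_\beta(\bfT)\bfs^\beta$ and pick some $\beta$ with $G_\beta\neq 0$. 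Set $\tilde g(\bfT)=G_\beta(\bfT)\prod_{i=1}^m c_{i,\alpha_i}(\bfT)\in K[\bfT]\smallsetminus\{0\}$. Regarding each $F_i$ as an irreducible element of $K[\bfT,\bfY]$, apply the s-Hilbertianity of $K$ over $S$ to the family $\{F_i\}$ with auxiliary polynomial $\tilde g$ to obtain $\bfa\in S^r$ such that $F_i(\bfa,\bfs,\bfX)$ is irreducible in $K[\bfs,\bfX]$ for every $i$ and $\tilde g(\bfa)\neq 0$.

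Since $c_{i,\alpha_i}(\bfa)\neq 0$, the leading coefficient $c_i(\bfa,\bfs)$ is a nonzero element of $K[\bfs]$, so $F_i(\bfa,\bfs,\bfX)$ retains its original positive $\bfX$-degree. Gauss' lemma for the UFD $K[\bfs]$ then upgrades irreducibility in $K[\bfs][\bfX]$ to irreducibility in $K(\bfs)[\bfX]=E[\bfX]$; since $f_i(\bfa,\bfX)$ is an $E$-unit multiple of $F_i(\bfa,\bfs,\bfX)$, it too is irreducible in $E[\bfX]$. Likewise $G_\beta(\bfa)\neq 0$ implies $G(\bfs,\bfa)\neq 0$ in $K[\bfs]$, hence $g(\bfa)\neq 0$ in $E$. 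The only real obstacle is the bookkeeping needed to ensure that leading $\bfX$-coefficients and $g$ do not vanish after the specialization --- the role of $\tilde g$ --- and the reason the argument needs s-Hilbertianity rather than plain Hilbertianity is that after re-grouping variables the polynomials $F_i\in K[\bfT,\bfY]$ need not be separable in $\bfY$, so specialization must be available without any separability assumption.
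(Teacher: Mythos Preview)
Your proof is correct and follows essentially the same approach as the paper: reduce to finitely many transcendentals, regroup the variables so that $(\bfs,\bfX)$ play the role of the ``$\bfX$'' variables over $K$, apply s-Hilbertianity of $K$ over $S$, and then pass from irreducibility in $K[\bfs,\bfX]$ to irreducibility in $K(\bfs)[\bfX]=E[\bfX]$. The paper's version is terser---it simply writes $f_i=g_i(\bfu,\bfT,\bfX)\in K[\bfu,\bfT,\bfX]$ and invokes the algebraic independence of the $u_\alpha$'s for the last step---whereas you spell out the Gauss' lemma bookkeeping and add the auxiliary polynomial $\tilde g$ to guarantee that the $\bfX$-degree and the value $g(\bfa)$ survive specialization; these are details the paper leaves implicit.
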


\begin{proof}
Let $f_1(\bfT,\bfX),\ldots, f_r(\bfT,\bfX)\in E[\bfT,\bfX]$ be
irreducible polynomials and $0\neq g(\bfT)\in E[\bfT]$. Since $E =
K(u_\alpha \mid \alpha\in A)$, where $\{u_\alpha\mid \alpha\in A\}$
is a set of variables, we can assume that $f_i(\bfT,\bfX) =
g_i(\bfu,\bfT,\bfX)$, where
\[
g_1(\bfu,\bfT,\bfX),\ldots,g_r(\bfu,\bfT,\bfX)\in K[\bfu,\bfT,\bfX]
\]
for some finite tuple of variables $\bfu$.

Since $K$ is s-Hilbertian over $S$, there exists a tuple $\bfa$ of
elements in $S$ such that all $f_i(\bfa,\bfX) = g_i(\bfu,\bfa,\bfX)$
are irreducible in $K[\bfu,\bfX]$ and $g(\bfa)\neq 0$. But the
elements in $\{u_\alpha\mid \alpha\in A\}$ are algebraically
independent, so all $f_i(\bfa,\bfX) = g_i(\bfu,\bfa,\bfX)$ are
irreducible in the larger ring $E[\bfX]$.
\end{proof}

\begin{proposition}
Let $K$ be an s-Hilbertian field over a subset $S$ and let $E/K$ be
a finitely generated extension. Then $E$ is Hilbertian over $S$.
Moreover, if $E/K$ is also separable, then $E$ is even s-Hilbertian
over $S$.
\end{proposition}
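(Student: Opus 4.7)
The plan is to split $E/K$ into a purely transcendental part followed by a finite algebraic part, and handle each in turn. Pick a transcendence basis $\bft = (t_1,\ldots,t_e)$ of $E/K$ and set $K' = K(\bft)$. By Lemma~\ref{lem:sHilb_p_t}, $K'$ is s-Hilbertian over $S$. Note also that if $E/K$ is separable, so is $E/K'$. It therefore suffices to prove: if $K'$ is s-Hilbertian over $S$ and $E/K'$ is a finite algebraic extension, then $E$ is Hilbertian over $S$, and is even s-Hilbertian over $S$ when $E/K'$ is separable.

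For the separable finite step I would use the classical norm-descent trick. Write $E = K'(\alpha)$ with separable minimal polynomial $p(Y) \in K'[Y]$ of degree $n = [E:K']$. Given irreducible $f_1,\ldots,f_m \in E[\bfT,\bfX]$ and nonzero $g \in E[\bfT]$, write each $f_j(\bfT,\bfX) = F_j(\bfT,\bfX,\alpha)$ with $F_j \in K'[\bfT,\bfX,Y]$ of $Y$-degree less than $n$ and form the ``norm''
\[
N_j(\bfT,\bfX) \;=\; \Res_Y\!\bigl(F_j(\bfT,\bfX,Y),\, p(Y)\bigr) \;\in\; K'[\bfT,\bfX].
\]
Over $E[\bfT,\bfX]$ the polynomial $N_j$ factors as a product of Galois conjugates of $f_j$, so one can pick an irreducible $\Phi_j \in K'[\bfT,\bfX]$ dividing $N_j$ such that $f_j$ divides $\Phi_j$ in $E[\bfT,\bfX]$. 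Lump together all the side conditions --- non-vanishing of leading coefficients, compatibility of specialization with the resultant, irreducibility-preservation, and $g(\bfa)\neq 0$ --- into a single nonzero $h(\bfT) \in K'[\bfT]$. Applying the s-Hilbertianity of $K'$ over $S$ to $\{\Phi_j\}$ and $h$ produces $\bfa \in S^r$ such that each $\Phi_j(\bfa,\bfX)$ is irreducible over $K'$; a routine argument (specialize, then use that $\Phi_j(\bfa,\bfX)$ remains a non-trivial multiple of $f_j(\bfa,\bfX)$ over $E$) then forces $f_j(\bfa,\bfX)$ to be irreducible over $E$.

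When $E/K'$ is not separable, take $E_0$ to be the separable closure of $K'$ in $E$, apply the previous step to $E_0/K'$, and handle the purely inseparable tower $E/E_0$ separately: since $E^{p^r} \subseteq E_0$ for some $r$, a polynomial $f \in E[\bfT,\bfX]$ that is separable in $\bfX$ can be paired with a polynomial over $E_0$ whose irreducibility after specialization forces the irreducibility of $f(\bfa,\bfX)$. This gives the weaker ``Hilbertian over $S$'' conclusion in the general case, while the s-Hilbertian conclusion requires $E/K'$ separable precisely because the purely inseparable step does not survive without the separability hypothesis on $\bfX$.

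The hard part will be the book-keeping behind the norm-descent step: verifying that the auxiliary polynomial $h$ can genuinely be chosen nonzero while simultaneously controlling leading coefficients, the resultant specialization, and the factorization of $\Phi_j(\bfa,\bfX)$ over $E$. This is essentially the standard content of the proof that finite separable extensions of Hilbertian fields are Hilbertian; the novelty here is merely to thread the set $S$ through the argument, which works because $K'$ is assumed s-Hilbertian \emph{over} $S$ rather than only Hilbertian in the usual sense.
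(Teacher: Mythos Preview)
Your overall strategy matches the paper's: reduce to the purely transcendental extension $K'=K(\bft)$ via Lemma~\ref{lem:sHilb_p_t}, then handle the finite step $E/K'$. The paper does not unpack the finite step at all; it simply invokes \cite[Proposition~12.3.3]{FriedJarden2005} (and \cite[Corollary~12.2.3]{FriedJarden2005} in the separable case) to say that every (separable) Hilbert set of $E$ contains a (separable) Hilbert set of $K(\bft)$, and then applies the s-Hilbertianity of $K(\bft)$ over $S$. Your norm-descent sketch is essentially the content of those Fried--Jarden references, so you are re-deriving what the paper treats as a black box. That is fine, and arguably more self-contained, but be aware that the bookkeeping you flag as ``the hard part'' is exactly the nontrivial work hidden in those citations.

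One genuine slip: you write ``if $E/K$ is separable, so is $E/K'$,'' but this is false for an arbitrary transcendence basis (e.g.\ $E=K(x)$, $K'=K(x^p)$ in characteristic $p$). In the separable case you must choose $\bft$ to be a \emph{separating} transcendence basis, which exists precisely because $E/K$ is finitely generated and separable; the paper makes this choice explicitly. Without it, your separable-case argument does not get off the ground.
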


\begin{proof}
Choose a transcendence basis $\bft$ for $E/K$, i.e., $K(\bft)/K$ is
purely transcendental and $E/K(\bft)$ is finite. Let $H\subseteq
E^r$ be a separable Hilbert set for $E$. By \cite[Proposition
12.3.3]{FriedJarden2005}, there exists a separable Hilbert set
$H_1\subseteq K(\bft)^r$ such that $H_1\subseteq H$. By
Lemma~\ref{lem:sHilb_p_t}, we get that $H_1\cap S^r\neq \emptyset$,
and hence the assertion.

If $E/K$ is also separable, then we can choose $\bft$ to be a
separating transcendence basis, that is, we can assume that
$E/K(\bft)$ is separable. Now the same argument as above work for
any Hilbert set $H\subseteq E^r$ (using \cite[Corollary
12.2.3]{FriedJarden2005} instead of \cite[Proposition
12.3.3]{FriedJarden2005}).
\end{proof}

Combining the results that we attained so far we enlarge the family
of PAC extensions:

\begin{theorem}\label{thm:almostallPACextension}
Let $e\geq 1$ be an integer, let $K$ be a countable field which is
s-Hilbertian over some subset $S$, and let $E/K$ be a finitely
generated extension. Then for almost all $\bfsigma\in \gal(E)^e$ the
fields $E_s(\bfsigma)$ and $\Egal(\bfsigma)$ are PAC over $S$.

In particular, the result is valid when $K$ is a countable
s-Hilbertian field (and $S=K$).
\end{theorem}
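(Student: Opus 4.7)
The plan is to derive the theorem as an immediate corollary of the two ingredients already in place: the Jarden--Razon theorem (which provides the probabilistic PAC conclusion whenever the ambient field is countable and Hilbertian over the given subset) and the preceding proposition (which propagates (s-)Hilbertianity over $S$ from $K$ to any finitely generated extension $E/K$).

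First I would verify the countability hypothesis: since $K$ is countable and $E/K$ is finitely generated, $E$ is countable as well (a finitely generated extension of a countable field is countable). Next I would invoke the preceding proposition with the given subset $S\subseteq K$ and the finitely generated extension $E/K$ to conclude that $E$ is Hilbertian over $S$. Note that the proposition only guarantees \emph{s}-Hilbertianity over $S$ when $E/K$ is separable, but this stronger conclusion is not needed here: the Jarden--Razon theorem as stated requires only Hilbertianity (its hypothesis concerns separable Hilbert sets), and this is supplied by the proposition without any separability assumption on $E/K$.

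With $E$ countable and Hilbertian over $S$, I would apply the Jarden--Razon theorem directly: for almost all $\bfsigma\in\gal(E)^e$, both $E_s(\bfsigma)$ and $\Egal(\bfsigma)$ are PAC extensions of $S$. This yields the first assertion. The ``In particular'' clause is the special case $S=K$, which is just the statement that a countable s-Hilbertian field is s-Hilbertian over itself (immediate from the definition), so the general result applies with no extra work.

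There is no real obstacle here, as the theorem is essentially a bookkeeping consequence of the two previous results; the only subtle point to flag explicitly in the write-up is that the proposition supplies Hilbertianity (not merely \emph{s}-Hilbertianity) even in positive characteristic without requiring $E/K$ to be separable, which is exactly what the hypothesis of the Jarden--Razon theorem demands.
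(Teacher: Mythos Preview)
Your proposal is correct and matches the paper's approach exactly: the paper presents this theorem without an explicit proof, simply prefacing it with ``Combining the results that we attained so far,'' which is precisely the combination of the Jarden--Razon theorem and the preceding proposition that you spell out. Your attention to the countability of $E$ and to the fact that only Hilbertianity (not s-Hilbertianity) of $E$ over $S$ is needed is apt and fills in the details the paper leaves implicit.
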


\begin{corollary} \label{cor:PACoverf.g.}
Let $e\geq 1$ be an integer and let $K$ be a finitely generated
infinite field (over its prime field). Then for almost all
$\bfsigma\in \gal(K)^e$ the field $K_s(\bfsigma)$ is a PAC extension
of any subfield which is not algebraic over a finite field.
Moreover, if $K$ is of characteristic~$0$, then $K_s(\bfsigma)$ is
also PAC over any subring.
\end{corollary}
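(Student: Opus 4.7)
The strategy is to derive the corollary from Theorem~\ref{thm:almostallPACextension} applied for several carefully chosen bases $S$, combined with the trivial monotonicity principle: if $M/K_0$ is PAC and $K_0\subseteq K_0'\subseteq M$, then $M/K_0'$ is PAC (since any witnessing pair $(a,b)\in K_0\times M$ is also a pair in $K_0'\times M$). Thus, for each relevant subfield or subring $K_0\subseteq K_s(\bfsigma)$, it suffices to find a smaller base $K_0^*\subseteq K_0$ for which $K_s(\bfsigma)/K_0^*$ is PAC.

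For the subfield claim, let $\bbP$ denote the prime subfield of $K$, and let $T=\{t\in K_s : t\text{ is transcendental over }\bbP\}$. Since $K$ is finitely generated, $K_s$ is countable and so is $T$. For each $t\in T$, the field $\bbP(t)$ is countable and s-Hilbertian (by Uchida's theorem when $\mathrm{char}(K)=p>0$, since $\bbF_p(t)$ is Hilbertian and imperfect; in characteristic zero Hilbertianity suffices). Because $K/\bbP$ is finitely generated so is $K(t)/\bbP(t)$, and $t\in K_s$ gives $(K(t))_s=K_s$. I apply Theorem~\ref{thm:almostallPACextension} with $K\mapsto\bbP(t)$, $E\mapsto K(t)$ and $S\mapsto\bbP(t)$: for almost all $\boldsymbol{\tau}\in\gal(K(t))^e$, the extension $K_s(\boldsymbol{\tau})/\bbP(t)$ is PAC. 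Since $t$ is algebraic over $K$, $\gal(K(t))$ is an open subgroup of finite index in $\gal(K)$, so its Haar probability measure agrees with the conditional Haar measure inherited from $\gal(K)$. Therefore
\[
A_t=\{\bfsigma\in\gal(K)^e : t\notin K_s(\bfsigma)\text{ or }K_s(\bfsigma)/\bbP(t)\text{ is PAC}\}
\]
has measure one. Countable subadditivity gives $A=\bigcap_{t\in T}A_t$ full measure. For any $\bfsigma\in A$ and any subfield $F\subseteq K_s(\bfsigma)$ not algebraic over $\bbP$, pick $t\in F\cap T$ (nonempty by assumption on $F$); then $K_s(\bfsigma)/\bbP(t)$ is PAC, and monotonicity with $\bbP(t)\subseteq F$ gives $K_s(\bfsigma)/F$ PAC.

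For the subring part in characteristic zero: classical Hilbert irreducibility shows $\bbQ$ is s-Hilbertian over $\bbZ$, and by the proposition on finitely generated separable extensions of s-Hilbertian fields (separability being automatic in characteristic zero), $K$ is s-Hilbertian over $\bbZ$. Applying Theorem~\ref{thm:almostallPACextension} directly with $K\mapsto K$, $E\mapsto K$, $S\mapsto\bbZ$ gives $K_s(\bfsigma)/\bbZ$ PAC for almost all $\bfsigma$. In characteristic zero every subring $R\subseteq K_s(\bfsigma)$ contains $\bbZ$, so monotonicity yields $K_s(\bfsigma)/R$ PAC. Intersecting this measure-one event with $A$ produces a single measure-one set of $\bfsigma$ handling all subfields and subrings simultaneously.

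The only nonobvious technical point is the measure-transfer from $\gal(K(t))$ to $\gal(K)$, but this is standard once one notes that $t\in K_s$ forces $\gal(K(t))$ to be open of finite index in $\gal(K)$; the rest of the proof is a direct application of the already-established Theorem~\ref{thm:almostallPACextension} together with countable subadditivity and the monotonicity principle.
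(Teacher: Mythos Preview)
Your proof is correct and follows essentially the same approach as the paper: reduce via monotonicity to a minimal base ($\bbZ$ in characteristic zero, $\bbF_p(t)$ in positive characteristic), apply Theorem~\ref{thm:almostallPACextension} for each such base, transfer the measure-one statement from $\gal(K(t))^e$ to $\gal(K)^e$ using openness, and take a countable intersection over all $t\in K_s$ transcendental over the prime field. The paper organizes the case split slightly differently---in characteristic zero it goes directly to $\bbZ$ and does not run the $\bbP(t)$ argument at all, since every subring (and in particular every subfield) already contains $\bbZ$---but the content is the same.
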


\begin{proof}
First assume that $K$ is of characteristic $0$. Then any ring
contains $\bbZ$, so it suffices to show that $K_s(\bfsigma)/\bbZ$ is
PAC for almost all $\bfsigma\in \gal(K)^e$. And indeed, since $\bbQ$
is Hilbertian over $\bbZ$, Theorem~\ref{thm:almostallPACextension}
implies that $K_s(\bfsigma)/\bbZ$ is PAC for almost all $\bfsigma$.

Next assume that the characteristic of $K$ is $p>0$. Since any field
$F$ which is not algebraic over $\bbF_p$ contains a rational
function field $\bbF_p(t)$, it suffices to show that
$K_s(\bfsigma)/\bbF_p(t)$ is PAC for almost all $\bfsigma\in
\gal(K)^e$ and any $t\in K_s(\bfsigma) \smallsetminus \tilde
\bbF_p$.

Set $G=\gal(K)^e$ and let $\mu$ be its normalized Haar measure.
For any $t\in K_s \smallsetminus \tilde\bbF_p$ we define a subset
$\Sigma_{t}\subseteq G$ as follows.
\begin{equation}\label{eq:Sigma_F}
\Sigma_{t} = \{ \bfsigma\in G \mid \mbox{if } t\in K_s(\bfsigma),
\mbox{ then } K_s(\bfsigma)/\bbF_p(t) \mbox{ is PAC}\}.
\end{equation}

We claim that $\mu(\Sigma_{t})=1$. Indeed, let $E=K(t)$. Then $E/K$
is a finite separable extension. Let $H=\gal(E)^e$ be the
corresponding open subgroup of $G$.

Note that $t\in K_s(\bfsigma)$ if and only if $\bfsigma\in H$. Then
the definition of $\Sigma_t$ implies that
\[
\Sigma_t =  (H\cap \Sigma_t)\cup (G\smallsetminus H).
\]
Hence it suffices to show that $\mu(H\cap \Sigma_t)=\mu(H)$, or
equivalently, $\nu(H\cap \Sigma_t)=1$, where $\nu$ denotes the
normalized Haar measure on $H$.

Since $\bbF_p(t)$ is Hilbertian (\cite[Theorem
13.3.5]{FriedJarden2005}) and imperfect (\cite[Lemma
2.7.2]{FriedJarden2005}), Uchida's theorem implies that $\bbF_p(t)$
is s-Hilbertian. Also $E/\bbF_p(t)$ is finitely generated because
$K$ is.

Finally, since $H\cap \Sigma_t$ is the set of all $\bfsigma\in
\gal(E)^e$ for which $E_s(\bfsigma)/\bbF_p(t)$ is PAC, and since
$E_s=K_s$, Theorem~\ref{thm:almostallPACextension} implies that
$\nu(H\cap \Sigma_t)=1$, as desired.
\end{proof}

\section{An Application}
In this section we address Problem 18.7.8 of \cite{FriedJarden2005},
the so called `bottom theorem'. Let $K$ be a Hilbertian field and
$e\geq 1$ an integer. The problem asks whether for almost all
$\bfsigma$ the field $M=K_s(\bfsigma)$ has no cofinite subfield
(that is, $N\subsetneq M$ implies $[M:N]=\infty$).

Note that the Hilbertian field $K=\bbF_p(t)$ has imperfect degree
$p$, i.e., $[K:K^p]=p$. Moreover, the imperfect degree is preserved
under separable extensions (see \cite[Lemma
2.7.3]{FriedJarden2005}), and hence every separable extension $M/K$
satisfies $[M:M^p]=p$. In other words, the problem requires a small
modification.

\begin{conjecture}
Let $K$ be a Hilbertian field and $e\geq 1$ an integer. Then for
almost all $\bfsigma\in\gal(K)^e$ and for every proper subfield
$N\subsetneqq K_s(\bfsigma)$, if the extension $K_s(\bfsigma)/N$ is
separable, then it is infinite.
\end{conjecture}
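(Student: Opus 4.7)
The plan is to derive the conjecture, for almost all $\bfsigma \in \gal(K)^e$, from the classification of finite PAC extensions given in Corollary~\ref{cor: finite PAC extension}. Let $\Sigma_0 \subseteq \gal(K)^e$ be the measure-one set of tuples for which both $M := K_s(\bfsigma)$ is a PAC extension of $K$ (Theorem~\ref{thm:intAlmostallPAC}) and $\gal(M) = \langle\bfsigma\rangle$ is free profinite of rank $e$ (\cite[Theorem~18.5.6]{FriedJarden2005}). I will verify the conjecture for every $\bfsigma \in \Sigma_0$.

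Suppose, toward a contradiction, that $\bfsigma \in \Sigma_0$ and $N \subsetneqq M$ with $M/N$ separable of finite degree $n$. Once one knows that $M/N$ is itself a PAC extension, Corollary~\ref{cor: finite PAC extension} forces one of two possibilities: either (a) $N$ is real closed and $M = \overline N$, or (b) $M/N$ is purely inseparable. Case (b) is excluded by separability and $N \neq M$; case (a) implies $\gal(M) = 1$, contradicting the fact that $\langle\bfsigma\rangle$ is free of positive rank $e$. Thus the conjecture reduces to establishing PACness of $M/N$.

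The heart of the argument is therefore to prove $M/N$ is PAC. Set $K' := K \cap N$; since the compositum $KN \subseteq M$ satisfies $[KN:N] \leq [M:N] = n$, restriction yields $[K : K'] \leq n$, so $K'$ is a cofinite subfield of $K$, itself Hilbertian by Weissauer's theorem \cite[Theorem~13.9.1]{FriedJarden2005}. My proposed approach is to refine the geometric weak solutions of rational double embedding problems for $M/K$ given by the lifting property (Proposition~\ref{prop:ExtensionSoltoGeoSol_DEP}) so that the specialization point $\bfa = \phi(\bft)$ lies in $(K')^e \subseteq N^e$ rather than merely in $K^e$. This would directly yield geometric weak solutions for rational double embedding problems for $M/N$, and Proposition~\ref{prop:ExistGeoSol} would then give PACness.

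The main obstacle, and the reason the conjecture remains open for arbitrary Hilbertian $K$, is exactly this refinement: the Jarden-Razon construction produces rational points with $K$-coordinates, and there is no intrinsic reason they can be pushed into an arbitrary cofinite subfield $K'$ without a strong lifting property of the kind that, for finitely generated fields, is supplied by Mordell-Faltings-Grauert-Manin. One plausible route forward is a probabilistic strengthening of Theorem~\ref{thm:intAlmostallPAC}: arrange a single measure-one set $\Sigma_0$ on which simultaneous Hilbert-set conditions are met, indexed over a countable cofinal family of cofinite subfields of $K$, and then show by a Borel-Cantelli argument that these conditions are strong enough to produce $(K')^e$-points for every cofinite $K' \subseteq K$ arising as $K \cap N$. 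The hardest step will be that candidate subfields $N \subseteq M$ need not themselves be fixed fields of subgroups of $\gal(K)$, so the Galois-theoretic parametrization one naturally uses for a measure-one argument must be replaced by a purely field-theoretic one, and showing that a single countable cofinal system of cofinite subfields of $K$ suffices to control all possible $K \cap N$ is the critical technical point.
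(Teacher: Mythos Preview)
This statement is a \emph{conjecture} in the paper; the paper proves only the finitely generated case (Theorem~\ref{thm:bottom}), and you correctly note that the general case remains open. However, your attempted reduction contains a concrete error upstream of the acknowledged obstacle. With $K' = K \cap N$ you claim that $[KN:N] \leq n$ ``yields $[K:K'] \leq n$'', but the general inequality runs the other way: one always has $[KN:N] \leq [K:K']$, so a bound on the left says nothing about the right. There is no reason $K/K'$ should even be algebraic, let alone finite, and Weissauer's theorem in any event concerns \emph{extensions} of Hilbertian fields, not subfields. Your framework of ``cofinite subfields of $K$'' therefore has no footing, and the Borel--Cantelli scheme you sketch is indexed over a family that need not exist.

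For comparison, the paper's argument in the finitely generated case sidesteps any relation between $N$ and $K$. Corollary~\ref{cor:PACoverf.g.} shows that for almost all $\bfsigma$ the field $K_s(\bfsigma)$ is PAC over \emph{every} subfield not algebraic over a finite field: any such subfield contains some finitely generated infinite field $F$ (namely $\bbQ$ in characteristic $0$, or some $\bbF_p(t)$ with $t\in K_s$ transcendental in positive characteristic), and since there are only countably many such $t$, a single measure-one intersection handles them all. Then $N$, being cofinite in $M\supseteq K$, contains such an $F$; PACness of $M/F$ trivially gives PACness of $M/N$; and Corollary~\ref{cor: finite PAC extension} finishes exactly as you proposed. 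Your endgame via finite PAC extensions is the right one; the missing idea is to bypass $K'$ entirely and work with minimal finitely generated subfields of $N$ instead.
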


In \cite{Haran1985} Haran proves an earlier version of this
conjecture, namely with the additional assumption that $K\subseteq
N$. The following result settles the conjecture for finitely
generated fields.

\begin{theorem}\label{thm:bottom}
Let $K$ be a finitely generated infinite field and $e\geq 1$ an
integer. Then for almost all $\bfsigma\in\gal(K)^e$ and for every
proper subfield $N\subsetneqq K_s(\bfsigma)$, if the extension
$K_s(\bfsigma)/N$ is separable, then it is infinite.
\end{theorem}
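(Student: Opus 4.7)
The plan is to combine Corollary~\ref{cor:PACoverf.g.}, which generically produces PAC extensions over every subfield of interest, with the classification of finite PAC extensions in Corollary~\ref{cor: finite PAC extension}. Let $\Sigma \subseteq \gal(K)^e$ be the set of $\bfsigma$ for which both (i) $M := K_s(\bfsigma)$ is PAC over every subfield which is not algebraic over a finite field, and (ii) $\gal(M) = \overline{\langle\bfsigma\rangle}$ is free of rank $e$ (in the sense of Jarden's classical theorem on the Galois group of $K_s(\bfsigma)$). Property (i) holds for almost all $\bfsigma$ by Corollary~\ref{cor:PACoverf.g.}, and property (ii) holds for almost all $\bfsigma$ as well, so $\Sigma$ has full Haar measure. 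In particular, for any $\bfsigma\in\Sigma$ the group $\gal(M)$ is nontrivial (because $e\geq 1$), so $M$ is not separably closed, hence not algebraically closed.

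Fix $\bfsigma\in\Sigma$ and suppose toward contradiction that $N\subsetneqq M$ is a proper subfield with $M/N$ finite and separable. Since $K\subseteq M$ is finitely generated and infinite, $M$ has positive transcendence degree over its prime field; because $[M:N]<\infty$, so does $N$, and therefore $N$ is not algebraic over any finite field. By (i), the finite extension $M/N$ is a PAC extension, so Corollary~\ref{cor: finite PAC extension} leaves exactly two possibilities:
\begin{enumerate}
\item $N$ is real closed and $M$ is its algebraic closure; but then $M$ is algebraically closed, contradicting~(ii).
\item $N$ is PAC and $M/N$ is purely inseparable; together with the assumption that $M/N$ is separable this forces $M=N$, contradicting $N\subsetneqq M$.
\end{enumerate}
Both cases are impossible, so no such $N$ exists, completing the proof.

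\textbf{Main obstacle.} All of the serious technical content is packaged into the two cited corollaries, both already established: Corollary~\ref{cor:PACoverf.g.} (which in turn relies on the Mordell--Faltings/Grauert--Manin input of the previous chapter) and Corollary~\ref{cor: finite PAC extension}. The only point demanding a little attention is ruling out the real-closed alternative, which is visibly only a characteristic-zero issue but is treated uniformly via the freeness of $\overline{\langle\bfsigma\rangle}$; no new ideas are required beyond these inputs.
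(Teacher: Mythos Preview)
Your proof is correct and follows essentially the same route as the paper: use Corollary~\ref{cor:PACoverf.g.} to see that $M=K_s(\bfsigma)/N$ is PAC, then invoke Corollary~\ref{cor: finite PAC extension} to rule out a finite separable proper extension. The paper's argument is phrased slightly differently (``$N$ contains a finitely generated infinite field $F$, hence $M/F$ is PAC, hence $M/N$ is PAC'') and leaves the exclusion of the real-closed case implicit, whereas you spell it out via the freeness of $\langle\bfsigma\rangle$; that extra care is perfectly fine.

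One small imprecision: the sentence ``$M$ has positive transcendence degree over its prime field'' is false in characteristic~$0$ when $K$ is a number field. Of course the conclusion you want, that $N$ is not algebraic over a finite field, is still correct there (trivially, since there are no finite fields in characteristic~$0$), so the argument survives. Either split into cases by characteristic, or simply observe as the paper does that $N$ contains a finitely generated infinite subfield (namely $\bbQ$ in characteristic~$0$, and $\bbF_p(t)$ for some transcendental $t\in N$ in positive characteristic, using that $[M:N]<\infty$ forces $N$ to have the same positive transcendence degree over $\bbF_p$ as $K$).
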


\begin{proof}
Clearly $N$ contains a finitely generated infinite field $F$. By
Corollary~\ref{cor:PACoverf.g.} $K_s(\bfsigma)/F$ is PAC. Hence
$K_s(\bfsigma)/N$ is PAC. The assertion now follows from
Corollary~\ref{cor: finite PAC extension}.
\end{proof}

\section{Algebraic Extensions of Hilbertian Fields}\label{sec:3}
Throughout this section let $K$ be a countable Hilbertian field and
$F/K$ an extension. Since $K$ has an abundance of algebraic PAC
extensions (Theorem~\ref{thm:almostallPACextension}) we can look on
the following family of PAC extensions of $F$.
\[
\{MF \mid M/K \mbox{ is an algebraic PAC extension}\}.
\]
In this section we discuss when this family contains non
separably-closed fields. For applications it is more important to
understand the following set of positive integers.
\[
\calN(F) = \{n\geq 1\mid \exists M/F \mbox{ PAC} \mbox{ and a
separable extension } N/M \mbox{ such that } n=[N:M]\}.
\]
Note that always $1\in \calN(F)$. Also if there exists a non
separably-closed PAC extension $M/F$, then $\calN(F)$ is infinite.
Indeed, $M$ is not a formally real field since $X^2+Y^2+1=0$ has a
solution. Therefore, by Artin-Schreier Theorem \cite[Corollary
9.3]{Lang2002}, $M$ has infinitely many separable extensions.

We start by giving a general result about the infinitude of
$\calN(F)$ in terms of the absence of free subgroups in $\gal(F/K)$.

\begin{theorem}
Assume $F/K$ is Galois and for some $e\geq 1$ no closed subgroup of
$\gal(F/K)$ is isomorphic to $\Fhat_e$. Then $\calN(F)$ is infinite.
\end{theorem}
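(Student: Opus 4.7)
The plan is to produce a single PAC extension $M'/F$ whose absolute Galois group is an infinite profinite group, which immediately forces $\calN(F)$ to be infinite: an infinite profinite group admits open subgroups of arbitrarily large finite index, and the corresponding fixed fields furnish separable extensions of $M'$ of unboundedly many distinct degrees. To build such an $M'$ I would start from a ``random'' algebraic PAC extension of $K$ in the sense of Theorem~\ref{thm:intAlmostallPAC} and take its composite with $F$.

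Concretely, since $K$ is countable Hilbertian, Theorem~\ref{thm:intAlmostallPAC} (with $R=K$) guarantees that for almost all $\bfsigma\in\gal(K)^e$ the field $M=K_s(\bfsigma)$ is a PAC extension of $K$ with $\gal(M)=\left<\bfsigma\right>\cong\Fhat_e$. I would fix one such $\bfsigma$ and set $M'=MF$. The descent statement for PAC extensions recalled in the remark following Proposition~\ref{prop:ppundersubgroups} (Jarden--Razon, \cite{JardenRazon1994}) then shows that $M'/F$ is PAC, because $M/K$ is PAC and $F/K$ is algebraic. Using that $F/K$ is Galois, so that $\gal(F)\normal\gal(K)$, one has
\[
\gal(M') \;=\; \gal(M)\cap\gal(F) \;=\; \left<\bfsigma\right>\cap\gal(F),
\]
and this subgroup will carry all the information needed for the conclusion.

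Next I would rule out the case $\gal(M')=1$: if this intersection were trivial, the restriction map $\gal(K)\to\gal(F/K)$ would embed $\left<\bfsigma\right>\cong\Fhat_e$ as a closed subgroup of $\gal(F/K)$, directly contradicting the hypothesis. Hence $\gal(M')$ is a nontrivial closed subgroup of $\Fhat_e$; since free profinite groups of rank $\geq 1$ are torsion-free, every nontrivial closed subgroup of $\left<\bfsigma\right>$ is infinite, so $\gal(M')$ is infinite. It then follows that $M'$ has separable algebraic extensions of arbitrarily large finite degree, each of which witnesses its degree as an element of $\calN(F)$ via the single PAC extension $M'/F$. The main obstacle is the step asserting that $MF/F$ remains PAC when one composes with $F$; this is the descent argument of Jarden--Razon, which is precisely what Proposition~\ref{prop:ppundersubgroups} reflects on the group-theoretic side. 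The hypothesis on $\gal(F/K)$ enters only at the very last step, and is tailored exactly so that $M'=F_s$ is impossible.
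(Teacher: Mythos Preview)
Your proof is correct and follows essentially the same route as the paper: pick $\bfsigma$ with $M=K_s(\bfsigma)$ PAC over $K$ and $\gal(M)\cong\Fhat_e$, pass to $M'=MF$ (PAC over $F$ by Jarden--Razon), and argue that $\gal(M')=\left<\bfsigma\right>\cap\gal(F)$ cannot be trivial since otherwise restriction would embed $\Fhat_e$ into $\gal(F/K)$. The only cosmetic difference is the endgame: the paper invokes the observation just before the theorem (a non-separably-closed PAC field has infinitely many separable extensions, via Artin--Schreier), whereas you use torsion-freeness of $\Fhat_e$ to upgrade ``nontrivial'' to ``infinite'' directly; both are fine here.
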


\begin{proof}
Let $\bfsigma\in \gal(K)^e$ be an $e$-tuple such that
$M=K_s(\bfsigma)$ is a PAC extension of $K$ and
$\left<\bfsigma\right>\cong \Fhat_e$
(Theorem~\ref{thm:intAlmostallPAC}).

It suffices to show that $MF\neq K_s$. If $MF=K_s$, then
\[
\gal(F/F\cap M) \cong \gal(K_s/M)=\left<\bfsigma\right> \cong
\Fhat_e,
\]
a contradiction.
\end{proof}

In what follows we shall describe $\calN(F)$ more explicitly for
more specific extensions. We shall use the following simple group
theoretic lemma.

\begin{lemma}\label{lem:grp}
Let $N\leq N_0\leq G$ be profinite groups such that $N$ is normal in
$G$. Let $H$ be a quotient of $G$ such that $H$ and $G/N$ have no
common nontrivial quotients. Then, $H$ is a quotient of $N_0$. In
particular, if $H$ has an open subgroup of index $n$, so does $N_0$.
\end{lemma}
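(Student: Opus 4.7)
The plan is to chase the quotient map $\pi\colon G \twoheadrightarrow H$ restricted to $N_0$, and use the hypothesis on common quotients to force this restriction to be surjective.

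First I would set $H_0 = \pi(N_0)$, which is a closed subgroup of $H$ containing $\pi(N)$. Since $N \trianglelefteq G$ and $\pi$ is surjective, $\pi(N)$ is normal in $H$, so we may form the quotient $H/\pi(N)$. The key observation is that this quotient is simultaneously (i) a quotient of $H$ in the obvious way, and (ii) a quotient of $G/N$, because $\pi$ induces a surjection $G/N \twoheadrightarrow \pi(G)/\pi(N) = H/\pi(N)$. The hypothesis on common nontrivial quotients then forces $H/\pi(N) = 1$, i.e.\ $H = \pi(N)$. Since $\pi(N) \leq \pi(N_0) = H_0 \leq H$, we conclude $H_0 = H$, which is exactly the statement that $\pi|_{N_0}\colon N_0 \twoheadrightarrow H$ is surjective.

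For the "in particular" part, given an open subgroup $H'\leq H$ of index $n$, I would simply take its preimage $(\pi|_{N_0})^{-1}(H')$; by surjectivity of $\pi|_{N_0}$ this preimage is open in $N_0$ of index $[H:H']=n$.

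I do not expect any serious obstacle: the only subtlety is recognizing that $H/\pi(N)$ sits as a common quotient of the two groups in the hypothesis, after which the argument is essentially formal. The normality of $N$ in $G$ (rather than merely in $N_0$) is used precisely to guarantee that $\pi(N)$ is normal in all of $H$, so that $H/\pi(N)$ makes sense as a quotient group of $H$.
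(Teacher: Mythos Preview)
Your argument is correct and is essentially the same as the paper's: where you work with the image $\pi(N)$ and the quotient $H/\pi(N)$, the paper works with the kernel $U=\ker\pi$ and the quotient $G/NU$, and these are identified via $H/\pi(N)\cong (G/U)/(NU/U)\cong G/NU$. The conclusion $\pi(N)=H$ is exactly the paper's $G=NU$, and your $\pi(N_0)=H$ is the paper's $N_0/(N_0\cap U)\cong N_0U/U=G/U=H$.
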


\begin{proof}
Let $U\lhd G$ such that $G/U= H$. Since $G/NU$ is a common quotient
of $G/U$ and $G/N$, we get that $G/NU=1$, so $G=NU$. Therefore
 also $N_0U=G$, and hence $N_0/N_0\cap U \cong N_0U/U = G/U = H$.
\end{proof}

\subsection{Pro-solvable Extensions}
Recall that a finite separable field extension is called
\textbf{solvable} if the Galois group of its Galois closure is
solvable. Then an algebraic separable field extension is called
\textbf{pro-solvable} if it is the compositum of solvable
extensions.

\begin{theorem}\label{prop:sol}
Let $F$ be a pro-solvable extension of a countable Hilbertian field
$K$ and $e\geq 2$. Then for almost all $\bfsigma\in \gal(K)^e$ the
field $M=FK_{s}(\bfsigma)$ is a PAC extension of $F$ and it has a
separable extension of every degree $>4$. In particular
\[
\{5,6,7,\ldots \} \subseteq \calN(F).
\]
\end{theorem}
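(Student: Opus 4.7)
The plan is to combine Theorem~\ref{thm:intAlmostallPAC}, which supplies almost-all PAC extensions $K_s(\bfsigma)/K$ with $\langle\bfsigma\rangle$ free of rank $e$, with two ingredients: descent of the PAC property along the algebraic extension $F/K$, and a group-theoretic extraction of non-abelian simple quotients from $\gal(M)$.

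First I would fix $\bfsigma\in\gal(K)^e$ for which $K_s(\bfsigma)/K$ is PAC and $\langle\bfsigma\rangle\cong\Fhat_e$; this holds on a set of full Haar measure by Theorem~\ref{thm:intAlmostallPAC}. Since $F/K$ is algebraic, the PACness of $M=FK_s(\bfsigma)$ as an extension of $F$ then follows from the base-change lemma of Jarden--Razon (\cite[Lemma~2.1]{JardenRazon1994}, recalled in the remark following Proposition~\ref{prop:ppundersubgroups}), settling the first assertion.

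For the second assertion, let $\Fhat$ denote the Galois closure of $F/K$, which is pro-solvable by hypothesis, and set
\[
G=\langle\bfsigma\rangle,\qquad N_0=\gal(M)=\langle\bfsigma\rangle\cap\gal(F),\qquad N=\langle\bfsigma\rangle\cap\gal(\Fhat).
\]
Since $\gal(\Fhat)\normal\gal(K)$, we get $N\normal G$; also $N\leq N_0$, and the restriction map embeds $G/N$ into the pro-solvable group $\gal(\Fhat/K)$, so $G/N$ is pro-solvable. Now fix $n\geq 5$. The alternating group $A_n$ is $2$-generated and non-abelian simple, so since $e\geq 2$ the free profinite group $G\cong\Fhat_e$ surjects onto $A_n$; simplicity of $A_n$ forces it to share no nontrivial quotient with the pro-solvable $G/N$. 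Lemma~\ref{lem:grp} then gives that $A_n$ is a quotient of $N_0=\gal(M)$, and pulling back a point stabilizer $A_{n-1}\leq A_n$ produces an open subgroup of $\gal(M)$ of index $n$, i.e.\ a separable extension of $M$ of degree $n$. Hence $n\in\calN(F)$.

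The only delicate step is the last paragraph, where the interplay between two orthogonal features---freeness of $\langle\bfsigma\rangle$ (supplying $A_n$ as a quotient of $G$) and pro-solvability of $\gal(\Fhat/K)$ (ensuring $A_n$ and $G/N$ have no common nontrivial quotients)---is what makes Lemma~\ref{lem:grp} bite; everything else is bookkeeping from Theorem~\ref{thm:intAlmostallPAC} and the descent of PACness along algebraic extensions.
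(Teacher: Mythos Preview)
Your proof is correct and follows essentially the same route as the paper: invoke Theorem~\ref{thm:intAlmostallPAC} for the almost-sure PACness and freeness, use the Jarden--Razon base-change lemma to pass from $K_s(\bfsigma)/K$ to $M/F$, and then apply Lemma~\ref{lem:grp} with $H=A_n$ to the chain $N\leq N_0\leq G$ to produce an index-$n$ subgroup of $\gal(M)$. Your write-up is in fact slightly more explicit than the paper's in citing the base-change step and the $2$-generation of $A_n$.
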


\begin{proof}
Let $\hat F$ be the Galois closure of $F/K$, so $\gal(\hat F/K)$ is
pro-solvable. For almost all $\bfsigma\in \gal(K)^e$ the field
$K_{s}(\bfsigma)$ is a PAC extension of $K$ and its absolute Galois
group $G=\left< \bfsigma \right>$ is a free profinite group of rank
$e$ (Theorem~\ref{thm:intAlmostallPAC}). Fix such $\bfsigma$ and
write $M = FK_{s}(\bfsigma)$. Then $M/F$ is PAC. Let $N_0 = \gal(M)$
be the absolute Galois group of $M$ and let $N = \gal(\hat
FK_{s}(\bfsigma))$. Then $N\leq N_0\leq G$, $N$ is normal in $G$,
and $G/N = \gal(\hat F K_{s}(\bfsigma)/K_s(\bfsigma))$. The
restriction map $\gal(\Fhat K_s(\bfsigma)/K_s(\bfsigma)) \to
\gal(\hat F/K)$ is an embedding, so $G/N$ is pro-solvable.

Let $n>4$, we show that $M$ has a separable extension of degree $n$.
By the Galois correspondence, it suffices to show that $N_0$ has an
open subgroup of index $n$. As $G$ is free of rank $\geq 2$ it has
$A_n$ (the alternating group) as a quotient. Now $A_n$ and $G/N$
have no nontrivial common quotients (as $G/N$ is pro-solvable and
$A_n$ is simple). Lemma \ref{lem:grp} with $H=A_n$ implies that
$N_0$ has an open subgroup of index $n$ (since $(A_n:A_{n-1})=n$).
\end{proof}

\subsection{Prime-to-$p$ Extensions}
Let $p, m,k$ be positive integers such that $p$ is prime, $p\nmid
m$, and $p\mid \phi(m)$. Here $\phi$ is Euler's totient function.
Fix $\alpha \in (\bbZ/m\bbZ)^*$ of (multiplicative) order $p$. Then
$\bbZ/p^k \bbZ$ acts on $\bbZ/m\bbZ$ (from the left) by $ x b :=
\alpha^x b$, $x\in \bbZ/p^k \bbZ$, $b\in \bbZ/m\bbZ$.

Consider the semidirect product $H = \bbZ/m\bbZ \rtimes \bbZ/p^k
\bbZ$ of all pairs $(a,x)$, $a\in \bbZ/m\bbZ$ and $x\in \bbZ/p^k
\bbZ$ with multiplication given by
\[
(a,x)(b,y) = (a + \alpha^x b, x+y).
\]
In particular
\[
(a,1)^n =(a(1+\alpha+\alpha^2 + \cdots +\alpha^{n-1}),n) =
\left(\frac{a(1-\alpha^{n})}{1-\alpha},n\right).
\]
We embed $\bbZ/m \bbZ$ and $\bbZ/p^k \bbZ$ in $H$ in the natural
way.

\begin{lemma}
\renewcommand{\labelenumi}{\alph{enumi}.}
Let $p,m,k$, and $H = \bbZ/m\bbZ \rtimes \bbZ/p^k \bbZ$ be as above.
Then
\begin{enumerate}\label{lem:semi}
\item
$H$ is quasi-$p$ (i.e.\ it is generated by its $p$-Sylow subgroups).
\item
The only prime-to-$p$ quotient of $H$ is the trivial quotient.
\item
For $n\mid |H|=p^k m$, there exists a subgroup of $H_0$ of order
$n$.
\end{enumerate}
\end{lemma}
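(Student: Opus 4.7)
Plan. The three parts are tightly linked: (b) follows formally from (a), and (c) is a direct construction, so the substance of the argument lies in (a). I begin by noting that $P_0 := \{(0,x) : x \in \bbZ/p^k\bbZ\}$ is a Sylow $p$-subgroup of $H$---indeed $|P_0|=p^k$ is the full $p$-part of $|H|=p^k m$ (using $p\nmid m$). As all Sylow $p$-subgroups of $H$ are conjugate to $P_0$, the subgroup $H_0$ they generate equals the normal closure of $P_0$ in $H$.

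The main computation is the conjugation identity: for $a \in \bbZ/m\bbZ$ and $x\in\bbZ/p^k\bbZ$,
\[
(a,0)(0,x)(a,0)^{-1} = (a(1-\alpha^x),\,x).
\]
Multiplying on the right by $(0,x)^{-1}\in P_0\subseteq H_0$ gives $(a(1-\alpha^x),0)\in H_0$; specializing to $x=1$ yields $(1-\alpha)(\bbZ/m\bbZ)\subseteq H_0\cap(\bbZ/m\bbZ)$. The hypothesis that $\alpha$ has exact order $p$ in $(\bbZ/m\bbZ)^{\times}$ with $p\nmid m$ forces $1-\alpha$ to be a unit of $\bbZ/m\bbZ$---equivalently $\gcd(1-\alpha,m)=1$---and hence $\bbZ/m\bbZ\subseteq H_0$. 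Together with $P_0\subseteq H_0$, this yields (a): $H_0 = H$.

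Part (b) then follows formally: for any surjection $\pi\colon H\to Q$ with $\gcd(|Q|,p)=1$, the image $\pi(P)$ of every Sylow $p$-subgroup $P$ is a $p$-group of order dividing $|Q|$, hence trivial; since $H$ is generated by its Sylow $p$-subgroups by (a), $\pi\equiv 1$ and $Q=1$. For (c), given $n\mid p^k m$, write $n=p^j n'$ with $0\le j\le k$ and $n'\mid m$, and let $M=(m/n')(\bbZ/m\bbZ)$ denote the unique subgroup of $\bbZ/m\bbZ$ of order $n'$; $M$ is $\alpha$-stable because $\alpha\in(\bbZ/m\bbZ)^{\times}$ permutes (indeed preserves) each subgroup of the cyclic group $\bbZ/m\bbZ$. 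Then the sub-semidirect product $M\rtimes(p^{k-j}\bbZ/p^k\bbZ)\le H$ has order $n'p^j=n$, as required.

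The principal delicate point is the unit claim for $1-\alpha$ at the end of (a); this is precisely where the hypothesis that $\alpha$ has exact order $p$ enters the proof, and the standard verification proceeds via the CRT decomposition of $\bbZ/m\bbZ$ together with the relation $(\alpha-1)(1+\alpha+\cdots+\alpha^{p-1})=0$. Everything else is routine bookkeeping with the semidirect product law.
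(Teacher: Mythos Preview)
Your arguments for (b) and (c) are correct and essentially match the paper's. For (a), your route differs from the paper's: the paper exhibits two generators $(0,1)$ and $(1,1)$ of $H$ and argues that each has order dividing $p^k$, whereas you compute the normal closure of a fixed Sylow $p$-subgroup via conjugation. Both reduce to the same arithmetic claim, which you correctly isolate as the crux.

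However, your assertion that ``$\alpha$ of exact order $p$ with $p\nmid m$ forces $1-\alpha$ to be a unit'' is false. Take $p=2$, $m=15$, $\alpha=4$: then $\alpha^2\equiv 1\pmod{15}$, but $\gcd(1-\alpha,15)=\gcd(-3,15)=3$. Under CRT, $\alpha$ acts trivially on the $\bbZ/3\bbZ$ component, so $1-\alpha$ vanishes there. For this $\alpha$ the subgroup $\bbZ/3\bbZ\times\{0\}$ is central and $H$ surjects onto $\bbZ/3\bbZ$; hence (a) and (b) fail and the lemma is wrong as stated. The paper's proof has the equivalent gap: the claim $(1,1)^{p^k}=(0,0)$ amounts to $\sum_{i=0}^{p-1}\alpha^i=0$ in $\bbZ/m\bbZ$ (after cancelling the unit $p^{k-1}$), and via the identity $(1-\alpha)\sum_{i=0}^{p-1}\alpha^i=1-\alpha^p=0$ together with the componentwise analysis this is equivalent to $1-\alpha\in(\bbZ/m\bbZ)^\times$. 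The fix is to impose that hypothesis on $\alpha$ (equivalently, require $\alpha$ to have order $p$ modulo every prime power dividing $m$); with it in place, both your argument and the paper's go through verbatim.
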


\begin{proof}
a.\! The elements $(0,1)$ and $(1,1)=(1,0)(0,1)$ generate $H$, so it
suffices to show that their order divides $p^k$ (and in fact equals
to $p^k$). We have $(0,1)^{p^k} = (0,p^k)=(0,0)$ and
\[
(1,1)^{p^k} = \left(\frac{1-\alpha^{p^k}}{1-\alpha},0\right)=(0,0).
\]

b.\! follows from a.: Indeed, let $\bar H = H/N$ be a quotient of
$H$ with order prime-to-$p$. Thus $p^k$ divides the order of $N$,
and hence $N$ contains a $p$-Sylow subgroup of $H$. As $N\lhd H$, it
contains all the $p$-Sylow subgroups. By a., $N= H$ and $\bar H =
1$, as desired.

To show c., assume $n\mid p^k m$. Then $n = n_0 q$, where $n_0\mid
m$ and $q\mid p^k$. Let $A$, $B$ be the unique subgroups of $\bbZ/m
\bbZ$, $\bbZ/p^k\bbZ$ of order $n_0$, $q$, respectively. Because of
the uniqueness, $A$ is $B$-invariant. Hence $A\rtimes B\leq H$.
Obviously $H_0 = A\rtimes B$ is of order $n$.
\end{proof}

Recall that a finite extension $F/K$ is \textbf{prime-to-$p$} if $p$
does not divide $[F:K]$. An infinite algebraic extension is
\textbf{prime-to-$p$} if any finite subextension is.

\begin{theorem}
Let $p$ be a prime, let $F$ be a separable extension of a countable
Hilbertian field $K$ whose Galois closure is prime-to-$p$ over $K$.
Let $e\geq 2$. Then for almost all $\bfsigma\in \gal(K)^e$ the field
$M=FK_{s}(\bfsigma)$ is a PAC extension of $F$ and it has a
separable extension of every degree. In particular, $\calN(F) =
\bbZ_+$.
\end{theorem}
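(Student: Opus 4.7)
The strategy parallels exactly that of Theorem~\ref{prop:sol} (the pro-solvable case), replacing the alternating group $A_n$ (which supplied the ``simple vs.\ solvable, no common quotients'' gap) with the metacyclic group $H=\bbZ/m\bbZ\rtimes\bbZ/p^k\bbZ$ from Lemma~\ref{lem:semi} (which supplies a ``quasi-$p$ vs.\ prime-to-$p$, no common quotients'' gap). First, by Theorem~\ref{thm:intAlmostallPAC}, for almost all $\bfsigma\in\gal(K)^e$ the field $M_0=K_s(\bfsigma)$ is a PAC extension of $K$ and $G=\langle\bfsigma\rangle\cong\hat F_e$ is a free profinite group of rank $e$. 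Fix such $\bfsigma$. Since $F/K$ is algebraic, the extension $M=FM_0$ of $F$ is PAC by \cite[Lemma~2.1]{JardenRazon1994}.

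To prove $\calN(F)=\bbZ_+$ it suffices, by Galois correspondence, to show that $N_0=\gal(M)$ has an open subgroup of every finite index $n$. Let $\hat F$ be the Galois closure of $F/K$ and set $N=\gal(\hat F M_0)$. Then $N\lhd G$ (because $\hat F M_0/M_0$ is Galois) and $N\leq N_0\leq G$. Restriction embeds $G/N$ into $\gal(\hat F/K)$, which is prime-to-$p$ by hypothesis; hence $G/N$ is a prime-to-$p$ profinite group.

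Now fix $n\geq 1$ and write $n=p^a n_0$ with $\gcd(n_0,p)=1$. By Dirichlet's theorem on primes in arithmetic progressions, pick a prime $q$ with $q\equiv 1\pmod p$ and $\gcd(q,n_0 p)=1$; put $m=q$ if $n_0=1$ and $m=qn_0$ otherwise, and take $k=\max(a,1)$. Then $p\nmid m$, $p\mid\phi(m)$, and $n\mid p^k m=|H|$, so Lemma~\ref{lem:semi} applies to $H=\bbZ/m\bbZ\rtimes\bbZ/p^k\bbZ$. By part (a) of that lemma $H$ is quasi-$p$, hence by part (b) its only prime-to-$p$ quotient is trivial; consequently $H$ and $G/N$ share no nontrivial common quotient. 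Since $H$ is $2$-generated and $G\cong\hat F_e$ with $e\geq 2$, $H$ is a quotient of $G$. Lemma~\ref{lem:grp} then forces $H$ to be a quotient of $N_0$, and part (c) of Lemma~\ref{lem:semi} produces a subgroup of $H$ of index $n$ (any subgroup of order $|H|/n$, permissible since $n\mid|H|$), which pulls back to an open subgroup of $N_0$ of index $n$, as required.

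The only nontrivial step is the number-theoretic one of arranging $H$ with $n\mid|H|$ and $p\mid\phi(m)$ simultaneously; this is handled cleanly by the Dirichlet choice above, so no real obstacle remains. The rest is a literal transcription of the pro-solvable argument with the substitution $A_n\rightsquigarrow H$.
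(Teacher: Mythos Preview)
Your proof is correct and follows essentially the same approach as the paper: set up $N\leq N_0\leq G$ with $G/N$ prime-to-$p$, invoke the metacyclic group $H=\bbZ/m\bbZ\rtimes\bbZ/p^k\bbZ$ of Lemma~\ref{lem:semi}, and apply Lemma~\ref{lem:grp}. The only differences are cosmetic: the paper parameterizes by first taking $n$ prime to $p$ and an arbitrary $k\geq 1$ (so that a single $H$ yields subgroups of both indices $n$ and $np^k$), whereas you handle an arbitrary $n=p^a n_0$ directly; and you make explicit two points the paper leaves implicit, namely that $H$ is $2$-generated (hence a quotient of $G\cong\Fhat_e$) and the citation for $M/F$ being PAC.
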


\begin{proof}
Let $\hat F$ be the Galois closure of $F/K$, so every finite
quotient of $\gal(\hat F/K)$ has order prime to $p$. By
Theorem~\ref{thm:intAlmostallPAC}, for almost all $\bfsigma\in
\gal(K)^e$ the field $M_0=K_{s}(\bfsigma)$ has a free absolute
Galois group of rank $e$, namely $G=\left< \bfsigma \right>$, and
$M=M_0F$ is a PAC extension of $F$. Let $N_0 = \gal(M)$ be the
absolute Galois group of $M$ and let $N = \gal(\hat
FK_{s}(\bfsigma))$. Then $N\leq N_0\leq G$, $N$ is normal in $G$,
and $G/N = \gal(\hat F K_{s}(\bfsigma)/K_s(\bfsigma))$. The
restriction map $G/N \to \gal(\hat F/K)$ is an embedding, so every
finite quotient of $G/N$ has order prime to $p$ (because it is a
subgroup of a finite quotient of $\gal(\hat F/K)$).

By the Galois correspondence, it suffices to show that $N_0$ has
open subgroups of any index. Let $n$ be a positive integer prime to
$p$ and $k\geq 1$. Let $l$ be a prime number such that $l\nmid n$
and $p\mid l-1$ (such a prime $l$ exists since there are infinitely
many primes in the arithmetic progression $l\equiv 1\pmod p$) and
let $m=nl$. Then $p\nmid m$ and $p\mid \phi(m)$. Let $H =
\bbZ/m\bbZ\rtimes \bbZ/p^k\bbZ$ as in Lemma~\ref{lem:semi}. Then $H$
and $G/N$ have no nontrivial common quotients (by
Lemma~\ref{lem:semi}b.). By Lemma \ref{lem:grp} and
Lemma~\ref{lem:semi}c., $N_0$ has open subgroups of index $n$ and
$np^k$, i.e., of any index.
\end{proof}

\bibliographystyle{amsplain}
\def\cprime{$'$}
\providecommand{\bysame}{\leavevmode\hbox
to3em{\hrulefill}\thinspace}
\providecommand{\MR}{\relax\ifhmode\unskip\space\fi MR }

\providecommand{\MRhref}[2]{%
  \href{http://www.ams.org/mathscinet-getitem?mr=#1}{#2}
} \providecommand{\href}[2]{#2}

\end{document}